\date{}
\newtheorem{thm}{Theorem}[section]
\newtheorem{defn}[thm]{Definition}
\newtheorem{rem}[thm]{Remark}
\newtheorem{conj}[thm]{Conjecture}
\newtheorem{Problem}[thm]{Problem}
\newtheorem{prop}[thm]{Proposition}
\newtheorem{lem}[thm]{Lemma}
\newtheorem{cor}[thm]{Corollary}
\newtheorem{notation}[thm]{Notation}
\newenvironment{f-proof}[1][\sc D\'emonstration.]{\begin{trivlist}
\item[\hskip \labelsep {\bfseries #1}]}{\hfill{$\square$}\end{trivlist}}
\newcommand{\Prod}{\displaystyle\prod}
\newcommand{\fonc}[5]{
 \begin{array}{cccc}
 #1: & #2 & \longrightarrow & #3\\
     & #4 & \longmapsto & #5
 \end{array}
}
\newcommand{\appl}[4]{
 \begin{array}{cccc}
  #1 & \longrightarrow & #2\\
  #3 & \longmapsto & #4
 \end{array}
}
\begin{document}

\title{Tannakian twists of quadratic forms and orthogonal Nori motives}

\author{Ph. Cassou-Nogu\`es and B. Morin}

\maketitle

\begin{abstract}
We revisit classical results of Serre, Fr\"ohlich and Saito in the theory of quadratic forms. Given a neutral Tannakian category $(\mathcal{T},\omega)$ over a field $k$ of characteristic $\neq 2$, another fiber functor $\eta$ over a $k$-scheme $X$ and an orthogonal object $(M,q)$ in $\mathcal{T}$, we show formulas relating the torsor $\bf{Isom}^{\otimes}(\omega,\eta)$ to Hasse-Witt invariants of the quadratic space $\omega(M,q)$ and the symmetric bundle $\eta(M,q)$. We apply this result to various neutral Tannakian categories arising in different contexts. We first consider  Nori's Tannakian category of essentially finite bundles over an integral proper $k$-scheme $X$ with a rational point, in order to study an analogue of the Serre-Fr\"ohlich embedding problem for Nori's fundamental group scheme. Then we consider Fontaine's Tannakian categories of $B$-admissible representations, in order to obtain a generalization of both the classical Serre-Fr\"ohlich formula and Saito's analogous result for Hodge-Tate $p$-adic representations. Finally we consider Nori's category of mixed motives over a number field. These last two examples yield formulas relating the torsor of periods of an orthogonal motive to Hasse-Witt invariants of the associated Betti and de Rham quadratic forms and to Stiefel-Withney invariants of the associated local $l$-adic orthogonal representations. We give some computations for Artin motives and for the motive of a smooth hypersurface. 
\end{abstract}

\section{Introduction} 

Let $k$ be a field of characteristic $\neq 2$, let $(\mathcal{T},\omega)$ be a $k$-linear neutralized Tannakian category, and let $\eta$ be another fiber functor with values in the category of vector bundles over a $k$-scheme $X$. We denote by $\mathcal{G}_{\omega}:=\mathbf{Aut}^{\otimes}(\omega)$ the Tannaka dual and by $T_{\omega_X,\eta}:=\bf{Isom}^{\otimes}(\omega,\eta)$ the $\mathcal{G}_{\omega}$-torsor over $X$ of isomorphisms of tensor functors. An orthogonal object $(M,q)$ in $\mathcal{T}$ is an object $M$ endowed with a symmetric map $M\otimes M\rightarrow \mathbf{1}$ inducing an isomorphism $M\stackrel{\sim}{\rightarrow}M^{\vee}$, where $\mathbf{1}$ is the unit object and $M^{\vee}$ the dual of $M$. Such an orthogonal object yields a quadratic $k$-vector space $(\omega(M),q_{\omega})$ and a symmetric bundle $(\eta(M),q_{\eta})$ over $X$. An orthogonal object $(M,q)$ may be seen as an orthogonal representation $\mathcal{G}_{\omega}\rightarrow \mathbf{O}(q_{\omega})$. 
 Composing with the determinant map $\mathbf{O}(q_{\omega})\rightarrow \mathbb{Z}/2\mathbb{Z}$ we obtain a map $\delta_q^1:\mathrm{Tors}(X,\mathcal{G}_{\omega})\rightarrow H^1(X_{\mathrm{et}},\mathbb{Z}/2\mathbb{Z})$. Similarly, the Pin-extension (see \cite{Fr\"ohlich85} Appendix I and \cite{Jardine92} Appendix)
\begin{equation}\label{ext-i}
1\longrightarrow \mathbb{Z}/2\mathbb{Z}\longrightarrow \widetilde{\mathbf{O}}(q_{\omega})\longrightarrow \mathbf{O}(q_{\omega})\longrightarrow 1
\end{equation}
induces a map $\delta_q^2:\mathrm{Tors}(X,\mathcal{G}_{\omega})\rightarrow H^2(X_{\mathrm{et}},\mathbb{Z}/2\mathbb{Z})$. Here $\mathrm{Tors}(X,\mathcal{G}_{\omega})$ denotes the pointed set of $\mathcal{G}_{\omega}$-torsors over $X$, and $\delta^1_q$ and $\delta^2_q$ are cohomological invariants of degree $1$ and $2$ respectively. It is easily checked that the twist (in the sense of \cite{CCMT14}) of $q_{\omega}$ by the $\mathcal{G}_{\omega}$-torsor $T_{\omega_X,\eta}$ is isometric to $q_{\eta}$. Applying the method of (\cite{CCMT14} Section 6.3), we obtain the following result.
\begin{thm}\label{thm-Tannakian-i}
For any orthogonal object $(M,q)$ in $\mathcal{T}$, we have the following identities in the \'etale cohomology ring $H^*(X_{\mathrm{et}},\mathbb{Z}/2\mathbb{Z})$:
$$\delta_q^1(T_{\omega_X,\eta})=w_1(q_\omega)+ w_1(q_{\eta});$$
$$\delta_q^2(T_{\omega_X,\eta})=w_2(q_\omega)+ w_1(q_\omega)\cdot w_1(q_\omega)+ w_1(q_\omega)\cdot w_1(q_{\eta})+w_2(q_{\eta}).$$
\end{thm}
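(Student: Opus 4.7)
The plan is to reduce the theorem to the general twisting formulas for Hasse--Witt invariants of symmetric bundles established in \cite{CCMT14} Section 6.3, applied after pushing the torsor $T_{\omega_X,\eta}$ forward along the orthogonal representation attached to $(M,q)$.

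As a first step, I would record the chain of torsors involved. The orthogonal object $(M,q)$ corresponds to a morphism of $k$-group schemes $\rho_q:\mathcal{G}_\omega\rightarrow \mathbf{O}(q_\omega)$, and pushing forward along $\rho_q$ gives an $\mathbf{O}(q_\omega)$-torsor $T_q$ on $X$. The twist of the constant symmetric bundle $(q_\omega)_X$ by $T_q$ is isometric to $(\eta(M),q_\eta)$, as recalled in the text. Moreover, by the very construction of $\delta_q^1$ and $\delta_q^2$ through $\rho_q$, the determinant $\mathbf{O}(q_\omega)\rightarrow\mathbb{Z}/2\mathbb{Z}$ sends the class of $T_q$ to $\delta_q^1(T_{\omega_X,\eta})\in H^1(X_{\mathrm{et}},\mathbb{Z}/2\mathbb{Z})$, and the non-abelian coboundary associated with the Pin extension \eqref{ext-i} sends it to $\delta_q^2(T_{\omega_X,\eta})\in H^2(X_{\mathrm{et}},\mathbb{Z}/2\mathbb{Z})$.

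The second step is to invoke the twisting formulas of \cite{CCMT14} Section 6.3. For any symmetric bundle $(V,\beta)$ on $X$, any $\mathbf{O}(\beta)$-torsor $T'$ and the associated twist $(V',\beta')$ of $(V,\beta)$ by $T'$, one has identities of the form
\begin{equation*}
w_1(\beta')=w_1(\beta)+e_1(T'),
\end{equation*}
\begin{equation*}
w_2(\beta')=w_2(\beta)+w_1(\beta)\cdot e_1(T')+e_2(T'),
\end{equation*}
in $H^*(X_{\mathrm{et}},\mathbb{Z}/2\mathbb{Z})$, where $e_1(T')$ and $e_2(T')$ denote the images of $T'$ under the determinant and the Pin coboundary of $\mathbf{O}(\beta)$ respectively. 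Specialising to $(V,\beta)=(q_\omega)_X$ and $T'=T_q$, whose twist is $(\eta(M),q_\eta)$, the previous paragraph identifies $e_1(T_q)=\delta_q^1(T_{\omega_X,\eta})$ and $e_2(T_q)=\delta_q^2(T_{\omega_X,\eta})$, and one obtains
\begin{equation*}
w_1(q_\eta)=w_1(q_\omega)+\delta_q^1(T_{\omega_X,\eta}),
\end{equation*}
\begin{equation*}
w_2(q_\eta)=w_2(q_\omega)+w_1(q_\omega)\cdot\delta_q^1(T_{\omega_X,\eta})+\delta_q^2(T_{\omega_X,\eta}).
\end{equation*}
Solving the first relation gives the stated formula for $\delta_q^1$; substituting the resulting expression for $\delta_q^1$ into the cup product $w_1(q_\omega)\cdot\delta_q^1(T_{\omega_X,\eta})$ in the second relation and solving for $\delta_q^2$, with all arithmetic taking place in $\mathbb{F}_2$-coefficients, recovers the stated formula for $\delta_q^2$.

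The main obstacle lies in the second step: verifying that the method of \cite{CCMT14} Section 6.3, developed there for symmetric bundles and $\mathbf{O}$-torsors on $X$, applies without essential modification in the Tannakian context. Concretely, one must check the functoriality of the Pin construction with respect to $\rho_q$, so that the coboundary of $T_q$ via the Pin extension of $\mathbf{O}(q_\omega)$ indeed coincides with $\delta_q^2(T_{\omega_X,\eta})$. This reduces to the commutativity of the evident diagram of central extensions induced by $\rho_q$, which can be made explicit using the constructions of \cite{Fr\"ohlich85} Appendix I and \cite{Jardine92} Appendix; once this functoriality is in hand, the computations of cocycles in \cite{CCMT14} Section 6.3 carry over verbatim.
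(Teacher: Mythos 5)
Your overall plan matches the paper's proof: the paper likewise pulls back the CCMT14 universal formulas (the identities $\det[q_\omega]=w_1(q_\omega)+HW_1(q_\omega)$ and $[C_{q_\omega}]=w_1(q_\omega)^2+w_2(q_\omega)+w_1(q_\omega)\cdot HW_1(q_\omega)+HW_2(q_\omega)$ in $H^*(B_{\mathbf{O}(q_\omega)},\mathbb{Z}/2\mathbb{Z})$) along $X_{\mathrm{fppf}}\xrightarrow{T_{\omega_X,\eta}}B_{\mathcal{G}_{\omega_X}}\xrightarrow{B_\rho}B_{\mathbf{O}(q_{\omega_X})}$, identifies the pullbacks of $\det[q_\omega]$ and $[C_{q_\omega}]$ with $\delta^1_q(T_{\omega_X,\eta})$ and $\delta^2_q(T_{\omega_X,\eta})$ and the pullbacks of the $HW_i$ with the $w_i$ of the twist, and then solves; your $\mathbb{F}_2$-algebra is correct.

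The one place where you are short is the step you dismiss with ``as recalled in the text'': that the twist $T_{\omega_X,\eta}\wedge^{\mathcal{G}_\omega}q_\omega$ is isometric to $q_\eta$. This is not merely a recalled fact but the actual technical core of the paper's argument (its Lemma \ref{twisting-lemma}). To prove it one must compare the $\mathbf{O}(q_{\omega_X})$-torsor obtained by pushing forward $\mathbf{Isom}^{\otimes}(\omega_X,\eta)$ along $\rho_q$ with $\mathbf{Isom}(q_{\omega_X},q_\eta)$: one reduces to the case where $\mathcal{T}$ is generated by a single object so that $\mathcal{G}_\omega$ is algebraic, then writes down an explicit map $\mu(Y):(\alpha,\sigma)\mapsto\alpha(M)\circ\sigma$, checks that $\alpha(M)$ is an isometry because $\alpha$ is a morphism of tensor functors (so it respects the pairing $q:M\otimes M\to\mathbf{1}$), checks $\mathcal{G}_{\omega_X}$-invariance, and concludes that the induced map of $\mathbf{O}(q_{\omega_X})$-torsors is an isomorphism. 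By contrast, the ``main obstacle'' you flag---functoriality of the Pin extension along $\rho_q$---is routine and requires little argument; the genuine content you should have isolated is this twisting lemma.
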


In Section 4, we apply Theorem \ref{thm-Tannakian-i} to the Tannakian category giving rise to Nori's fundamental group scheme. Let $X$ be an integral proper $k$-scheme with a rational point $x\in X(k)$. Then $\pi^N_1(X/k,x)$ is defined as the Tannakian dual of a certain category of essentially finite bundles. Recall that there is a 1-1 correspondence between the set of maps $\pi^N_1(X/k,x)\rightarrow G$ and the set of triples $(T,G,t)$, where $G$ is a finite flat $k$-group scheme, $T$ a $G$-torsor over $X$ and $t\in T(k)$ is a point lying over $x$. Let $\pi^N_1(X/k,x)\twoheadrightarrow G=\mathrm{Spec}(A)$ be a finite flat quotient. For any generator $\theta$ of the integrals of the Hopf algebra $A$, there is a "unit" quadratic form 
$\kappa_{\theta}:A^D\times A^D\rightarrow k$
and an orthogonal representation
$G\rightarrow \mathbf{O}(\kappa_{\theta})$, where $A^D:=\mathrm{Hom}_k(A, k)$ is the linear dual. The map $\pi^N_1(X/k,x)\twoheadrightarrow G \rightarrow \mathbf{O}(\kappa_{\theta})$ corresponds to an essentially finite symmetric bundle $(\mathcal{V},q_{\theta})$ over $X$.
Pulling back (\ref{ext-i}) along $G\rightarrow \mathbf{O}(\kappa_{\theta})$, we obtain a central extension
\begin{equation}\label{ext2i}
1\longrightarrow \mathbb{Z}/2\mathbb{Z}\longrightarrow \widetilde{G}_{\theta}\stackrel{s_{\theta}}{\longrightarrow} G\longrightarrow 1.
\end{equation}
\begin{thm}\label{Norifond-i} Assume that the extension (\ref{ext2i}) is non-trivial. Then the following assertions are equivalent.
\begin{enumerate}
\item We have $$w_2(\kappa_{\theta})+w_1(\kappa_{\theta})\cdot w_1(\kappa_{\theta})+w_1(\kappa_{\theta})\cdot w_1(q_{\theta})+ w_2(q_{\theta})=0.$$
\item There exist a rational point $\sigma\in G(k)$ and a commutative diagram
\[ \xymatrix{
   & & \widetilde{G}_{\theta} \ar[d]^{s_{\theta}}  \\
\pi^N_1(X/k,x)\ar[rru]^{r}\ar[r]& G\ar[r]^{c_{\sigma}}& G
} \]
where $r$ is faithfully flat and $c_{\sigma}$ denotes the conjugation by $\sigma$.
\end{enumerate}
\end{thm}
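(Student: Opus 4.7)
The plan is to deduce Theorem \ref{Norifond-i} from Theorem \ref{thm-Tannakian-i} applied to Nori's Tannakian category of essentially finite bundles on $X$. Take $\mathcal{T}$ to be this category, $\omega = x^*$ the fibre functor at the rational point, and $\eta$ the natural inclusion into the category of vector bundles on $X$; then $\mathcal{G}_\omega = \pi^N_1(X/k,x)$ and $T_{\omega_X,\eta}$ is the universal torsor. The orthogonal object $(\mathcal{V}, q_\theta)$ satisfies $\omega(\mathcal{V}, q_\theta) = (A^D, \kappa_\theta)$ and $\eta(\mathcal{V}, q_\theta) = (\mathcal{V}, q_\theta)$ by construction. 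Theorem \ref{thm-Tannakian-i} therefore identifies the quantity in assertion (1) with the class $\delta_q^2(T_{\omega_X,\eta}) \in H^2(X_{\mathrm{et}}, \mathbb{Z}/2\mathbb{Z})$. By naturality of the boundary map along the surjection $f : \pi^N_1(X/k,x) \twoheadrightarrow G$, this class coincides with the obstruction $\delta^G(T_G)$ to lifting, through the central extension (\ref{ext2i}), the $G$-torsor $T_G$ attached to $f$. Hence (1) is equivalent to the existence of a $\widetilde G_\theta$-torsor $\widetilde T$ on $X$ with $s_{\theta,\ast}\widetilde T \simeq T_G$.

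Assuming this lift exists, the next step is to upgrade it to a pair $(\widetilde T, \widetilde t)$ with $\widetilde t \in \widetilde T(k)$ above $x$. The obstruction is the class of the $\mathbb{Z}/2\mathbb{Z}$-torsor $x^*\widetilde T \to \{t\}$ in $H^1(\mathrm{Spec}(k), \mathbb{Z}/2\mathbb{Z})$. Different lifts of $T_G$ form a torsor under $H^1(X_{\mathrm{et}}, \mathbb{Z}/2\mathbb{Z})$, and the rational point $x \in X(k)$ makes the restriction $x^* : H^1(X_{\mathrm{et}}, \mathbb{Z}/2\mathbb{Z}) \to H^1(\mathrm{Spec}(k), \mathbb{Z}/2\mathbb{Z})$ split surjective, so one can modify $\widetilde T$ until $x^*\widetilde T$ becomes trivial. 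The resulting pair corresponds, via Nori's dictionary, to a homomorphism $r : \pi^N_1(X/k,x) \to \widetilde G_\theta$ with $s_\theta \circ r = f$, in particular one may take $\sigma = e$.

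The decisive technical point is the faithful flatness of $r$, which is where the non-triviality hypothesis on (\ref{ext2i}) enters. The scheme-theoretic image $H \subseteq \widetilde G_\theta$ of $r$ is a closed subgroup scheme surjecting onto $G$ via $s_\theta$, because $s_\theta \circ r = f$ is faithfully flat. The intersection $H \cap \mathbb{Z}/2\mathbb{Z}$ is either $0$ or $\mathbb{Z}/2\mathbb{Z}$: in the former case $s_\theta|_H$ would be an isomorphism, producing a section of $s_\theta$ that splits (\ref{ext2i}), contradicting the hypothesis. Therefore $H = \widetilde G_\theta$ and $r$ is faithfully flat. Conversely, given a diagram as in (2), the equality $s_\theta \circ r = c_\sigma \circ f$ says that the inner twist $c_\sigma^* T_G$ lifts to a $\widetilde G_\theta$-torsor; but inner twists by elements of $G(k)$ yield canonically isomorphic $G$-torsors (via right multiplication by $\sigma$), so $T_G$ itself lifts, whence $\delta^G(T_G) = 0$, which is (1).
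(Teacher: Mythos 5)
Your proposal follows the paper's overall strategy — apply Theorem \ref{thm-Tannakian-i} with $\omega = x^*$ and $\eta$ the inclusion into $\mathrm{VB}(X)$ to identify the class in assertion (1) with $\delta_q^2(T)$, and then interpret its vanishing via the lifting problem for the $G$-torsor $T$ — and the main steps are correct. However, your argument for the faithful flatness of $r$ is genuinely different and considerably cleaner than the paper's. The paper factors $r$ through a finite flat quotient $\pi$ and shows, using exact sequences in classifying-topos cohomology (comparing $H^2(B_G, \mathbb{Z}/2\mathbb{Z})$, $H^2(B_\pi,\mathbb{Z}/2\mathbb{Z})$ and $\mathrm{Ext}_k(-,\mathbb{Z}/2\mathbb{Z})$), that the map $\beta$ from the kernel of $\pi\to G$ to $\mathbb{Z}/2\mathbb{Z}$ is surjective. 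Your argument — take the scheme-theoretic image $H \subset \widetilde{G}_\theta$ of $r$, observe that $H\cap\mathbb{Z}/2\mathbb{Z}$ is either $0$ or $\mathbb{Z}/2\mathbb{Z}$, and rule out the first case because a section of the central extension would split it — reaches the same conclusion by elementary group-scheme considerations and avoids the cohomological machinery entirely. One small caution on the middle step: to conclude $\sigma=e$, the modification must kill the specific $\mathbb{Z}/2\mathbb{Z}$-torsor $x^*\widetilde{T}\times_{T_x}\{t\}$ (which is the obstruction you correctly identify), not merely the class of $x^*\widetilde{T}$ in $H^1(k,\widetilde{G}_\theta)$; the latter is weaker, since a rational point of $x^*\widetilde{T}$ need not lie over $t$. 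Your phrasing ``modify until $x^*\widetilde{T}$ becomes trivial'' conflates the two, and this is precisely why the paper's argument only produces a nontrivial conjugation $\sigma$; your sharper claim that $\sigma = e$ always suffices is correct once the stronger modification is performed, but it is stronger than what the theorem requires.
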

For $G$ \'etale, there is a canonical choice $\theta_0$ for $\theta$,  and the form $\kappa_{\theta_0}$ generalize the classical unit form considered by Serre in \cite{Serre84}. In this case, the existence of a commutative diagram as in Theorem \ref{Norifond-i}(2) is analogous to the existence of a solution to Serre's embedding problem \cite{Serre84} in the context of classical Galois theory. Theorem \ref{Norifond-i} expresses the obstruction to the existence of a solution in terms of Hasse-Witt invariants.

In Section 5, we apply Theorem \ref{thm-Tannakian-i} to Fontaine's Tannakian categories of $B$-admissible representations. Let $k$ be a topological field, $K/k$ an extension and $B/k$ a filtered topological algebra such that $C:=\mathrm{gr}^0(B)$ is a t-Henselian field in the sense of (\cite{Gabber14}  Section 3 and \cite{pp}). We assume that $B$ is $(k, G_K)$-regular and that $C^{\times}/C^{\times 2}$ is trivial. We also assume that there exists a morphism of $K$-algebras $\bar{K}\rightarrow \mathrm{Fil}^0(B)$ which  is $G_K$-equivariant. Let $(V,q)$ be a quadratic $k$-vector space and let $\rho: G_K\rightarrow {\bf O}(q)(k)$ be an orthogonal continuous representation which is $B$-admissible. Then the Stiefel-Whitney invariants $sw_1(\rho)$, $sw_2(\rho)$ and the spinor class $sp_2(\rho)$ are well defined. We consider the twisted form induced by $q$ on $D_B(V):=(V\otimes_k B)^{G_K}$ that we denote by $q_D$.
\begin{thm}\label{thm-Fontaine-i}
We have the following identities in the Galois cohomology ring $H^*(G_K,\mathbb{Z}/2\mathbb{Z})$:
\begin{enumerate}
\item $w_1(q_D)=w_1(q)+sw_1(\rho)$.
\item $w_2(q_D)=w_2(q)+w_1(q)\cdot sw_1(\rho)+sw_2(\rho)+sp_2(\rho)$.
\end{enumerate}
\end{thm}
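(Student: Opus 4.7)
The plan is to deduce Theorem~\ref{thm-Fontaine-i} from Theorem~\ref{thm-Tannakian-i} applied to Fontaine's neutral Tannakian category $\mathcal{T}=\mathrm{Rep}^{B}_{k}(G_K)$ of $B$-admissible continuous $k$-linear representations of $G_K$, taking $\omega$ to be the forgetful fiber functor over $k$ and $\eta:=D_B$ the fiber functor $W\mapsto (W\otimes_k B)^{G_K}$ valued in finite-dimensional $C$-vector spaces, i.e.\ a fiber functor over $X=\mathrm{Spec}(C)$. The $(k,G_K)$-regularity of $B$ is exactly what makes $\mathcal{T}$ Tannakian and $D_B$ into a fiber functor; on the orthogonal object $(V,q,\rho)$ the two fiber functors produce $(q_\omega,q_\eta)=(q,q_D)$.

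\textbf{Step 1.} I would first identify the $\mathcal{G}_\omega$-torsor $T_{\omega_X,\eta}$ over $X$ with the torsor classified by the continuous cocycle $\rho$. The $G_K$-equivariant $K$-algebra morphism $\bar K\to\mathrm{Fil}^0(B)$ provides a $G_K$-equivariant trivialization of $T_{\omega_X,\eta}$ after base change along $\mathrm{Spec}(\bar C)\to X$, so $T_{\omega_X,\eta}$ lies in the image of a natural comparison map
\[
H^1_{\mathrm{cont}}(G_K,\mathcal{G}_\omega)\longrightarrow \mathrm{Tors}(X,\mathcal{G}_\omega)
\]
and corresponds to the tautological class of $\rho$ through the Tannakian structure map $G_K\to\mathcal{G}_\omega$. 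The t-Henselian hypothesis on $C$ together with $C^\times/C^{\times 2}=1$ guarantees that this map is injective on mod-$2$ cohomology in degrees $\leq 2$, so that both sides of the identities of Theorem~\ref{thm-Tannakian-i} can be read inside $H^*(G_K,\mathbb{Z}/2\mathbb{Z})$.

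\textbf{Step 2.} I would next compare the pullbacks $\delta^i_q(T_{\omega_X,\eta})$ with the classical invariants of $\rho$. By construction, $\delta^1_q$ is the connecting map associated with the determinant $\mathbf{O}(q)\twoheadrightarrow\mathbb{Z}/2\mathbb{Z}$, so its pullback along $\rho$ is $sw_1(\rho)$. Similarly $\delta^2_q$ arises from the Pin-extension (\ref{ext-i}); after comparing the Pin-extension of $\mathbf{O}(q)$ with the Spin-extension of $\mathbf{SO}(q)$, one obtains
\[
\delta^1_q(T_{\omega_X,\eta})=sw_1(\rho),\qquad \delta^2_q(T_{\omega_X,\eta})=sw_2(\rho)+sp_2(\rho),
\]
the spinor-class correction accounting for the difference between the Pin- and Spin-obstructions. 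With these identifications in hand, substitute into Theorem~\ref{thm-Tannakian-i}: the first formula gives (1) immediately, and for (2), substituting $w_1(q_D)=w_1(q)+sw_1(\rho)$ into the mixed term $w_1(q_\omega)\cdot w_1(q_\eta)$ makes the two copies of $w_1(q)^2$ cancel modulo $2$, producing the advertised identity.

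The main obstacle is \textbf{Step 2}: one has to trace carefully how the Tannakian invariant $\delta^2_q(T_{\omega_X,\eta})$ decomposes as $sw_2(\rho)+sp_2(\rho)$ rather than as $sw_2(\rho)$ alone, which is where the precise form of the Pin-extension used in the definition of $\delta^2_q$ enters. This is the analogue in the present setting of the comparison carried out classically by Serre and Fr\"ohlich and by Saito in the Hodge--Tate case.
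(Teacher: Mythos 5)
Your global strategy is the right one and matches the paper: apply Theorem \ref{thm-Tannakian-i} to the Tannakian category $\mathrm{Rep}^B_k(G_K)$ with $\omega$ the forgetful functor and $\eta=D_B$, and then identify $\delta^1_q(T_{\omega_X,\eta})$ and $\delta^2_q(T_{\omega_X,\eta})$ with the Stiefel--Whitney and spinor classes of $\rho$. However, the whole content of the theorem lives in your Step~2, and there the proposal both contains inaccuracies and leaves the key mechanism unidentified.

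First, a small but telling slip: the fiber functor $\eta=D_B$ is valued in $\mathrm{Vec}_K$, not in $\mathrm{Vec}_C$ (since $D_B(V)=(V\otimes_k B)^{G_K}$ is a $K$-vector space and $B^{G_K}=K$). So $X=\mathrm{Spec}(K)$, and the class $\delta^i_q(T_{\omega_X,\eta})$ a priori lives in $H^i(G_K,\mathbb{Z}/2\mathbb{Z})$ because $T_{\omega_X,\eta}$ is a torsor over $K$; no injectivity statement involving $C$ is needed at this stage, and that is not what the t-Henselian and $C^\times=C^{\times 2}$ hypotheses are used for. Those conditions are used later to turn the long exact cohomology sequence over $\mathrm{Spec}(C)$ into a genuine short exact sequence of topological groups $1\to\mathbb{Z}/2\mathbb{Z}\to\widetilde{\mathbf{O}}(q)(C)\to\mathbf{O}(q)(C)\to 1$, which is what makes the boundary map $\delta^2_C$ in continuous cohomology well defined.

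The genuine gap is that you never identify the bridge between the Tannakian torsor and the Galois cocycle. The torsor $T_B$ is classified by a $1$-cocycle $c$ with values in $\mathbf{O}(q)(\bar K)$ (choose a $\bar K$-point $f$ of $\mathbf{Isom}(q_K,q_D)$ and set $c(g)=f^{-1}\circ(g\otimes 1)\circ f\circ(g^{-1}\otimes 1)$), whereas $\rho$ is a homomorphism with values in $\mathbf{O}(q)(k)$. These are a priori unrelated classes; the comparison is achieved by extending scalars to $B$: the $B$-admissibility gives the comparison isomorphism $\alpha_V: B\otimes_K D_B(V)\xrightarrow{\sim}B\otimes_k V$, a $G_K$-equivariant $B$-point of the Isom-torsor, whence $i_B([c])=[\rho]$ in $H^1(G_K,\mathbf{O}(q)(B))$. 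Since $\det$ factors through $\mathbf{O}(q)(B)$, this already proves (1). For (2) one cannot stop at $B$: one uses the filtered hypothesis to refine the equality to $\mathbf{O}^\bullet(q)(B)$ (isometries preserving the filtration) and then pushes along $\mathbf{O}^\bullet(q)(B)\to\mathbf{O}(q)(C)$ via $\mathrm{gr}^0$, obtaining $i_C([c])=[\rho]$ in $H^1(G_K,\mathbf{O}(q)(C))$. This is precisely where the well-filtered hypothesis and the assumption that $\rho$ is filtered are needed. Finally the spinor class does not come from ``comparing Pin with Spin'' as you suggest; it comes from Saito's Lemma~3 of \cite{Saito95}, which says that $sw_2(\rho)+sp_2(\rho)=\delta^2_C(\rho)$, the boundary along the topological extension over $C$, and the commutative ladder of exact sequences from $\bar K$ to $C$ identifies $\delta^2([c])$ with $\delta^2_C(\rho)$. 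Without those two ingredients (the scalar-extension comparison $i_C([c])=[\rho]$ and Saito's lemma converting $\delta^2_C$ into $sw_2+sp_2$), Step~2 as written does not close.
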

For $k=K$ and $B=\overline{k}$ a separable closure endowed with the discrete topology and the trivial filtration, we obtain the classical Serre-Fr\"ohlich Theorem \cite{Fr\"ohlich85}. For $k$ a $p$-adic field, $K/k$ a finite extension and $B=B_{dR}$ with its usual topology and filtration, we obtain a result of T. Saito (see \cite{Saito95} Theorem 1).

In Section 6, we apply Theorem \ref{thm-Tannakian-i} to the Tannakian category of Nori's mixed motives over a number field $E$ with coefficients in $E$. Here the fiber functor $\omega$ (respectively $\eta$) is given by the Betti (respectively the de Rham) realization. For a motive $M$ over $E$, we denote by $\mathcal{G}_{M/E}$ its motivic Galois group and by $\mathfrak{P}_{M}$ its torsor of formal periods in the sense of Kontsevich \cite{Kontsevich-Zagier01}. Recall that there is a canonical evaluation map $\mathfrak{P}_{M}\rightarrow \mathbb{C}$ whose image is the $E$-algebra $\mathbb{P}_{M}$ of classical periods of $M$. An orthogonal motive $(M,q)$ yields quadratic forms $q_B$ and $q_{dR}$ on the Betti and the de Rham realizations, and a continuous orthogonal representation $\rho_{\lambda}:G_E\rightarrow \mathbf{O}(q_B)(E_{\lambda})$ of the Galois group $G_E$ on the $\lambda$-adic cohomology, for any finite place $\lambda$ of $E$. We also consider their restrictions $\rho_{
 \lambda\mid G_{E_{\lambda}}}$ to the local Galois group $G_{E_{\lambda}}$. Similarly, for a real prime $\lambda$ of $E$, complex conjugation on Betti cohomology gives a representation
$\rho_{\lambda\mid G_{E_{\lambda}}}:G_{\mathbb{R}}\rightarrow \mathbf{O}(q_B)(\mathbb{R})$.
We shall consider the Hasse-Witt invariants $w_i(q_B), w_i(q_{dR})\in H^i(G_E,\mathbb{Z}/2\mathbb{Z})$ and the local Stiefel-Whitney invariants $sw_i(\rho_{\lambda\mid G_{E_{\lambda}}})\in H^i(G_{E_{\lambda}},\mathbb{Z}/2\mathbb{Z})$, as well as the Global Stiefel-Whitney invariant $sw_1(\rho_{\lambda}):=\mathrm{det}(\rho_{\lambda})\in H^1(G_{E},\mathbb{Z}/2\mathbb{Z})$. Finally, an orthogonal motive gives an orthogonal representation $\mathcal{G}_{M/E}\rightarrow\mathbf{O}(q_B)$ hence a central extension
$$1\longrightarrow \mathbb{Z}/2\mathbb{Z} \longrightarrow \widetilde{\mathcal{G}}_{M/E}\longrightarrow \mathcal{G}_{M/E} \longrightarrow 1.$$  
Then $\delta_{q}^2(\mathfrak{P}_M)$ exactly computes the obstruction for the existence of a $\widetilde{\mathcal{G}}_{M/E}$-torsor lifting $\mathfrak{P}_M$. In particular, if $\delta^2_q(\mathfrak{P}_M)\neq 0$ then the canonical map $\mathcal{G}_E\rightarrow \mathcal{G}_{M/E}$ does not factor through $\widetilde{\mathcal{G}}_{M/E}$, where $\mathcal{G}_E$ is the full motivic Galois group. On the other hand, $\delta_{q}^1(\mathfrak{P}_M)\in E^{\times}/E^{\times 2}$ determines the periods of the determinant motive $\mathrm{det}_E(M)$, in the sense that $\mathbb{P}_{\mathrm{det}_E(M)}=E\left(\sqrt{\delta_{q}^1(\mathfrak{P}_M)}\right)$. 

\begin{thm}\label{cor-dR-B-i} Let $(M,q)$ be an orthogonal motive over $E$ with $E$-coefficients. The following identities take place in the ring $H^*(G_E,\mathbb{Z}/2\mathbb{Z})$.
\begin{enumerate}
\item We have 
\begin{eqnarray*}
\delta_{q}^2(\mathfrak{P}_M)&=&w_2(q_B)+ w_1(q_B)\cdot w_1(q_B)+ w_1(q_B)\cdot w_1(q_{dR})+w_2(q_{dR}).
\end{eqnarray*}
\item Assume that either $E=\mathbb{Q}$ or that $E$ is totally imaginary. Then one has
$$\delta_q^2(\mathfrak{P}_M)=\sum_{\lambda} sw_2(\rho_{\lambda\mid{G_{E_{\lambda}}}})+ sp_2(\rho_{\lambda\mid{G_{E_{\lambda}}}})$$
in $H^2(G_E,\mathbb{Z}/2\mathbb{Z})\subset \bigoplus_{\lambda} H^2(G_{E_{\lambda}},\mathbb{Z}/2\mathbb{Z})$, where $\lambda$ runs over the set of all places of $E$.
\item Let $V$ be a projective smooth variety over $E$ of even dimension $n$. Then there exists a unique orthogonal structure on $M=h^n(V)(n/2)$ inducing the usual quadratic form on Betti cohomology $H^n(V(\mathbb{C}),\mathbb{Q}(n/2))$. 
\item  For $(M,q)=(h^n(V)(n/2),q)$ and any finite place $\lambda$, one has
$$\delta_q^1(\mathfrak{P}_M)=sw_1(\rho_{\lambda}).$$

\end{enumerate}
\end{thm}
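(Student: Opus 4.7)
The plan is to prove the four parts in order, reducing each to Theorem \ref{thm-Tannakian-i} or Theorem \ref{thm-Fontaine-i} applied to the appropriate Tannakian setup.

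Part (1) is a direct application of Theorem \ref{thm-Tannakian-i} to Nori's Tannakian category of mixed motives over $E$ with $E$-coefficients, taking $\omega$ to be the Betti realization, $\eta$ the algebraic de Rham realization, and $X = \mathrm{Spec}(E)$. With these choices one has $q_\omega = q_B$, $q_\eta = q_{dR}$, and $T_{\omega_X,\eta}$ restricted to the Tannakian subcategory generated by $M$ is $\mathfrak{P}_M$, so the degree-$2$ identity of Theorem \ref{thm-Tannakian-i} becomes exactly the formula of Part (1). For Part (3), I would invoke Poincar\'e duality in Nori's category of mixed motives: the cup product provides a morphism $h^n(V)(n/2) \otimes h^n(V)(n/2) \to \mathbf{1}$ that is symmetric (since $n$ is even) and non-degenerate, whose Betti realization is the usual pairing on $H^n(V(\mathbb{C}),\mathbb{Q}(n/2))$. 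Uniqueness follows from the faithfulness of the Betti fiber functor, since the difference of two motivic symmetric pairings inducing the same Betti form has vanishing Betti realization, hence vanishes.

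For Part (2), the strategy is to apply Theorem \ref{thm-Fontaine-i} locally at each place $\lambda$ of $E$. At a finite place $\lambda$ I take $k = K = E_\lambda$ and $B = B_{dR}$: the hypotheses of Section 5 are satisfied, and the de Rham property of the $\lambda$-adic realization of a motive provides an isometry $D_{B_{dR}}(V_\lambda) \cong V_{dR} \otimes_E E_\lambda$ identifying $q_D$ with $q_{dR} \otimes_E E_\lambda$. At a real place (contributing only when $E = \mathbb{Q}$) I take $B = \mathbb{C}$ with trivial filtration. Substituting the local identities of Theorem \ref{thm-Fontaine-i}(1) and (2) into the restriction to $G_{E_\lambda}$ of the Part (1) formula, the four Hasse--Witt terms cancel in pairs modulo $2$, yielding
$$\delta_q^2(\mathfrak{P}_M)|_{G_{E_\lambda}} = sw_2(\rho_\lambda|_{G_{E_\lambda}}) + sp_2(\rho_\lambda|_{G_{E_\lambda}}).$$
The global identity then follows from the injectivity of the localization map $H^2(G_E,\mathbb{Z}/2) \hookrightarrow \bigoplus_\lambda H^2(G_{E_\lambda},\mathbb{Z}/2)$, which comes from Brauer--Hasse--Noether via the identification $H^2(G_E,\mathbb{Z}/2) \cong \mathrm{Br}(E)[2]$; the hypothesis that $E$ is $\mathbb{Q}$ or totally imaginary is used to treat the archimedean contributions uniformly.

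For Part (4), I combine the degree-$1$ case of Theorem \ref{thm-Tannakian-i}, which gives $\delta_q^1(\mathfrak{P}_M) = w_1(q_B) + w_1(q_{dR})$, with the observation that for $M = h^n(V)(n/2)$ the determinant motive $\det_E(M)$ is a rank-one orthogonal motive, so $\det_E(M)^{\otimes 2} \cong \mathbf{1}$ and $\det_E(M)$ is an Artin motive corresponding to a unique class $a \in E^\times/E^{\times 2}$. This class is simultaneously $\delta_q^1(\mathfrak{P}_M)$ (the formal period class of $\det_E(M)$) and $sw_1(\rho_\lambda) = \det(\rho_\lambda)$ (the $\lambda$-adic Galois character of $\det_E(M)$) for every finite $\lambda$, by the compatibility of realizations on the one-dimensional motive $\det_E(M)$. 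The main obstacle will be Part (2): identifying the $p$-adic comparison isomorphism at each finite place with an \emph{isometry} of quadratic spaces, which requires the motivic nature of Poincar\'e duality on both the Betti and the de Rham sides, and verifying the regularity hypotheses of Section 5 at archimedean places.
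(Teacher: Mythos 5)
Your treatment of part (1) coincides with the paper's: both are immediate from Theorem \ref{thm-Tannakian} (=\ref{thm-Tannakian-i}) applied to the Betti and de Rham fiber functors on $\mathrm{NMM}_E(E)$, once one knows $\mathfrak{P}_M\simeq \mathbf{Isom}^{\otimes}(H^*_{B,E\mid\langle M\rangle},H^*_{dR,E\mid\langle M\rangle})$. Your outline for part (2) is also essentially the paper's argument (localize part (1) via Artin comparison, apply Theorem \ref{thm-Fontaine-i} with $k=K=E_\lambda$, $B=B_{dR}$ at finite places and $B=\mathbb{C}$ at the real place of $\mathbb{Q}$, observe the four Hasse--Witt terms cancel mod $2$). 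Two remarks: the compatibility of the comparison isomorphism with the form $q$ is automatic for any orthogonal object of the Tannakian category because $q$ is already a morphism in $\mathrm{NMM}_E(E)$ and the realizations and comparison are morphisms of tensor functors, so the obstacle you flag at the end is not really one for a general $(M,q)$; and the statement is a term-by-term local identity, not one recovered by injectivity of localization, since both sides live in $\bigoplus_\lambda H^2(G_{E_\lambda},\mathbb{Z}/2\mathbb{Z})$. For part (3) you ``invoke Poincar\'e duality in Nori's category'' directly; this is where the paper spends its entire appendix establishing the K\"unneth decomposition $\bigoplus_{p+q=n}h^p(X)\otimes h^q(Y)\simeq h^n(X\times Y)$ and the motivic trace map, and without that work the cup product is not available as a morphism in $\mathrm{NMM}_E(E)$, so your argument for existence has a genuine gap (the uniqueness via faithfulness of $H^*_B$ is fine).

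The more serious problem is part (4). You argue that $\det_E(M)$ is a rank-one orthogonal object, hence squares to $\mathbf{1}$, hence is an Artin motive, and then conclude by compatibility of realizations. But the inference ``rank-one and $2$-torsion in $\mathrm{Pic}$ implies Artin motive'' is exactly the unproven conjecture that $\pi_0(\mathcal{G}_E)\cong G_E$ (a map $\mathcal{G}_E\to\mathbb{Z}/2\mathbb{Z}$ factors through $\pi_0(\mathcal{G}_E)$, and it is only conjectural that this equals the Tannaka dual $G_E$ of the subcategory of Artin motives). The paper singles out precisely this step as conditional in the remark following the corollary, and proves part (4) unconditionally by a different route: T.~Saito's theorem that $sw_1(\rho_\lambda)=\det(\rho_\lambda)$ is independent of the finite place $\lambda$ (relying on the Weil conjectures), combined with the degree-one local identities $\delta^1_q(\mathfrak{P})_\lambda=sw_1(\rho_{\lambda\mid G_{E_\lambda}})$ from part (2) and the injectivity of $H^1(G_E,\mathbb{Z}/2\mathbb{Z})\to\prod_{\lambda\nmid\infty}H^1(G_{E_\lambda},\mathbb{Z}/2\mathbb{Z})$ via Chebotarev. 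You should replace your argument for part (4) by this unconditional one, or at least flag that as written it assumes the connected-components conjecture for the motivic Galois group.
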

In order to show that Poincar\'e duality is motivic, so that $h^2(V)(n/2)$ inherits an orthogonal structure, we need to work out the K\"unneth formula for Nori motives in Section 8. Taking this for granted, the proof of Theorem \ref{cor-dR-B-i} is a simple combination of Theorem \ref{thm-Tannakian-i}, Theorem \ref{thm-Fontaine-i} and $p$-adic Hodge theory.  Note also that it is conjectured that the group of connected components of the full motivic Galois group $\mathcal{G}_E$ is the usual absolute Galois group $G_E$. This would force $\mathrm{det}_E(M)$ to be an orthogonal Artin motive, and the identity $\delta_q^1(\mathfrak{P}_M)=sw_1(\rho_{\lambda})$ would immediately follow. One may avoid this conditional argument using the fact, due to T. Saito, that $sw_1(\rho_{l})$ is independent on $l$, which in turn relies on the Weil conjectures.

Finally, we give in Section 7 examples and computations for certain orthogonal motives. We treat in details the case of certain explicit Artin motives, and we compute some of these invariants for the orthogonal motives of the form $h^n(V)(n/2)$ where $V$ is an hypersurface or a complete intersection. For example, we have the following

\begin{cor}  Let $V$ be a smooth hypersurface of even dimension $n\geq 2$ in $\mathbf{P}_E^{n+1}$,  defined by an homogeneous polynomial $f$ of degree $d$, and let 
$(M=h^n(V)(n/2), q)$ be the corresponding orthogonal motive. Then we have
$$\delta^1_q(\mathfrak{P}_M)=\begin{cases}
(-1)^{\frac{d-1}{2}}\cdot\mathrm{disc}_{\mathrm{d}}(f) \textrm{ if  d  is  odd } \\
(-1)^{\frac{d}{2}\cdot\frac {n+2}{2}}\cdot\mathrm{disc}_{\mathrm{d}}(f) \textrm{ if  d  is  even}
\end{cases}$$
and
$$\delta^2_q(\mathfrak{P}_M)=w_2(q_{dR})+
\begin{cases}
\frac{d-1}{2}(-1, -1) \textrm{ if  d  is  odd } \\
\frac{n}{4}(1+\frac{d}{2})(-1, -1)\textrm { if  d  is  even  and } n\equiv 0 \textrm{ mod } 4 \\
\left(-1,\mathrm{disc}_{\mathrm{d}}(f)\right)+(\frac{n+2}{4})(1+\frac{d}{2})(-1, -1) \textrm{ if  d  is  even  and }\  n \equiv 2 \textrm{ mod } 4
\end{cases}$$
where $\mathrm{disc}_{\mathrm{d}}(f)$ is the divided discriminant of $f$ in the sense of \cite{Saito11}.
\end{cor}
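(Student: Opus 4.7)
The plan is to specialise Theorem~\ref{cor-dR-B-i} to the orthogonal motive $(M,q)=(h^n(V)(n/2),q)$ attached to the hypersurface.

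For the first formula, I apply part~(4) of Theorem~\ref{cor-dR-B-i}, which yields $\delta^1_q(\mathfrak{P}_M)=sw_1(\rho_\lambda)=\det(\rho_\lambda)$ for any finite place $\lambda$ of $E$. The determinant of the $\ell$-adic Galois representation on the middle cohomology of a smooth projective hypersurface is precisely the invariant computed by T.~Saito in \cite{Saito11}: it equals $\mathrm{disc}_d(f)$ multiplied by $(-1)^{(d-1)/2}$ when $d$ is odd, and by $(-1)^{(d/2)(n+2)/2}$ when $d$ is even. This gives the first formula.

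For the second formula the starting point is part~(1) of Theorem~\ref{cor-dR-B-i},
$$\delta^2_q(\mathfrak{P}_M)=w_2(q_B)+w_1(q_B)\cdot w_1(q_B)+w_1(q_B)\cdot w_1(q_{dR})+w_2(q_{dR}),$$
combined with the degree one identity $\delta^1_q(\mathfrak{P}_M)=w_1(q_B)+w_1(q_{dR})$ coming from Theorem~\ref{thm-Tannakian-i}. Substituting $w_1(q_{dR})=w_1(q_B)+\delta^1_q(\mathfrak{P}_M)$ reduces the formula to
$$\delta^2_q(\mathfrak{P}_M)=w_2(q_B)+w_1(q_B)\cdot\delta^1_q(\mathfrak{P}_M)+w_2(q_{dR}),$$
so the task becomes to compute $w_1(q_B)$ and $w_2(q_B)$ explicitly and to insert the value of $\delta^1_q(\mathfrak{P}_M)$ obtained above.

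The form $q_B$ is the $\mathbb{Q}(n/2)$-twisted Poincar\'e pairing on $H^n(V(\mathbb{C}),\mathbb{Q}(n/2))$. Since $n$ is even it decomposes orthogonally as $H^n_{\mathrm{prim}}\oplus\mathbb{Q}\cdot h^{n/2}$, whose rank $h^n_{\mathrm{prim}}=((1-d)^{n+2}-1)/d$, Hodge numbers $h^{p,n-p}$ and signature are explicit functions of $d$ and $n$. The discriminant $w_1(q_B)$ follows by a straightforward Hodge-theoretic computation, and $w_2(q_B)$ reduces to a sum of Hilbert symbols that one organises case by case. The hyperplane class summand is responsible for the symbol $(-1,\mathrm{disc}_d(f))$ appearing in the third case, while the factor $(2\pi i)^{n/2}$ built into the Tate twist produces the $(-1,-1)$ contributions depending on $n/2 \pmod 2$. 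Assembling these pieces in the three cases ($d$ odd; $d$ even with $n\equiv 0\pmod 4$; $d$ even with $n\equiv 2\pmod 4$) and substituting into the reduced expression for $\delta^2_q(\mathfrak{P}_M)$ yields the stated closed formulas.

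The main obstacle is precisely this case-by-case computation of $w_2(q_B)$: the careful bookkeeping of sign contributions coming from the Tate twist, from the hyperplane class, and from the parities of the primitive Hodge numbers hides most of the algebraic content of the corollary, and is what ultimately forces the answer to split into three separate cases rather than being uniform in $d$ and $n$.
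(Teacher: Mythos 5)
Your overall strategy is the same as the paper's: part~(4) of Theorem~\ref{cor-dR-B-i} gives $\delta^1_q(\mathfrak{P}_M)=sw_1(\rho_\lambda)$, which is Saito's determinant computation from \cite{Saito11}, and for the degree two class you substitute $w_1(q_{dR})=w_1(q_B)+\delta^1_q(\mathfrak{P}_M)$ into part~(1) to reduce to $\delta^2_q(\mathfrak{P}_M)=w_2(q_B)+w_1(q_B)\cdot\delta^1_q(\mathfrak{P}_M)+w_2(q_{dR})$, which is a correct reduction and exactly what the paper does via Theorem~\ref{cor-dR-B}(1)--(2) together with Corollary~\ref{Hasse}.

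There is, however, a genuine gap in how you propose to obtain $w_1(q_B)$ and $w_2(q_B)$. You call this a ``straightforward Hodge-theoretic computation,'' but Hodge theory (or the Libgober--Wood--Zagier congruences the paper actually invokes) only supplies the signature $(r,s)$ of $q_B$ over $\mathbb{R}$, and the Hasse--Witt invariants of a $\mathbb{Q}$-form are not determined by its signature alone. The missing step is the paper's Proposition~\ref{wbet}: $q_B$ is isometric over $\mathbb{Q}$ to the diagonal form $\sum_{i\le r}x_i^2-\sum_{j\le s}x_{r+j}^2$, so that $w_1(q_B)=s\cdot(-1)$ and $w_2(q_B)=\binom{s}{2}(-1,-1)$. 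That proposition is proved not by Hodge theory but by Poincar\'e duality (which shows the underlying integral lattice is unimodular, hence locally trivial at every odd prime) together with the Hilbert product formula to settle the prime $2$. Without this lattice-theoretic input your case analysis has nothing to feed on. Your attributions of specific terms are also off: the symbol $(-1,\mathrm{disc}_{\mathrm{d}}(f))$ in the third case arises from the cross term $w_1(q_B)\cdot\delta^1_q(\mathfrak{P}_M)$ once $w_1(q_B)=(-1)$, so $\mathrm{disc}_{\mathrm{d}}(f)$ enters only through $\delta^1_q$, not through any hyperplane class contribution to $w_2(q_B)$; and the Tate twist $(2i\pi)^{n/2}$ merely rescales the Betti lattice and does not change the rational isometry class of $q_B$, so it is not responsible for the $(-1,-1)$ terms -- those come from the value of $s$ via $\binom{s}{2}(-1,-1)$ and from the sign prefactor appearing in $\delta^1_q$. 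If you were to carry out the bookkeeping guided by these attributions, the case-by-case signs would come out wrong.
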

\vskip 0.2 truecm
\noindent {\it Acknowledgements:} The authors warmly thank A. Huber and T. Saito for their comments and A. Auel for sharing with them some of his ideas on  the subject.

\tableofcontents

\section{Preliminaries}

Throughout the paper $k$ denotes a field of characteristic $\neq 2$. In this section we review basic definitions on torsors and symmetric bundles over a scheme (see \cite{CCMT14}).  One should note that  a symmetric bundle over $ \mbox{Spec}(k)$ is given by a pair $(V, q)$ where $V$ is a $k$-vector space of finite dimension and $q$ is a non degenerate quadratic form on $V$. 

\subsection{The classifying topos of a pro-group-scheme}  
Let $\mathcal{S}$ be a topos and let $G$ be a  group-object in $\mathcal{S}$. The topos $B_G$ is the category of objects of $\mathcal{S}$ endowed with a left $G$-action. A right $G$-torsor in $\mathcal{S}$ is given by an object $T$ with a $G$-action $\mu:T\times G\rightarrow T$ such that the map from $T$ to the final object is epimorphic and the map $(\mathrm{pr},\mu):T\times G\rightarrow T\times T$ is an isomorphism, where $\mathrm{pr}$ is the first projection.  We denote by ${\bf Tors}(\mathcal{S},G)$ the category of right $G$-torsors in $\mathcal{S}$. Then ${\bf Tors}(\mathcal{S},G)$ is a groupoid, i.e. any morphism of this category is an isomorphism. There is a canonical morphism $\pi:B_G\rightarrow\mathcal{S}$ whose inverse image functor sends a a sheaf $\mathcal{F}$ on $\mathcal{S}$ to the object of $B_G$ given by $\mathcal{F}$ with trivial $G$-action. In particular $\pi^*G$ is the group-object of $B_G$ given by $G$ with trivial left $G$-action. The topos $B_G$ has a un
 iversal $\pi^*G$-torsor $E_G$ given by $G$ on which $G$ acts both on the left and on the right by multiplication. 
\begin{thm}\label{thm-classify}
The functor
$$\appl{{\bf Homtop}_{\mathcal{S}}(\mathcal{S},B_{G})^{op}}{{\bf Tors}(\mathcal{S},G)}{f}{f^*E_G}$$
is an equivalence of groupoids.
\end{thm}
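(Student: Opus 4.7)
The strategy is to exhibit a quasi-inverse $T\mapsto f_T$. Given a $G$-torsor $T$ in $\mathcal{S}$, I define an inverse image functor $f_T^\ast:B_G\to\mathcal{S}$ by the contracted product
$$f_T^\ast(X)\;:=\;T\wedge^G X\;=\;(T\times X)/G,$$
where $G$ acts antidiagonally (via its right action on $T$ and left action on $X$). The first task is to show that $f_T^\ast$ is the inverse image of a geometric morphism $f_T:\mathcal{S}\to B_G$, which amounts to: (i) producing a right adjoint, obtained by sending $Y\in\mathcal{S}$ to $\underline{\mathrm{Hom}}_{\mathcal{S}}(T,Y)$ with the $G$-action inherited from $T$; and (ii) checking left exactness. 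This last point is the central technical step and uses the torsor property crucially: the isomorphism $(\mathrm{pr},\mu):T\times G\xrightarrow{\sim} T\times T$ implies that the map $T\times X\to T\wedge^G X$ is itself a $G$-torsor over its target, and by standard descent along this effective epimorphism, limits in $B_G$ of $G$-objects are computed by taking limits in $\mathcal{S}$ after pulling back to $T\times(-)$, then descending. Equivalently, by working locally where the torsor trivializes, $f_T^\ast$ becomes (up to the trivial $G$-action) the forgetful functor $B_G\to \mathcal{S}$, which is plainly left exact.

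Next, I verify that $f_T$ lies over $\mathcal{S}$, i.e.\ that $f_T^\ast\circ\pi^\ast\simeq\mathrm{id}$: if $\mathcal{F}\in\mathcal{S}$ is given the trivial $G$-action, then $(T\times\mathcal{F})/G\simeq (T/G)\times\mathcal{F}\simeq\mathcal{F}$, since $T\to\ast$ is an epimorphism with $G$-action making $T\to T/G$ a torsor. I then compute
$$f_T^\ast(E_G)\;=\;(T\times G)/G\;\simeq\;T,$$
the isomorphism coming from $T\times G\xrightarrow{\sim}T\times T$ after quotienting on the right by $G$. This gives a natural isomorphism from the composite $T\mapsto f_T\mapsto f_T^\ast E_G$ to the identity.

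For the other composite, let $f:\mathcal{S}\to B_G$ be a geometric morphism over $\mathcal{S}$ and set $T:=f^\ast E_G$; it is a $G$-torsor in $\mathcal{S}$. I must exhibit a natural isomorphism $f^\ast\simeq f_T^\ast$. The key observation is that in $B_G$ every object $X$ admits the canonical presentation
$$X\;\simeq\;(E_G\times \pi^\ast X_0)/G\quad\text{where}\quad X_0:=\pi_\ast X,$$
or more uniformly, $E_G\times X\to X$ (with diagonal action) exhibits $X$ as the $G$-quotient of a free $G$-object. Applying $f^\ast$, which preserves finite limits and commutes with the quotient by $G$ (again because the projection $E_G\times X\to X$ is a $G$-torsor in $B_G$, and $f^\ast$ transports this to a $G$-torsor in $\mathcal{S}$), yields a natural isomorphism $f^\ast X\simeq (T\times f^\ast X)/G = f_T^\ast X$, using $f^\ast\pi^\ast\simeq\mathrm{id}$.

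The principal obstacle is clearly the left exactness of the contracted product and the commutation of inverse image functors with $G$-quotients of $G$-torsors; both reduce to descent along the torsor itself. Once this is settled, the two constructions are manifestly functorial in morphisms of torsors (resp.\ in $2$-morphisms of geometric morphisms), and both sides are groupoids because torsor morphisms are automatically invertible and any $2$-morphism between geometric morphisms over $\mathcal{S}$ induced by an isomorphism of torsors is an isomorphism. This yields the asserted equivalence of groupoids.
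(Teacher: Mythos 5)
The paper itself gives no proof of Theorem~\ref{thm-classify}; it is quoted from the reference \cite{CCMT14} as part of the preliminaries. Your overall strategy --- build a quasi-inverse $T \mapsto f_T$ via the contracted product $f_T^*(X) = T\wedge^G X$, verify left exactness by descent along the torsor, and compare $f^*$ with $f_{f^*E_G}^*$ using a presentation of objects of $B_G$ --- is the standard one for this statement, and most of what you write is on the right track. However, there are two problems, one minor and one that leaves a real gap.

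The minor error is that you set $X_0 := \pi_* X$. The direct image $\pi_*$ for the canonical projection $\pi\colon B_G\to\mathcal{S}$ computes $G$-invariants, i.e.\ $\pi_*X = X^G$, and the presentation $X \simeq (E_G\times\pi^*(X^G))/G$ is false in general (take $X=E_G$: then $X^G$ is trivial while $E_G$ is not). What you want is the \emph{underlying} $\mathcal{S}$-object of $X$, i.e.\ the image of $X$ under the inverse image of the canonical point $\mathcal{S}\to B_G$ (``forget the action''), not $\pi_*$.

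The more serious gap is in the concluding identity: writing $T := f^*E_G$, you deduce from the $\pi^*G$-torsor $E_G\times X\to X$ that $f^*X\simeq (T\times f^*X)/G$ and then equate this with $f_T^*X=(T\times X)/G$. But the first isomorphism is a tautology (every object is the quotient of any $G$-torsor over itself by $G$) and still contains $f^*X$, so it cannot by itself yield a formula for $f^*X$ in terms of $X$ and $T$. The ingredient you need is the shear isomorphism in $B_G$
$$\varphi\colon E_G\times X \xrightarrow{\ \sim\ } E_G\times \pi^*(|X|),\qquad (g,x)\longmapsto (g,g^{-1}x),$$
where $|X|$ denotes the underlying object of $X$. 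One checks that $\varphi$ is an isomorphism of $B_G$-objects that carries the right $\pi^*G$-torsor structure of the left-hand side over $X$ to a right $\pi^*G$-action on the right-hand side of the form $(g,y)\cdot k=(gk,k^{-1}y)$. Applying $f^*$ and using $f^*\pi^*\simeq\mathrm{id}$ and $f^*E_G=T$ then gives a $G$-torsor $T\times|X|\to f^*X$, hence $f^*X\simeq (T\times|X|)/G$, and the transferred $G$-action is precisely the antidiagonal one in the definition of the contracted product, so that $(T\times|X|)/G = T\wedge^G X = f_T^*X$. Without the shear, you have not closed the circle. (You should also justify that morphisms in $\mathbf{Homtop}_{\mathcal{S}}(\mathcal{S},B_G)$ are invertible, which follows once the functor $f\mapsto f^*E_G$ is shown to be fully faithful and the target is a groupoid, rather than being ``manifest''.)
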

Here ${\bf Homtop}_{\mathcal{S}}(\mathcal{S},B_{G})$ denotes the category of morphisms of $\mathcal{S}$-topoi from $\mathcal{S}$ to $B_{G}$ (with respect to $\mathrm{Id}:\mathcal{S}\rightarrow\mathcal{S}$ and $\pi:B_{G}\rightarrow\mathcal{S}$), and $(-)^{op}$ denotes the opposite category. Given a $G$-torsor $Y$, we denote the corresponding morphism of topoi by the same symbol:
$$Y:\mathcal{S}\longrightarrow B_{G}.$$
The morphism $Y$ can be described as follows: For any sheaf $\mathcal{F}$ on $B_G$, one has
$$Y^*(\mathcal{F})=Y\wedge^G\mathcal{F}:=(Y\times\mathcal{F})/G$$
where $G$ acts on the product $Y\times\mathcal{F}$ via the given right (respectively left) action on $Y$ (respectively on $\mathcal{F}$).
A morphism of group-objects $f:G\rightarrow G'$ in $\mathcal{S}$ (i.e. a morphism of sheaves of groups on $\mathcal{S}$) induces a morphism of topoi $B_f:B_G\rightarrow B_{G'}$, where the inverse image functor $B_f^*$ is simply given by restriction of the group of operators.

We shall use the following variant. Let $G:="\underleftarrow{\mathrm{lim}}"\, G_i$ be a pro-group in $\mathcal{S}$ indexed by a filtered small category $I$. Recall that, for another group $H$, we have 
$$\mathrm{Hom}_{\mathrm{Grp}(\mathcal{S})}(G,H)= \underrightarrow{\mathrm{lim}}\,\mathrm{Hom}_{\mathrm{Grp}(\mathcal{S})}(G_i,H)$$
where $\mathrm{Grp}(\mathcal{S})$ denotes the category of group-objects in $\mathcal{S}$. The classifying topos of the pro-group $G$ is defined as an inverse limit in $2$-category of topoi:
$$B_{G}:=\underleftarrow{\mathrm{lim}}\, B_{G_i}.$$
By definition of the inverse limit, the canonical functor
$${\bf Homtop}_{\mathcal{S}}(\mathcal{S},B_{G})\stackrel{\sim}{\longrightarrow}\underleftarrow{\mathrm{lim}} \,{\bf Homtop}_{\mathcal{S}}(\mathcal{S},B_{G_i})$$
is an equivalence. It follows that $B_G$ classifies \emph{$G$-protorsors}. A $G$-protorsor is given by a $G_i$-torsors $T_i$ for any $i\in I$ and an isomorphism of $G_j$-torsors $f_{\alpha}:T_i\wedge^{G_i}G_j\simeq T_j$ for any $\alpha:i\rightarrow j$ in $I$, such that the isomorphisms $f_{\alpha}$  satisfy the obvious compatibility constraint for composite maps in $I$. We denote by $\mathbf{Tors}(\mathcal{S},G)$ the groupoid of $G$-protorsors. Theorem \ref{thm-classify} gives an equivalence
\begin{equation}\label{protors-equiv}
{\bf Homtop}_{\mathcal{S}}(\mathcal{S},B_{G})^{op}\stackrel{\sim}{\longrightarrow}{\bf Tors}(\mathcal{S},G).
\end{equation}
Let $X$ be a scheme and let $G:="\underleftarrow{\mathrm{lim}}"\, G_i$ be a pro-object in the category of $X$-group schemes of finite type. The classifying topos $B_{G}$ is defined as above by taking $\mathcal{S}$ to be the (big) fppf-topos $X_{\mathrm{fppf}}$. Here $G_i$ is of course seen as a group-object in $X_{\mathrm{fppf}}$ by Yoneda.

\subsection{Hasse-Witt invariants}

\subsubsection{}  Let $k^s/k$ be a separable closure of $k$ . We denote by $G_k:=\mbox{Gal}(k^s/k)$ the absolute Galois group of $k$. Suppose that $q$ is a non degenerate quadratic form of rank $n$ over $k$. We choose a diagonal form $<a_1, a_2,\cdots,a_n> $ of $q$ with $a_i\in k^*$, and consider the cohomology classes $$(a_i)\in k^{\times}/(k^{\times })^2\simeq H^1(G_{k},\mathbb{Z}/2\mathbb{Z}).$$ 
For $1\leq m\leq n$, the \emph{$m^{th}$ Hasse-Witt invariant} of $q$ is defined to be  
\begin{equation}\label{inv1} w_m(q)=\sum_{1\leq i_1<\cdots<i_m\leq n} (a_{i_1})\cdots(a_{i_m})\in H^m(G_{k},\mathbb{Z}/2\mathbb{Z})\end{equation}
where $(a_{i_1})\cdots(a_{i_m})$ is the cup product. Moreover we set $w_0(q)=1$ and $w_m(q)=0$ for $m>n$. It can be shown that $w_m(q)$ does not depend on the choice of the diagonal form of $q$.

\subsubsection{} This definition can be generalized over more general base schemes as follows (see \cite{CCMT14} Section 4). Let $X\rightarrow \mbox{Spec}(\mathbb Z[1/2])$ be a $\mathbb{Z}[1/2]$-scheme. A \emph{symmetric bundle over $X$} is a pair $(V, q)$ where $V$ is a locally free $\mathcal{O}_X$-module of constant finite rank and $q:V\otimes_{\mathcal{O}_X}V\rightarrow \mathcal{O}_X$  is a symmetric map of $\mathcal{O}_X$-modules inducing an isomorphism $V\stackrel{\sim}{\rightarrow}V^{\vee}$, where $V^{\vee}:=\underline{\mathrm{Hom}}_{\mathcal{O}_X}(V,\mathcal{O}_X)$ is the dual of $V$. Given two symmetric bundles $(V, q)$ and $(W, r)$  of same rank over $X$, we consider the sheaf ${\bf Isom}(q,r )$ on $X_{\mathrm{fppf}}$ given by ${\bf Isom}(q,r )(Y):=\mathrm{Isom}(q_Y, r_Y)$ for any $X$-scheme $Y$, where 
$\mathrm{Isom}(q_Y, r_Y)$ is the set of isometries between the symmetric vector bundles $(V_Y, q_Y)$ and $(W_Y, r_Y)$ over $Y$,  obtained from $(V, q)$ and $(W, r)$ by base-change along $Y\rightarrow X$. We define the orthogonal group ${\bf O}(q)$ as the group ${\bf Isom}(q,q )$. Consider the standard form $(\mathcal{O}^n_X, t_n=\sum x_i^2)$ of rank $n$ over $X$. We set ${\bf O}(n):={\bf Isom}(t_n,t_n)$. 

Let $(V,q)$ be a symmetric bundle of rank $n$ over $X$. The sheaf ${\bf Isom}(t_n,q)$ on $X_{\mathrm{fppf}}$ is a ${\bf O}(n)$-torsor on $X_{\mathrm{fppf}}$, hence Theorem \ref{thm-classify} provides us with  a canonical map
\begin{equation}\label{mapisom}
{\bf Isom}(t_n,q):X_{\mathrm{fppf}}\longrightarrow B_{{\bf O}(n)}.
\end{equation}
There is a canonical isomorphism of graded $\mathbb{Z}/2\mathbb{Z}$-algebras
$$H^*(B_{{\bf O}(n)},\mathbb{Z}/2\mathbb{Z})\simeq A[HW_1,\cdots,HW_n]$$
where $HW_m$ has degree $m$ and $A$ is the \'etale cohomology algebra $H^*(X_{\mathrm{et}},\mathbb{Z}/2\mathbb{Z})$. The \emph{$m^{th}$ Hasse-Witt invariant} of $q$ is defined to be the pull-back 
\begin{equation}\label{inv2} 
w_m(q):={\bf Isom}(t_n,q)^*(HW_m)\in H^m_{\mathrm{et}}(X,\mathbb{Z}/2\mathbb{Z})
\end{equation} 
of $HW_m$ along (\ref{mapisom}).
When $X=\mbox{Spec}(k)$ the definitions (\ref{inv1}) and (\ref{inv2}) agree.

\subsection{Twists of quadratic forms}\label{twist}

Let $X\rightarrow \mathrm{Spec}(\mathbb{Z}[1/2])$ be a $\mathbb{Z}[1/2]$-scheme, let $G$ be a pro-group-scheme over $X$, let $(V,q)$ be a symmetric bundle over $X$ of rank $n$, let $\rho:G\rightarrow \mathbf{O}(q)$ be an orthogonal representation and let $T$ be a $G$-protorsor over $X$. The equivalence (\ref{protors-equiv}) yields the morphism of topoi
$$X_{\mathrm{fppf}}\stackrel{T}{\longrightarrow} B_{G}\stackrel{B_\rho}{\longrightarrow} B_{\mathbf{O}(q)}$$
where $B_{\rho}$ is induced by $\rho$. By Theorem \ref{thm-classify} (again), this morphism corresponds to an $\mathbf{O}(q)$-torsor.  Any such ${\bf O}(q)$-torsor is of the form
${\bf Isom}(q ,r)$ for an essentially unique symmetric bundle $(W,r)$ of rank $n$, which we denote by $T\wedge^{\mathcal G}q$. Then $T\wedge^{\mathcal G}q$ is called \emph{the twist of $q$ by the $G$-protorsor $T$}.

\section{Tannakian twists of quadratic forms}


Let $k$ be a field. A $k$-linear neutral Tannakian category $\mathcal{T}$ (or a neutral Tannakian category over $k$) is a rigid abelian tensor category such that $\mathrm{End}(\mathbf{1})=k$ for which there exists a $k$-linear, exact and faithful tensor functor $\mathcal{T}\rightarrow \mathrm{Vec}_k$, where $\mathrm{Vec}_k$ is the category of finite dimensional $k$-vector spaces and $\mathbf{1}\in\mathcal{T}$ is a unit for the tensor product. A Tannakian subcategory of $\mathcal{T}$ is a strictly full subcategory $\mathcal{T}'\subseteq\mathcal{T}$ which is stable by finite tensor products (in particular $\mathbf{1}\in\mathcal{T}'$), finite sums (in particular $\mathcal{T}'$ contains the zero object), duals and subquotients. Let $\omega:\mathcal{T}\rightarrow \mathrm{Vec}_k$ be a fiber functor (i.e. $\omega$ is an exact, $k$-linear and faithful tensor functor). We denote by $\mathcal{G}_{\omega}:={\bf Aut}^{\otimes}(\omega)$ the Tannaka dual of $(\mathcal{T},\omega)$. Recall t
 hat $\mathcal{G}_{\omega}$ is defined via its functor of points: For any $k$-scheme $Y$, the group
$$\mathcal{G}_{\omega}(Y)={\bf Aut}^{\otimes}(\omega)(Y):= \mathrm{Aut}^{\otimes}(\omega_Y)$$
is the group of automorphisms of the tensor functor
$$\omega_Y:\mathcal{T}\stackrel{\omega}{\longrightarrow} \mathrm{Vec}_k \longrightarrow \mathrm{VB}(Y)$$
where the second functor is induced by pull-back along the structure map $Y\rightarrow\mathrm{Spec}(k)$, and $\mathrm{VB}(Y)$ denotes the category of vector bundles over $Y$, i.e. the category of locally free finitely generated $\mathcal{O}_Y$-modules. Then $\mathcal{G}_{\omega}$ is an affine $k$-group scheme (\cite{Deligne-Milne} Theorem 2.11). In fact $\mathcal{G}_{\omega}$ is most naturally defined as a pro-algebraic group. Indeed, let $\{\mathcal{T}_i\subseteq \mathcal{T},i\in I\}$ be a partially ordered set of Tannakian subcategories, such that $\mathcal{T}=\bigcup_{i\in I} \mathcal{T}_i$ and $\mathcal{T}_i$ is generated as a Tannakian category by a single object $X_i\in \mathcal{T}_i$. We denote by $\omega_i:\mathcal{T}_i\hookrightarrow\mathcal{T}\rightarrow \mathrm{Vec}_k$ the fiber functor induced by $\omega$. Then $\mathcal{G}_{\omega_i}:={\bf Aut}^{\otimes}(\omega_i)$ is an affine algebraic group over $k$ (i.e. an affine $k$-group scheme of finite type), and one has
$\mathcal{G}_{\omega}=\underleftarrow{\mathrm{ lim}}\, \mathcal{G}_{\omega_i}$. We consider $\mathcal{G}_{\omega}$ as the pro-object
$$\mathcal{G}_{\omega}:="\underleftarrow{\mathrm{ lim}}"\, \mathcal{G}_{\omega_i}.$$
in the category of affine algebraic groups over $k$.

\subsection{The protorsor $T_{\omega_X,\eta}$} Let $(\mathcal{T},\omega)$ be a $k$-linear neutral Tannakian category as above and let $X$ be a $k$-scheme. We denote by $$\omega_{X}:\mathcal{T}\stackrel{\omega}{\longrightarrow} \mathrm{Vec}_k\stackrel{f^*}{\longrightarrow} \mathrm{VB}(X) $$ the fiber functor over $X$ induced by $\omega$, where $f:X\rightarrow \mathrm{Spec}(k)$ is the structure map.
Let $\eta:\mathcal{T}\rightarrow \mathrm{VB}(X)$ be another fiber functor over $X$. We also denote by $\omega_{i,X}:\mathcal{T}_i\rightarrow \mathrm{VB}(X)$ and $\eta_i:\mathcal{T}_i\rightarrow \mathrm{VB}(X)$ the fiber functors induced by $\omega_{X}$ and $\eta$ respectively. We consider the pro-$X$-scheme $${\bf Isom}^{\otimes}(\omega_X,\eta):="\underleftarrow{\mathrm{ lim}}"\, {\bf Isom}^{\otimes}(\omega_{i,X},\eta_i)$$ 
of isomorphisms of tensor functors from $\omega_X$ to $\eta$. Here ${\bf Isom}^{\otimes}(\omega_{i,X},\eta_i)$ is defined via its functor of points. More precisely,
for any $X$-scheme $Y$, $${\bf Isom}^{\otimes}(\omega_{i,X},\eta_i)(Y)=\mathrm{Isom}^{\otimes}(\omega_{i,Y},\eta_{i,Y})$$ is the set of morphisms of tensor functors $\omega_{i,Y}\rightarrow \eta_{i,Y}$ where
$\omega_{i,Y}$ and $\eta_{i,Y}$ are the fiber functors over $Y$ induced by $\omega_i$ and $\eta_i$ respectively. Note that any morphism $\omega_{i,Y}\rightarrow \eta_{i,Y}$ necessarily is an isomorphism (see \cite{Deligne-Milne}  Proposition 1.13 for the affine case). Then ${\bf Isom}^{\otimes}(\omega_X,\eta)$ is a projective system of faithfully flat affine $X$-schemes of finite type. Similarly, one has
$$\mathcal{G}_{\omega_i}(Y)=\mathrm{Aut}^{\otimes}(\omega_{i,Y})$$
for any $k$-scheme $Y$, hence there is a right action of $\mathcal{G}_{\omega}$ on ${\bf Isom}^{\otimes}(\omega_X,\eta)$ over $X$ which is defined by composition. This action turns ${\bf Isom}^{\otimes}(\omega_X,\eta)$ into a $\mathcal{G}_{\omega}$-protorsor over $X$ (see \cite{Deligne-Milne} Theorem 3.2). We denote by $T_{\omega_{i,X},\eta_i}:={\bf Isom}^{\otimes}(\omega_{i,X},\eta_i)$ the $\mathcal{G}_{\omega_i}$-torsor and by $T_{\omega_X,\eta}:={\bf Isom}^{\otimes}(\omega_X,\eta)$ the $\mathcal{G}_{\omega}$-protorsor just defined, and also by
$$T_{\omega_{i,X},\eta_i}:X_{\mathrm{fppf}}\longrightarrow B_{\mathcal{G}_{\omega_i}}\mbox{ and }T_{\omega_X,\eta}:X_{\mathrm{fppf}}\longrightarrow B_{\mathcal{G}_{\omega}}:= \underleftarrow{\mathrm{ lim}}\, B_{\mathcal{G}_{\omega_i}}$$
the corresponding morphisms of topoi.

\subsection{Orthogonal objects}
We denote by ${\bf 1}$ the unit object of $\mathcal{T}$. For an object $M\in \mathcal{T}$ we denote by $M^{\vee}=\underline{\mathrm{Hom}}(M,{\bf 1})$ its dual.

\begin{defn}
An \emph{orthogonal object} $(M,q)$ of $\mathcal{T}$ is an object $M\in\mathcal{T}$ endowed with a symmetric map
$q:M\otimes M\rightarrow {\bf 1}$ such that the induced map
$M\rightarrow M^{\vee}$ is an isomorphism. 
\end{defn}
We define the category of orthogonal objects of $\mathcal{T}$ in the obvious way: for two orthogonal objects $(M,q)$ and $(M',q')$, a morphism $f:(M,q)\rightarrow (M',q')$ is a map $f:M\rightarrow M'$ in $\mathcal{T}$ such that the diagram
\[ \xymatrix{
M\otimes M\ar[d]^{f\otimes f}\ar[r]^{q}
&{\bf 1}\ar[d]^{\mathrm{Id}}\\
M'\otimes M'\ar[r]^{q'}
&{\bf 1}}
\]
commutes.
 If $(M,q)$ is an orthogonal object of $\mathcal{T}$, then
$$q_\omega:\omega(M)\otimes\omega(M)\simeq \omega(M\otimes M)\stackrel{\omega(q)}{\longrightarrow}\omega({\bf 1})\simeq k$$
is a non-degenerate symmetric bilinear form. By Tannaka duality, an object $M\in\mathcal{T}$ can be seen as a finite dimensional representation of $\mathcal{G}_{\omega}$ of the form  $$\rho_M:\mathcal{G}_{\omega}\longrightarrow {\bf GL}(\omega(M))$$ 
where ${\bf GL}(\omega(M))$ is seen as a $k$-group scheme. If $M$ admits an orthogonal structure $(M,q)$, then we have a factorization
$$\mathcal{G}_{\omega}\longrightarrow {\bf O}(q_\omega)\longrightarrow {\bf GL}(\omega(M))$$
hence an orthogonal representation
$$\rho_{q}:\mathcal{G}_{\omega}\longrightarrow {\bf O}(q_{\omega}),$$
where ${\bf O}(q_{\omega})$ denotes the orthogonal $k$-group scheme associated with $q_{\omega}$.

The category of orthogonal $k$-linear representations of $\mathcal{G}_{\omega}$ is the category of orthogonal objects of the Tannakian category $\mathrm{Rep}_{k}(\mathcal{G}_{\omega})$ of $k$-linear representations of $\mathcal{G}_{\omega}$. The fact that the functor
$$\appl{\mathcal{T}}{\mathrm{Rep}_{k}(\mathcal{G}_{\omega})}{M}{\rho_M}$$
is an equivalence of rigid tensor $k$-linear categories (\cite{Deligne-Milne} Theorem 2.11) immediately gives the following

\begin{prop}
The functor
$(M,q) \longmapsto\rho_{q}$
is an equivalence from the category of orthogonal objects of $\mathcal{T}$ to the category of orthogonal $k$-linear representations of $\mathcal{G}_{\omega}$.
\end{prop}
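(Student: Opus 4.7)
The plan is to deduce the result directly from the Tannaka equivalence $\omega: \mathcal{T} \xrightarrow{\sim} \mathrm{Rep}_k(\mathcal{G}_\omega)$ recalled just above. The key observation is that the notion of orthogonal object is intrinsic to the structure of a rigid symmetric monoidal $k$-linear category: it involves only the tensor product, the unit $\mathbf{1}$, the commutativity constraint (to express symmetry of $q$), and the duality $M \mapsto M^\vee$ (to express that $q$ induces an isomorphism). Consequently, any equivalence of rigid symmetric monoidal $k$-linear categories should induce an equivalence on the associated categories of orthogonal objects.

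First I would formulate that general principle precisely. Given a $k$-linear rigid tensor equivalence $F: \mathcal{A} \to \mathcal{B}$, one has canonical natural isomorphisms $F(M \otimes M) \cong F(M) \otimes F(M)$ compatible with the commutativity constraints, $F(\mathbf{1}_{\mathcal{A}}) \cong \mathbf{1}_{\mathcal{B}}$, and $F(M^\vee) \cong F(M)^\vee$. Thus $F$ carries a symmetric map $q: M \otimes M \to \mathbf{1}_{\mathcal{A}}$ to a symmetric map $F(M) \otimes F(M) \to \mathbf{1}_{\mathcal{B}}$, and the induced arrow $M \to M^\vee$ is an isomorphism if and only if its image $F(M) \to F(M)^\vee$ is, since $F$ reflects isomorphisms. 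A morphism of orthogonal objects being a morphism of underlying objects satisfying a diagrammatic identity that transfers along $F$ in the same way, one concludes that $F$ induces an equivalence between the categories of orthogonal objects of $\mathcal{A}$ and of $\mathcal{B}$.

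Next I would apply this general principle to $F = \omega$. The resulting equivalence sends an orthogonal object $(M,q)$ of $\mathcal{T}$ to the pair $(\omega(M), q_\omega)$, which is by definition an orthogonal object of $\mathrm{Rep}_k(\mathcal{G}_\omega)$, that is, an orthogonal $k$-linear representation of $\mathcal{G}_\omega$. Unwinding the definitions, the representation attached to $(\omega(M), q_\omega)$ is precisely the factorization $\rho_q: \mathcal{G}_\omega \to \mathbf{O}(q_\omega)$ of $\rho_M$ through the stabilizer of the non-degenerate form $q_\omega$, which yields the desired functor.

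No serious obstacle is expected; the argument is essentially a bookkeeping of definitions once the general principle above is in hand. The only subtlety worth pointing out is that the commutativity constraint used to define ``symmetric'' in the abstract Tannakian context is identified by $\omega$ with the usual flip on tensor products of $k$-vector spaces, which is precisely part of the data making $\omega$ a symmetric monoidal functor; similarly the evaluation pairings entering the definition of $M^\vee$ are preserved by $\omega$, so that the condition ``$q$ induces an isomorphism $M \to M^\vee$'' is literally the pullback under $\omega$ of its analogue for $(\omega(M),q_\omega)$.
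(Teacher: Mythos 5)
Your proposal is correct and takes essentially the same approach as the paper: it invokes the Tannaka equivalence $\mathcal{T}\simeq\mathrm{Rep}_k(\mathcal{G}_\omega)$ of rigid tensor $k$-linear categories and observes that the notion of orthogonal object is preserved under any such equivalence because it is formulated purely in terms of the symmetric monoidal and rigid structure. The paper states this as an immediate consequence; you have simply spelled out the bookkeeping that makes "immediate" justified.
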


\subsection{The maps $\delta_{q}^1$ and $\delta_q^2$}\label{sectDelta}

Let $(M,q)$ be an orthogonal object  of $\mathcal{T}$, and let $X$ be a $k$-scheme. The orthogonal representation
$$\rho_{q}:\mathcal{G}_{\omega}\longrightarrow {\bf O}(q_\omega)$$
yields canonical maps
$$\delta_q^i:\mathrm{Tors}(X,\mathcal{G}_{\omega})\longrightarrow H^i(X_{\mathrm{et}},\mathbb{Z}/2\mathbb{Z})$$
for  $i=1,2$. Here $\mathrm{Tors}(X,\mathcal{G}_{\omega})$ is the set of isomorphism classes of $\mathcal{G}_{\omega}$-pro-torsors in the topos $X_{\mathrm{fppf}}$. The maps $\delta_q^1$ and $\delta_q^2$ are defined as follows. 
The map
$$\delta_q^1:\mathrm{Tors}(X,\mathcal{G}_{\omega})\longrightarrow H^1(X_{\mathrm{et}},\mathbb{Z}/2\mathbb{Z}).$$
is induced by the composite map $$\mathcal{G}_\omega\longrightarrow 
{\bf O}(q_\omega)\stackrel{\mathrm{det}_{{\bf O}(q_\omega)}}{\longrightarrow}\mathbb{Z}/2\mathbb{Z}.$$
Indeed, a $\mathcal{G}_{\omega}$-pro-torsor $T$ in $X_{\mathrm{fppf}}$ gives a map $T:X_{\mathrm{fppf}}\rightarrow B_{\mathcal{G}_{\omega}}$, hence a map $X_{\mathrm{fppf}}\rightarrow B_{\mathcal{G}_{\omega}}\rightarrow B_{\mathbb{Z}/2\mathbb{Z}}$, hence a class $$\delta_q^1(T)\in H^1(X_{\mathrm{fppf}},\mathbb{Z}/2\mathbb{Z})\simeq H^1(X_{\mathrm{et}},\mathbb{Z}/2\mathbb{Z}).$$

The map $\delta_q^2$ is defined using the canonical central extension (see \cite{Fr\"ohlich85} Appendix I and \cite{Jardine92} Appendix)
\begin{equation}\label{C_qB}
1\rightarrow \mathbb{Z}/2\mathbb{Z}\rightarrow \widetilde{{\bf O}}(q_\omega) \rightarrow {\bf O}(q_\omega)\rightarrow 1.
\end{equation}
Indeed, (\ref{C_qB}) gives a class $[C_q]\in H^2(B_{{\bf O}(q_\omega)},\mathbb{Z}/2\mathbb{Z})$. For a $\mathcal{G}_{\omega}$-pro-torsor $T$, the class $\delta_q^2(T)$ is defined as the pull-back of $[C_q]$ along the morphism of topoi
$$X_{\mathrm{fppf}}\stackrel{T}{\rightarrow} B_{\mathcal{G}_{\omega}}\rightarrow B_{{\bf O}(q_\omega)}.$$ 

Another way to define $\delta_q^2(T)$ is the following. The representation $\rho_{q}$ factors through $\mathcal{G}_{\omega_i}\rightarrow {\bf O}(q_\omega)$ for any $\mathcal{T}_i$ containing $M$. Pulling back (\ref{C_qB}) along $\mathcal{G}_{\omega_i}\rightarrow {\bf O}(q_\omega)$ gives a central extension of affine group-schemes
\begin{equation}\label{C_qBnew}
1\rightarrow \mathbb{Z}/2\mathbb{Z}\rightarrow \widetilde{\mathcal{G}}_{\omega_i} \rightarrow \mathcal{G}_{\omega_i}\rightarrow 1.
\end{equation}
and an exact sequence of pointed sets (see \cite{Giraud}IV.4.2.10)
\begin{equation}\label{pointed-Exact-Sequ}
...\longrightarrow H^1(X_{\mathrm{fppf}},\widetilde{\mathcal{G}}_{\omega_i}) \longrightarrow H^1(X_{\mathrm{fppf}},\mathcal{G}_{\omega_i})\stackrel{\delta_q^2}\longrightarrow H^2(X_{\mathrm{fppf}},\mathbb{Z}/2\mathbb{Z})\longrightarrow...
\end{equation}
in which $\delta_q^2$ appears as a boundary map. 

\subsection{Comparison formulas} Let $(\mathcal{T},\omega)$ be a $k$-linear neutral Tannakian category as above, let $X$ be a $k$-scheme, let $\eta:\mathcal{T}\rightarrow \mathrm{VB}(X)$ be a fiber functor over $X$, and let $(M,q)$ be an  orthogonal object  of $\mathcal{T}$. 

We consider the quadratic form $q_{\omega}$ over $k$ and the following symmetric vector bundle over $X$:
$$q_\eta:\eta(M)\otimes_{\mathcal{O}_X}\eta(M)\simeq \eta(M\otimes M)\stackrel{\eta(q)}{\longrightarrow}\eta({\bf 1})\simeq \mathcal{O}_X.$$
Cup-product turns $H^*(X_\mathrm{et},\mathbb{Z}/2\mathbb{Z})$ into a commutative $\mathbb{Z}/2\mathbb{Z}$-algebra, and the structure map $f:X\rightarrow \mathrm{Spec}(k)$ induces a ring homorphism
$$H^*(\mathrm{Spec}(k)_\mathrm{et},\mathbb{Z}/2\mathbb{Z})\rightarrow H^*(X_\mathrm{et},\mathbb{Z}/2\mathbb{Z})$$
so that we may consider $w_i(q_{\omega})\in H^i(\mathrm{Spec}(k)_\mathrm{et},\mathbb{Z}/2\mathbb{Z})$ as an element of $H^i(X_\mathrm{et},\mathbb{Z}/2\mathbb{Z})$.

\begin{thm}\label{thm-Tannakian}
For any orthogonal object $(M,q)$ of $\mathcal{T}$, we have identities in $H^*(X_{\mathrm{et}},\mathbb{Z}/2\mathbb{Z})$:
$$\delta_q^1(T_{\omega_X,\eta})=w_1(q_\omega)+ w_1(q_{\eta});$$
$$\delta_q^2(T_{\omega_X,\eta})=w_2(q_\omega)+ w_1(q_\omega)\cdot w_1(q_\omega)+ w_1(q_\omega)\cdot w_1(q_{\eta})+w_2(q_{\eta}).$$
\end{thm}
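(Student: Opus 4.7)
The plan is to identify $q_\eta$ with the twist of $q_\omega$ by the $\mathcal{G}_\omega$-protorsor $T_{\omega_X,\eta}$ in the sense of Section 2.3, and then to read off the two identities from the general twist formulas for Hasse-Witt invariants proved in \cite{CCMT14} Section 6.3.

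First I would reduce to the finite-type setting. Since $(M,q)$ lies in some Tannakian subcategory $\mathcal{T}_i\subseteq\mathcal{T}$ generated by a single object, the orthogonal representation $\rho_q:\mathcal{G}_\omega\rightarrow\mathbf{O}(q_\omega)$ factors through the affine algebraic $k$-group $\mathcal{G}_{\omega_i}$, and so do the cohomological invariants $\delta_q^1$ and $\delta_q^2$. The protorsor $T_{\omega_X,\eta}$ projects to the honest $\mathcal{G}_{\omega_i}$-torsor $T_{\omega_{i,X},\eta_i}$, so it suffices to work with this finite-type torsor under an affine algebraic $k$-group.

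Next I would verify that $q_\eta\simeq T_{\omega_{i,X},\eta_i}\wedge^{\mathcal{G}_{\omega_i}}q_\omega$ as symmetric bundles over $X$. By Tannaka duality (\cite{Deligne-Milne} Theorem 3.2), for every $N\in\mathcal{T}_i$ there is a natural isomorphism $\eta_i(N)\simeq T_{\omega_{i,X},\eta_i}\wedge^{\mathcal{G}_{\omega_i}}\omega(N)_X$, compatible with tensor products and sending $\mathbf{1}$ to $\mathcal{O}_X$. Applying this reconstruction to the symmetric arrow $q:M\otimes M\rightarrow\mathbf{1}$ of $\mathcal{T}_i$ identifies $q_\eta$ with the twist of $q_\omega$ by $T_{\omega_{i,X},\eta_i}$ along $\rho_q$, as required.

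With this identification in hand, \cite{CCMT14} Section 6.3 supplies, for the orthogonal representation $\rho_q:\mathcal{G}_{\omega_i}\rightarrow\mathbf{O}(q_\omega)$ and the torsor $T_{\omega_{i,X},\eta_i}$, the twist formulas
$$w_1(q_\eta)=w_1(q_\omega)+\delta_q^1(T_{\omega_X,\eta}),$$
$$w_2(q_\eta)=w_2(q_\omega)+w_1(q_\omega)\cdot\delta_q^1(T_{\omega_X,\eta})+\delta_q^2(T_{\omega_X,\eta}).$$
Solving these for $\delta_q^1(T_{\omega_X,\eta})$ and $\delta_q^2(T_{\omega_X,\eta})$ in $\mathbb{F}_2$-coefficients (using $2=0$) recovers exactly the two identities of the theorem. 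I expect the main obstacle to be the verification in the previous paragraph: one must carefully track the tensor and duality structures through the Tannakian reconstruction to conclude that the symmetric map $q$ really is transported to the twist of $q_\omega$ by $T_{\omega_{i,X},\eta_i}$. Once this is in place, the remainder is straightforward bookkeeping in $H^*(X_\mathrm{et},\mathbb{Z}/2\mathbb{Z})$ combined with the cited twist formulas.
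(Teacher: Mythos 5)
Your proposal follows essentially the same strategy as the paper: reduce to the finite-type case, identify $q_\eta$ with the twist $T_{\omega_X,\eta}\wedge^{\mathcal{G}_\omega}q_\omega$, and pull back the universal formulas in $H^*(B_{\mathbf{O}(q_\omega)},\mathbb{Z}/2\mathbb{Z})$ from \cite{CCMT14}. Where you invoke general Tannakian reconstruction of $\eta$ from $\omega$ via the torsor to obtain the twist isometry, the paper's Lemma \ref{twisting-lemma} verifies the same claim by hand, constructing an explicit $\mathbf{O}(q_{\omega_X})$-equivariant map $\mu:\mathbf{Isom}^{\otimes}(\omega_X,\eta)\times\mathbf{O}(q_{\omega_X})\to\mathbf{Isom}(q_{\omega_X},q_\eta)$; this is precisely the unraveling you flagged as the main obstacle, so the two arguments differ only in level of detail.
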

\begin{proof}
We set $A:=H^*(\mathrm{Spec}(k)_{et},\mathbb{Z}/2\mathbb{Z})$. Recall the isomorphism of $A$-algebras $H^*(B_{{\bf O}(q_{\omega})},\mathbb{Z}/2\mathbb{Z})\simeq A[HW_1(q_{\omega}),\cdots,HW_n(q_{\omega})]$ where $n$ is the rank of $q_{\omega}$. By (\cite{CCMT14} Theorem 1.1), we have 
\begin{equation}\label{deg1}
\mathrm{det}[q_{\omega}]=w_1(q_{\omega})+HW_1(q_{\omega})
\end{equation}
and
\begin{equation}\label{deg2}
[C_{q_{\omega}}]=(w_1(q_{\omega})\cdot w_1(q_{\omega})+ w_2(q_{\omega}))+w_1(q_{\omega})\cdot HW_1(q_{\omega})+HW_2(q_{\omega})
\end{equation}
in the polynomial ring $ A[HW_1(q_{\omega}),\cdots,HW_n(q_{\omega})]$. Theorem \ref{thm-Tannakian} will be obtained by taking the pull-back of the formulas (\ref{deg1}) and (\ref{deg2}) along the morphism
$$X_{\mathrm{fppf}}\stackrel{T_{\omega_X,\eta}}{\longrightarrow} B_{\mathcal{G}_{\omega_X}}\stackrel{B_{\rho}}{\longrightarrow} B_{{\bf O}(q_{\omega_X})}.$$
Indeed, consider the twist $T_{\omega_X,\eta}\wedge^{\mathcal{G}_{\omega_X}}q_{\omega_X}$  of the form $q_{\omega_X}$ by the torsor $T_{\omega_X,\eta}$,  that we simply denote by $T_{\omega_X,\eta}\wedge^{\mathcal{G}_{\omega}}q_{\omega}$ (see Section \ref{twist}). As in the proof of (\cite{CCMT14} Corollary 6.5), we easily see that the pull-back of $HW_i(q_{\omega})$ is $w_i(T_{\omega_X,\eta}\wedge^{\mathcal{G}_{\omega}}q_{\omega})$ and the pull-back of $\mathrm{det}[q_{\omega}]$ (respectively $[C_{q_{\omega}}]$) is by definition $\delta_q^1(T_{\omega_X,\eta})$ (respectively $\delta_q^2(T_{\omega_X,\eta})$). It remains to show the following 
\begin{lem}\label{twisting-lemma}
The twisted form $T_{\omega_X,\eta}\wedge^{\mathcal{G}_{\omega}}q_{\omega}$ is isometric to $q_{\eta}$.
\end{lem}

\begin{proof} We simply denote by $\rho:\mathcal{G}_{\omega_X}\rightarrow {\bf O}(q_{\omega_X})$ the morphism of $X$-group schemes induced by $\rho_{q}$ (by base change). The form $T_{\omega_X,\eta}\wedge^{\mathcal{G}_{\omega}}q_{\omega}$ is determined by the map 
\begin{equation}\label{mapfortwist}
B_{\rho}\circ T_{\omega_X,\eta}:X_{\mathrm{fppf}}\stackrel{T_{\omega_X,\eta}}{\longrightarrow} B_{\mathcal{G}_{\omega_X}}\stackrel{B_{\rho}}{\longrightarrow} B_{{\bf O}(q_{\omega_X})}.
\end{equation}
Let $\mathcal{T}_i\subseteq \mathcal{T}$ be a Tannakian subcategory which is generated by a single object, and such that $M\in\mathcal{T}_i$. The following diagram commutes:
\[ \xymatrix{
& &B_{\mathcal{G}_{\omega_X}}\ar[d]^{}\ar[drr]^{B_{\rho}}
&\\
X_{\mathrm{fppf}}\ar[rru]^{T_{\omega_X,\eta}}\ar[rr]_{T_{\omega_{i,X},\eta_i}}& &B_{\mathcal{G}_{\omega_{i,X}}}\ar[rr]_{B_{\rho^i}}
&  & B_{{\bf O}(q_{\omega_X})}
}
\]
In particular we have an isometry
$$T_{\omega_{i,X},\eta_i}\wedge^{\mathcal{G}_{\omega_i}}q_{\omega}\simeq T_{\omega_X,\eta}\wedge^{\mathcal{G}_{\omega}}q_{\omega}$$
hence one may suppose that $\mathcal{T}=\mathcal{T}_i$ is generated by a single object. In other words, we may assume that $\mathcal{G}_{\omega}$ is an algebraic group and that $T_{\omega_X,\eta}$ is a $\mathcal{G}_{\omega}$-torsor in the usual sense.

In order to ease the notations, we set $T:=T_{\omega_X,\eta}$. So the form $T\wedge^{\mathcal{G}_{\omega}}q_{\omega}$ is determined by the morphism of topoi $B_{\rho}\circ T$ which is in turn determined by the ${\bf O}(q_{\omega_X})$-torsor over $X$ given by $(B_{\rho}\circ T)^*(E_{{\bf O}(q_{\omega_X})})$. Here $E_{{\bf O}(q_{\omega_X})}$ denotes the ${\bf O}(q_{\omega_X})$-torsor of  $B_{{\bf O}(q_{\omega_X})}$ given by the object ${\bf O}(q_{\omega_X})$ on which ${\bf O}(q_{\omega_X})$ acts both on the left and on the right by multiplication. Similarly, the form $q_{\eta}$ is determined by the ${\bf O}(q_{\omega_X})$-torsor ${\bf Isom}(q_{\omega_X},q_\eta)$. Hence $T\wedge^{\mathcal{G}_{\omega}}q_{\omega}$ is isometric to $q_{\eta}$ if and only if $(B_{\rho}\circ T)^*(E_{{\bf O}(q_{\omega_X})})$ and ${\bf Isom}(q_{\omega_X},q_{\eta})$ are isomorphic as ${\bf O}(q_{\omega_X})$-torsors. We have
$$(B_{\rho}\circ T)^*(E_{{\bf O}(q_{\omega_X})})=T^*\circ B_{\rho}^*(E_{{\bf O}(q_{\omega_X})})=
{\bf Isom}^{\otimes}(\omega_X,\eta)\wedge^{\mathcal{G}_{\omega_X}}{\bf O}(q_{\omega_X})$$
where $\mathcal{G}_{\omega_X}$ acts on the left on ${\bf O}(q_{\omega_X})$ via $\rho_q:\mathcal{G}_{\omega_X}\rightarrow{\bf O}(q_{\omega_X})$ and on the right on ${\bf Isom}^{\otimes}(\omega_X,\eta)$ by composition.

Let us define a map
$$\mu:{\bf Isom}^{\otimes}(\omega_X,\eta)\times {\bf O}(q_{\omega_X})\longrightarrow {\bf Isom}(q_{\omega_X},q_{\eta})$$
on points as follows. For any $X$-scheme $Y$, we define 
$$\fonc{\mu(Y)}{\mathrm{Isom}^{\otimes}(\omega_Y,\eta_Y)\times \mathrm{O}(q_{\omega_Y})}{\mathrm{Isom}(q_{\omega_Y},q_{\eta_Y})}{(\alpha,\sigma)}{\alpha(M)\circ \sigma}$$
where $\mathrm{O}(q_{\omega_Y}):=\mathbf{O}(q_{\omega_X})(Y)$ and
$$\alpha(M):\omega_Y(M)\stackrel{\sim}{\longrightarrow} \eta_Y(M)$$ is just given by evaluating the natural transformation $\alpha$ at the object $M$. In order to show that $\mu(Y)$ is well defined, we simply need to observe that $\alpha(M)$ is an isometry, i.e. respects the quadratic forms $q_{\omega_Y}$ and $q_{\eta_Y}$. But $q_{\omega_Y}$ and $q_{\eta_Y}$ are both induced by the pairing $q:M\otimes M\longrightarrow {\bf 1}$ in  $\mathcal{T}$ and $\alpha$ is a morphism of tensor functors, so that we have a commutative diagram
\[ \xymatrix{
\omega_Y(M)\otimes_{\mathcal{O}_Y}\omega_Y(M)\ar[d]^{\alpha(M)\otimes \alpha(M)}\ar[r]^{\hspace{0.5cm}\simeq}&\omega_Y(M\otimes M)\ar[d]^{\alpha(M\otimes M)}\ar[r]^{\hspace{0.6cm}\omega_Y(q)}
&\omega_Y({\bf 1})\ar[r]^{\simeq}\ar[d]^{\alpha({\bf 1})}&\mathcal{O}_Y\ar[d]^{\mathrm{Id}}\\
\eta_Y(M)\otimes_{\mathcal{O}_Y}\eta_Y(M)\ar[r]^{\hspace{0.5cm}\simeq}&\eta_Y(M\otimes M)\ar[r]^{\hspace{0.6cm}\eta_Y(q)}
&\eta_Y({\bf 1})\ar[r]^{\simeq}&\mathcal{O}_Y
}
\]
where the left and the right hand side squares commute because $\alpha$ is a morphism of tensor functors (see \cite{Deligne-Milne} Definition 1.12). The upper row (respectively the lower row) is the form $q_{\omega_Y}$ (respectively $q_{\eta_Y}$). Hence $\alpha(M)$ is an isometry, so that $\mu(Y)$ is well defined, hence so is $\mu$ (note that $\mu(Y)$ is functorial in $Y$).

Moreover, $\mu(Y)$ is invariant under the action of $\mathcal{G}_{\omega_X}(Y)=\mathrm{ Aut}^{\otimes}(\omega_Y)$ on the set $\mathrm{Isom}^{\otimes}(\omega_Y,\eta_Y)\times \mathrm{O}(q_{\omega_Y})$. Indeed, an element $g\in \mathcal{G}_{\omega_X}(Y)$ takes $(\alpha,\sigma)\in \mathrm{Isom}^{\otimes}(\omega_Y,\eta_Y)\times \mathrm{O}(q_{\omega_Y})$ to $(\alpha\circ g^{-1},\rho(g)\circ \sigma)$, and one has
$$\mu(Y)(\alpha\circ g^{-1},\rho(g)\circ \sigma)=\alpha(M)\circ \rho(g)^{-1}\circ \rho(g)\circ \sigma=\alpha\circ \sigma= \mu(Y)(\alpha, \sigma).$$
Hence $\mu$ is invariant under the action of $\mathcal{G}_{\omega_X}$.  Equivalently, $\mu$ induces a morphism
$$\tilde{\mu}:{\bf Isom}^{\otimes}(\omega_X,\eta)\wedge^{\mathcal{G}_{\omega_X}}{\bf O}(q_{\omega_X}):=
({\bf Isom}^{\otimes}(\omega_X,\eta)\times{\bf O}(q_{\omega_X}))/\mathcal{G}_{\omega_X}\longrightarrow {\bf Isom}^{\otimes}(q_{\omega},q_{\eta})$$
in the topos ${\mathrm{Spec}}(k)_{\mathrm{fppf}}$. The group ${\bf O}(q_{\omega_X})$ acts on ${\bf Isom}^{\otimes}(\omega_X,\eta)\wedge^{\mathcal{G}_{\omega_X}}{\bf O}(q_{\omega_X})$ by right multiplication on the second factor and trivially on the first factor. It acts on 
${\bf Isom}^{\otimes}(q_{\omega_X},q_{\eta})$ by composition. The map $\tilde{\mu}$ is obviously ${\bf O}(q_{\omega_X})$-equivariant, hence $\tilde{\mu}$ is a morphism of ${\bf O}(q_{\omega_X})$-torsors. The result follows, since any morphism of torsors is an isomorphism.

\end{proof}

\end{proof}

\section{An analogue of the Serre-Fr\"ohlich embedding problem for Nori's fundamental group scheme}

\subsection{The Serre-Fr\"ohlich embedding problem}
Let $F$ be a field of characteristic $\neq 2$, let $G$ be a finite group and let $p:G_F\rightarrow G$ be a surjective map, i.e. let $\bar{F}/F$ be a separable closure and let $\bar{F}/K/F$ be a Galois sub-extension of group $G$. We consider the {\it $G$-unit form} $(F[G], \kappa)$ where $\kappa$ is the quadratic form such that $\kappa(g, h)=\delta_{g, h}$ (Kronecker symbol) for $g, h \in G$. The action of $G$ on itself by left multiplication  extends to an orthogonal representation 
\begin{equation}\label{unit-serre}
\rho:G_F\rightarrow G\rightarrow \mathbf{O}(\kappa).
\end{equation} 
The Pin-extension (see \cite{Jardine92} Appendix) gives a central extension of discrete groups
\begin{equation}\label{Pinbar}
1\rightarrow \mathbb{Z}/2\mathbb{Z}\rightarrow \widetilde{\mathbf{O}}(\kappa)(\overline{F})\rightarrow \mathbf{O}(\kappa)(\overline{F})\rightarrow 1.
\end{equation}
Pulling back (\ref{Pinbar}) via $G\rightarrow \mathbf{O}(\kappa)(F) \rightarrow \mathbf{O}(\kappa)(\overline{F})$ we obtain a 
central extension of finite discrete groups
 $$1\rightarrow \mathbb{Z}/2\mathbb{Z}\rightarrow \widetilde{G}\rightarrow G\rightarrow 1 $$
which we suppose to be non-trivial. The embedding problem attached to $\tilde G\rightarrow G$ can be expressed as follows: Is there a commutative triangle
\[ \xymatrix{
   & & \widetilde{G} \ar[d]^s  \\
G_F\ar[rru]^{r}\ar[rr]^p& & G
} \]
where $r$ is surjective? In other words, is there an $F$-embedding $K\hookrightarrow \widetilde{K}$ where $\widetilde{K}/F$ is a Galois extension of group $\widetilde{G}$? This problem has a solution if and only if the Stiefel-Whitney class $sw_{2}(\rho)$ vanishes.  It follows from Serre's comparison formula (\cite{Serre84} Th\'eor\`eme 1)  that $sw_{2}(\rho)\in H^2(G_F,\mathbb{Z}/2\mathbb{Z})$ can be computed in terms of the Hasse-Witt invariant $w_2(\mathrm{Tr}_{K/F})$, where $\mathrm{Tr}_{K/F}$ is the quadratic from $x\mapsto \mathrm{Tr}_{K/F}(x^2)$. This result can be applied to decide whether or not the previous embedding problem has a solution in explicit examples.
 
\subsection{An analogue for Nori's fundamental group scheme}
The aim of this section is to formulate a similar embedding problem for Nori's fundamental group scheme and to deduce from the comparison formula of Theorem \ref{thm-Tannakian} a necessary and sufficient condition for the existence of a solution. More precisely, let $X$ be a  proper integral scheme over a field $k$ of characteristic $\neq 2$, with a $k$-rational point $x\in X(k)$. We consider Nori's fundamental group scheme $\pi^N_1(X/k,x)$ (see \cite{Nori76} and \cite{Szamuely09} 6.7). Let $G=\mathrm{Spec}(A)$ be a finite flat quotient of $\pi^N_1(X/k,x)$. We denote by $p:\pi^N_1(X/k,x)\rightarrow G$ the projection. 
\subsubsection{Generalization of the $G$-unit form}
The linear dual $A^D=\mathrm{Hom}_k(A, k)$ of $A$ has a natural structure of Hopf $k$-algebra. We denote by $S^D$ its antipode.
\begin{defn}\label{def-unit} For any generator $\theta$ of the $k$-vector space of the integrals of $A$ we define the form 
$$\fonc{\kappa_\theta}{A^D\times A^D}{k}{(u,v)}{(S^D(u)v)(\theta)}$$
\end{defn} Then $\kappa_{\theta}$ is a non degenerate quadratic form over $k$. The action of $A^D$ on itself by left multiplication corresponds to a linear representation $\rho$ of $G$ on $A^D$, dual of the regular representation of $G$. One can prove (\cite{CCMT15} Proposition 5.1)  that $\rho: G\rightarrow \mathbf{GL}_k(A^D)$ factors through $\mathbf{O}(\kappa_{\theta})$  and thus provides  an orthogonal representation of $G$. We refer to $(A^D, \kappa_{\theta})$ as a {\it  unit form}. If $G$ is \'etale then there is a canonical generator $\theta_0$ of the integrals of $A$, hence a canonical orthogonal representation $$\pi^N_1(X/k,x)\rightarrow G\rightarrow \mathbf{O}(\kappa_{\theta_0}),$$ 
which generalizes the unit form (\ref{unit-serre}).

\subsubsection{} For any generator $\theta$ of the integrals of $A$, we consider the representation $G\rightarrow \mathbf{O}(\kappa_{\theta})$ defined above. The Pin-extension (see \cite{Jardine92} Appendix again) induces a central extension
of finite flat $k$-group-schemes
$$1\rightarrow \mathbb{Z}/2\mathbb{Z}\rightarrow \widetilde{G}_{\theta}\stackrel{s_{\theta}}{\rightarrow}G\rightarrow 1$$
which we suppose to be non-trivial. Notice that $\widetilde{G}_{\theta}:= G\times_{\mathbf{O}(\kappa_{\theta})} \widetilde{\mathbf{O}}(\kappa_{\theta})$ depends on $\theta$. The embedding problem we are interested is the following:  
\begin{Problem}\label{prob} Let $\theta$ be a generator of the integrals of $A$. Is there a faithfully flat morphism $r:\pi^N_1(X/k,x)\rightarrow \widetilde{G}_{\theta}$ which renders
the following triangle
\[ \xymatrix{
   & & \widetilde{G}_{\theta} \ar[d]^{s_{\theta}}  \\
\pi^N_1(X/k,x)\ar[rru]^{r}\ar[rr]^p& & G
} \]
commutative? 
\end{Problem}
In the next section, we  give a necessary and sufficient condition for the existence of a solution to Problem \ref{prob} up to conjugation by an element of $G(k)$.

\subsection{Existence of a solution in terms of Hasse-Witt invariants}

Let $X$ be a proper integral scheme over a field $k$ of characteristic $\neq2$. Let $x\in X(k)$ be a rational point. In this situation, Nori defines the category $\mathrm{EF}(X)$ of essentially finite bundles on $X$. Then $\mathrm{EF}(X)$ is a full subcategory of the category $\mathrm{VB}(X)$ of vector bundles on $X$, which is closed under direct sums and tensor products. The pull-back functor along the map $x:\mathrm{Spec}(k)\rightarrow X$ gives a fiber functor $\omega:\mathrm{EF}(X)\rightarrow\mathrm{Vec}_k$, and $\mathrm{EF}(X)$ is a neutral Tannakian category. Its Tannaka dual 
$$\pi^N_1(X/k,x):=\mathbf{Aut}^{\otimes}(\omega)$$
is by definition the fundamental group-scheme of $X$ with respect to the rational point $x$. Then $\pi^N_1(X/k,x)$ is a projective limit of finite $k$-group schemes. It classifies pointed torsors under finite group-schemes: For any finite group-scheme $G$ over $k$, one has a bijection between $\mathrm{Hom}_k(\pi^N_1(X/k,x),G)$ and the set of triples $(G,T,t)$ where $T\rightarrow X$ is a $G$-torsor and $t\in T(k)$ is a rational point lying over $x\in X(k)$.
The Tannakian category $\mathrm{EF}(X)$ has a distinguished fiber-functor over $X$: the inclusion functor
$$\eta:\mathrm{EF}(X)\longrightarrow \mathrm{VB}(X).$$
As in the previous section, we also consider the composition
$$\omega_X:\mathrm{EF}(X)\stackrel{\omega}{\longrightarrow}\mathrm{Vec}_k\stackrel{f^*}{\longrightarrow} \mathrm{VB}(X)$$
where $f:X\rightarrow\mathrm{Spec}(k)$ is the structure map. Now let $G$ be a finite flat quotient of $\pi^N_1(X/k,x)$ given by the faithfully flat morphism $s:\pi^N_1(X/k,x)\twoheadrightarrow G$. The regular representation of $G$ gives a representation $\pi^N_1(X/k,x)\rightarrow G\rightarrow \mathbf{GL}_k(V)$. By Tannaka duality, we obtain an object $\mathcal{R}\in\mathrm{EF}(X)$. Let $\mathcal{V}:=\mathcal{R}^{\vee}$ be the dual of $\mathcal{R}$ and  let $<\mathcal{V}>=<\mathcal{R}>\subseteq\mathrm{EF}(X)$ be the Tannakian category generated by $\mathcal{V}$. Let $\omega_{X\mid<\mathcal{V}>}$ and $\eta_{\mid<\mathcal{V}>}$ the restrictions of $\omega_X$ and $\eta$. Then 
$$\mathbf{Aut}^{\otimes}(\omega_{X\mid<\mathcal{V}>})\simeq G\times_k X$$
and
$$T:=\mathbf{Isom}^{\otimes}(\omega_{X\mid<\mathcal{V}>},\eta_{\mid<\mathcal{V}>})$$
is a $G$-torsor over $X$. Note that $T$ has a canonical $k$-rational point $t:\mathrm{Spec}(k)\rightarrow T$ over $x$, which is given by the canonical isomorphism of tensor functors
$$x^*\circ\omega_{X\mid<\mathcal{V}>}\simeq x^*\circ \eta_{\mid<\mathcal{V}>}.$$
The triple $(G,T,t)$ corresponds to the morphism $s:\pi^N_1(X/k,x)\twoheadrightarrow G$.

Let $\theta$ be a generator of the integrals of $A$, let $(A^D,\kappa_{\theta})$ be the quadratic space of Definition \ref{def-unit} and let $\rho_{\theta}:G\rightarrow\mathbf{O}(\kappa_{\theta})$ be the corresponding orthogonal representation. By Tannaka duality again, $\rho_{\theta}$ endows the essentially finite vector bundle $\mathcal{V}$ with a canonical non-degenerate symmetric bilinear form
$$q_{\theta}:\mathcal{V}\otimes_{\mathcal{O}_X}\mathcal{V}\rightarrow \mathcal{O}_X.$$ 
Then we consider the canonical central Pin-extension
$$1\rightarrow \mathbb{Z}/2\mathbb{Z}\rightarrow \widetilde{\mathbf{O}}(\kappa_{\theta})\rightarrow \mathbf{O}(\kappa_{\theta})\rightarrow 1$$
and its base change
\begin{equation}\label{ext}
1\rightarrow \mathbb{Z}/2\mathbb{Z}\rightarrow \widetilde{G}_{\theta}\stackrel{s_{\theta}}{\rightarrow} G\rightarrow 1
\end{equation}
along the map $G\rightarrow \mathbf{O}(\kappa_{\theta})$. We consider the Hasse-Witt invariants $w_i(q_{\theta}),w_i(\kappa_{\theta})\in H^i(X_{\mathrm{et}},\mathbb{Z}/2\mathbb{Z})$ for $i=1,2$.  Finally, if $\sigma\in G(k)$ is a rational point we denote by $c_{\sigma}=\sigma^{-1}\cdot (-)\cdot\sigma$ the conjugation map.
\begin{thm}\label{Norifond} Assume that the extension (\ref{ext}) is non-trivial. Then the following assertions are equivalent.
\begin{enumerate}
\item We have $$w_2(\kappa_{\theta})+w_1(\kappa_{\theta})\cdot w_1(\kappa_{\theta})+w_1(\kappa_{\theta})\cdot w_1(q_{\theta})+ w_2(q_{\theta})=0.$$
\item There exist a rational point $\sigma\in G(k)$ and a commutative diagram
\[ \xymatrix{
   & & \widetilde{G}_{\theta} \ar[d]^{s_{\theta}}  \\
\pi^N_1(X/k,x)\ar[rru]^{r}\ar[r]& G\ar[r]^{c_{\sigma}}& G
} \]
where $r$ is faithfully flat.
\end{enumerate}
\end{thm}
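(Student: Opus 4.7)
The strategy is to combine Theorem \ref{thm-Tannakian} with Nori's classification of pointed torsors. I would apply Theorem \ref{thm-Tannakian} to the Tannakian subcategory $\langle\mathcal{V}\rangle\subseteq \mathrm{EF}(X)$ equipped with the restrictions of $\omega_X$ and $\eta$, and the orthogonal object $(\mathcal{V},q_\theta)$ corresponding under Tannaka duality to $\rho_\theta\colon G\to \mathbf{O}(\kappa_\theta)$. Since the Tannaka dual of $(\langle\mathcal{V}\rangle,\omega)$ is $G$, and evaluating $(\mathcal{V},q_\theta)$ at $\omega_X$ and $\eta$ produces $\kappa_\theta$ (base-changed to $X$) and $q_\theta$ respectively, Theorem \ref{thm-Tannakian}(2) immediately shows that condition (1) is equivalent to the vanishing $\delta_q^2(T)=0$ in $H^2(X_{\mathrm{et}},\mathbb{Z}/2\mathbb{Z})$, where $T:=T_{\omega_X,\eta}$ is the $G$-torsor described before the statement of the theorem. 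By the exact sequence of pointed sets (\ref{pointed-Exact-Sequ}) attached to the central extension (\ref{ext}), this vanishing is equivalent to the existence of a $\widetilde{G}_\theta$-torsor $\widetilde{T}$ over $X$ whose quotient by $\mathbb{Z}/2\mathbb{Z}$ is isomorphic to $T$ as a $G$-torsor.

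The implication $(2)\Rightarrow(1)$ is then immediate: a morphism $r$ as in (2) classifies, via Nori's equivalence, a pointed $\widetilde{G}_\theta$-torsor $(\widetilde{T},\tilde{t})$, and $\widetilde{T}/(\mathbb{Z}/2\mathbb{Z})$ is the underlying $G$-torsor of $T$ (the $G$-torsors underlying two pointed torsors differing only by $c_\sigma$ coincide). For the converse, beginning with any lift $\widetilde{T}_0$ of $T$ one needs to twist it so as to acquire a rational point above $x$. The fiber $x^{*}\widetilde{T}_0$ is a $\widetilde{G}_\theta$-torsor over $k$ whose image in $H^1(k,G)$ is the trivial class (trivialized by $t$), so by the long exact sequence of the central extension (\ref{ext}) it is of the form $\iota(b)$ for some $b\in H^1(k,\mathbb{Z}/2\mathbb{Z})$, where $\iota\colon H^1(k,\mathbb{Z}/2\mathbb{Z})\to H^1(k,\widetilde{G}_\theta)$ is induced by $\mathbb{Z}/2\mathbb{Z}\hookrightarrow\widetilde{G}_\theta$. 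Since $x$ is a section of the structure map, the pullback $x^{*}\colon H^1(X_{\mathrm{et}},\mathbb{Z}/2\mathbb{Z})\to H^1(k,\mathbb{Z}/2\mathbb{Z})$ is split surjective, so we may pick $a\in H^1(X_{\mathrm{et}},\mathbb{Z}/2\mathbb{Z})$ with $x^{*}a=b$. Using the natural action of $H^1(X_{\mathrm{et}},\mathbb{Z}/2\mathbb{Z})$ on the set of lifts of $T$ (coming from the central subgroup $\mathbb{Z}/2\mathbb{Z}\subseteq\widetilde{G}_\theta$), twisting $\widetilde{T}_0$ by $a$ produces a new lift $\widetilde{T}$ whose restriction to $x$ becomes $\iota(b+b)=\iota(0)$, hence trivial, so $\widetilde{T}$ admits a $k$-point $\tilde{t}$ above $x$. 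Setting $t\cdot\sigma:=s_\theta(\tilde{t})\in T(k)$ defines a unique $\sigma\in G(k)$, and Nori's equivalence applied to $(\widetilde{G}_\theta,\widetilde{T},\tilde{t})$ yields a morphism $r$ satisfying $s_\theta\circ r=c_\sigma\circ p$; the appearance of $c_\sigma$ rather than the identity reflects the elementary fact that changing the marked point of a pointed torsor from $t$ to $t\sigma$ composes the classifying homomorphism with $c_\sigma$.

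It remains to verify the faithful flatness of $r$, which is the single step where the non-triviality hypothesis on (\ref{ext}) is used. The scheme-theoretic image $H\subseteq\widetilde{G}_\theta$ of $r$ is a closed subgroup scheme surjecting onto $G$ via $s_\theta$ (since $c_\sigma\circ p$ is faithfully flat). The intersection $H\cap\mathbb{Z}/2\mathbb{Z}$ is either the whole $\mathbb{Z}/2\mathbb{Z}$, in which case $H=\widetilde{G}_\theta$ and $r$ is faithfully flat, or trivial, in which case $s_\theta|_H$ is an isomorphism producing a section of $s_\theta$ and contradicting the non-splitness of (\ref{ext}). The main technical obstacle in this plan is the twisting step: one must carefully pair the long exact sequence of the central extension over $k$ with the surjectivity of $x^{*}$ on $H^1(-,\mathbb{Z}/2\mathbb{Z})$ in order to cancel the obstruction to lifting the rational point. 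Everything else is a formal combination of Theorem \ref{thm-Tannakian}, non-abelian cohomology, and Nori's classification of pointed torsors.
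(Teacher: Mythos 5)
Your proof is correct and, up to the last step, follows the same route as the paper: identify condition (1) with $\delta_q^2(T)=0$ via Theorem~\ref{thm-Tannakian}, translate this into the existence of a $\widetilde{G}_\theta$-lift of $T$, observe that $(2)\Rightarrow(1)$ because a pointed $\widetilde{G}_\theta$-torsor over the triple $(G,T,t\cdot\sigma)$ is in particular a lift, and for the converse adjust an arbitrary lift $\widetilde{T}_0$ by twisting with a class in $H^1(X_{\mathrm{et}},\mathbb{Z}/2\mathbb{Z})$ pulled back from $H^1(k,\mathbb{Z}/2\mathbb{Z})$ so that the fibre over $x$ becomes a trivial $\widetilde{G}_\theta$-torsor; this is precisely the paper's $\alpha$/$A$ twisting.

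Where you genuinely diverge is in the verification that $r$ is faithfully flat. The paper factors $r$ through a finite flat quotient $\pi$, compares the resulting extension of $G$ by $N=\ker(\pi\to G)$ with the Pin extension, and then argues via the exact sequences $H^1(B_G,\mathbb{Z}/2\mathbb{Z})\to H^1(k,\mathbb{Z}/2\mathbb{Z})\to \mathrm{Ext}_k(G,\mathbb{Z}/2\mathbb{Z})\to H^2(B_G,\mathbb{Z}/2\mathbb{Z})$ and $H^0(B_G,\underline{\mathrm{Hom}}(N,\mathbb{Z}/2\mathbb{Z}))\to H^2(B_G,\mathbb{Z}/2\mathbb{Z})\to H^2(B_\pi,\mathbb{Z}/2\mathbb{Z})$ from \cite{CCMT14} to show that the induced map $\beta\colon N\to\mathbb{Z}/2\mathbb{Z}$ is non-trivial, hence surjective. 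Your argument is more elementary: take the scheme-theoretic image $H\subseteq\widetilde{G}_\theta$ of $r$; since the image of a morphism of affine $k$-group schemes is a closed subgroup scheme and $s_\theta\circ r = c_\sigma\circ p$ is faithfully flat, $s_\theta|_H\colon H\to G$ is faithfully flat; its kernel $H\cap\mathbb{Z}/2\mathbb{Z}$ is a subgroup scheme of the étale group $\mathbb{Z}/2\mathbb{Z}$ (here $\mathrm{char}(k)\neq 2$ is used), so it is trivial or all of $\mathbb{Z}/2\mathbb{Z}$. The first case yields a group-scheme section of $s_\theta$ and contradicts the non-splitness of (\ref{ext}); the second forces $H=\widetilde{G}_\theta$ by rank counting, so $r$ is the composite of the faithfully flat $\pi_1^N(X/k,x)\twoheadrightarrow H$ with the identity. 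This avoids the classifying-topos cohomology entirely and is, in my view, a cleaner way to isolate exactly where the non-triviality hypothesis enters. The one small point worth spelling out (which you implicitly rely on) is that for affine group schemes over a field the image of a Hopf algebra morphism is again a Hopf subalgebra, and that a faithfully flat monomorphism of finite group schemes is an isomorphism; both are standard but deserve a citation in a polished write-up.
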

\begin{proof} In order to ease the notations, we fix $\theta$ and we set $\widetilde{G}:=\widetilde{G}_{\theta}$,  $s:=s_{\theta}$, $\kappa:=\kappa_{\theta}$  and $q=q_{\theta}$. We have an exact sequence of pointed sets (see \cite{Giraud} IV.4.2.10) 
$$H^1(X,\widetilde{G})\longrightarrow H^1(X,{G})\stackrel{\delta^2}{\longrightarrow}H^2(X_{\mathrm{et}},\mathbb{Z}/2\mathbb{Z}).$$
The quadratic space $\omega(\mathcal{V},q)$ is the unit form $(A^D,\kappa)$, whereas $\eta(\mathcal{V},q)=(\mathcal{V},q)$. By Theorem \ref{thm-Tannakian}, we have
$$\delta^2(T)=w_2(\kappa)+ w_1(\kappa)\cdot w_1(\kappa)+ w_1(\kappa)\cdot w_1(q)+w_2(q).$$
By the exact sequence above, $\delta^2(T)=0$ iff there exists a $\widetilde{G}$-torsor $\widetilde{T}$ over $X$ and a $\widetilde{G}$-equivariant map $\widetilde{T}\rightarrow T$. 

We show $(2)\Longrightarrow (1)$. Suppose that there exists a commutative diagram as in (2). The map
$$\pi^N_1(X/k,x)\rightarrow G\stackrel{c_{\sigma}}{\rightarrow}G$$
corresponds to the triple $(G,T,t\cdot\sigma)$, where $(t\cdot\sigma)\in T(k)$ is the point
$$\mathrm{Spec}(k)\stackrel{(t,\sigma)}{\longrightarrow} T\times_k G=T\times_XG_X\stackrel{\mu}{\longrightarrow} T.$$
The commutative diagram (2) therefore gives a triple $(\widetilde{G},\widetilde{T},\widetilde{t})$ mapping to $(G,T,t\cdot\sigma)$ hence a $\widetilde{G}$-torsor $\widetilde{T}$ over $X$ and a $\widetilde{G}$-equivariant map $\widetilde{T}\rightarrow T$. We obtain $\delta^2(T)=0$, hence (1).

We show $(1)\Longrightarrow (2)$.  We have $\delta^2(T)=0$ hence there exists a $\widetilde{G}$-torsor $\widetilde{T}$ over $X$ together with a $\widetilde{G}$-equivariant map $\widetilde{T}\rightarrow T$. Taking the base change along $x\in X(k)$, we obtain a 
$\widetilde{G}$-torsor $\widetilde{T}_x:=\widetilde{T}\times_Xx$ over $k$ lifting $T_x$. Note that $T_x$ is the trivial $G$-torsor. The exact sequence of pointed sets
$$H^1(k,\mathbb{Z}/2\mathbb{Z})\longrightarrow H^1(k,\widetilde{G})\longrightarrow H^1(k,G)$$
shows that there exists $\alpha\in H^1(k,\mathbb{Z}/2\mathbb{Z})$ mapping to $\widetilde{T}_x$. Recall from (\cite{Giraud} III.3.4.5) the natural action of $H^1(k,\mathbb{Z}/2\mathbb{Z})$ on $H^1(k,\widetilde{G})$. We have $$[\widetilde{T}_x]=\alpha\cdot  [\widetilde{T}_{0}]$$
where $\widetilde{T}_{0}$ is the trivial $\widetilde{G}$-torsor over $k$.

Let $A$ be the image of $\alpha$ in $H^1(X,\mathbb{Z}/2\mathbb{Z})$. The action (see \cite{Giraud} III.3.4.5) of $H^1(X,\mathbb{Z}/2\mathbb{Z})$ on the set $H^1(X,\widetilde{G})$ is compatible with base-change. Let $\widetilde{T}'$ be a $\widetilde{G}$-torsor over $X$ such that we have 
$[\widetilde{T}']= A\cdot [\widetilde T]$ in $H^1(X,\widetilde{G})$. Taking the pull-back $\widetilde{T}'_x$ of $\widetilde{T}'$ along $x\in X(k)$, we have
$$[\widetilde{T}'_x]=x^*[\widetilde{T}']=x^*(A\cdot [\widetilde{T}])=(x^*A)\cdot (x^*[\widetilde{T}])=\alpha\cdot [\widetilde{T}_x]=\alpha\cdot \alpha\cdot[\widetilde{T}_{0}]=[\widetilde{T}_{0}].$$ 
Therefore, replacing $\widetilde{T}$ with $\widetilde{T}'$, we may suppose that
$\widetilde{T}_x$ is a trivial $\widetilde{G}$-torsor over $k$. Choose a rational point $\widetilde{t}\in \widetilde{T}_x(k)$, and let $t_0$ be its image in $T_x(k)$. Since $T_x$ is a $G$-torsor, there exists a unique $\sigma\in G(k)$ such that
$t_0=t\cdot\sigma$. Hence the triple $(\widetilde{G},\widetilde{T},\widetilde{t})$ maps to $(G,T,t_0)$, which corresponds to the map $\pi^N_1(X/k,x)\rightarrow G\stackrel{c_{\sigma}}{\rightarrow}G.$
We obtain the commutative diagram in (2).

It remains to show that the map $r:\pi^N_1(X/k,x)\rightarrow \widetilde{G}$ in the diagram is faithfully flat. The map $r$ factors as follows $\pi^N_1(X/k,x)\twoheadrightarrow  \pi \rightarrow \widetilde{G}$, where $\pi$ is a finite flat $k$-group scheme and $\pi^N_1(X/k,x)\twoheadrightarrow  \pi$ is faithfully flat. It is therefore enough to show that the induced map $r':\pi \rightarrow \widetilde{G}$ is faithfully flat. The map $r'$ gives a morphism of exact sequences
\[ \xymatrix{
1\ar[r]&N\ar[r]\ar[d]^{\beta} &\pi\ar[r]\ar[d]^{r'} & G\ar[r]\ar[d]^{c_{\sigma}}&1  \\
1\ar[r]&\mathbb{Z}/2\mathbb{Z}\ar[r]& \widetilde{G}\ar[r]^{s}& G\ar[r]&1
} \]
We have an exact sequence (see \cite{CCMT14} Proposition 3.8)
$$H^1(B_G,\mathbb{Z}/2\mathbb{Z})\rightarrow H^1(\mathrm{Spec}(k)_{\mathrm{fppf}},\mathbb{Z}/2\mathbb{Z})\rightarrow \mathrm{Ext}_k(G,\mathbb{Z}/2\mathbb{Z})\rightarrow H^2(B_G,\mathbb{Z}/2\mathbb{Z}).$$
But the map $H^1(B_G,\mathbb{Z}/2\mathbb{Z})\rightarrow H^1(\mathrm{Spec}(k)_{\mathrm{fppf}},\mathbb{Z}/2\mathbb{Z})$ is surjective, since it is induced by the canonical section $\mathrm{Spec}(k)_{\mathrm{fppf}}\rightarrow B_G$ of the structure map $B_G\rightarrow \mathrm{Spec}(k)_{\mathrm{fppf}}$. Hence the canonical map
$\mathrm{Ext}_k(G,\mathbb{Z}/2\mathbb{Z})\hookrightarrow H^2(B_G,\mathbb{Z}/2\mathbb{Z})$ is injective. Similarly, we have an injection
$\mathrm{Ext}_k(\pi,\mathbb{Z}/2\mathbb{Z})\hookrightarrow H^2(B_{\pi},\mathbb{Z}/2\mathbb{Z})$. The class $[E]\in \mathrm{Ext}_k(G,\mathbb{Z}/2\mathbb{Z})$ is non-trivial by assumption, but dies in $\mathrm{Ext}_k(\widetilde{G},\mathbb{Z}/2\mathbb{Z})$, and a fortiori in $\mathrm{Ext}_k(\pi,\mathbb{Z}/2\mathbb{Z})$ (here we denote by $[E]$ the class of the extension denoted by (\ref{ext}) in the statement of the theorem). We consider the exact sequence (see \cite{CCMT14} Corollary 3.13)
$$\rightarrow H^0(B_G,\underline{\mathrm{Hom}}(N,\mathbb{Z}/2\mathbb{Z}))\rightarrow H^2(B_G,\mathbb{Z}/2\mathbb{Z})\rightarrow H^2(B_\pi,\mathbb{Z}/2\mathbb{Z}).$$ Then $[E]$ lies in $\mathrm{Ker}\left(H^2(B_G,\mathbb{Z}/2\mathbb{Z})\rightarrow H^2(B_\pi,\mathbb{Z}/2\mathbb{Z})\right)$, and $$\beta\in \mathrm{Hom}(N,\mathbb{Z}/2\mathbb{Z})^{G(k)}= H^0(B_G,\underline{\mathrm{Hom}}(N,\mathbb{Z}/2\mathbb{Z}))$$
maps to $[E]\in H^2(B_G,\mathbb{Z}/2\mathbb{Z})$, which is non-trivial since $\mathrm{Ext}_k(G,\mathbb{Z}/2\mathbb{Z})\hookrightarrow H^2(B_G,\mathbb{Z}/2\mathbb{Z})$ is injective. Hence $\beta$ is non-trivial too, hence surjective. It follows that $r'$ is an epimorphism in $\mathrm{Spec}(k)_{\mathrm{fppf}}$, i.e. $r'$ is faithfully flat.

\end{proof}

\begin{rem}
We may replace  $\rho: G\rightarrow \mathbf{O}(\kappa)$ with an arbitrary orthogonal representation  $G\rightarrow \mathbf{O}(u)$, and consider Problem \ref{prob} in this more general situation.  This problem is the analogue of Fr\"ohlich's embedding problem (\cite{Fr\"ohlich85}, Section 7) for Nori's fundamental group scheme.  We can prove mutatis mutandis  Theorem \ref {Norifond} in this case.
\end{rem}

\section{Orthogonal $B$-admissible representations}

Various comparison formulas have been obtained,   in different set-ups,   between the Hasse-Witt invariants of a form and the Hasse-Witt invariants of its twists (see  \cite{Serre84},  \cite{Fr\"ohlich85},  \cite{Saito95}). In this section we show that they all can be seen as consequences of Theorem \ref{thm-Tannakian} when applied to a well choosen tannakian category. Fontaine's theory of $B$-admissible representations and regular $G$-rings provides us with a large range of such categories. 
\subsection{$B$-twist of a form}\label{section-Btwist}

Let $k$  be a topological field of characteristic $\neq 2$ and  let $G$ be a topological group.  We denote by $\mathrm{Rep}_k(G)$ the category of continuous finite dimensional $k$-linear representations of $G$, and by 
$\omega: \mathrm{Rep}_k(G)\rightarrow \mathrm{Vec}_k$ be the forgetful functor.  Then $(\mathrm{Rep}_k(G),\omega)$ is a neutralized  $k$-linear tannakian category. We let $B$ be a topological commutative $k$-algebra (recall that the homomorphism $k\rightarrow B$ is continuous), equipped with a continuous action of $G$ by $k$-automorphisms. We set $K=B^G$. For any $k$-representation $V$ of $G$ we set 
$$D_B(V)=(B\otimes_kV)^{G}$$ 
where $G$ acts  diagonally on $B\otimes_k V$.  By scalar extension we obtain a map 
\begin{equation}
\alpha_V: B\otimes_KD_B(V)\rightarrow B\otimes_k V.
\end{equation}  
The map $\alpha_V$ is $B$-linear and $G$-equivariant. Here $G$ acts trivially on $D_B(V)$ and diagonally on $B\otimes_KD_B(V)$. We now recall the definition of a ($k, G$)-regular algebra (\cite{Fontaine00}  Definition 2.8). 
\begin{defn}\label{def-regular} We say that $B$ is \emph{$(k,G)$-regular} if the following conditions hold:
\begin{enumerate}
\item  $B$ is a domain.
\item $B^{G}=Fr(B)^{G}$.
\item If $b\in B$, $b\neq 0$, such that $kb$ is stable by $G$, then $b$ is invertible in $B$. 
\end{enumerate}

\end{defn}
We assume from now on that $B$ is $(k,G)$-regular. This implies that $K$ is a field and that $\alpha_V$ is injective for any $k$-representation of $G$ (see \cite{Fontaine00} Theorem 2.13).

\begin{defn}  
A $k$-representation $V$ of $G$ is said to be $B$-admissible if $\alpha_V$ is an isomorphism. 
\end{defn}
We denote by  $\mbox{Rep}_k^B(G)$  the full subcategory of $\mbox{Rep}_k(G)$ consisting of representations of $G$ which are $B$-admissible. This category is a tannakian sub-category of $\mbox{Rep}_k(G)$ with  fiber functor given by the restriction of $\omega$. Moreover  the restriction of $D_B$ to $\mbox{Rep}_k^B(G)$ defines  a fiber functor  
$\eta: \mbox{Rep}_k^B(G)\rightarrow \mathrm{Vec}_K$  (\cite{Fontaine00} Theorem 2.13). We let  $\mathcal{G}_B$ be the Tannaka dual ${\bf Aut}^\otimes (\omega)$ and  $T_{B}$ be the 
$\mathcal{G}_{B, K}$-torsor ${\bf Isom}^\otimes (\omega_K, \eta)$. There is a continuous morphism $G \rightarrow  \mathcal G_B(k)$ and the functor sending $\rho:\mathcal{G}_B\rightarrow {\bf GL}(V)$  to 
$\bar{\rho}:G\rightarrow \mathcal G_B(k)\rightarrow {\bf GL}_V(k)$ induces an equivalence of tensor categories (\cite{Fontaine00}  Proposition 1.2.3)
$$\mbox{Rep}_k(\mathcal{G}_B)\stackrel{\sim}{\longrightarrow}\mbox{Rep}_k^B(G). $$ We identify these categories.  

An orthogonal object of $\mbox{Rep}_k^B(G)$ is given by a continuous  orthogonal representation $\rho: G\rightarrow {\bf O}(q)(k)$ where $(V,q)$ is a quadratic form over $k$ and ${\bf O}(q)(k)$ is endowed with topology induced by that of $k$. Following the method  of Section 3, we  associate with   $(V, q, \rho)$, via the fiber functors $\omega$ and $\eta$,   non degenerate quadratic forms on $k$ and $K$,  on the one hand the form $(V,q)$ obtained through the forgetful functor $\omega$ and the form $(D_B(V), q_D)$ obtained through  $\eta$, where 
$q_D$ is the restriction to $D_B(V)$ of the form $B\otimes q$.  The quadratic space $(D_B(V), q_{D})$ will be called the {\it  $B$-twist of $(V,q)$}. 

Since $q_D$ is the twist of $q$ by the $\mathcal G_{B, K}$ torsor $T_{B}$ (see \cite{Fontaine94}, definition 2.4), then Theorem 3.4 provides us with a comparison formula for the Hasse-Witt invariants of these forms. More precisely, let $(V, q, \rho)$ be an orthogonal $B$-admissible $k$-representation of $G$ and let $<V>$ be the subtannakian category generated by $V$. Then $\omega$ and $\eta$ induce fiber functors $\omega'$ and $\eta'$  on $<V>$. We  consider the subalgebra $B_V$ of $B$ consisting of all the periods of all the objects of $<V>$ ( \cite{Fontaine94}, Section 1.7)). The algebra $B_V/k$ is a $(k, G)$-regular algebra of finite type. We set $T_{B, V}=\mbox{Spec}(B_V)$. By \cite{Fontaine94}, Theorem 1.7.3 we know that $T_{B, V}$ represents the functor 
${\bf {Isom}}^\otimes (\omega'_K, \eta')$. Therefore Theorem 3.4 expresses the invariants  $\delta^1_q(T_{B})=\delta^1_q(T_{B, V})$ and $\delta^2_q(T_{B})=\delta^2_q(T_{B, V})$ of this algebra  in terms of the Hasse-Witt invariants of the forms $q$ and $q_D$. 

Our aim is now to express $\delta^1_q(T_B)$ and $\delta^2_q(T_B)$ in terms of invariants attached to $\rho$.  
We start by introducing  some notations. As before $B/k$ is a topological algebra and  $K/k$ is  a field extension. We   fix a separable closure $\bar K$ of $K$ and we   take for  group $G$ the profinite group  $G_K=\mbox{Gal}(\bar K/K)$. We  suppose that $B$ is a $(k, G_K)$ regular algebra, such that $B^{G_K}=K$. Moreover we assume that $B/K$ is  a filtered equivariant algebra. In other words   there exists a decreasing filtration by $K$-subvector spaces  $\{B^i, i \in \mathbb  Z\}$, stable under the action of $G_K$,   such that  $K\subset B^0$ and $B^iB^j\subset B^{i+j}$ for $i, j \in \mathbb Z$. If $V$ is a $k$-representation of $G_K$, the filtration on $B$ induces natural filtrations on $B\otimes_k V$, $D_B(V)$  and $B\otimes_K D_B(V)$. More precisely for any integer $i$, we set 
$$(B\otimes_k V)^i=B^i\otimes_k V,\ \ D_B(V)^i=(B^i\otimes_k V )^{G_K},  $$
and we endow  $B\otimes_K D_B(V)$ with the  tensor product filtration given by $(B\otimes_K D_B(V))^i=\sum _{k+l=i}B^k\otimes_K D_B(V)^l$. 

We recall that a Hausdorff topological field $F$ is called t-Henselian if for every etale morphism $X\rightarrow Y$ of $F$-varieties the induced map 
$X(F)\rightarrow Y(F)$ is a local homeomorphism,  where $X(F)$ and $Y(F)$ are endowed with the topology of $F$ ( \cite{Gabber14}  Section 3 and \cite{pp} ). 

\begin{defn} We say that the algebra $B/K$ is well filtered if the following conditions hold:  
\begin{enumerate}
\item There exists a $G_K$-equivariant $\bar K$-structure on $B^0$ extending the $K$-algebra structure. 
\item $C:=\mathrm{gr}^0(B)$ is a   t-Henselian field  when endowed with the quotient topology such that $C^{\times}=C^{\times 2}$. 
\end{enumerate}

\end{defn}
If $B/K$ is filtered  a $B$-admissible representation $V$ of $G_K$ is said to be {\it filtered}  if $\alpha_V$ and $\alpha_V^{-1}$ both respect the filtrations. 

Let $\rho: G_K\rightarrow {\bf O}(q)(k)$ be a continuous,  orthogonal and $B$-admissible representation.  We define  {\it the first Stiefel-Whitney class $sw_1(\rho)$} as $\mbox{det}(\rho) \in 
H^1(G_K, \mathbb Z/2\mathbb Z)$.  We now   consider the central extension of group schemes 
\begin{equation}\label{exact1} 1\rightarrow \mathbb Z/2\mathbb Z\rightarrow \tilde{\bf O}(q_C)\rightarrow {\bf O}(q_C)\rightarrow 1, \end{equation}
where $q_C=q\otimes_k C$. We associate to (\ref{exact1}) an exact sequence of groups 
\begin{equation}\label{exact2}
1\rightarrow \mathbb Z/2\mathbb Z\rightarrow \tilde{\bf O}(q)(C)\rightarrow {\bf O}(q)(C)\rightarrow H^1(\mbox{Spec}(C)_{\mathrm{et}}, \mathbb  Z/2\mathbb  Z).\end{equation}
 Since $B/K$ is well filtered and since the morphism $\tilde{\bf O}(q_C)\rightarrow {\bf O}(q_C)$ is etale we deduce from (\ref{exact2})  an exact sequence of topological groups 
 \begin{equation}\label{exact3}
 1\rightarrow \mathbb Z/2\mathbb Z\rightarrow \tilde{\bf O}(q)(C)\rightarrow {\bf O}(q)(C)\rightarrow 1, 
 \end{equation}
 with the trivial $G_K$-action.   Since the map $k \rightarrow C$ is a continuous and  $G_K$-equivariant  ring homomorphism,  the map $\rho$ defines a $1$-continuous cocycle of $G_K$ with values in $C$ and thus a class $[\rho] \in H^1(G_K, {\bf O}(q)(C))$. We define {\it the second Stiefel-Whitney  class 
$sw_2(\rho)$ of $\rho$}  as the image of $[\rho]$ by the boundary map of continuous cohomology 
$$\delta^2: H^1(G_K,  {\bf O}(q)(C))\rightarrow H^2(G_K,  \mathbb Z/2\mathbb Z).$$

Composing $\rho$ with the spinor norm homomorphism $$sp:{\bf O}(q)(k)\rightarrow k^{\times}/k^{\times 2}\rightarrow K^{\times}/K^{\times 2}=H^1(G_K, \mathbb Z/2\mathbb Z),$$ we obtain a homomorphism  
$sp\circ \rho \in \mbox{Hom}(G_K, H^1(G_K, \mathbb Z/2\mathbb Z))$. {\it The spinor class $sp_2(\rho)$} 
is defined as the image of $sp\circ \rho$ by the map
$$\mbox{Hom}(G_K, H^1(G_K, \mathbb Z/2\mathbb Z))
\simeq H^1(G_K, \mathbb Z/2\mathbb Z)\otimes H^1(G_K, \mathbb Z/2\mathbb Z)\longrightarrow H^2(G_K, \mathbb Z/2\mathbb Z)$$
defined by cup-product.

\begin{thm}\label{thm-Fontaine} Let $B$ be  a $(k, G_K)$-regular  algebra and let $\rho: G_K\rightarrow {\bf O}(q)(k)$ be a continuous,  orthogonal and $B$-admissible representation. 
 Then  we have:
 \begin{enumerate}
 \item $w_1(q_D)=w_1(q)+sw_1(\rho).$
 \item Moreover, if $B/K$ is well filtered and the representation  is filtered then   
 $$w_2(q_D)=w_2(q) +w_1(q)sw_1(\rho)+sw_2(\rho)+sp_2(\rho).$$
 \end{enumerate}
\end{thm}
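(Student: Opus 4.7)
The plan is to apply Theorem \ref{thm-Tannakian} to the $k$-linear neutral Tannakian category $(\mathrm{Rep}_k^B(G_K),\omega) \simeq (\mathrm{Rep}_k(\mathcal{G}_B),\omega)$ with second fiber functor $\eta = D_B: \mathrm{Rep}_k^B(G_K) \to \mathrm{Vec}_K \subset \mathrm{VB}(\mathrm{Spec}(K))$, taking $X = \mathrm{Spec}(K)$. The orthogonal representation $(V,q,\rho)$ defines an orthogonal object with $q_\omega = q$ and $q_\eta = q_D$, and the $\mathcal{G}_{B,K}$-protorsor $T_{\omega_X,\eta}$ is by construction $T_B$. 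Theorem \ref{thm-Tannakian} then yields
$$\delta^1_q(T_B) = w_1(q) + w_1(q_D), \qquad \delta^2_q(T_B) = w_2(q) + w_1(q)^2 + w_1(q)\cdot w_1(q_D) + w_2(q_D),$$
which rearrange to $w_1(q_D) = w_1(q) + \delta^1_q(T_B)$ and $w_2(q_D) = w_2(q) + w_1(q)\cdot\delta^1_q(T_B) + \delta^2_q(T_B)$. The theorem therefore reduces to the two identifications $\delta^1_q(T_B) = sw_1(\rho)$ (which will hold in general) and $\delta^2_q(T_B) = sw_2(\rho) + sp_2(\rho)$ (which will require the filtered hypothesis).

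For part (1), I would unwind the definitions: $\delta^1_q(T_B)$ is the pullback of the generator of $H^1(B_{\mathbb{Z}/2\mathbb{Z}}, \mathbb{Z}/2\mathbb{Z})$ along the composite $\mathrm{Spec}(K)_{\mathrm{fppf}} \stackrel{T_B}{\longrightarrow} B_{\mathcal{G}_{B,K}} \longrightarrow B_{\mathbf{O}(q_K)} \stackrel{B_{\det}}{\longrightarrow} B_{\mathbb{Z}/2\mathbb{Z}}$. By Lemma \ref{twisting-lemma}, the induced $\mathbf{O}(q_K)$-torsor is $\mathbf{Isom}(q_K, q_D)$, whose class in $H^1_{\mathrm{et}}(K,\mathbf{O}(q_K))$ corresponds under continuous/\'etale cohomology to the cocycle $\rho : G_K \to \mathbf{O}(q)(k) \hookrightarrow \mathbf{O}(q)(\overline{K})$ via the standard dictionary between $B$-admissible representations and torsors of fiber functors. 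Applying $\det$ on both sides yields $\delta^1_q(T_B) = \det(\rho) = sw_1(\rho)$.

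For part (2), the key new input is the $G_K$-equivariant $K$-algebra map $\overline{K} \to \mathrm{Fil}^0(B) = B^0$ combined with the reduction $B^0 \twoheadrightarrow C$, which produces a commutative diagram relating the classifying morphism of $T_B$ to a $C$-valued cocycle governing $sw_2(\rho)$. Base-changing the Pin extension (\ref{C_qB}) to $C$ gives an \'etale central extension of $C$-group schemes, and the hypotheses that $C$ is t-Henselian and $C^\times = C^{\times 2}$ ensure that passing to $C$-points produces exactly the topological short exact sequence (\ref{exact3}) used to define $sw_2(\rho)$; the assumption that $V$ is a \emph{filtered} $B$-admissible representation makes the comparison with the associated-graded piece $\mathrm{gr}^0$ well behaved. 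I would then establish a commutative diagram of topoi expressing that the pullback of the Pin class $[C_{q_\omega}] \in H^2(B_{\mathbf{O}(q_\omega)}, \mathbb{Z}/2\mathbb{Z})$ along $T_B$ matches the boundary of $[\rho]$ with respect to (\ref{exact3}), up to a spinor-norm correction. The hardest step is to pin down this discrepancy as precisely $sp_2(\rho)$: this boils down to the classical decomposition of the Pin class via the spinor norm $sp : \mathbf{O}(q) \to \mathbb{G}_m/\mathbb{G}_m^2$, which on cohomology contributes exactly the cup product $sw_1(\rho) \cup (sp \circ \rho) = sp_2(\rho)$. The cleanest route is an explicit cocycle computation: choose lifts of $\rho(g) \in \mathbf{O}(q)(k) \subset \mathbf{O}(q)(C)$ to $\widetilde{\mathbf{O}}(q)(C)$ using t-Henselianness, expand the resulting $2$-cocycle in $\mathbb{Z}/2\mathbb{Z}$, and recognize the spinor $2$-cocycle as the difference with the standard cocycle computing $sw_2(\rho)$.
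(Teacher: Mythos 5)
Your reduction of the theorem via Theorem \ref{thm-Tannakian} to the two identities $\delta^1_q(T_B)=sw_1(\rho)$ and $\delta^2_q(T_B)=sw_2(\rho)+sp_2(\rho)$ is exactly the paper's first step, and your sketch of part (1) is the paper's argument: identify $\rho_*(T_B)$ with $\mathbf{Isom}(q_K,q_D)$, represent it by a $1$-cocycle $c$ with values in $\mathbf{O}(q)(\bar K)$ coming from a chosen $\bar K$-point $f$, and compare the class $i_B([c])$ over $B$ with the cocycle $c'_B$ associated with the comparison isomorphism $\alpha_V^{-1}$; the $G_K$-equivariance of $\alpha_V$ gives $c'_B=\rho$, and both $\bar K$-points $f_B$, $\alpha_V^{-1}$ give cohomologous cocycles, so $i_B([c])=[\rho]$ and $\det([c])=\det(\rho)=sw_1(\rho)$.

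For part (2) there is a genuine gap. The delicate point is not the decomposition into $sw_2+sp_2$ (the paper simply cites \cite{Saito95}, Lemma 3, to rewrite $sw_2(\rho)+sp_2(\rho)=\delta^2_C(\rho)$, the boundary for the sequence (\ref{exact3}) over $C$); the delicate point is showing that the torsor class $[c]\in H^1(G_K,\mathbf{O}(q)(\bar K))$, pushed along $\bar K\to\mathrm{Fil}^0(B)\to C$, equals $[\rho]\in H^1(G_K,\mathbf{O}(q)(C))$. For degree $1$ the identity $i_B([c])=[\rho]$ over $B$ suffices because $\det$ factors through $\mathbf{O}(q)(B)$; but to reduce modulo the filtration you must know that both $B$-points $f_B$ and $\alpha_V$ lie in the subgroup $\mathbf{O}^\bullet(q)(B)$ of \emph{filtration-preserving} isometries of $B\otimes_k V$, so that the class $i^\bullet_B([c])=[\rho]$ in $H^1(G_K,\mathbf{O}^\bullet(q)(B))$ can then be pushed along $\mathbf{O}^\bullet(q)(B)\to\mathbf{O}(q)(C)$ to obtain $i_C([c])=[\rho]$. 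This is exactly where the hypothesis that the representation is \emph{filtered} (so that $\alpha_V$ preserves the filtration) and that $\bar K$ lands in $\mathrm{Fil}^0(B)$ (so that $f_B$ does too) are used; your write-up mentions these hypotheses but does not extract the filtration-preserving group $\mathbf{O}^\bullet(q)(B)$, which is the actual mechanism. Without it you have no map from the torsor side (over $\bar K$) to the $C$-side, and the ``commutative diagram of topoi'' you gesture at is not available. Your final ``explicit cocycle computation'' to extract the spinor correction is a reasonable route to reprove Saito's Lemma 3, but it is orthogonal to the missing step; also note the formula you state, $sw_1(\rho)\cup(sp\circ\rho)$, is not the definition of $sp_2(\rho)$ used here, which is the cup-product image of $sp\circ\rho$ regarded as an element of $H^1(G_K,\mathbb{Z}/2\mathbb{Z})\otimes H^1(G_K,\mathbb{Z}/2\mathbb{Z})$.
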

\begin{proof} It follows from Theorem  \ref{thm-Tannakian} that in order to prove the first equality it suffices  to prove  the equality
$$\delta^1_q(T_B)=sw_1(\rho) $$ 
in $H^1(G_K, \mathbb Z/2\mathbb Z)$. The morphism of group schemes $\rho: \mathcal G_B \rightarrow  {\bf O}(q)$ induces a map $\rho_*: H^1(G_K, \mathcal G_B)\rightarrow H^1(G_K, 
{\bf O}(q))$. The image of $T_B$ by $\rho_*$  is the ${\bf O}(q)$-torsor $\mbox {\bf Isom}(q_K, q_D)$ over $K$ (  [CCMT],  Section 4).  It  defines a class in the continuous cohomology set $H^1(G_K, {\bf O}(q)(\bar K))$, when  ${\bf O}(q)(\bar K)$ is endowed with the discrete topology.  One defines a $1$-cocycle representative of $\rho_*(T_B)$ by choosing a $\bar K$-point of $\mbox {\bf Isom}(q_K, q_D)$, given by an isometry 
$$f: \bar K\otimes_k V\rightarrow \bar K\otimes_KD_B(V), $$
and considering the map $c: G_K\rightarrow {\bf O}(q)(\bar K)$, $g \rightarrow f^{-1}\circ (g\otimes 1)\circ f\circ  (g^{-1}\otimes 1)$.  We have the equality 
\begin {equation} \delta^1_q(T_B)=\mbox {det}([c]),  \end{equation} where $[c]\in H^1(G_K, {\bf O}(q)(\bar K))$ is the class of $c$. 

 Scalar extension by $B$ induces  a $G_K$-equivariant group homomorphism ${\bf O}(q)(\bar K) \rightarrow {\bf O}(q)(B)$ which is continuous, with respect to the discrete topology  on ${\bf O}(q)(\bar K)$ and the topology induced by $B$ on ${\bf O}(q)(B)$. Therefore this group homomorphism induces a map of pointed sets 
$$i_B: H^1(G_K, {\bf O}(q)(\bar K))\rightarrow H^1(G_K, {\bf O}(q)(B)), $$
and the  class $i_B([c])$ is represented by the cocycle $c_B$ associated  with the $B$-point $f_B$ of $\mbox {\bf Isom}(q_K, q_D)$ obtained by  scalar extension from $f$
$$f_B: B\otimes_{ k} V\rightarrow B\otimes_{ K} D_B(V).$$
Since $\rho$ is $B$-admissible we obtained an other $B$-point of $\mbox {\bf Isom}(q, q_D)$ by considering 
$$\alpha_V^{-1}: B\otimes _k V\rightarrow B\otimes _K D_B(V).$$
Let   $c'_B: G_K\rightarrow {\bf O}(q)(B)$ be the map  $(g \rightarrow \alpha_V\circ (g\otimes 1)\circ \alpha_V^{-1}\circ  (g^{-1}\otimes 1)$. 
  We  observe that $\alpha_V$  is    $G_K$-equivariant  when  $G_K$ acts diagonally on $B\otimes V$ and via $B$ on $B\otimes D_B(V)$. Therefore we have
\begin{equation}\label {equiv} \alpha_V\circ (g\otimes 1)\circ \alpha_V^{-1}\circ  (g^{-1}\otimes 1)(b\otimes v)=\alpha_V\circ (g\otimes 1)\circ \alpha_V^{-1}(g^{-1}(b\otimes \rho(g)(v))=b\otimes \rho(g)(v). 
\end{equation} 
Since $k \rightarrow B$ is continuous and preserves $G_K$-action, the group homomorphism $\rho: G_K\rightarrow {\bf O}(q)(k)$ defines a  $1$-cocycle of the continuous cohomology set $H^1(G_K, {\bf O}(q)(B))$ and (\ref {equiv})  shows   the equality $c'_B=\rho$. Moreover since the $1$-cocycles $c_B$ and $c'_B$ are both defined through $B$- points they are cohomologous. Thus  we have proved that 
\begin{equation}\label {fond} i_B([c])=[\rho] \end{equation}  in $H^1(G_K, {\bf O}(q)(B))$. 
Since the group homomorphism $\mbox{det}: {\bf O}(q)(k)\rightarrow \mathbb Z/2\mathbb Z$ factors through ${\bf O}(q)(B)$ we conclude that 
\begin{equation}
\delta^1_q(T_B)=\mbox {det}([c])=\mbox{det}([i_B([c])=\mbox {det}(\rho)=sw_1(\rho)
\end{equation}

We now assume that $B$ is well filtered and the representation $\rho$ is  filtered. It follows from Theorem  \ref{thm-Tannakian} and the previous description of  $\rho_*(T_B)$ that it is sufficient to show the equality 
$$\delta^2([c])=sw_2(\rho)+sp_2(\rho), $$ where $\delta^2$ is the boundary map associated with the exact sequence of groups 
$$1\rightarrow \mathbb Z/2\mathbb Z\rightarrow \tilde{\bf O}(q)(\bar K)\rightarrow {\bf O}(q)(\bar K)\rightarrow 1$$
with the natural $G_K$-action. Moreover it follows from a general result of \cite{Saito95}, Lemma 3,  that 
$sw_2(\rho)+sp_2(\rho)=\delta_C^2(\rho)$,   where $\delta^2_C$  is the boundary map associated with  the exact sequence of topological groups  (\ref{exact3})
$$1\rightarrow \mathbb Z/2\mathbb Z\rightarrow \tilde{\bf O}(q)(C)\rightarrow {\bf O}(q)(C)\rightarrow 1.$$
Therefore, in order to prove (ii) we have to show,  in $H^2(G_K, \mathbb Z/2\mathbb Z)$,  that 
$$\delta^2([c])=\delta^2_C(\rho).$$

We denote by ${\bf O}^{\bullet}(q)(B)$ the group of isometries $B\otimes_k V\rightarrow B\otimes_k V$ which respect the filtration of $B\otimes_k V$. Since $B$ is well filtered we note that $G_K$ acts on ${\bf O}^{\bullet}(q)(B)$. It is now easy to check that since $f_B$ and $\alpha_V$ both preserve the filtration thus  the $1$-cocycles $c_B$ and $c'_B$ have  values in ${\bf O}^{\bullet}(q)(B)$ and define the same class in $H^1(G_K,  {\bf O}^{\bullet}(q)(B))$.  We conclude to the equality  
\begin{equation}\label {fond1} i^{\bullet}_B([c])=[\rho] 
\end{equation}                                                            
in $H^1(G_K,  {\bf O}^{\bullet}(q)(B)))$. Any isometry $ B\otimes_k V \rightarrow B\otimes_k V$ which respects  the filtration induces an isometry 
$\mathrm{gr}^0(B)\otimes V\rightarrow \mathrm{gr}^0(B)\otimes V$. This yields to a  group homomorphism ${\bf O}^{\bullet}(q)(B)\rightarrow {\bf O}(q)(C)$ and thus a map 
$H^1(G_K, {\bf O}^{\bullet}(q)(B))\rightarrow H^1(G_K, {\bf O}(q)(C)$. Using this map we deduce from  (\ref {fond1}) that 
\begin{equation}  \label{fond2} i_C([c])=[\rho] \end {equation} 
in $H^1(G_K, {\bf O}(q)(C)$, where $i_C$ is induced on the cohomology sets  by the group homomorphism ${\bf O}(q)(\bar K)\rightarrow {\bf O}(q)(C)$.  
From (\ref {fond2}) and from the commutativity of the  diagram

\[\xymatrix{\{1\}\ar[r]^{}&
({\mathbb Z}/2{\mathbb Z})\ar[r]^{}\ar[d]_{\textrm{\emph{Id}}}&\widetilde {\bf O}(q)(\bar K)\ar[r]^{}\ar[d]_{\widetilde i_C}\ar[r]^{}& {\bf O}(q)(\bar K)\ar[d]_{i_C}\ar[r]^{} &\{1\}\\
\{1\}\ar[r]^{}&({\mathbb Z}/2{\mathbb Z })\ar[r]^{}&\widetilde {\bf O}(q)(C)\ar[r]^{}& {\bf O}(q)(C)\ar[r]^{}&\{1\} \\
}\]
we deduce that $\delta^2([c])=\delta^2_C(\rho)$ as required

\end{proof}

\subsection{Examples}

We consider several examples of $(k, G_K)$-algebras where we can apply the formulas of Theorem \ref{thm-Fontaine}. 

We introduce a small amount of notations. We set 
$$H^*(K, \mathbb Z/ 2\mathbb Z)=\bigoplus_{0\leq i \leq 2}H^i(G_K, \mathbb Z/2\mathbb Z) $$
endowed by cup product with a structure of abelian group. For a quadratic form $q$ over $K$ we define its Hasse-Witt invariant by  $w(q)=1+w_1(q)+w_2(q) \in H^*(K, \mathbb Z/ 2\mathbb Z)$. For  a virtual quadratic form $q_1-q_2$ over $K$ we write $w(q_1-q_2)=w(q_1)w(q_2)^{-1}$. For a $B$-admissible orthogonal representation 
$\rho$ of $G_K$ we define respectively the Stiefel-Whitney  and the spinor norm classes of $\rho$ by  $sw(\rho)=1+sw_1(\rho) +sw_2(\rho)$  and $sp(\rho)=1 +sp_2(\rho)$
\subsubsection{Fr\"ohlich-Serre formulas}
We let $k$ be a field of characteristic $\neq 2$, $K=k$ and $B=\bar k$ the separable closure of $k$   endowed with the discrete topology. Then $\bar k/k$ is a $(k, G_k)$-algebra, which satisfies the hypotheses of Section 4.1. 
The $\bar k$-admissible orthogonal representations of $G_k$ are the group homomorphisms $\rho: G_k\rightarrow {\bf O}(q)(k)$,  with open kernel, where $q$ is a non degenerate quadratic form on the $k$-vector space $V$  of finite dimension. It is clear that $\bar k$ is a well filtered  when endowed with the filtration given by  $\bar k^0=\bar k$ and $\bar k^1=\{0\}$ and that any orthogonal representation is filtered.  The $\bar k$-twist of $(V, q)$ is the $k$-vector space $(\bar k\otimes_k V)^{G_k}$, with the form obtained by restriction from $\bar k\otimes_k q$.  Theorems 2 and 3 of 
Fr\"ohlich \cite{Fr\"ohlich85} are the equalities of Theorem \ref{thm-Fontaine} in this special case. More precisely:
\begin{cor} \label{FS} Let $\rho: G_k\rightarrow {\bf O}(q)(k)$ be an orthogonal representation with open kernel and let $\underline q$ be the form $(\bar k\otimes q)^{G_k}$. Then 
$$sw(\rho)+sp(\rho)=w(\underline q-q).$$
\end{cor}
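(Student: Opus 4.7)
The plan is to deduce Corollary \ref{FS} as a direct specialization of Theorem \ref{thm-Fontaine} applied to the $(k,G_k)$-algebra $\bar{k}$, after an algebraic manipulation to pass from the additive identities of Theorem \ref{thm-Fontaine} to the multiplicative form involving the total classes $w$, $sw$, $sp$.

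First I would verify that the triple $(k,K=k,B=\bar{k})$ satisfies the hypotheses of Theorem \ref{thm-Fontaine}. The algebra $\bar{k}$ is a field, so it is trivially a domain, $\bar{k}^{G_k} = k = \mathrm{Fr}(\bar{k})^{G_k}$, and every nonzero element is invertible, so $\bar{k}$ is $(k,G_k)$-regular. Declaring $\bar{k}^0 = \bar{k}$ and $\bar{k}^1 = 0$ gives a $G_k$-equivariant filtration; the attached graded piece is $C = \mathrm{gr}^0(\bar{k}) = \bar{k}$, a separably closed field, which is automatically t-Henselian, and $C^\times = C^{\times 2}$ because $\mathrm{char}(k)\neq 2$. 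The $\bar{K}$-structure required on $B^0$ is tautological since $K=k$ and $B^0=\bar{k}=\bar{K}$. Thus $\bar{k}/k$ is well filtered in the sense of Section 5.

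Next I would check that an orthogonal representation $\rho:G_k\to \mathbf{O}(q)(k)$ with open kernel is automatically $\bar{k}$-admissible and filtered. The openness of $\ker(\rho)$ means $\rho$ factors through $\mathrm{Gal}(L/k)$ for some finite Galois extension $L/k$; classical Galois descent then gives $(\bar{k}\otimes_k V)^{G_k} = (L\otimes_k V)^{\mathrm{Gal}(L/k)}$, a $k$-vector space of dimension $\dim_k V$ for which $\alpha_V$ is an isomorphism. The filtered condition is vacuous because the filtration on $\bar{k}$ is concentrated in degree $0$. With this identification, the $\bar{k}$-twist $q_D$ of $q$ is precisely the restricted form $\underline{q} = (\bar{k}\otimes_k q)^{G_k}$.

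Theorem \ref{thm-Fontaine} then yields
\begin{align*}
w_1(\underline{q}) &= w_1(q) + sw_1(\rho),\\
w_2(\underline{q}) &= w_2(q) + w_1(q)\cdot sw_1(\rho) + sw_2(\rho) + sp_2(\rho).
\end{align*}
It remains to repackage these equalities in the multiplicative notation. Since we work modulo $2$ and truncate in degrees $>2$, one has
$$w(q)^{-1} = 1 + w_1(q) + w_2(q) + w_1(q)^2,$$
so expanding $w(\underline{q})\cdot w(q)^{-1}$ up to degree $2$ gives
$$w(\underline{q}-q) = 1 + (w_1(\underline{q})+w_1(q)) + (w_2(\underline{q})+w_2(q)+w_1(q)^2+w_1(\underline{q})\cdot w_1(q)).$$
Substituting the two identities above and using $w_1(\underline{q}) = w_1(q)+sw_1(\rho)$ to simplify the degree $2$ term (noting $w_1(q)\cdot w_1(\underline{q}) = w_1(q)^2 + w_1(q)\cdot sw_1(\rho)$), the degree $2$ coefficient becomes $sw_2(\rho)+sp_2(\rho)$, and the degree $1$ coefficient becomes $sw_1(\rho)$. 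Hence $w(\underline{q}-q) = 1 + sw_1(\rho) + sw_2(\rho) + sp_2(\rho) = sw(\rho) + sp(\rho)$, as claimed.

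The step I expect to be the most delicate is the verification that the paper's general-topology definitions of $sw_i(\rho)$ and $sp_2(\rho)$, which involve the t-Henselian ring $C$ and its continuous cohomology, collapse onto Fr\"ohlich's classical definitions when $C=\bar{k}$ carries the discrete topology; this amounts to observing that in this case the continuous cohomology of $G_k$ with values in $\mathbf{O}(q)(\bar{k})$ and $\widetilde{\mathbf{O}}(q)(\bar{k})$ coincides with the usual profinite Galois cohomology, so that the boundary map computing $sw_2(\rho)+sp_2(\rho)$ is the one used in \cite{Fr\"ohlich85}. Once this identification is made, no further computation is needed.
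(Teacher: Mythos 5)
Your proof is correct and takes essentially the same route as the paper: Section 5.2.1 obtains Corollary \ref{FS} precisely by specializing Theorem \ref{thm-Fontaine} to $K=k$, $B=\bar k$ with the discrete topology and the trivial filtration $\bar k^0=\bar k$, $\bar k^1=0$. You have simply written out the hypothesis checks and the multiplicative repackaging that the paper leaves implicit.
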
 

Let $H$ be  an open subgroup  of $G_k$. We   let $V$ be the $k$-vector space with basis the left-cosets  $gH$ of $H$ in $G_k$ and  $q$ be the quadratic form on $V$ which has $\{gH\}$ as an orthonormal basis. The action of $G_k$ by permuting the cosets $gH$ extends to an orthogonal representation $\rho$ of $G_k$. The Hasse-Witt invariants of $q$ are trivial since $q$ is the form $x_1^2+...x_n^2$. The $\bar k$-twist of $q$ is the trace form $\mbox{Tr}_{L/k}$ where $L$ is the fixed field of $H$. Moreover one can compute $sp_2(\rho)$ in this case and prove that it is equal to the cup product $(2)(d_{L/k})$ where $d_{L/k}$ is the relative discriminant of $L/k$. Therefore Corollary \ref{FS} provides us with the equality
 $$sw_2(\rho)+(2)(d_{L/k})=w_2(\mbox{Tr}_{L/k}), $$ which is 
Serre's formula  \cite{Serre84}.
\vskip 0.1 truecm
The references for the next two sections are \cite{Fontaine94} and \cite{Fontaine00}.   Throughout these  sections  $k$ and $ K$   denote complete discrete valuation fields  of characteristic $0$, whose
residue fields  are  perfect of characteristic $p$. We assume that $k \subset K$. We let $C_p$ be the completion of an algebraic closure $\bar K$ of $K$. We note that since $C_p$ is Henselian it is t-Henselian (\cite{Conrad12} Lemma 5-3, or \cite{Gabber14} Proposition 3.1.4). There exist many examples of $(k, G_K)$ regular rings in this set-up. For instance $C_p$ itself is $(k, G_K)$ regular. The $C_p$-admisible representations are those for which the inertia acts through a finite quotient. We can filter $C_p$ by setting $C_p^0=C_p$ and $C_p^1=\{0\}$. Therefore for any $C_p$-admissible orthogonal representation $\rho: G_K
\rightarrow {\bf O}(q)(k)$ we will deduce from Theorem \ref {thm-Fontaine} that 
$$sw(\rho)+sp(\rho)=w(q'-q), $$
where $q'$ is the form $(C_p\otimes_k q)^G_K$. We treat in more details the case of Hodge-Tate and de Rham representations. 
\subsubsection {Orthogonal Hodge-Tate representations}
 We fix a basis $t$ of $\mathbb Z_p(1)$, so that $g(t)=\chi(g)t$  for any $g\in G_K$, where $\chi$ is the cyclotomic character. 
We consider  the ring of Hodge-Tate periods  
$$B_{HT}=C_p[t, t^{-1}]=\bigoplus_{k\in \mathbb Z}C_pt^k.$$
This is a graded $C_p$-algebra endowed with a semilinear action of $G_K$. Moreover $B_{HT}$ is a $(k, G_K)$ regular ring such that $B_{HT}^{G_K}=K$.  We endow $B_{HT}$  with a filtration by setting
$$B_{HT}^i=\bigoplus_{k\geq i}C_pt^k, $$ and thus $\mathrm{gr}^0(B)\simeq C_p$.  One  checks that $B_{HT}$ is a well filtered algebra.  We denote by $(D_{HT}(V), q_{HT})$
 the $B_{HT}$-twist  of  an orthogonal Hodge-Tate representation $(V, q, \rho)$ of $G_K$. 
 \begin{cor}\label{HT} Let $\rho: G_K\rightarrow {\bf O}(q)(k)$ be a filtered  Hodge-Tate $k$-representation of $G_K$. Let $q_K$ be the form $K\otimes q$ and $q_{HT}$ be  the $B_{HT}$-twist of $q$. Then
 $$sw(\rho)+sp(\rho)=w( q_{HT}-q_K).$$ 
 \end{cor}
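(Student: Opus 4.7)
The plan is to deduce Corollary \ref{HT} directly from Theorem \ref{thm-Fontaine} applied to the Hodge-Tate period ring $B = B_{HT}$; the only substantive work is a routine verification that $B_{HT}$ fulfils the hypotheses of that theorem, after which the stated identity reduces to a short formal manipulation of total Hasse-Witt classes.

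First I would check that $B_{HT}$ is $(k, G_K)$-regular with $B_{HT}^{G_K} = K$, which is classical Hodge-Tate theory, and verify that $B_{HT}/K$ is well filtered with respect to the filtration $B_{HT}^i := \bigoplus_{\ell \geq i} C_p t^{\ell}$. The degree-zero associated graded is $\mathrm{gr}^0(B_{HT}) = C_p$, a complete algebraically closed nonarchimedean field; it is Henselian and hence t-Henselian in the sense of \cite{Gabber14}, \cite{pp}, and its multiplicative group coincides with its squares since $C_p$ is algebraically closed. The chain $\bar K \hookrightarrow C_p \hookrightarrow C_p[t] \subset B_{HT}^0$ furnishes the required $G_K$-equivariant $\bar K$-structure on $\mathrm{Fil}^0(B_{HT})$ extending the $K$-algebra structure. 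The hypothesis that $\rho$ is filtered is part of the statement of the corollary.

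Once these conditions are in place, Theorem \ref{thm-Fontaine}, where the class $w_i(q) \in H^i(G_K, \mathbb{Z}/2\mathbb{Z})$ is read as the pullback of $w_i(q) \in H^i(G_k, \mathbb{Z}/2\mathbb{Z})$ along $G_K \to G_k$ and therefore coincides with $w_i(q_K)$, yields
$$w_1(q_{HT}) = w_1(q_K) + sw_1(\rho),$$
$$w_2(q_{HT}) = w_2(q_K) + w_1(q_K)\cdot sw_1(\rho) + sw_2(\rho) + sp_2(\rho).$$

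The last step is to repackage these two identities into the single equation $sw(\rho) + sp(\rho) = w(q_{HT} - q_K)$, where "$+$" denotes the cup-product group law on the truncated multiplicative set $1 + H^{\geq 1}(G_K, \mathbb{Z}/2\mathbb{Z})$ set up at the start of Section 5.2. Expanding $w(q_{HT}) \cdot w(q_K)^{-1}$ with $w(q_K)^{-1} = 1 + w_1(q_K) + w_1(q_K)^2 + w_2(q_K)$ in degrees $\leq 2$, the degree-one part is $w_1(q_{HT}) + w_1(q_K) = sw_1(\rho)$ by the first identity above; substituting the second identity into the degree-two part yields $w_1(q_K)\bigl(sw_1(\rho) + w_1(q_{HT}) + w_1(q_K)\bigr) + sw_2(\rho) + sp_2(\rho)$, and the bracket vanishes mod $2$ by the degree-one relation, leaving $sw_2(\rho) + sp_2(\rho)$. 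This matches $sw(\rho) \cdot sp(\rho) = 1 + sw_1(\rho) + sw_2(\rho) + sp_2(\rho)$ after truncating away the cross terms $sw_1(\rho) \cdot sp_2(\rho)$ and $sw_2(\rho) \cdot sp_2(\rho)$ in degrees $\geq 3$. The only real obstacle is the well-filtered verification for $B_{HT}$; the rest is formal.
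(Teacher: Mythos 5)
Your proposal is correct and follows exactly the route the paper takes: Corollary \ref{HT} is stated as an immediate consequence of Theorem \ref{thm-Fontaine} once one observes that $B_{HT}$ with the filtration $B_{HT}^i=\bigoplus_{\ell\geq i}C_p t^{\ell}$ is $(k,G_K)$-regular and well filtered (with $\mathrm{gr}^0(B_{HT})\simeq C_p$, which is t-Henselian and algebraically closed). Your explicit reassembly of the two degreewise identities into $w(q_{HT}-q_K)=sw(\rho)\cdot sp(\rho)$ in the truncated cohomology group is a correct unfolding of what the paper leaves implicit.
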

 Let $MF_K$ be the category of filtered $K$-vector spaces. This is a $K$-linear tensor, addive category, with an identity object and an internal homomorphism. Therefore,  following definition 3.1, we may define an orthogonal object of this category. One easily checks that a quadratic object of $MF_K$ is a filtered quadratic $K$-vector space as defined  in \cite
{Saito95}, Section 1. The fiber functor $D_{HT}: \mbox{Rep}^{dR}_k(G_K)\rightarrow \mathrm{Vec}_k$ factorises through $\mbox{Rep}^{dR}_k(G_K)\rightarrow MF_K$. Hence if $(V,q, \rho)$ is an orthogonal Hodge-Tate representation of $G_K$, then $(D_{dR}(V), q_{dR})$ is a filtered quadratic $K$-vector space. Moreover,  if  the representation is filtered, the comparison isomorphism $\alpha_V$ respects the filtration and so  induces an isometry 
$$\alpha_V^0: \mathrm{gr}^0(B_{HT}\otimes_K D_{HT}(V))\simeq \mathrm{gr}^0(B_{HT}\otimes_k V)$$
which can be written  
$$\alpha_V^0: \bigoplus_i \mathrm{gr}^i(D_{HT}(V))\otimes_K C_p(-i) \simeq C_p \otimes_k V. $$ 
Therefore the orthogonal representations considered  by Saito in \cite{Saito95} are precisely the filtered orthogonal Hodge-Tate $k$-representations of $G_K$ and Corollary \ref{HT}  provides a new proof of \cite{Saito95}, Theorem 1.

\subsubsection{Orthogonal de Rham  representations}\label{section-deRhamTwists}
We consider the ring of periods $B= B_{dR}$  endowed with "its natural topology" (see \cite{Fontaine00}, 5.2.2). Then $B_{dR}/k$ is a $(k, G_K)$-regular ring  such that $B_{dR}^{G_K}=K$. The field  $B_{dR}$ is the fraction field of  the discrete valuation ring $B_{dR}^+$. One can check that $B_{dR}$,  filtered   by the $i$-th powers of the maximal ideal of $B_{dR}^+$,  is well filtered and that any de Rham representation of $G_K$ is filtered \cite{Brinon-Conrad} Proposition 6.3.7.  The ring   $\mathrm{gr}^0(B_{dR})$   is by definition the residue field of $B_{dR}^+$.  It  identifies with 
$C_p$ and the   topology induced on $C_p$ coincides with the  usual topology. For any continuous, orthogonal, de Rham representation $\rho: G_K\rightarrow {\bf O}(q)(k)$, we can define the  $B_{dR}$-twist of $q$, that we denote by $q_{dR}$.  
\begin{cor}\label{dR} Let $\rho: G_K\rightarrow {\bf O}(q)(k)$ be an orthogonal de Rham $k$-representation of $G_K$. Let $q_K=K\otimes_k q$ and let $q_{dR}$ be the $B_{dR}$- twist of $q$. Then 
$$sw(\rho)+sp(\rho)=w(q_{dR}-q_K).$$
\end{cor}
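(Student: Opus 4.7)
The plan is to apply Theorem \ref{thm-Fontaine} directly to the $(k,G_K)$-regular algebra $B = B_{dR}$. All the hypotheses have already been verified in the setup preceding the statement: $B_{dR}$ is $(k,G_K)$-regular with $B_{dR}^{G_K}=K$; the filtration by powers of the maximal ideal of $B_{dR}^+$ makes $B_{dR}$ well filtered, since $\mathrm{gr}^0(B_{dR})\simeq C_p$ is t-Henselian (being Henselian, hence t-Henselian by \cite{Conrad12} or \cite{Gabber14}) and satisfies $C_p^\times = C_p^{\times 2}$; finally, any de Rham representation is filtered in the sense of Section \ref{section-Btwist} by \cite{Brinon-Conrad} Proposition 6.3.7. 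Thus both parts of Theorem \ref{thm-Fontaine} apply to $\rho$ with $B=B_{dR}$, and give, in $H^*(G_K,\mathbb{Z}/2\mathbb{Z})$,
\begin{equation*}
w_1(q_{dR}) = w_1(q_K) + sw_1(\rho), \qquad w_2(q_{dR}) = w_2(q_K) + w_1(q_K)\cdot sw_1(\rho) + sw_2(\rho) + sp_2(\rho),
\end{equation*}
where $w_i(q_K)$ is the pull-back of $w_i(q)$ under $H^i(k,\mathbb{Z}/2\mathbb{Z})\to H^i(K,\mathbb{Z}/2\mathbb{Z})$.

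It then only remains to repackage these two identities in the multiplicative notation of the excerpt. Since we work modulo $2$ and truncate above degree $2$, an elementary computation gives
\begin{equation*}
w(q_K)^{-1} = 1 + w_1(q_K) + w_2(q_K) + w_1(q_K)^2
\end{equation*}
in the truncated ring $\bigoplus_{0\le i\le 2} H^i(G_K,\mathbb{Z}/2\mathbb{Z})$. Multiplying by $w(q_{dR}) = 1 + w_1(q_{dR}) + w_2(q_{dR})$, substituting the two displayed formulas for $w_i(q_{dR})$, and collecting terms in each degree, the degree $1$ part reduces to $sw_1(\rho)$ and the degree $2$ part reduces to $sw_2(\rho) + sp_2(\rho)$ (the cancellations $w_1(q_K)^2 + w_1(q_{dR})\cdot w_1(q_K) = w_1(q_K)\cdot sw_1(\rho)$ combine with the last term in the formula for $w_2(q_{dR})$ to kill all mixed terms). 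Hence
\begin{equation*}
w(q_{dR}-q_K) = 1 + sw_1(\rho) + sw_2(\rho) + sp_2(\rho) = sw(\rho) + sp(\rho),
\end{equation*}
the last equality because $sw_1(\rho)\cdot sp_2(\rho)$ lies in $H^3$ and is therefore truncated away when forming the product $sw(\rho)\cdot sp(\rho)$ in this ring, while the notation ``$+$'' refers precisely to the abelian group operation on $H^*(K,\mathbb{Z}/2\mathbb{Z})$ induced by cup product.

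There is no real obstacle: the content of the corollary is entirely contained in Theorem \ref{thm-Fontaine} together with the standard $p$-adic Hodge-theoretic fact that the de Rham period ring meets the axioms of a well-filtered $(k,G_K)$-regular algebra. The only point requiring some attention is the degree-$2$ bookkeeping in the truncated Whitney-sum ring (use of characteristic $2$, truncation at degree $2$, and the relation between $q_K$ and the pull-back of $q$), but this is purely formal.
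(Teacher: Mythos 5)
Your proposal is correct and follows essentially the same route as the paper: the paper presents Corollary \ref{dR} as an immediate consequence of Theorem \ref{thm-Fontaine}, having just verified (in the preceding paragraph) that $B_{dR}$ is $(k,G_K)$-regular, well filtered, and that every de Rham representation is filtered. You merely make explicit the degree-by-degree bookkeeping in the truncated Whitney-sum notation, which the paper leaves implicit; your computation (including the observation that $sw_1(\rho)\cdot sp_2(\rho)$ lands in degree $3$ and is truncated away) is accurate.
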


Let ${\bf \mathrm {Grad}}_K$ be the category of graded finite dimensional $K$-vector spaces. The fiber functor $D_{HT}$ factorizes through a fiber  functor $D_{HT}: Rep_k^{HT}(G_K)\rightarrow {\bf \mathrm {Grad}}_K$.  Let $(V, q, \rho)$ be an orthogonal de Rham representation.  Since $D_{dR}(V)$ is filtered we can consider $gr(D_{dR}(V))$ as an object of ${\bf \mathrm {Grad}}_K$ et prove that $gr(D_{dR}(V))$ and $D_{HT}(V)$ are isomorphic in this category. By applying the forgetful functor we obtain an isomorphism of quadratic space
$$ gr(D_{dR}(V))\simeq D_{HT}(V).$$
 
 Let  $X/K$ be  a proper and smooth scheme of even dimension $n$. The cup product defines  a non degenerate quadratic form $q$ on the $\mathbb Q_p$-vector space $V=H^n(X_{\bar {K}}, \mathbb Q_p)(n/2)$. The action of $G_{K}$ on $V$ provides us with  a continuous orthogonal de Rham representation
$\rho: G_{K}\rightarrow {\bf O}(q)(\mathbb Q_p)$ to which the equality of Corollary \ref{dR} applies. It follows from  a theorem of Faltings that  the $B_{dR}$-twist $(D_{dR}(V), q_{dR}) $ of $(V,q)$ coincides in this case, up to isometry,   with the $K$-vector space  $H^n_{dR}(X/K)$, endowed with the quadratic form defined by the cup product.  

\subsubsection{Orthogonal de Rham representation  in the case of  non  perfect residue field}
 We let $K$ be a  complete discrete valuation fields  of characteristic $0$, with residue field $k_K$  of characteristic $p$, such that $[k_K:k_K^p]=p^d$ with $d\geq 0$. We fix $\bar K/K$ an algebraic closure of $K$ and we set $G_K=\mathrm{Gal}(\bar K/K)$. Let $C$ be the completion of $\bar K$ for the $p$-adic topology. In \cite{Brinon1}  O. Brinon has  constructed a ring of periods $B_{dR}$ which is a generalization to the case of imperfect residue fields of the algebra  considered in Section 4.2.3. One should note that $B_{dR}$ is not necessarily a field when $d\geq 1$. This is a toplogical $(\mathbb Q_p, G_K)$-regular algebra such that $B_{dR}^{G_K}=K$.  Therefore for any $B_{dR}$-admissible othogonal representation $\rho: G_K \rightarrow {\bf O}(q)(\mathbb Q_p)$  we have 
$$ w_1(q_{dR})=w_1(q)+sw_1(\rho).$$
$B_{dR}$  has  a structure of $\bar K$-algebra and is endowed with a decreasing, exhaustive and separated filtration, stable under the action of $G_K$. It follows from \cite{Brinon1} Proposition 2.19  that $\mathrm{gr}^0(B_{dR})$ is a ring of polynomials over $C$ in $d$ variables and so that  $H^1(\mathrm{Spec}(C)_{\mathrm{et}}, {\mathbb Z}/2{\mathbb Z})=\{0\}$. Hence we obtain  from (\ref{exact2})  an exact sequence of  groups 
 \begin{equation}
 1\rightarrow \mathbb Z/2\mathbb Z\rightarrow \tilde{\bf O}(q)(C)\rightarrow {\bf O}(q)(C)\rightarrow 1. 
 \end{equation}
However for such a $C$ we don't know if  the map $\tilde{\bf O}(q)(C)\rightarrow {\bf O}(q)(C)$ is a local homeomorphism and thus if $B_{dR}$ is well filtered.  

\section{Orthogonal $E$-motives}
In this section we apply Theorem \ref{thm-Tannakian} to the tannakian category of Nori's mixed motives over a number field (see \cite{Nori} and \cite{Huber-Muller-Stach-12}). Here the fiber functors $\omega$ and $\eta$ are the Betti and the de Rham realization respectively, and the torsor $T_{\omega, \eta}$ is given by the spectrum of the algebra of Kontsevich's formal periods together with its natural action of the motivic Galois group.

\subsection{Nori motives}

\subsubsection{}Let $E/\mathbb{Q}$ be a number field with a given complex embedding $\sigma:E\rightarrow \mathbb{C}$. We denote by $\mathrm{NMM}_E$ the category of cohomological Nori mixed motives over $E$ with $\mathbb{Q}$-coefficients (see \cite{Nori}, \cite{Levine}, \cite{Huber-Muller-Stach-12}). It is a neutral Tannakian category with Tannaka dual $\mathcal{G}_{\mathrm{mot}}(E)$. More precisely, Betti cohomology induces a fiber functor
$$H^*_B:\mathrm{NMM}_E\longrightarrow \mathrm{Vec}_{\mathbb{Q}}$$
and the motivic Galois group $\mathcal{G}_{\mathrm{mot}}(E)$ is defined as the automorphism group-scheme ${\bf Aut}^{\otimes}(H^*_{B})$ of the tensor functor  $H^*_{B}$. By Tannaka duality, an object $M$ is determined by the $\mathbb{Q}$-linear representation
\begin{equation}\label{repres}
\mathcal{G}_{\mathrm{mot}}(E)\longrightarrow {\bf GL}(H^*_B(M))
\end{equation}
where ${\bf GL}(H^*_B(M))$ is seen as a group scheme over $\mathbb{Q}$. De Rham cohomology induces another fiber functor 
$$H^*_{dR}:\mathrm{NMM}_E\longrightarrow \mathrm{Vec}_E.$$
If $X$ is a variety over $E$, $Y\subset X$ a subvariety and $i$ an integer, then there is an object $h^i(X,Y)\in\mathrm{NMM}_E$ such that $H^*_B(h^i(X,Y))=H^i(X_{\sigma}(\mathbb{C}),Y_{\sigma}(\mathbb{C}),\mathbb{Q})$ and $H^*_{dR}(h^i(X,Y))=H^i_{dR}(X,Y)$. Here $X_{\sigma}(\mathbb{C})$ denotes the set of complex points of $X$ over $E$ with respect to the complex embedding $\sigma$. We write $h^i(X):=h^i(X,\emptyset)$ and $h^i(X,Y)(j)=h^i(X,Y)\otimes {\bf 1}(j)$. The category of Artin motives $\mathrm{NMM}^0_E$ is the Tannakian sub-category of $\mathrm{NMM}_E$ generated by objects of the form $h^0(\mathrm{Spec}(F))$ where $F/E$ is a finite extension. Then $\mathrm{NMM}_E^0$ is equivalent to the category of $\mathbb{Q}$-linear representations of the absolute Galois group $G_E$. For any prime number $p$, \'etale $p$-adic cohomology yields a tensor functor
$$H^*_p:\mathrm{NMM}_E\longrightarrow \mathrm{Rep}_{\mathbb{Q}_p}(G_{E})$$
where $\mathrm{Rep}_{\mathbb{Q}_p}(G_{E})$ denotes the Tannakian category of continuous $\mathbb{Q}_p$-linear representations of $G_{E}$. Artin's comparison isomorphism yields an isomorphism of tensor functors $\mathrm{Ou}\circ {H^*_p}\simeq H^*_B\otimes \mathbb{Q}_p$, where $\mathrm{Ou}$ is the forgetful functor $\mathrm{Rep}_{\mathbb{Q}_p}(G_{E})\rightarrow \mathrm{Vec}_{\mathbb{Q}_p}$.  Note also that $p$-adic Hodge theory gives an isomorphism of tensor functor
\begin{equation}\label{p-adiccompa}
\left(\mathrm{Ou}\circ {H^*_p}\right)\otimes_{\mathbb{Q}_p} B_{dR,\lambda}\simeq \left(H^*_{dR}\otimes_E E_{\lambda}\right)\otimes_{E_{\lambda}} B_{dR,\lambda}
\end{equation}
for any prime number $p$ and any finite place $\lambda$ of $E$ lying over $p$, where $B_{dR,\lambda}$ is the field of $p$-adic periods (see Section \ref{section-deRhamTwists}). Moreover, (\ref{p-adiccompa}) is compatible with the natural Galois actions and filtrations. For an archimedean place $\lambda$ of $E$ we have a tensor functor
$$\fonc{H^*_{\lambda}}{\mathrm{NMM}_E}{\mathrm{Rep}_{E_{\lambda}}(G_{E_{\lambda}})}{h^i(X,Y)(j)}{H^i\left(X_{\lambda}(\mathbb{C}),Y_{\lambda}(\mathbb{C}),(2i\pi)^j\mathbb{Q}\right)\otimes_{\mathbb{Q}}E_{\lambda}}$$
where $X_{\lambda}(\mathbb{C})$ is the set of $\mathbb{C}$-points of $X$ over $E$ with respect to the complex embedding $\tilde{\lambda}:E\rightarrow \mathbb{C}$, for any choice of a representative $\tilde{\lambda}$ of $\lambda$.
Finally, the tensor functor $H^*_p$ induces
a continuous morphism
$$G_{E}\longrightarrow \mathcal{G}_{\mathrm{mot}}(E)(\mathbb{Q}_p).$$
Here $\mathcal{G}_{\mathrm{mot}}(\mathbb{Q}_p)$ is endowed with its natural topology (i.e. the projective limit of the $p$-adic topology on the $\mathbb{Q}_p$-points of the algebraic quotients of $\mathcal{G}_{\mathrm{mot}}(\mathbb{Q}_p)$.

\subsubsection{} We refer to \cite{Stalder08} for generalities about scalar extensions of Tannakian categories, and we define the category $\mathrm{NMM}_E(E)$ of Nori mixed motives with $E$-coefficients as the scalar extension
$\mathrm{NMM}_E(E):=\mathrm{NMM}_E\otimes_{\mathbb{Q}}E$. Then $\mathrm{NMM}_E(E)$ is an $E$-linear Tannakian category endowed with a universal $\mathbb{Q}$-linear tensor functor
$$t:\mathrm{NMM}_E\longrightarrow \mathrm{NMM}_E(E).$$ In particular, a $\mathbb{Q}$-linear tensor functor $\mathrm{NMM}_E\rightarrow \mathcal{C}$ into some $E$-linear tensor category $\mathcal{C}$ induces an essentially unique $E$-linear tensor functor $\mathrm{NMM}_E(E)\rightarrow \mathcal{C}$. We still denote by $h^i(X,Y)(j)$ its image in $\mathrm{NMM}_E(E)$.

The Betti realization functor above induces a fiber functor
$$H^*_{B,E}:\mathrm{NMM}_E(E)\longrightarrow \mathrm{Vec}_E$$
and we have $$\mathcal{G}_{E}:={\bf Aut}^{\otimes}(H^*_{B,E})\simeq {\bf Aut}^{\otimes}(H^*_{B})\otimes_{\mathbb{Q}}E=:\mathcal{G}_{\mathrm{mot}}(E)\otimes_{\mathbb{Q}}E.$$
The realization functor $H^*_{dR}$ above induces a fiber functor
$$H^*_{dR,E}:\mathrm{NMM}_E(E)\longrightarrow  \mathrm{Vec}_E.$$
The $p$-adic realization functor induces
$$H^*_{\lambda}:\mathrm{NMM}_E(E)\longrightarrow \mathrm{Rep}_{E_\lambda}(G_{E})$$
for any place $\lambda$ of $E$ lying over $p$. Artin's comparison theorem gives an isomorphism of tensor functors
\begin{equation}\label{Artin}
\mathrm{Ou}\circ H^*_{\lambda}\simeq H_{B,E}^*\otimes_{E}E_{\lambda},
\end{equation}
where $\mathrm{Ou}$ is the forgetful functor $\mathrm{Rep}_{E_\lambda}(G_{E})\rightarrow \mathrm{Vec}_{E_\lambda}$.
For any finite place $\lambda$ of $E$, (\ref{p-adiccompa}) induces by extension of scalars an isomorphism of tensor functors
\begin{equation}\label{p-adic-comparison}
\left(\mathrm{Ou}\circ H^*_{\lambda}\right)\otimes_{E_{\lambda}}B_{dR,\lambda}\simeq H^{*}_{dR,E_{\lambda}}\otimes_{E_{\lambda}}B_{dR,\lambda}
\end{equation}
which is moreover compatible with the natural $G_{E_{\lambda}}$-actions and filtrations, where $H^{*}_{dR,E_{\lambda}}:=H^{*}_{dR,E}\otimes_E E_{\lambda}$. Finally, for any archimedean place $\lambda\mid\infty$ the tensor functor $H^*_{\lambda}$ above induces
$$H^*_{\lambda}:\mathrm{NMM}_E(E)\longrightarrow\mathrm{Rep}_{E_{\lambda}}(G_{E_{\lambda}}).$$

\subsection{The torsor of formal periods}

\subsubsection{} The space of effective formal periods $\mathcal{P}_+$ is the $\mathbb{Q}$-vector space
generated by the symbols $(X,D,\omega,\gamma)$, where $X$ is a variety of dimension $d$ over $\mathbb{Q}$, $D\subset X$ a subvariety, $\omega\in H^d_{dR}(X,D)$ a de Rham cohomology class and $\gamma\in H_d(X(\mathbb{C}),D(\mathbb{C}),\mathbb{Q})$ a singular homology class modulo the following relations: 
\begin{enumerate}
\item (linearity) $(X,D,\omega,\gamma)$ is linear in both $\omega$ and $\gamma$.

\item (change of variables) If $f:(X,D)\rightarrow(X',D')$ is a morphism of pairs over $\mathbb{Q}$, $\gamma\in H_d(X(\mathbb{C}),D(\mathbb{C}),\mathbb{Q})$ and $\omega'\in H^d_{dR}(X',D')$ then
$$(X,D,f^*\omega',\gamma)= (X',D',\omega',f_*\gamma)$$.

\item (Stokes formula) For every triple $Z\subset Y\subset X$ we have
$$(Y,Z,\omega,\delta\gamma)= (X,Y,d\omega,\gamma)$$
where $\delta:H_d(X(\mathbb{C}),Y (\mathbb{C}),\mathbb{Q})\rightarrow H_{d-1}(Y(\mathbb{C}),Z(\mathbb{C}),\mathbb{Q})$ is the boundary operator and $d:H_{dR}^{d-1}(Y,Z)\rightarrow H_{dR}^{d}(X,Y)$ is the differential.

\end{enumerate}
Then $\mathcal{P}_+$ is turned into a $\mathbb{Q}$-algebra via
$$(X,D,\omega,\gamma)\cdot (X',D',\omega',\gamma'):=(X\times X', X\times D'\cup D\times X',\omega\wedge\omega',\gamma\times\gamma').$$
The space of formal periods $\mathcal{P}:=\mathcal{P}_+[(\mathbb{G}_m,\{1\},dT/T,{\bf S}^1)^{-1}]$ is defined as a localization of $\mathcal{P}_+$, where ${\bf S}^1\subset \mathbb{G}_m(\mathbb{C})$ is the unit circle.
Then
$$
\appl{ H^d_{dR}(X,D)\times H_d(X(\mathbb{C}),D(\mathbb{C}),\mathbb{Q})}{\mathbb{C}}{(\omega,\gamma)}{\int_{\gamma}\omega}
$$
induces a map of $\mathbb{Q}$-algebras
$$\mathrm{ev}:\mathcal{P}\rightarrow \mathbb{C}.$$
\begin{defn}
The image of $\mathrm{ev}$ is the space $\mathbb{P}\subset\mathbb{C}$ of Kontsevich-Zagier periods. 
\end{defn}
Below is Grothendieck's period conjecture in the formulation of  Kontsevich-Zagier, see (\cite{Kontsevich-Zagier01} Conjecture 1) and (\cite{Kontsevich-Zagier01} Conjecture 4.1). 
\begin{conj}(Kontsevich-Zagier) 
The map $\mathrm{ev}:\mathcal{P}\rightarrow \mathbb{C}$ is injective.
\end{conj}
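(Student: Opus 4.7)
The ``final statement'' here is the Kontsevich--Zagier period conjecture, which is one of the most celebrated open problems in transcendence theory, so any realistic ``proof proposal'' has to be frank about the fact that no known approach comes close. In the framework set up in this paper one can at least describe what the statement amounts to: the algebra $\mathcal{P}$ is expected (and in the Nori formalism this is indeed the case, cf.\ Kontsevich's original formulation) to be the coordinate ring of the formal periods torsor $\mathfrak{P}$ of isomorphisms between the Betti and de Rham fiber functors on $\mathrm{NMM}_E(E)$ for $E=\mathbb{Q}$. The evaluation map $\mathrm{ev}$ corresponds to the $\mathbb{C}$-point of $\mathfrak{P}$ given by the comparison isomorphism $H^*_B\otimes\mathbb{C}\simeq H^*_{dR}\otimes\mathbb{C}$. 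Thus the conjecture is equivalent to saying that this comparison $\mathbb{C}$-point is Zariski-dense in $\mathfrak{P}$, or equivalently that the transcendence degree of $\mathbb{P}$ over $\mathbb{Q}$ equals the dimension of the motivic Galois group $\mathcal{G}_{\mathrm{mot}}(\mathbb{Q})$.

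The natural strategy would therefore be a two-step scheme: first, identify $\mathcal{P}$ with $\mathcal{O}(\mathfrak{P})$ in a precise functorial way (this is where Nori's universal construction and the Stokes/change-of-variables relations are used to produce a surjection $\mathcal{P}\twoheadrightarrow\mathcal{O}(\mathfrak{P})$ and one checks it is an isomorphism by a presentation of $\mathcal{O}(\mathfrak{P})$ in terms of bilinear pairings of realizations). The first step is reasonably well understood and essentially due to Nori, Kontsevich and Huber--M\"uller-Stach. Second, one would need a transcendence statement: that no non-trivial motivic relation is an accidental numerical identity among actual integrals. Concretely, if $f\in\mathcal{O}(\mathfrak{P})$ is non-zero, then $f$ evaluated at the period point of $\mathbb{C}$ is non-zero in $\mathbb{C}$.

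The hard part, and the reason this is an open conjecture, is exactly this second step. It contains as special cases the algebraic independence of classical periods such as $\pi$ and $\log 2$, Schanuel-type statements, and the Grothendieck period conjecture for smooth projective varieties; no general transcendence principle is known that would produce these. A plausible plan -- but one that nobody knows how to execute -- would be to combine: (i) the finite-dimensionality theorems for motivic Galois groups in specific subcategories (Artin--Tate, abelian, mixed Tate over $\mathbb{Z}$), where the Kontsevich--Zagier statement is sometimes reduced to known transcendence results such as Baker's theorem or the Lindemann--Weierstrass theorem; (ii) a motivic Ax--Schanuel type statement for periods; and (iii) an inductive argument on the weight and dimension stratification of $\mathrm{NMM}_E(E)$. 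Even carrying out this program for mixed Tate motives over $\mathbb{Z}$ is currently out of reach (it would imply the algebraic independence of $\zeta(3),\zeta(5),\ldots$), so I would expect the obstacle to be insurmountable with current technology and would, in an honest exposition, state the conjecture exactly as the authors do: as motivation and context for the formal--versus--classical periods dichotomy underlying Theorem~\ref{cor-dR-B-i}, rather than as something to be proved.
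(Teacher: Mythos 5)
You correctly recognize that the statement is the Kontsevich--Zagier period conjecture, a famous open problem, and that the paper states it only as a conjecture (for context, as motivation for the period torsor formalism) without any proof. Your honest refusal to manufacture an argument, together with the accurate sketch of what the conjecture asserts in the Nori--Tannakian framework (Zariski density of the comparison point in $\mathfrak{P}$, equivalently $\mathrm{trdeg}_{\mathbb{Q}}\mathbb{P}=\dim\mathcal{G}_{\mathrm{mot}}(\mathbb{Q})$) and why even the mixed Tate case is out of reach, is exactly the right response; it matches the paper's treatment, which offers no proof either.
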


We denote by $\mathcal{P}_{\mathrm{alg}}$ the sub-algebra of $\mathcal{P}$ generated by symbols $(X,D,\omega,\gamma)$ where $\mathrm{dim}(X)=0$. Then the map $\mathrm{ev}$ induces an isomorphism $\mathcal{P}_{\mathrm{alg}}\stackrel{\sim}{\rightarrow}\overline{\mathbb{Q}}\subset\mathbb{C}$, where $\overline{\mathbb{Q}}\subset\mathbb{C}$ denotes the set of complex numbers which are algebraic over $\mathbb{Q}$. Let $z$ be such a complex number. Let $m_z(T)$ be its minimal polynomial over $\mathbb{Q}$ and let $F=\mathbb{Q}[T]/(m_z(T))$ be the corresponding number field. We set $X=\mathrm{Spec}(F)$. Then $\bar{T}\in F$ is a class in $H_{dR}^0(X)=F$. The complex number $z\in\mathbb{C}$ gives a point in $X(\mathbb{C})=\mathrm{Hom}_{\mathbb{Q}}(F,\mathbb{C})$, i.e. a $0$-cycle $\gamma_z$ in $X(\mathbb{C})$, such that
$\mathrm{ev}(\bar{T},\gamma_z)=z$.

\subsubsection{} We set $\mathfrak{P}:=\mathrm{Spec}(\mathcal{P})$. Then $\mathfrak{P}$ carries a natural action of $\mathcal{G}_{\mathrm{mot}}(\mathbb{Q})$. By (\cite{Huber-Muller-Stach-12} Theorem 2.10), there is an isomorphism of $\mathcal{G}_{\mathrm{mot}}(\mathbb{Q})$-torsors
\begin{equation}\label{isotors}
\mathfrak{P}:=\mathrm{Spec}(\mathcal{P})\simeq {\bf Isom}^{\otimes}(H^*_B,H^*_{dR}).
\end{equation}
Here ${\bf Isom}^{\otimes}(H^*_B,H^*_{dR})$ is defined via its functor of points: For any commutative $\mathbb{Q}$-algebra $R$ we have
$${\bf Isom}^{\otimes}(H^*_B,H^*_{dR})(R)=\mathrm{Isom}^{\otimes}(H^*_B\otimes_{\mathbb{Q}}R,H^*_{dR}\otimes_{\mathbb{Q}}R)$$
where the tensor functor $H^*_B\otimes_{\mathbb{Q}}R$ (resp. $H^*_{dR}\otimes_{\mathbb{Q}}R$) is the composition of $H^*_B$ (resp. $H^*_{dR}$) with $(-)\otimes_{\mathbb{Q}}R$, and $\mathrm{Isom}^{\otimes}(-,-)$ denotes the set of isomorphisms of tensor functors.
Let $E/\mathbb{Q}$ be a number field with a complex embedding $\sigma$. There is a unique map $\mathfrak{P}\rightarrow \mathrm{Spec}(E)$ such that $E\rightarrow \mathcal{P}\stackrel{\mathrm{ev}}{\rightarrow}\mathbb{C}$ coincides with $\sigma$. Considering $\mathcal{G}_{\mathrm{mot}}(E)$ as a subgroup of $\mathcal{G}_{\mathrm{mot}}(\mathbb{Q})$ leads to an exact sequence of pointed sets
$$1\rightarrow\mathcal{G}_{\mathrm{mot}}(E)\rightarrow \mathcal{G}_{\mathrm{mot}}(\mathbb{Q})\rightarrow\mathrm{Hom}_{\mathbb{Q}}(E,\mathbb{C})\rightarrow *$$
where $\mathrm{Hom}_{\mathbb{Q}}(E,\mathbb{C})$ is pointed by $\sigma$. Restricting the group of operators, $\mathfrak{P}$ becomes a $\left(\mathcal{G}_{\mathrm{mot}}(E)\otimes_\mathbb{Q}E\right)$-torsor over  $\mathrm{Spec}(E)$. Moreover, composition with the functor $t:\mathrm{NMM}_E\rightarrow \mathrm{NMM}_E(E)$ yields an equivalence from the category of $R$-valued fiber functors on $\mathrm{NMM}_E(E)$ to the category of $R$-valued fiber functors on $\mathrm{NMM}_E$. In particular, we have $\mathcal{G}_E\simeq\mathcal{G}_{\mathrm{mot}}(E)\otimes_\mathbb{Q}E$ and the identification (\ref{isotors}) induces an isomorphism of $\mathcal{G}_E$-torsors
$$\mathfrak{P}\simeq {\bf Isom}^{\otimes}(H^*_{B,E},H^*_{dR,E})$$
over $\mathrm{Spec}(E)$.

If $M$ is an object of $\mathrm{NMM}_E(E)$, we denote by  $\langle M\rangle$ 
the Tannakian subcategory of $\mathrm{NMM}_E(E)$ generated by $M$, i.e. the smallest strictly full $E$-linear  abelian subcategory of $\mathrm{NMM}_E(E)$ containing $M$, which is stable by tensor products, sums, duals and sub-quotients. 
We denote by $H^*_{B,E\mid <M>}$ and $H^*_{dR,E\mid <M>}$ the restriction of the fiber functors $H^*_{B,E}$ and $H^*_{dR,E}$ to $\langle M\rangle$. We set $\mathcal{G}_{M/E}:={\bf Aut}^{\otimes}(H^*_{B,E\mid <M>})$ and we consider the $\mathcal{G}_{M/E}$-torsor ${\bf Isom}^{\otimes}(H^*_{B,E\mid <M>},H^*_{dR,E\mid <M>})$. Note that the sheaf ${\bf Isom}^{\otimes}(H^*_{B,E\mid <M>},H^*_{dR,E\mid <M>})$ on $\mathrm{Spec}(E)_{\mathrm{fppf}}$ is represented by an affine scheme. The inclusion $\langle M\rangle\subset\mathrm{NMM}_E(E)$ induces a faithfully flat map
$\mathcal{G}_{E}\rightarrow \mathcal{G}_{M/E}$
and there is a canonical isomorphism of $\mathcal{G}_{M/E}$-torsors
$$ {\bf Isom}^{\otimes}(H^*_{B,E},H^*_{dR,E})\wedge^{\mathcal{G}_{E}} \mathcal{G}_{M/E}\simeq {\bf Isom}^{\otimes}(H^*_{B,E\mid <M>},H^*_{dR,E\mid <M>}).$$
We consider the $E$-sub-algebra $\mathcal{P}_{M}\subset \mathcal{P}$ such that (\ref{isotors}) induces an isomorphism of $\mathcal{G}_{M/E}$-torsors
\begin{equation}\label{isotorsM}
\mathfrak{P}_{M}:=\mathrm{Spec}(\mathcal{P}_{M})\simeq {\bf Isom}^{\otimes}(H^*_{B,E\mid <M>},H^*_{dR,E\mid <M>}).
\end{equation}
Finally, we consider the restriction of the evaluation map
$$\mathrm{ev}_{\mid\mathcal{P}_{M}}:\mathcal{P}_{M}\hookrightarrow\mathcal{P}\rightarrow \mathbb{C},$$
and we the denote by
$$\mathbb{P}_{M}:=\mathrm{Im}(\mathrm{ev}_{\mid\mathcal{P}_{M}})$$
the $E$-algebra of periods of $M$.

\subsection{Invariants of orthogonal $E$-motives} An \emph{orthogonal Nori motive $(M,q)$ over $E$ with $E$-coefficients}, or simply an orthogonal $E$-motive, is an orthogonal object of $\mathrm{NMM}_E(E)$, i.e. an object $M$ endowed with a symmetric map
$q:M\otimes M\rightarrow {\bf 1}$
inducing an isomorphism $M\stackrel{\sim}{\rightarrow} M^{\vee}$. Let $(M,q)$ be such a motive.  

\subsubsection{The Hasse-Witt invariants $w_i(q_B)$ and $w_i(q_{dR})$}

Recall that $M_B:=H^*_{B,E}(M)$ and $M_{dR}:=H^*_{dR,E}(M)$ denote the Betti and de Rham realizations of $M$. We denote by $$q_B:M_B\otimes M_B\rightarrow E\hspace{0.5cm} \mbox{and} \hspace{0.5cm} q_{dR}: M_{dR}\otimes M_{dR}\rightarrow E$$ 
the non-degenerate symmetric $E$-bilinear forms obtained by applying the tensor functors $H_{B,E}^*$ and $H^*_{dR,E}$ to the map $q:M\otimes M\rightarrow {\bf 1}$. We shall consider the Hasse-Witt invariants $w_i(q_B)\in H^i(G_{E},\mathbb{Z}/2\mathbb{Z})$ and  $w_i(q_{dR})\in H^i(G_{E},\mathbb{Z}/2\mathbb{Z})$ of these quadratic forms.

\subsubsection{The global Stiefel-Whitney invariants  $sw_i(\rho_{\lambda\mid G_{E_{\lambda}}})$} Let $\lambda\nmid\infty$ be a finite prime of $E$. Recall that we denote $M_{\lambda}:=H^*_{\lambda}(M)$. Applying the tensor functor $H^*_{\lambda}$ to the orthogonal motive $(M,q)$ we obtain a quadratic form
$$q_{\lambda}:M_{\lambda}\otimes M_{\lambda}\longrightarrow E_{\lambda}$$
and a continuous $E_{\lambda}$-linear orthogonal representation of the Galois group $G_{E}$ on $(M_{\lambda},q_{\lambda})$, which we denote by
$$\rho_{\lambda}:G_{E}\longrightarrow {\bf O}(q_{\lambda})(E_{\lambda})$$
where ${\bf O}(q_{\lambda})(E_{\lambda})\subset GL_{E_{\lambda}}(H^*_{\lambda}(M))$ is endowed with the $\lambda$-adic topology. If one forgets the Galois action, we have an isometry $(M_{\lambda},q_{\lambda})\simeq (M_B,q_B)\otimes _EE_{\lambda}$. Composing $\rho_{\lambda}$ with the determinant map, we obtain a continuous morphism
$$\mathrm{det}(\rho_{\lambda}):G_{E}\longrightarrow {\bf O}(q_{\lambda})(E_{\lambda})\longrightarrow \mathbb{Z}/2\mathbb{Z}.$$
The first Stiefel-Whitney invariant $sw_1(\rho_{\lambda})\in H^1(G_{E},\mathbb{Z}/2\mathbb{Z})$ is the cohomology class of  $\mathrm{det}(\rho_{\lambda})$. 

Let $\mathbb{C}_{\lambda}$ be the completion of $\overline{E}_{\lambda}$, given with its natural topology. Consider the continuous representation
$$\overline{\rho}_{\lambda}:G_{E}\stackrel{\rho_{\lambda}}{\longrightarrow} {\bf O}(q_{\lambda})(E_{\lambda})\longrightarrow {\bf O}(q_{\lambda})(\mathbb{C}_{\lambda}).$$
We have a central extension of topological groups
\begin{equation}\label{extCOC}1\longrightarrow \mathbb{Z}/2\mathbb{Z}\longrightarrow \widetilde{\bf O}(q_{\lambda})(\mathbb{C}_{\lambda}) \longrightarrow {\bf O}(q_{\lambda})(\mathbb{C}_{\lambda})\longrightarrow 1
\end{equation}
inducing a central extension in the topos $\widetilde{\mathrm{Top}}$ of sheaves on the site of all topological spaces endowed with the open cover topology (see \cite{Giraud}VIII.8.1). Pulling-back (\ref{extCOC}) via $\overline{\rho}_{\lambda}$, we obtain
central extension of topological groups 
\begin{equation}\label{extCOG}1\longrightarrow \mathbb{Z}/2\mathbb{Z}\longrightarrow \widetilde{G_{E}} \longrightarrow G_{E}\longrightarrow 1
\end{equation}
inducing a central extension in $\widetilde{\mathrm{Top}}$. The second Stiefel-Whitney invariant $sw_2(\rho_{\lambda})\in H^2(G_{E},\mathbb{Z}/2\mathbb{Z})$ is the cohomology class (see \cite{Giraud}VIII.8.2) of the central extension (\ref{extCOG}).

\subsubsection{The local Stiefel-Whitney invariants $sw_i(\rho_{\lambda\mid G_{E_{\lambda}}})$} For a finite prime $\lambda\nmid\infty$, we choose an $E$-embedding $\overline{E}\rightarrow \overline{E_{\lambda}}$ and we consider the restriction of $\rho_{\lambda}$ to $G_{E_{\lambda}}$, i.e. the representation
$$\rho_{\lambda\mid G_{E_{\lambda}}}:G_{E_{\lambda}}\longrightarrow G_{E}\longrightarrow {\bf O}(q_{\lambda})(E_{\lambda})$$
and we define $sw_i(\rho_{\lambda\mid G_{E_{\lambda}}})\in H^i(G_{E_{\lambda}},\mathbb{Z}/2\mathbb{Z})$ as above. Then $sw_i(\rho_{\lambda\mid G_{E_{\lambda}}})$ is the image of $sw_i(\rho_{\lambda})$ under the restriction map $H^i(G_{E},\mathbb{Z}/2\mathbb{Z})\rightarrow H^i(G_{E_{\lambda}},\mathbb{Z}/2\mathbb{Z})$. For an archimedean prime $\lambda\mid\infty$, we denote by $M_{\lambda}:=H^*_{\lambda}(M)$ and by
$$q_{\lambda}: M_{\lambda}\otimes M_{\lambda}\longrightarrow E_{\lambda}$$ 
the quadratic form over $E_{\lambda}$ obtained by applying the tensor functors $H_{\lambda}^*$ to the map $q$. If $\lambda$ is a real place, then we obtain an orthogonal representation $$\rho_{\lambda\mid G_{E_{\lambda}}}: G_{E_{\lambda}}\longrightarrow \mathbf{O}(q_{\lambda})(E_{\lambda})$$ 
and $sw_i(\rho_{\lambda\mid G_{E_{\lambda}}})\in H^i(G_{E_{\lambda}},\mathbb{Z}/2\mathbb{Z})$ is its classical Stiefel-Whitney invariant.

\subsubsection{The local spinor class $sp_2(\rho_{\lambda\mid G_{E_{\lambda}}})$} Let $\lambda$ be a place of $E$.
The spinor norm is the map 
$$sp:\mathbf{O}(q_{\lambda})(E_{\lambda})=H^0(G_{E_{\lambda}},\mathbf{O}(q_{\lambda})(\overline{E_{\lambda}}))\longrightarrow H^1(G_{E_{\lambda}},\mathbb{Z}/2\mathbb{Z})=E_{\lambda}^{\times}/E_{\lambda}^{\times 2}$$
induced by the exact sequence of group-schemes
$$1\rightarrow\mathbb{Z}/2\mathbb{Z}\rightarrow \widetilde{\mathbf{O}}(q_{\lambda})\rightarrow  \mathbf{O}(q_{\lambda})\rightarrow 1.$$
Composing $\rho_{\lambda\mid G_{E_{\lambda}}}$ with the spinor norm we obtain an element $sp\circ\rho_{\lambda\mid G_{E_{\lambda}}}$ in the group
$$\mbox{Hom}(G_{E_{\lambda}}, E_{\lambda}^{\times}/E_{\lambda}^{\times 2})=\mbox{Hom}(G_{E_{\lambda}}, H^1(G_{E_{\lambda}}, \mathbb{Z}/2\mathbb{Z}))\simeq H^1(G_{E_{\lambda}}, \mathbb{Z}/2\mathbb{Z})\otimes H^1(G_{E_{\lambda}}, \mathbb{Z}/2\mathbb{Z}).$$
The spinor class $sp_2(\rho_{\lambda\mid G_{E_{\lambda}}})$ is defined as the image of $sp\circ\rho_{\lambda\mid G_{E_{\lambda}}}$ by the cup product 
$$H^1(G_{E_{\lambda}}, \mathbb{Z}/2\mathbb{Z})\otimes H^1(G_{E_{\lambda}}, \mathbb{Z}/2\mathbb{Z})\stackrel{\cup}{\longrightarrow} H^2(G_{E_{\lambda}}, \mathbb{Z}/2\mathbb{Z}).$$

\subsubsection{The classes $\delta_{q}^1(\mathfrak{P}_M)$ and  $\delta_{q}^2(\mathfrak{P}_M)$.}

As in section \ref{sectDelta}, an orthogonal Nori motive $(M,q)$ over $E$ with $E$-coefficients yields an orthogonal representation
$$\rho_{q}:\mathcal{G}_{E}\longrightarrow {\bf O}(q_B)$$
inducing maps
$$\delta_{q}^i:H^1(\mathrm{Spec}(E)_{\mathrm{fppf}},\mathcal{G}_{E})\longrightarrow H^i(G_{E},\mathbb{Z}/2\mathbb{Z})$$
for $i=1,2$. The protorsor $\mathfrak{P}$ gives a canonical class in $H^1(\mathrm{Spec}(E)_{\mathrm{fppf}},\mathcal{G}_{E})$ and we shall consider its image $\delta_{q}^i(\mathfrak{P})$. The orthogonal representation $\rho_q$ factors through 
$$\mathcal{G}_{M/E}\longrightarrow {\bf O}(q_B)$$
hence gives 
$$\delta_{q}^i:H^1(\mathrm{Spec}(E)_{\mathrm{fppf}},\mathcal{G}_{M/E})\longrightarrow H^i(G_{E},\mathbb{Z}/2\mathbb{Z}).$$
By definition, we have 
$$\delta_{q}^i(\mathfrak{P})=\delta_{q}^i(\mathfrak{P}_M) \in H^i(G_{E},\mathbb{Z}/2\mathbb{Z}).$$

\subsection{Interpretation of $\delta_{q}^1(\mathfrak{P}_M)$ and $\delta_{q}^2(\mathfrak{P}_M)$}

\subsubsection{The class $\delta_{q}^1(\mathfrak{P}_M)$ and the periods of $\mathrm{det}_E(M)$}

Let $M\in\mathrm{NMM}_E(E)$ be a motive given by the representation
$\mathcal{G}_{E}\rightarrow \mathbf{GL}(M_B)$. The motive $\mathrm{det}_E(M)$ is the motive associated with the determinant representation $$\mathcal{G}_{E}\longrightarrow \mathbf{GL}(\Lambda^{\mathrm{dim}_E(M_B)}_{E}M_B)$$ 
Using the canonical isomorphism $\mathbf{GL}(\Lambda^{\mathrm{dim}_E(M_B)}_{E}M_B)\simeq \mathbf{G}_m$, where $\mathbf{G}_m$ is the multiplicative group over $E$, we see that 
$\mathrm{det}_E(M)$ is the motive corresponding to
\begin{equation}\label{here}
\mathcal{G}_{E}\longrightarrow \mathbf{GL}(M_B)\stackrel{\mathrm{det}}{\longrightarrow}\mathbf{G}_m.
\end{equation}
An orthogonal structure $q$ on $M$ induces an orthogonal structure $\mathrm{det}(q)$ on $\mathrm{det}_E(M)$. Recall that the class $\delta_q^1(\mathfrak{P}_M)$ belongs to the  group
$H^1(\mathrm{Spec}(E),\mathbb{Z}/2\mathbb{Z})\simeq E^{\times}/E^{\times 2}$. 
\begin{prop}\label{propdelta1}
Let $(M,q)$ be an orthogonal motive. 
\begin{enumerate}
\item We have
$$\mathbb{P}_{\mathrm{det}_E(M)}=E\left(\sqrt{\delta_q^1(\mathfrak{P}_M)}\right).$$ 
\item If $\delta_q^1(\mathfrak{P}_M)$ is non-trivial, then we have $\mathcal{G}_{\mathrm{det}_E(M)/E}\simeq \mathbb{Z}/2\mathbb{Z}$ and 
an isomorphism
$$\mathfrak{P}_{\mathrm{det}_E(M)}\simeq \mathrm{Spec}\left(E\left(\sqrt{\delta_q^1(\mathfrak{P}_M)}\right)\right)$$
of $\mathbb{Z}/2\mathbb{Z}$-torsor over $\mathrm{Spec}(E)$.
\end{enumerate}
\end{prop}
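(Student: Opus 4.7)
The plan is to identify $\mathfrak{P}_{\mathrm{det}_E(M)}$ directly with the Kummer torsor attached to a representative of $\delta_q^1(\mathfrak{P}_M)\in E^{\times}/E^{\times 2}$, and then to deduce statement (1) by using that $\mathrm{det}_E(M)$ is an Artin motive, so that evaluation is injective on $\mathcal{P}_{\mathrm{det}_E(M)}$.

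First I would use the inclusion $\mathrm{det}_E(M)\in\langle M\rangle$ to obtain a faithfully flat morphism $\mathcal{G}_{M/E}\twoheadrightarrow \mathcal{G}_{\mathrm{det}_E(M)/E}$ and the induced canonical isomorphism of $\mathcal{G}_{\mathrm{det}_E(M)/E}$-torsors
$$\mathfrak{P}_{\mathrm{det}_E(M)}\simeq \mathfrak{P}_M\wedge^{\mathcal{G}_{M/E}}\mathcal{G}_{\mathrm{det}_E(M)/E}.$$
The orthogonal structure $\mathrm{det}(q)$ on the one-dimensional motive $\mathrm{det}_E(M)$ factors the faithful representation $\mathcal{G}_{\mathrm{det}_E(M)/E}\hookrightarrow \mathbf{GL}(\mathrm{det}(M_B))\simeq \mathbf{G}_m$ through the closed immersion $\mathbf{O}(\mathrm{det}(q)_B)\simeq \mathbb{Z}/2\mathbb{Z}\hookrightarrow \mathbf{G}_m$, so $\mathcal{G}_{\mathrm{det}_E(M)/E}$ is a closed sub-group-scheme of $\mathbb{Z}/2\mathbb{Z}$ and hence either trivial or equal to $\mathbb{Z}/2\mathbb{Z}$. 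The composite $\mathcal{G}_{M/E}\to\mathbb{Z}/2\mathbb{Z}$ defining $\delta_q^1$ factors as $\mathcal{G}_{M/E}\twoheadrightarrow \mathcal{G}_{\mathrm{det}_E(M)/E}\hookrightarrow \mathbb{Z}/2\mathbb{Z}$, so that the $\mathbb{Z}/2\mathbb{Z}$-torsor representing $\delta_q^1(\mathfrak{P}_M)$ is $\mathfrak{P}_{\mathrm{det}_E(M)}\wedge^{\mathcal{G}_{\mathrm{det}_E(M)/E}}\mathbb{Z}/2\mathbb{Z}$. If $\delta_q^1(\mathfrak{P}_M)$ is non-trivial, the case $\mathcal{G}_{\mathrm{det}_E(M)/E}=1$ is ruled out, which proves the group identification in (2); the pushout then collapses to $\mathfrak{P}_{\mathrm{det}_E(M)}$ itself and, choosing a lift $a\in E^\times$ of $\delta_q^1(\mathfrak{P}_M)$, Kummer theory gives $\mathfrak{P}_{\mathrm{det}_E(M)}\simeq \mathrm{Spec}\bigl(E[T]/(T^2-a)\bigr)=\mathrm{Spec}(E(\sqrt{a}))$, which is the torsor isomorphism in (2).

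For (1), I would note that $\mathcal{G}_{\mathrm{det}_E(M)/E}$ is finite \'etale, so $\mathrm{det}_E(M)$ is an Artin motive and $\mathcal{P}_{\mathrm{det}_E(M)}\subset \mathcal{P}_{\mathrm{alg}}$; the evaluation map restricts on this sub-algebra to the isomorphism $\mathcal{P}_{\mathrm{alg}}\stackrel{\sim}{\rightarrow}\overline{\mathbb{Q}}\subset\mathbb{C}$ recalled before the statement. When $\delta_q^1(\mathfrak{P}_M)$ is non-trivial with lift $a$, $\mathrm{ev}$ sends the class $\overline{T}\in E[T]/(T^2-a)$ to a complex square root of $\sigma(a)$, so it identifies $\mathcal{P}_{\mathrm{det}_E(M)}=E(\sqrt{a})$ with the subfield $E(\sqrt{\sigma(a)})\subset \mathbb{C}$, giving $\mathbb{P}_{\mathrm{det}_E(M)}=E(\sqrt{\delta_q^1(\mathfrak{P}_M)})$. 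When $\delta_q^1(\mathfrak{P}_M)$ is trivial, either $\mathcal{G}_{\mathrm{det}_E(M)/E}=1$ and $\mathcal{P}_{\mathrm{det}_E(M)}=E$, or $\mathcal{G}_{\mathrm{det}_E(M)/E}=\mathbb{Z}/2\mathbb{Z}$ and the torsor is the split one $\mathrm{Spec}(E\times E)$, whose image under $\mathrm{ev}$ is again $E=E(\sqrt{1})$. The main obstacle is a careful bookkeeping of the identifications -- the Tannakian torsor, the Kummer description of $H^1(E,\mathbb{Z}/2\mathbb{Z})$, and the evaluation map on $\mathcal{P}_{\mathrm{alg}}$ -- to ensure that the image of $\mathcal{P}_{\mathrm{det}_E(M)}$ is exactly the subfield $E(\sqrt{a})\subset\mathbb{C}$, not merely a field abstractly isomorphic to it.
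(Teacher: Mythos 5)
Your proof follows essentially the same route as the paper's: factor $\delta^1_q$ through the faithfully flat surjection $\mathcal{G}_{M/E}\twoheadrightarrow\mathcal{G}_{\mathrm{det}_E(M)/E}$ and the monomorphism $\mathcal{G}_{\mathrm{det}_E(M)/E}\hookrightarrow\mathbb{Z}/2\mathbb{Z}$, do the case analysis on whether $\mathcal{G}_{\mathrm{det}_E(M)/E}$ is trivial and on whether the resulting $\mathbb{Z}/2\mathbb{Z}$-torsor is split, and invoke Kummer theory. That part is fine and matches the paper.

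There is one step you should not keep, however: the assertion that $\mathcal{G}_{\mathrm{det}_E(M)/E}$ being finite \'etale forces $\mathrm{det}_E(M)$ to be an Artin motive, so that $\mathcal{P}_{\mathrm{det}_E(M)}\subset\mathcal{P}_{\mathrm{alg}}$. This is not known unconditionally. A motive has finite motivic Galois group if and only if its Tannakian category factors through $\pi_0(\mathcal{G}_E)$, and the statement that $\pi_0(\mathcal{G}_E)=G_E$ (equivalently, that every finite quotient of $\mathcal{G}_E$ comes from an Artin motive) is precisely the conjecture the paper records in the remark immediately after this proposition, where the authors are careful to say that $\mathrm{det}_E(M)$ \emph{would} be an Artin motive under that conjecture. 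Fortunately the detour is unnecessary: once you know $\mathcal{P}_{\mathrm{det}_E(M)}$ is a quadratic \'etale $E$-algebra and, in the non-split case, a field $E(\sqrt{a})$ with $a=\delta^1_q(\mathfrak{P}_M)$, the evaluation map $\mathrm{ev}\colon\mathcal{P}_{\mathrm{det}_E(M)}\to\mathbb{C}$ is a ring homomorphism out of a field, hence injective, and its image is $E(\sqrt{\sigma(a)})=E(\sqrt{a})\subset\mathbb{C}$ by $E$-linearity. In the split or trivial cases the image is $E$. That gives statement (1) directly, exactly as the paper does, without any appeal to $\mathcal{P}_{\mathrm{alg}}$.
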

\begin{proof}
Consider the composite map
\begin{equation}\label{herela}
\mathcal{G}_{E}\longrightarrow \mathbf{O}(q_B)\stackrel{\mathrm{det}}{\longrightarrow}\mathbb{Z}/2\mathbb{Z}\simeq \mathbf{O}(\mathrm{det}(q)_B).
\end{equation}
Here $\mathbb{Z}/2\mathbb{Z}\simeq \mathbf{O}(\mathrm{det}(q)_B)$ is the orthogonal group of  the Betti realization of $(\mathrm{det}_E(M),\mathrm{det}(q))$. The motivic Galois group $\mathcal{G}_{\mathrm{det}_E(M)/E}$ comes with a monomorphism
$$\mathcal{G}_{\mathrm{det}_E(M)/E}\hookrightarrow \mathbb{Z}/2\mathbb{Z}$$
so that $\mathcal{G}_{\mathrm{det}_E(M)/E}$ is either trivial or isomorphic to $\mathbb{Z}/2\mathbb{Z}$. 
The canonical morphism $\mathcal{G}_{M/E}\rightarrow\mathcal{G}_{\mathrm{det}_E(M)/E}$ induces  $H^1(E,\mathcal{G}_{M/E})\rightarrow H^1(E,\mathcal{G}_{\mathrm{det}_E(M)/E})$, which maps $\mathfrak{P}_M$ to $\mathfrak{P}_{\mathrm{det}_E(M)}$.
Moreover, $\delta_q^1$ is the map 
$H^1(E,\mathcal{G}_{M/E})\rightarrow H^1(E, \mathbb{Z}/2\mathbb{Z})$ induced by
$$\mathcal{G}_{M/E}\rightarrow \mathcal{G}_{\mathrm{det}_E(M)/E}\hookrightarrow \mathbb{Z}/2\mathbb{Z}.$$
In particular, if $\mathcal{G}_{\mathrm{det}_E(M)/E}$ is trivial then $\delta_q^1(\mathfrak{P}_M)$ is trivial, $\mathfrak{P}_{\mathrm{det}_E(M)}$ is trivial and $\mathbb{P}_{\mathrm{det}_E(M)}=E$. So one may assume that $\mathcal{G}_{\mathrm{det}_E(M)/E}\simeq \mathbb{Z}/2\mathbb{Z}$. In this case we have $\delta_q^1(\mathfrak{P}_M)= \mathfrak{P}_{\mathrm{det}_E(M)}$. If  
$\mathfrak{P}_{\mathrm{det}_E(M)}$ is the trivial $\mathbb{Z}/2\mathbb{Z}$-torsor, then $\mathbb{P}_{\mathrm{det}_E(M)}=E$
 and $\delta_q^1(\mathfrak{P}_M)$ is trivial. So one may assume that  the $\mathbb{Z}/2\mathbb{Z}$-torsor $\mathfrak{P}_{\mathrm{det}_E(M)}$ is non-trivial. In this case we have $\mathfrak{P}_{\mathrm{det}_E(M)}=\mathrm{Spec}\left(E\left(\sqrt{\delta_q^1(\mathfrak{P}_M)}\right)\right)$ and the evaluation map $\mathcal{P}_{\mathrm{det}_E(M)}\rightarrow \mathbb{P}_{\mathrm{det}_E(M)}$ is an isomorphism.

\end{proof}

\subsubsection{The class $\delta_{q}^2(\mathfrak{P}_M)$ and the lifting problem $\mathrm{Lift}(M,q)$.}\label{sectDelta2}

\begin{defn}
Let $\widetilde{\mathcal{G}}\rightarrow \mathcal{G}$ be a map of group schemes and let $T$ be a $\mathcal{G}$-torsor. We say that \emph{$T$ admits a $\widetilde{\mathcal{G}}$-lifting} if there exist a $\widetilde{\mathcal{G}}$-torsor $\widetilde{T}$ and an isomorphism of $\mathcal{G}$-torsors $\widetilde{T}\wedge^{\widetilde{\mathcal{G}}}\mathcal{G}\simeq T$.
\end{defn}
Let $(M,q)$ be an orthogonal motive  given by the representation 
$$\mathcal{G}_{E}\rightarrow \mathcal{G}_{M/E}\rightarrow {\bf O}(q_B).$$
Pulling back the canonical central extension 
$$1\rightarrow \mathbb{Z}/2\mathbb{Z}\rightarrow \widetilde{{\bf O}}(q_B) \rightarrow {\bf O}(q_B)\rightarrow 1$$
we obtain a central extension 
\begin{equation}\label{centrale}
1\rightarrow \mathbb{Z}/2\mathbb{Z}\rightarrow \widetilde{\mathcal{G}}_{M/E} \rightarrow \mathcal{G}_{M/E}\rightarrow 1.
\end{equation} 
We shall also consider its base change 
\begin{equation}\label{centralep}
1\rightarrow \mathbb{Z}/2\mathbb{Z}\rightarrow \widetilde{\mathcal{G}}_{M/E}\otimes_{E}E_{\lambda} \rightarrow \mathcal{G}_{M/E}\otimes_{E}E_{\lambda}\rightarrow 1.
\end{equation} 
We would like to address the following lifting problem for torsors.
\begin{notation}
We say that $\mathrm{Lift}(M,q)$ has a solution if the $\mathcal{G}_{M/E}$-torsor $\mathfrak{P}_M$ admits a  $\widetilde{\mathcal{G}}_{M/E}$-lifting.
We say that $\mathrm{Lift}(M,q)$ has a solution over $E_{\lambda}$ if the $\left(\mathcal{G}_{M/E}\otimes_{E}E_{\lambda}\right)$-torsor $\mathfrak{P}_M \otimes_{E}E_{\lambda}$ admits a  $\left(\widetilde{\mathcal{G}}_{M/E}\otimes_{E}E_{\lambda}\right)$-lifting.
\end{notation}
We identify $H^*(\mathrm{Spec}(E)_{\mathrm{fppf}},\mathbb{Z}/2\mathbb{Z})$ with Galois cohomology $H^*(G_E,\mathbb{Z}/2\mathbb{Z})$. Recall that the class $\delta_q^2(\mathfrak{P}_M)$ belongs to the  groups $H^2(G_E,\mathbb{Z}/2\mathbb{Z})$ which is described  by class field theory: One has the exact sequence
\begin{equation}\label{ESCFT}
0\longrightarrow H^2(G_E,\mathbb{Z}/2\mathbb{Z})\longrightarrow \bigoplus_{\lambda}H^2(G_{E_{\lambda}},\mathbb{Z}/2\mathbb{Z})\stackrel{\sum}\longrightarrow \mathbb{Z}/2\mathbb{Z}\longrightarrow0
\end{equation}
and $H^2(G_{E_{\lambda}},\mathbb{Z}/2\mathbb{Z})\simeq\mathbb{Z}/2\mathbb{Z}$ for $\lambda$ finite and for $\lambda$ real. Finally, we denote by
$$\appl{H^*(G_E,\mathbb{Z}/2\mathbb{Z})}{H^*(G_{E_{\lambda}},\mathbb{Z}/2\mathbb{Z})}{x}{x_{\lambda}}$$
the canonical restriction map, for any place $\lambda$ of $E$.

\begin{prop}\label{propdelta2}
Let $(M,q)$ be an orthogonal motive.
\begin{enumerate}
\item  The following assertions are equivalent.
\begin{enumerate}
\item $\delta^2_q(\mathfrak{P}_M)=0$.
\item The problem $\mathrm{Lift}(M,q)$ has a solution.
\item The problem $\mathrm{Lift}(M,q)$ has a solution over $E_{\lambda}$ for any place $\lambda$ of $E$.
\end{enumerate}
\item Let $\lambda$ be a finite place. The problem $\mathrm{Lift}(M,q)$ has a solution over $E_{\lambda}$ if and only if $\delta^2_q(\mathfrak{P}_M)_{\lambda}=0$. 
\end{enumerate}
\end{prop}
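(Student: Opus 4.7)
The plan is to reduce everything to the non-abelian coboundary formalism of (\cite{Giraud} IV.4.2.10) attached to the central extensions (\ref{centrale}) and (\ref{centralep}), combined with the class field theory sequence (\ref{ESCFT}).

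I would begin with part (2). The central extension (\ref{centralep}) of $E_{\lambda}$-group schemes yields, by the Giraud formalism, an exact sequence of pointed sets
$$H^1(\mathrm{Spec}(E_{\lambda})_{\mathrm{fppf}},\widetilde{\mathcal{G}}_{M/E}\otimes_E E_{\lambda}) \longrightarrow H^1(\mathrm{Spec}(E_{\lambda})_{\mathrm{fppf}},\mathcal{G}_{M/E}\otimes_E E_{\lambda}) \stackrel{\delta^2_{q,\lambda}}{\longrightarrow} H^2(G_{E_{\lambda}},\mathbb{Z}/2\mathbb{Z}).$$
Since $\delta^2_q$ is defined in Section \ref{sectDelta} as the pull-back of the class $[C_{q_B}]\in H^2(B_{\mathbf{O}(q_B)},\mathbb{Z}/2\mathbb{Z})$ along the morphism of topoi $\mathrm{Spec}(E)_{\mathrm{fppf}} \to B_{\mathcal{G}_{M/E}} \to B_{\mathbf{O}(q_B)}$, a diagram chase (using base change along $\mathrm{Spec}(E_{\lambda}) \to \mathrm{Spec}(E)$) identifies $\delta^2_{q,\lambda}(\mathfrak{P}_M \otimes_E E_{\lambda})$ with $\delta^2_q(\mathfrak{P}_M)_{\lambda}$. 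Part (2) then amounts to exactness of the sequence above at the middle term.

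Next, the equivalence (a) $\Leftrightarrow$ (b) of part (1) follows from the global analog of this exact sequence applied to (\ref{centrale}): the class $\delta^2_q(\mathfrak{P}_M)$ vanishes precisely when $\mathfrak{P}_M$ lies in the image of $H^1(\mathrm{Spec}(E)_{\mathrm{fppf}},\widetilde{\mathcal{G}}_{M/E})$ in $H^1(\mathrm{Spec}(E)_{\mathrm{fppf}},\mathcal{G}_{M/E})$, which by definition means that $\mathfrak{P}_M$ admits a $\widetilde{\mathcal{G}}_{M/E}$-lifting. The implication (b) $\Rightarrow$ (c) is automatic by base change. For (c) $\Rightarrow$ (a), I would invoke the injectivity in (\ref{ESCFT}): if $\mathrm{Lift}(M,q)$ has a solution over every $E_{\lambda}$, then by the local equivalence (part (2) at finite places, and the same Giraud argument verbatim at archimedean places) we obtain $\delta^2_q(\mathfrak{P}_M)_{\lambda} = 0$ for every place $\lambda$, and injectivity of the diagonal map in (\ref{ESCFT}) forces $\delta^2_q(\mathfrak{P}_M) = 0$.

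The only non-formal input is the Hasse principle (\ref{ESCFT}); the rest is routine. The main bookkeeping obstacle will be verifying compatibility of $\delta^2_q$ with base change to $E_{\lambda}$, but this is immediate from the topos-theoretic definition of the coboundary as a pull-back of a fixed class in $H^2(B_{\mathbf{O}(q_B)},\mathbb{Z}/2\mathbb{Z})$.
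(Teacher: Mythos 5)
Your proposal is correct and follows essentially the same route as the paper's proof: the Giraud exact sequence of pointed sets attached to the central extensions (\ref{centrale}) and (\ref{centralep}), the compatibility of the coboundary $\delta_q^2$ with base change along $\mathrm{Spec}(E_{\lambda})\to\mathrm{Spec}(E)$, and the injectivity statement in the class field theory sequence (\ref{ESCFT}). The paper's proof is a one-sentence appeal to these three ingredients; your write-up just supplies the details they leave implicit, including the observation that the local Giraud argument applies at archimedean places as well, which is what makes $(c)\Rightarrow(a)$ go through.
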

\begin{proof} The exact sequence (\ref{centrale}) induces an exact sequence of pointed sets
$$H^1(\mathrm{Spec}(E)_{\mathrm{fppf}},\widetilde{\mathcal{G}}_{M/E}) \longrightarrow H^1(\mathrm{Spec}(E)_{\mathrm{fppf}},\mathcal{G}_{M/E})\stackrel{\delta^2_q}{\longrightarrow} H^2(\mathrm{Spec}(E)_{\mathrm{fppf}},\mathbb{Z}/2\mathbb{Z})$$
and similarly for (\ref{centralep}). Assertions (1) and (2) follow from these exact sequences of pointed sets, their compatibility with base change and from the  exact sequence (\ref{ESCFT}).
\end{proof}

\subsubsection{The motivic lifting problem $\mathrm{MotLift}(M,q)$.}

Let $(M,q)$ be an orthogonal $E$-motive given by the representation
$\rho_q:\mathcal{G}_E\rightarrow \mathbf{O}(q_B)$. We say that $\mathrm{MotLift}(M,q)$ has a solution if there exists a representation $\widetilde{\rho}_q$ such that the following diagram commutes:
\[ \xymatrix{
& & \widetilde{\mathbf{O}}(q_B)\ar[d]^{}\\
\mathcal{G}_E\ar[rr]^{\rho_q}\ar[rru]^{\widetilde{\rho}_q}
& &{\mathbf{O}}(q_B)
}\]

\begin{prop}
If $\mathrm{MotLift}(M,q)$ has a solution, then $\delta^2_q(\mathfrak{P}_M)=0$, $sw_2(\rho_{\lambda}) =0$ for any finite prime $\lambda$, and $sw_2(\rho_{\lambda\mid{G_{E_{\lambda}}}})=sp_2(\rho_{\lambda\mid{G_{E_{\lambda}}}})=0$ for any prime $\lambda$ of $E$.
\end{prop}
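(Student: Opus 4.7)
The plan is to use the hypothetical motivic lift $\widetilde{\rho}_q$ to produce, in one stroke, coherent lifts of every object whose non-liftability is measured by the four invariants, and then derive each vanishing formally. For $\delta^2_q(\mathfrak{P}_M)$, recall that $\widetilde{\mathcal{G}}_{M/E}$ is defined as the fibre product $\mathcal{G}_{M/E}\times_{\mathbf{O}(q_B)}\widetilde{\mathbf{O}}(q_B)$. Since $\widetilde{\rho}_q$ lies over $\rho_q$ and $\rho_q$ factors through the canonical faithfully flat projection $\pi:\mathcal{G}_E\rightarrow\mathcal{G}_{M/E}$, the universal property of the fibre product produces a morphism $\widetilde{\pi}:\mathcal{G}_E\rightarrow\widetilde{\mathcal{G}}_{M/E}$ of pro-algebraic $E$-group schemes lifting $\pi$. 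Contracting the $\mathcal{G}_E$-torsor $\mathfrak{P}$ along $\widetilde{\pi}$ yields a $\widetilde{\mathcal{G}}_{M/E}$-torsor $\widetilde{T}:=\mathfrak{P}\wedge^{\mathcal{G}_E}\widetilde{\mathcal{G}}_{M/E}$, whose pushforward along $\widetilde{\mathcal{G}}_{M/E}\rightarrow\mathcal{G}_{M/E}$ is $\mathfrak{P}\wedge^{\mathcal{G}_E}\mathcal{G}_{M/E}=\mathfrak{P}_M$. Thus $\mathrm{Lift}(M,q)$ has a solution, and $\delta^2_q(\mathfrak{P}_M)=0$ by Proposition \ref{propdelta2}(1).

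For the global Stiefel-Whitney class at a finite prime $\lambda$, recall that $\rho_\lambda$ factors as the composition $G_E\rightarrow\mathcal{G}_E(E_\lambda)\rightarrow\mathbf{O}(q_B)(E_\lambda)=\mathbf{O}(q_\lambda)(E_\lambda)$, where the first arrow is continuous (for the $\lambda$-adic topology) and the identification on the right comes from Artin's comparison isomorphism. Evaluating $\widetilde{\rho}_q$ at $E_\lambda$-points and composing with the previous continuous map produces a continuous lift $G_E\rightarrow\widetilde{\mathbf{O}}(q_\lambda)(E_\lambda)$ of $\rho_\lambda$; post-composing with the continuous morphism $\widetilde{\mathbf{O}}(q_\lambda)(E_\lambda)\rightarrow\widetilde{\mathbf{O}}(q_\lambda)(\mathbb{C}_\lambda)$ gives a continuous lift of $\overline{\rho}_\lambda$. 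Hence the pull-back of the central extension (\ref{extCOC}) along $\overline{\rho}_\lambda$ splits in $\widetilde{\mathrm{Top}}$, i.e. $sw_2(\rho_\lambda)=0$. Since $sw_2(\rho_{\lambda\mid G_{E_\lambda}})$ is the image of $sw_2(\rho_\lambda)$ under the restriction map $H^2(G_E,\mathbb{Z}/2\mathbb{Z})\rightarrow H^2(G_{E_\lambda},\mathbb{Z}/2\mathbb{Z})$, it also vanishes. For an archimedean place one applies the same construction to the functor $H^*_\lambda$, using the canonical identification of the underlying $E_\lambda$-vector space of $H^*_\lambda(M)$ with $M_B\otimes_E E_\lambda$ and the fact that $G_{E_\lambda}$ is finite, so that the continuity issue is trivial.

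For the spinor class, the construction above already produces a lift $G_{E_\lambda}\rightarrow\widetilde{\mathbf{O}}(q_\lambda)(E_\lambda)$ of $\rho_{\lambda\mid G_{E_\lambda}}$. Hence the image of $\rho_{\lambda\mid G_{E_\lambda}}$ is contained in the image of $\widetilde{\mathbf{O}}(q_\lambda)(E_\lambda)\rightarrow\mathbf{O}(q_\lambda)(E_\lambda)$, which by exactness of the long exact cohomology sequence attached to $1\rightarrow\mathbb{Z}/2\mathbb{Z}\rightarrow\widetilde{\mathbf{O}}(q_\lambda)\rightarrow\mathbf{O}(q_\lambda)\rightarrow 1$ coincides with the kernel of the spinor norm $sp:\mathbf{O}(q_\lambda)(E_\lambda)\rightarrow E_\lambda^{\times}/E_\lambda^{\times 2}$. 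Therefore $sp\circ\rho_{\lambda\mid G_{E_\lambda}}=0$ as an element of $\mathrm{Hom}(G_{E_\lambda},H^1(G_{E_\lambda},\mathbb{Z}/2\mathbb{Z}))$, and cup product gives $sp_2(\rho_{\lambda\mid G_{E_\lambda}})=0$. The main subtlety throughout is packaging the single algebraic-groups lift $\widetilde{\rho}_q$ into compatible lifts at three different levels (protorsor, global Galois, local Galois) whose obstruction theories live in genuinely different topoi; the continuity of all induced Galois lifts is a formal consequence of the continuity of $G_E\rightarrow\mathcal{G}_E(E_\lambda)$ and the fact that $\widetilde{\rho}_q$ is a morphism of pro-algebraic $E$-group schemes.
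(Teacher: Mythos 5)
Your proof is correct and takes essentially the same approach as the paper: factor $\mathcal{G}_E\rightarrow\mathcal{G}_{M/E}$ through $\widetilde{\mathcal{G}}_{M/E}$, contract $\mathfrak{P}$ to get the torsor lift, and at the Galois level observe that the motivic lift forces $\rho_\lambda$ and its local restrictions to factor through $\widetilde{\mathbf{O}}(q_\lambda)(E_\lambda)$, killing both $sw_2$ and, via exactness of the Galois cohomology sequence, the spinor class. The paper states these factorizations without further comment, so your spelled-out use of the fibre-product universal property and the identification of $\mathrm{Im}(\widetilde{\mathbf{O}}(q_\lambda)(E_\lambda)\rightarrow\mathbf{O}(q_\lambda)(E_\lambda))$ with $\ker(sp)$ is a welcome expansion rather than a divergence.
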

\begin{proof}
Assume that $\mathrm{MotLift}(M,q)$ has a solution. Then $\mathcal{G}_E\rightarrow \mathcal{G}_{M/E}$ factors through $\widetilde{\mathcal{G}}_{M/E}$, and 
$\mathfrak{P}\wedge^{\mathcal{G}_E}\widetilde{\mathcal{G}}_{M/E}$ lifts $\mathfrak{P}_M$ so that $\delta_q^2(\mathfrak{P}_M)=0$. Moreover, for $\lambda$ finite the map 
$\rho_{\lambda}:G_{E}\rightarrow \mathbf{O}(q_{\lambda})(E_{\lambda})$ factors through $\widetilde{\mathbf{O}}(q_{\lambda})(E_{\lambda})$, so that $sw_2(\rho_{\lambda})=0$. Similarly, for any prime $\lambda$ of $E$, the map $\rho_{\lambda\mid G_{E_{\lambda}}}:G_{E_{\lambda}}\rightarrow \mathbf{O}(q_{\lambda})(E_{\lambda})$ factors through $\widetilde{\mathbf{O}}(q_{\lambda})(E_{\lambda})$, which yields $sw_2(\rho_{\lambda\mid{G_{E_{\lambda}}}})=sp_2(\rho_{\lambda\mid{G_{E_{\lambda}}}})=0$. 

\end{proof}

\subsubsection{The class $\delta_q(\mathfrak{P}_M)$ as an obstruction to the existence of isometry $q_B\simeq q_{dR}$.}

Let $E$ be a totally imaginary number field and let $(M,q)$ be an arbitrary orthogonal $E$-motive. Recall that $E$ has strict cohomological dimension $2$. We consider the multiplicative group
$$H^*(G_E,\mathbb{Z}/2\mathbb{Z})^{\times}:=\lbrace 1+a_1+a_2\in \sum_{0\leq i\leq 2} H^i(G_E,\mathbb{Z}/2\mathbb{Z}),\,\,a_i\in H^i(G_E,\mathbb{Z}/2\mathbb{Z})\rbrace$$
of the cohomology ring $H^*(G_E,\mathbb{Z}/2\mathbb{Z})$. 
We consider the elements 
\begin{eqnarray*}
\delta_{q}(\mathfrak{P}_M)&=&1+\delta_{q}(\mathfrak{P}_M)+ \delta^2_{q}(\mathfrak{P}_M)\\
w(q_B)&=&1+w_1(q_B)+ w_2(q_B)\\
w(q_{dR})&=&1+w_1(q_{dR})+w_2(q_{dR})
\end{eqnarray*}
of $H^*(G_E,\mathbb{Z}/2\mathbb{Z})^{\times}$. We already record the following corollary of Theorem \ref{cor-dR-B}.

\begin{cor} Assume that $E$ is a totally imaginary. The forms $q_{B}$ and $q_{dR}$ are isometric if and only if $\delta_q(\mathfrak{P}_M)=1$.
\end{cor}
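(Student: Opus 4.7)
The plan is to combine the two identities obtained by applying Theorem \ref{thm-Tannakian} with $\omega=H^*_{B,E}$, $\eta=H^*_{dR,E}$, and $X=\mathrm{Spec}(E)$, so that $T_{\omega_X,\eta}$ restricts to $\mathfrak{P}_M$ on the Tannakian subcategory $\langle M\rangle$. This gives
$$\delta^1_q(\mathfrak{P}_M)=w_1(q_B)+w_1(q_{dR}),\qquad \delta^2_q(\mathfrak{P}_M)=w_2(q_B)+w_1(q_B)^2+w_1(q_B)\cdot w_1(q_{dR})+w_2(q_{dR}),$$
which can be packaged into the single multiplicative identity
$$\delta_q(\mathfrak{P}_M)\;=\;w(q_{dR})\cdot w(q_B)^{-1}$$
in the group $H^*(G_E,\mathbb{Z}/2\mathbb{Z})^{\times}$. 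Since $E$ is totally imaginary, its strict cohomological dimension is $2$, so $H^*(G_E,\mathbb{Z}/2\mathbb{Z})$ vanishes in degrees $\geq 3$ and the inverse $w(q_B)^{-1}=1+w_1(q_B)+w_1(q_B)^2+w_2(q_B)$ terminates. Expanding $w(q_{dR})\cdot w(q_B)^{-1}$ in degrees $0,1,2$ matches the formulas above term by term. Consequently $\delta_q(\mathfrak{P}_M)=1$ if and only if $w_i(q_B)=w_i(q_{dR})$ for $i=1,2$.

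The remaining step is the classical fact that over a totally imaginary number field $E$, two non-degenerate quadratic forms of equal rank are isometric if and only if their first and second Hasse-Witt invariants agree. The Betti and de Rham realizations of $M$ have the same $E$-dimension (both being fiber functors), so the rank hypothesis is automatic. By Hasse-Minkowski, isometry over $E$ is detected at every completion $E_v$: at each archimedean place --- all complex, by total imaginarity --- any two forms of equal rank are isometric; at each finite place, forms are classified by rank, discriminant, and Hasse invariant, which in our conventions are rank, $w_1$, and $w_2$. The passage from local to global equalities of the $w_i$ uses the injectivity of $H^1(G_E,\mathbb{Z}/2\mathbb{Z})\hookrightarrow\bigoplus_v H^1(G_{E_v},\mathbb{Z}/2\mathbb{Z})$ (a Chebotarev argument) together with the left injection in the exact sequence (\ref{ESCFT}). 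The only substantive ingredient is this last classification; the rest of the argument is a formal computation in the truncated cohomology ring.
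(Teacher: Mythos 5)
Your proof is correct and follows essentially the same route as the paper: package Theorem~\ref{cor-dR-B}(1) into the multiplicative identity $\delta_q(\mathfrak{P}_M)=w(q_{dR})\cdot w(q_B)^{-1}$ in the truncated ring $H^*(G_E,\mathbb{Z}/2\mathbb{Z})^{\times}$, reduce to equality of the Hasse--Witt invariants, and conclude by the Hasse--Minkowski classification of quadratic forms over a totally imaginary number field. (Your closing remark about injectivity of $H^1(G_E,\mathbb{Z}/2\mathbb{Z})\hookrightarrow\bigoplus_v H^1(G_{E_v},\mathbb{Z}/2\mathbb{Z})$ is harmless but not actually needed: the implication from global equality of the $w_i$ to local isometry uses only restriction, and the converse direction uses isometry-invariance of the $w_i$.)
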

\begin{proof}
Using the notation introduced above, Theorem \ref{cor-dR-B}(1) reads as follows:
\begin{equation}\label{short}
\delta_q(\mathfrak{P}_M)= w(q_{B})^{-1}\cdot w(q_{dR}).
\end{equation}
By (\ref{short}) we have $\delta_q(\mathfrak{P}_M)=1$ if and only if $w(q_B)=w(q_{dR})$. This is equivalent to $w(q_B)_{\lambda}=w(q_{dR})_{\lambda}$ for any place $\lambda$, hence equivalent to $q_B\otimes_E E_{\lambda}\simeq q_{dR}\otimes_E E_{\lambda}$ for any $\lambda$, which is in turn equivalent to $q_{B}\simeq q_{dR}$.
\end{proof}

\subsection{Examples of orthogonal Nori motives}\label{exSaito}

Let $V/E$ be a smooth projective algebraic variety of even dimension $n$. By Corollary \ref{corqSaito}, the motive $h^n(V)(n/2)\in\mathrm{NMM}_E(E)$ has a canonical orthogonal structure $q$ such that the forms $q_B$ and $q_{dR}$ 
are the perfect pairings
$$q_B:H^n_{B}(V(\mathbb{C}),(2i\pi)^{n/2}\mathbb{Q})_E\otimes H^n_{B}(V(\mathbb{C}),(2i\pi)^{n/2}\mathbb{Q})_E\stackrel{\cup}{\longrightarrow} H^{2n}_{B}(V(\mathbb{C}),(2i\pi)^{n}\mathbb{Q})_E\stackrel{tr}{\longrightarrow} E$$
and
$$q_{dR}:H^n_{dR}(V/E)\otimes H^n_{dR}(V/E)\stackrel{\wedge}{\longrightarrow}H^{2n}_{dR}(V/E)\stackrel{tr}{\longrightarrow} E$$
given by cup-product and trace maps. Similarly, for  $\lambda\mid p$ a finite prime of $E$, the $\lambda$-adic realization yields   
the orthogonal Galois representation $\rho_{\lambda}$ of $G_E$ given by $p$-adic \'etale cohomology
$$q_{\lambda}:H^n(V_{\overline{E}},\mathbb{Q}_p(n/2))_{E_{\lambda}}\otimes H^n(V_{\overline{E}},\mathbb{Q}_p(n/2))_{E_{\lambda}}\stackrel{\cup}{\longrightarrow} H^{2n}(V_{\overline{E}},\mathbb{Q}_p(n/2))_{E_{\lambda}}\stackrel{tr}{\longrightarrow} E_{\lambda}$$
with its natural Galois action. If  $\lambda$ is a real prime of $E$,  then $\rho_{\lambda\mid G_{E_{\lambda}}}$ is the $G_{E_{\lambda}}$-equivariant quadratic form
$$H^n(V_{\lambda}(\mathbb{C}),(2i\pi)^{n/2}\mathbb{Q})_{E_{\lambda}}\otimes H^n(V_{\lambda}(\mathbb{C}),(2i\pi)^{n/2}\mathbb{Q})_{E_{\lambda}}\stackrel{\cup}{\longrightarrow} H^{2n}_{B}(V_{\lambda}(\mathbb{C}),(2i\pi)^{n}\mathbb{Q})_{E_{\lambda}}\stackrel{tr}{\longrightarrow} E_{\lambda}$$
where $V_{\lambda}(\mathbb{C})$ is the set of complex points of $V$ over $E$ with respect with the complex embedding $E\stackrel{\lambda}{\rightarrow}\mathbb{R}\rightarrow\mathbb{C}$.

\subsection{The main result}
Recall that we denote by
$$\appl{H^*(G_E,\mathbb{Z}/2\mathbb{Z})}{H^*(G_{E_{\lambda}},\mathbb{Z}/2\mathbb{Z})}{x}{x_{\lambda}}$$
the canonical restriction map, for any prime $\lambda$. In particular, if $x\in H^2(G_E,\mathbb{Z}/2\mathbb{Z})$ then we have
$x=\sum_{\lambda}x_{\lambda}$ in $H^2(G_E,\mathbb{Z}/2\mathbb{Z})$. 

\begin{thm}\label{cor-dR-B}  Let $(M,q)$ be an orthogonal Nori $E$-motive.
\begin{enumerate}
\item We have 
\begin{eqnarray*}
\delta_{q}^1(\mathfrak{P}_M)&=&w_1(q_B)+ w_1(q_{dR})\\
\delta_{q}^2(\mathfrak{P}_M)&=&w_2(q_B)+ w_1(q_B)\cdot w_1(q_B)+ w_1(q_B)\cdot w_1(q_{dR})+w_2(q_{dR}).
\end{eqnarray*}
\item Assume that $(M,q)$ is of the form $(h^n(V)(n/2),q)$ as in Section \ref{exSaito}. Then 
we have $$\delta_q^1(\mathfrak{P}_M)=sw_1(\rho_{\lambda}).$$
\item Let $(M,q)$ be an arbitrary orthogonal motive. Assume that either $E=\mathbb{Q}$ or $E$ is totally imaginary. Then we have $$\delta_q^2(\mathfrak{P}_M)=\sum_{\lambda} sw_2(\rho_{\lambda\mid{G_{E_{\lambda}}}})+ sp_2(\rho_{\lambda\mid{G_{E_{\lambda}}}}).$$
\end{enumerate}
\end{thm}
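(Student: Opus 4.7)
The plan is to deduce Part (1) from Theorem \ref{thm-Tannakian} and then to obtain Parts (3) and (2) by local-to-global arguments built on Theorem \ref{thm-Fontaine}, $p$-adic Hodge theory and class field theory. For Part (1), I apply Theorem \ref{thm-Tannakian} directly to the neutral Tannakian category $(\mathrm{NMM}_E(E), H^*_{B,E})$ over $E$ with second fiber functor $\eta := H^*_{dR,E} : \mathrm{NMM}_E(E) \to \mathrm{Vec}_E$. The isomorphism $\mathfrak{P}_M \simeq {\bf Isom}^{\otimes}(H^*_{B,E\mid\langle M\rangle}, H^*_{dR,E\mid\langle M\rangle})$ of (\ref{isotorsM}) realizes $\mathfrak{P}_M$ as the torsor $T_{\omega_X,\eta}$ to which Theorem \ref{thm-Tannakian} applies, while $\omega(M,q) = (M_B, q_B)$ and $\eta(M,q) = (M_{dR}, q_{dR})$ by construction, so the two identities of Part (1) follow verbatim.

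For Part (3), I reduce the global claim to local identities at every finite place and then glue via class field theory. Fix a finite place $\lambda$ of $E$ over a prime $p$. Because $M_\lambda$ is de Rham, the continuous orthogonal representation $\rho_{\lambda\mid G_{E_{\lambda}}}$ is $B_{dR,\lambda}$-admissible; $B_{dR,\lambda}$ is well-filtered by Section 5.2.3 and de Rham representations are filtered. The $p$-adic comparison (\ref{p-adic-comparison}) identifies the $B_{dR,\lambda}$-twist of $(M_\lambda, q_\lambda)$ with $(M_{dR} \otimes_E E_\lambda, q_{dR} \otimes_E E_\lambda)$, so Theorem \ref{thm-Fontaine} applied to $\rho_{\lambda\mid G_{E_{\lambda}}}$ produces
\begin{eqnarray*}
w_1(q_{dR})_\lambda &=& w_1(q_B)_\lambda + sw_1(\rho_{\lambda\mid G_{E_{\lambda}}}), \\
w_2(q_{dR})_\lambda &=& w_2(q_B)_\lambda + w_1(q_B)_\lambda\cdot sw_1(\rho_{\lambda\mid G_{E_{\lambda}}}) + sw_2(\rho_{\lambda\mid G_{E_{\lambda}}}) + sp_2(\rho_{\lambda\mid G_{E_{\lambda}}}).
\end{eqnarray*}
Substituting into the restriction at $G_{E_{\lambda}}$ of the Part (1) identity for $\delta_q^2$ and simplifying modulo $2$ cancels every $w_i$ term, yielding the local equality
\[ \delta_q^2(\mathfrak{P}_M)_\lambda = sw_2(\rho_{\lambda\mid G_{E_{\lambda}}}) + sp_2(\rho_{\lambda\mid G_{E_{\lambda}}}) \quad \text{in } H^2(G_{E_{\lambda}},\mathbb{Z}/2\mathbb{Z}). \]
When $E$ is totally imaginary, $H^2(G_\mathbb{C},\mathbb{Z}/2\mathbb{Z}) = 0$ eliminates the archimedean contribution and the exact sequence (\ref{ESCFT}) assembles the finite-place equalities into the global one. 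When $E = \mathbb{Q}$, the missing datum at the real place is pinned down by the reciprocity $\sum_\lambda(-)_\lambda = 0$ built into (\ref{ESCFT}): since both sides of the claimed identity are global classes, they satisfy this constraint, and agreement at every finite place forces agreement at $\infty$; that the right-hand tuple does lie in $H^2(G_\mathbb{Q},\mathbb{Z}/2\mathbb{Z})$ is confirmed by applying the classical Serre-Fr\"ohlich formula (Corollary \ref{FS}) to the complex-conjugation representation $\rho_\infty\mid G_\mathbb{R}$.

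For Part (2), Part (1) reduces the claim to showing $w_1(q_B) + w_1(q_{dR}) = sw_1(\rho_\lambda)$ for one (hence every) finite place $\lambda$. Theorem \ref{thm-Fontaine}(1) applied to $\rho_{\lambda\mid G_{E_{\lambda}}}$ yields the restricted identity $(w_1(q_B) + w_1(q_{dR}))_\lambda = sw_1(\rho_\lambda)_\lambda$. By Saito's theorem (resting on the Weil conjectures), $sw_1(\rho_\lambda) \in H^1(G_E,\mathbb{Z}/2\mathbb{Z}) \simeq E^\times/E^{\times 2}$ is independent of the finite place $\lambda$, so the discrepancy $\delta_q^1(\mathfrak{P}_M) - sw_1(\rho_\lambda)$ restricts to zero at every finite place. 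The main obstacle, shared with the $E=\mathbb{Q}$ case of Part (3), is to propagate these finite-place equalities through the real places of $E$ in order to conclude globally via the Hasse-square injection $E^\times/E^{\times 2} \hookrightarrow \prod_\lambda E_\lambda^\times/E_\lambda^{\times 2}$. I would resolve this either by granting the conjectural identification $\pi_0(\mathcal{G}_E) \simeq G_E$, which forces $\mathrm{det}_E(M)$ to be an Artin motive and makes the identity immediate, or by the unconditional route through a direct real-place computation: Corollary \ref{FS} applied to the complex-conjugation representation on Betti cohomology identifies $sw_1(\rho_\infty\mid G_\mathbb{R})$ with the Hodge-theoretic signature of $q_B\otimes_E E_\lambda$ at each real place $\lambda$, which in turn matches $(w_1(q_B)+w_1(q_{dR}))_\lambda$ through the Hodge decomposition on $H^n(V_\lambda(\mathbb{C}),\mathbb{Q}(n/2))$.
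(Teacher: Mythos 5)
Parts (1) and (3) follow the paper's approach. Part (1) is Theorem~\ref{thm-Tannakian} applied to the pair of fiber functors $(H^*_{B,E},H^*_{dR,E})$ via~(\ref{isotorsM}), exactly as the paper does. For Part (3) you restrict the Part (1) identity to each $G_{E_\lambda}$, use Artin comparison plus the $p$-adic comparison~(\ref{p-adic-comparison}) to recognize $q_{dR}\otimes_E E_\lambda$ as the $B_{dR}$-twist of $q_\lambda$, and invoke Corollary~\ref{dR}; the paper does precisely this. Your reciprocity framing of the real place for $E=\mathbb{Q}$ is a detour: once you have to verify that the right-hand tuple is global, you end up proving the local identity at $\infty$ directly via Corollary~\ref{FS} and the isometry $(M_\infty\otimes_{\mathbb{R}}\mathbb{C})^{G_{\mathbb{R}}}\simeq M_{dR,\mathbb{R}}$, which is literally the paper's identity~(\ref{requequ}), after which reciprocity contributes nothing. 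The conclusion is the same.

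In Part (2) there is a genuine gap. You write that ``the main obstacle \dots is to propagate these finite-place equalities through the real places of $E$,'' and propose to fix it either by the conjectural $\pi_0(\mathcal{G}_E)\simeq G_E$ or by a direct archimedean Hodge-theoretic comparison. No such obstacle exists: the map
\[
H^1(G_E,\mathbb{Z}/2\mathbb{Z})\;\longrightarrow\;\prod_{\lambda\nmid\infty}H^1(G_{E_\lambda},\mathbb{Z}/2\mathbb{Z})
\]
is already injective by the Chebotarev density theorem (a global class in $E^\times/E^{\times 2}$ that becomes a square in every nonarchimedean completion is a square), so agreement at the finite places alone, combined with Saito's $\lambda$-independence of $sw_1(\rho_\lambda)$, gives $\delta_q^1(\mathfrak{P}_M)=sw_1(\rho_\mu)$ immediately. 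This is exactly the paper's argument. Your alternative Route~1 gives only a conditional proof, and Route~2 is problematic on its own terms: $sw_1(\rho_\mu)$ for a fixed finite $\mu$ is a global class in $H^1(G_E,\mathbb{Z}/2\mathbb{Z})$, and its restriction to a real $G_{E_\lambda}$ is a priori unrelated to the archimedean representation $\rho_{\lambda\mid G_{E_\lambda}}$ on Betti cohomology; equating those would be a consequence of the theorem, not an independent input. You should replace the last paragraph of your Part (2) with the one-line Chebotarev injectivity argument.
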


\begin{proof}  
In view of the isomorphism of $\mathcal{G}_{M/E}$-torsors
$\mathfrak{P}_M\simeq {\bf Isom}^{\otimes}(H^*_{B,E\mid <M>},H^*_{dR,E\mid <M>})$,
Assertion (1) follows from Theorem \ref{thm-Tannakian}. 

Let $\lambda$ be a finite place of $E$. By Artin's comparison theorem (\ref{Artin}), we have an isometry $(M_B,q_B)\otimes_{E}E_{\lambda}\simeq (M_{\lambda},q_{\lambda})$. We denote by $(M_{dR,E_{\lambda}},q_{dR,E_{\lambda}})$ the $E_{\lambda}$-quadratic space 
$(M_{dR}, q_{dR})\otimes_E E_{\lambda}$.  It follows from the first assertion that we have identities
\begin{equation}\label{Xp}
\delta_q^1(\mathfrak{P})_{\lambda}=w_1(q_{\lambda})+ w_1(q_{dR,E_{\lambda}})
\end{equation}
\begin{equation}\label{XXp}
\delta_q^2(\mathfrak{P})_{\lambda}=w_2(q_{\lambda})+ w_1(q_{\lambda})\cdot w_1(q_{\lambda})+ w_1(q_{\lambda})\cdot w_1(q_{dR,E_{\lambda}})+w_2(q_{dR,E_{\lambda}})
\end{equation}
since the Hasse-Witt invariants are compatible with base-change. 

By (\ref{p-adic-comparison}) the representation $\rho_{\lambda\mid G_{E_{\lambda}}}$ is de Rham and $(M_{dR,E_{\lambda}},q_{dR,E_{\lambda}})$ is a $\lambda$-adic twist of $(M_{\lambda}, q_{\lambda})$ by $\rho_{\lambda\mid G_{E_{\lambda}}}$ in the sense of Section \ref{section-Btwist} (see for example \cite{Brinon-Conrad}, Theorem 2.2.3). By (\ref{Xp}), (\ref{XXp}) and Corollary \ref{dR}, we obtain 
\begin{equation}\label{hehe}
\delta_q^1(\mathfrak{P})_{\lambda}=sw_1(\rho_{\lambda\mid G_{E_{\lambda}}})
\end{equation}
\begin{equation}
\delta_q^2(\mathfrak{P})_{\lambda}=sw_2(\rho_{\lambda\mid G_{E_{\lambda}}})+ sp_2(\rho_{\lambda\mid G_{E_{\lambda}}}) 
\end{equation}
for any $\lambda\nmid\infty$. In order to prove the assertion (3), it remains to consider the case $E=\mathbb{Q}$ and $\lambda=\infty$, and to prove that we have
\begin{equation}\label{requequ}
w_2(q_{\infty})+ w_1(q_{\infty})\cdot w_1(q_{\infty})+ w_1(q_{\infty})\cdot w_1(q_{dR,\mathbb{R}})+w_2(q_{dR,\mathbb{R}})=sw_2(\rho_{\infty\mid G_{\mathbb{R}}})+ sp_2(\rho_{\infty\mid G_{\mathbb{R}}})
\end{equation}
where $(M_{\infty},q_{\infty})=(M_B,q_{B})\otimes_{\mathbb{Q}}\mathbb{R}$ and $\rho_{\infty\mid G_{\mathbb{R}}}: G_{\mathbb{R}}\rightarrow \mathbf{O}(q_{\infty})(\mathbb{R})$ is induced by the action of the complex conjugation on $M_{B}$.
But in this case we have an isometry 
$$\left(M_{\infty}\otimes_{\mathbb{R}}\mathbb{C}\right)^{G_{\mathbb{R}}}\simeq M_{dR,\mathbb{R}}$$
and thus $q_{dR,\mathbb{R}}$ is the Fr\"ohlich twist of $q_{\infty}$ by $\rho_{\infty\mid G_{\mathbb{R}}}$, so that (\ref{requequ}) is given by Corollary \ref{FS}.

We show Assertion (2). By (\cite{Saito11} Corollary 3.3), the global Stiefel-Whitney invariant $sw_1(\rho_{\lambda})$ does not dependent on the finite place $\lambda$. Let $\mu$ be a fixed finite place of $E$. For any $\lambda$ finite, we have 
$$sw_1(\rho_{\mu})_{\lambda}=sw_1(\rho_{\lambda})_{\lambda}=sw_1(\rho_{\lambda\mid G_{E_{\lambda}}})=\delta_q^1(\mathfrak{P})_{\lambda}.$$ 
But the map
$$H^1(G_{E},\mathbb{Z}/2\mathbb{Z})\longrightarrow \Prod_{\lambda\nmid\infty} H^1(G_{E_{\lambda}},\mathbb{Z}/2\mathbb{Z})$$
is injective, by the Chebotarev density theorem. In view of (\ref{hehe}), we obtain $\delta_q^1(\mathfrak{P}_M)=sw_1(\rho_{\mu})$, as required.  
\end{proof}

\subsection{The case $M=h^n(V)(n/2)$}

Let $V$ be a smooth projective variety over $E$ of even dimension $n$. We endow $M=h^n(V)(n/2)$ with its canonical orthogonal structure $q$. For any prime $p$, we consider $sw_1(\rho_p)\in H^1(G_E,\mathbb{Z}/2\mathbb{Z})\simeq E^{\times}/E^{\times 2}$. We denote by 
$b_i= \mathrm{dim}_{\mathbb{Q}}H_B^i(V_{\sigma}(\mathbb C), \mathbb Q)$ the $i^{th}$ Betti number
and, following \cite{Saito94}, we set  $$r(V)=\sum_{i<n}(-1)^i\cdot b_i \hspace{0.5cm}\mbox{ (respectively } r(V)=\sum_{i\leq n}(-1)^i\cdot b_i\mbox{)}$$ 
if $n \equiv 0 \ \mathrm{mod}\ 4$ (respectively if $n \equiv  2 \ \mathrm{mod}\ 4$). 

\begin{cor}
If $V$ is a smooth projective variety over $E$ of even dimension $n$, then we have
$$\mathbb{P}_{\mathrm{det}_E\left(h^n(V)(n/2)\right)}=E\left(\sqrt{sw_1(\rho_p)}\right)$$
for any prime $p<\infty$, and
$$\delta_{q}^2(\mathfrak{P}_M)=w_2(q_B)+ (-1)^{r(V)}\cdot (-1)^{r(V)}+ (-1)^{r(V)}\cdot w_1(q_{dR})+w_2(q_{dR}).$$
\end{cor}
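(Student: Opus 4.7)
The plan is to reduce both displayed formulas to Theorem \ref{cor-dR-B} via a single auxiliary identity,
\begin{equation}\label{eq-HW1-qB}
w_1(q_B)=(-1)^{r(V)}\in E^\times/E^{\times 2}\simeq H^1(G_E,\mathbb{Z}/2\mathbb{Z}),
\end{equation}
which computes the discriminant of the Betti intersection form of $V$.

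The first displayed formula then follows by combining Proposition \ref{propdelta1}(1), which gives $\mathbb{P}_{\mathrm{det}_E(M)}=E\bigl(\sqrt{\delta_q^1(\mathfrak{P}_M)}\bigr)$, with Theorem \ref{cor-dR-B}(2), which identifies $\delta_q^1(\mathfrak{P}_M)=sw_1(\rho_p)$ for any finite prime $p$ (so \eqref{eq-HW1-qB} is not even needed for this half). The second formula follows by substituting \eqref{eq-HW1-qB} for each occurrence of $w_1(q_B)$ in the identity
\[\delta_q^2(\mathfrak{P}_M)=w_2(q_B)+w_1(q_B)\cdot w_1(q_B)+w_1(q_B)\cdot w_1(q_{dR})+w_2(q_{dR})\]
supplied by Theorem \ref{cor-dR-B}(1); this produces precisely the case-split expression in the statement.

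To establish \eqref{eq-HW1-qB}, observe first that $q_B$ is the $E$-scalar extension of the cup-product pairing on $H^n(V(\mathbb{C}),\mathbb{Q})$ (the Tate twist by $n/2$ on each factor and by $n$ on the target contributes no sign to the discriminant); this $\mathbb{Q}$-form is unimodular over $\mathbb{Z}$, so its discriminant in $\mathbb{Q}^\times/\mathbb{Q}^{\times 2}$ is $\pm 1$, equal to $(-1)^{r_-}$ with $r_-$ the dimension of a maximal negative-definite subspace over $\mathbb{R}$. The Hodge index theorem expresses the signature $\sigma=r_+-r_-$ as an explicit combination of Hodge numbers, and since $r_-=(b_n-\sigma)/2$, Hodge symmetry $h^{p,q}=h^{q,p}$ together with Poincar\'e duality $b_i=b_{2n-i}$ allow one to rewrite $r_-\bmod 2$ purely in terms of Betti numbers. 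The case distinction in the definition of $r(V)$ reflects the opposite-sign contributions of $h^{n/2,n/2}$ to $\sigma$ for $n\equiv 0$ and $n\equiv 2\pmod 4$, respectively. This parity computation, which is the only non-formal step of the argument (and the main obstacle), is carried out in \cite{Saito94}; granting it, the identification \eqref{eq-HW1-qB} --- and hence the corollary --- follows.
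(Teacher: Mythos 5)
Your overall plan is correct: both displayed formulas do reduce to Theorem \ref{cor-dR-B} via the single auxiliary identity $w_1(q_B)=(-1)^{r(V)}$, and the treatment of the first formula is exactly the paper's (Proposition \ref{propdelta1}(1) plus Theorem \ref{cor-dR-B}(2), with no need for the auxiliary identity). Where you diverge is in how you propose to establish $w_1(q_B)=(-1)^{r(V)}$. The paper does \emph{not} prove this topologically. Instead it observes that, by the two parts of Theorem \ref{cor-dR-B} ($\delta_q^1(\mathfrak{P}_M)=w_1(q_B)+w_1(q_{dR})$ and $\delta_q^1(\mathfrak{P}_M)=sw_1(\rho_p)$), the identity $w_1(q_B)=(-1)^{r(V)}$ is \emph{equivalent} to the arithmetic statement $w_1(q_{dR})=sw_1(\rho_p)+r(V)\cdot\{-1\}$, and the latter is precisely \cite{Saito94}, Theorem 2 (a result comparing the de Rham discriminant with the determinant of $\ell$-adic cohomology, proved by Galois-theoretic and $p$-adic Hodge-theoretic means). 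The paper thus sidesteps any direct computation of the parity of the negative index $s$ of $q_B$ over $\mathbb{R}$.

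Your route --- unimodularity of the middle-dimensional lattice, hence $w_1(q_B)=(-1)^s$, plus a Hodge-theoretic computation showing $s\equiv r(V)\bmod 2$ --- is in principle sound (the first half is Proposition \ref{wbet} in the paper, proved by a Hasse local-global argument, and the second half is true since it follows from the equalities just listed). But the second half, which you correctly identify as the crux, is exactly the step you leave undone. The attribution to \cite{Saito94} for this \emph{topological} parity computation is imprecise: the paper cites Saito for the arithmetic Theorem 2 about $w_1(q_{dR})$ and $sw_1(\rho_p)$, not for a Hodge-index computation of $s\bmod 2$. If you want to fill your gap directly by topology you would need to actually carry out the Hodge-theoretic bookkeeping (the signature formula $\sigma=\sum_{p,q}(-1)^q h^{p,q}$ for even-dimensional Kähler manifolds, together with Hodge symmetry and Poincar\'e duality), in the style of the Libgober--Wood--Zagier computations cited in Section 7.2 of the paper; this is not obviously shorter than the paper's reduction to Saito's theorem, and as you present it the step is an acknowledged but unaddressed obstacle. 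So: different route, essentially correct in outline, but the key nontrivial step is deferred rather than proved, and the reference you cite for it is not the right one.
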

\begin{proof}
The first assertion immediately follows from Theorem \ref{cor-dR-B} and Proposition \ref{propdelta1}. 
As for the second assertion, it is enough to showing that $$w_1(q_B)=\mathrm{disc}(q_B)=(-1)^{r(V)}$$ in $E^{\times}/E^{\times2}$ by Theorem \ref{cor-dR-B}(1). This is equivalent to the equality (written additively)
$$w_1(q_{dR})=sw_1(\rho_p)+{r(V)}\cdot\lbrace-1\rbrace$$ by Theorem \ref{cor-dR-B}(2), which is (\cite{Saito94} Theorem 2) (see also \cite{Saito12}, Remark 2.4).
\end{proof}
\begin{rem}
It is conjectured that there is a short exact sequence
$$1\rightarrow \mathcal{G}^0_E\rightarrow \mathcal{G}_E\rightarrow G_E\rightarrow 1$$
where $\mathcal{G}^0_E$ is the connected component of the identity in $\mathcal{G}_E$, $G_E$ is the absolute Galois group and  the map $\mathcal{G}_E\rightarrow G_E$ is induced by the inclusion of Artin motives inside the category of all motives. Assume that there is such an exact sequence and let $(M,q)$ be an orthogonal motive, given by the orthogonal representation
$$\rho_M:\mathcal{G}_E\rightarrow \mathbf{O}(q_B).$$
Then the determinant motive $\mathrm{det}_E(M)$ is given by the representation
$$\rho_{\mathrm{det}_E(M)}:\mathcal{G}_E\rightarrow \mathbf{O}(\mathrm{det}(q)_B)=\mathbb{Z}/2\mathbb{Z}$$
which must factor as follows:
$$\rho_{\mathrm{det}_E(M)}:\mathcal{G}_E\rightarrow G_E\stackrel{\alpha}{\rightarrow}\mathbb{Z}/2\mathbb{Z}.$$
Hence $\mathrm{det}_E(M)$ would be an orthogonal Artin  motive, and
we would obtain $\mathrm{det}(\rho_{\lambda})=\alpha$ and 
$\mathbb{P}_{\mathrm{det}_E(M)}=E\left(\sqrt{sw_1(\rho_{\lambda})}\right)$
for any finite prime $\lambda$ of $E$.
\end{rem}

\section{Examples}

\subsection{Artin motives}

In the case of orthogonal Artin motives, Theorem \ref{cor-dR-B} reduces to classical results of Serre and Fr\"ohlich, see (\cite{Serre84} Theorem 1) and (\cite{Fr\"ohlich85} Theorems 2 and 3) over number fields. The aim of this section is to compute these invariants for some explicit Artin motives (see Proposition \ref{prop-Artin} for a description of Artin motives).

Let $(M,q)$ be an orthogonal Artin motive given by a representation of the form
$$\rho_q:\mathcal{G}_{E}\stackrel{}{\longrightarrow} G_{E}\stackrel{\rho_0}{\longrightarrow} {\bf O}(q_B)$$
We have $\mathfrak{P}_M=\mathrm{Spec}(K)$ for $K/E$ a finite Galois extension. In this situation Fr\"ohlich defines \cite{Fr\"ohlich85} the twisted form $(q_B\otimes_{E} K)^{G_{K/E}}$.
\begin{cor}\label{cor-Fro}
We have an isometry $q_{dR}\simeq (q_B\otimes_{E} K)^{G_{K/E}}$ and the following identities in $H^*(G_{E},\mathbb{Z}/2\mathbb{Z})$:
\begin{eqnarray*}
\delta_{q}^1(\mathrm{Spec}(K))&=&w_1(q_B)+ w_1(q_{dR})=sw_1(\rho_0);\\
\delta_{q}^2(\mathrm{Spec}(K))&=&w_2(q_B)+ w_1(q_B)\cdot w_1(q_B)+ w_1(q_B)\cdot w_1(q_{dR})+w_2(q_{dR})\\
&=&sw_2(\rho_0)+ sp_2(\rho_0).
\end{eqnarray*}

\end{cor}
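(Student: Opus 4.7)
The plan is to derive Corollary \ref{cor-Fro} by specializing the main result Theorem \ref{cor-dR-B}(1) to the Artin setting and then matching it with the classical Fr\"ohlich--Serre formula recovered here as Corollary \ref{FS}. The preliminary step is to identify $q_{dR}$ with the Fr\"ohlich twist $(q_B\otimes_E K)^{G_{K/E}}$. Once this identification is in place, the Hasse--Witt line of identities is a direct specialization of Theorem \ref{cor-dR-B}(1), and the Stiefel--Whitney/spinor line follows by applying Corollary \ref{FS} to the representation $\rho_0$.

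For the identification of $q_{dR}$, observe that since $\rho_q$ factors through the finite quotient $G_{K/E}$ of $G_E$, the Tannakian subcategory $\langle M\rangle\subseteq \mathrm{NMM}_E(E)$ lies inside the Artin subcategory, $\mathcal{G}_{M/E}$ is a finite constant quotient of $G_{K/E}$, and the hypothesis $\mathfrak{P}_M=\mathrm{Spec}(K)$ amounts to choosing $K$ as the (minimal) splitting extension. Lemma \ref{twisting-lemma} applied to the fiber functors $H^*_{B,E\mid\langle M\rangle}$ and $H^*_{dR,E\mid\langle M\rangle}$ gives an isometry between $q_{dR}$ and the twist $\mathfrak{P}_M\wedge^{\mathcal{G}_{M/E}}q_B$. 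Unwinding the twist construction of Section \ref{twist} for a torsor represented by $\mathrm{Spec}(K)$ with $K/E$ Galois, this twist is precisely the $G_{K/E}$-invariant subspace of $M_B\otimes_E K$ equipped with the form induced by $q_B\otimes_E K$, i.e.\ Fr\"ohlich's twisted form $(q_B\otimes_E K)^{G_{K/E}}$.

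The first line of identities is then the specialization of Theorem \ref{cor-dR-B}(1) to $(M,q)$, using $\mathfrak{P}_M=\mathrm{Spec}(K)$. For the second line, apply Corollary \ref{FS} to $\rho_0\colon G_E\to \mathbf{O}(q_B)(E)$, which has open kernel by hypothesis. Setting $\underline{q_B}:=(q_B\otimes_E\overline{E})^{G_E}$ and noting that this coincides with $(q_B\otimes_E K)^{G_{K/E}}\simeq q_{dR}$ by the previous step, Corollary \ref{FS} yields
$$sw(\rho_0)+sp(\rho_0)\;=\;w(q_{dR}-q_B)$$
in the cup-product group $H^*(G_E,\mathbb{Z}/2\mathbb{Z})$. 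Expanding the right-hand side as $w(q_{dR})\cdot w(q_B)^{-1}$ and using $w(q_B)^{-1}\equiv 1+w_1(q_B)+\bigl(w_1(q_B)^2+w_2(q_B)\bigr)$ modulo terms of degree $\geq 3$, separation by degree produces $sw_1(\rho_0)=w_1(q_B)+w_1(q_{dR})$ and $sw_2(\rho_0)+sp_2(\rho_0)=w_2(q_B)+w_1(q_B)^2+w_1(q_B)\cdot w_1(q_{dR})+w_2(q_{dR})$, matching the values from Theorem \ref{cor-dR-B}(1).

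No substantial obstacle arises: all technical content is supplied by Lemma \ref{twisting-lemma}, Theorem \ref{cor-dR-B}(1), and Corollary \ref{FS}. The only care needed is the routine algebraic manipulation inverting $w(q_B)$ in the cup-product group, keeping in mind that the ``$+$'' in the statement of Corollary \ref{FS} stands for the cup-product operation on total classes $1+a_1+a_2$.
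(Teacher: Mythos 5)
Your proof is correct, and it takes a genuinely different route from the paper's. The paper argues \emph{locally}: it restricts $\rho_0$ to decomposition groups $G_{E_\lambda}$ at each place, uses the local twist formula from the proof of Theorem \ref{cor-dR-B} (via $p$-adic Hodge theory at finite places and Fr\"ohlich's formula at real places), and then reassembles the global invariants $sw_2(\rho_0)$ and $sp_2(\rho_0)$ from their local restrictions using the class-field-theoretic sum formula in $H^2(G_E,\mathbb{Z}/2\mathbb{Z})$. You instead stay \emph{global}: after identifying $q_{dR}$ with the Fr\"ohlich twist $(q_B\otimes_E K)^{G_{K/E}}$ via Lemma \ref{twisting-lemma} and the description of twists in Section \ref{twist}, you apply Corollary \ref{FS} with $k=E$ directly to the finite-image representation $\rho_0$ (which is $\overline{E}$-admissible), and extract the two degree components of $w(q_{dR}-q_B)$ by the cup-product manipulation. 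This bypasses the local-global argument entirely and, as a bonus, explicitly establishes the isometry $q_{dR}\simeq(q_B\otimes_E K)^{G_{K/E}}$ that is part of the corollary's statement but left implicit in the paper's proof. The two approaches share the same core ingredient (the Tannakian twist identification), but your organization is more economical for Artin motives since the Galois-theoretic twist there is already covered by Corollary \ref{FS} over the base field $E$ without passing through completions; the paper's route, on the other hand, is the one that generalizes to non-Artin motives where no global analogue of Corollary \ref{FS} is available.
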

\begin{proof}
The local representation $\rho_{\lambda\mid G_{E_\lambda}}$ identifies with the composition
$$G_{E_\lambda}\longrightarrow G_{E}\stackrel{\rho_0}{\longrightarrow}\mathbf{O}(q_B)(E)\longrightarrow \mathbf{O}(q_B)(E_\lambda)$$ for any place $\lambda$  of $E$. We obtain
$$sw_2(\rho_0)=\sum_{\lambda}sw_2(\rho_{\lambda\mid G_{E_\lambda}})\mbox{ and } sp_2(\rho_0)=\sum_{\lambda}sp_2(\rho_{\lambda\mid G_{E_\lambda}})$$
and $sw_1(\rho_0)=sw_1(\rho_{\lambda})$ for any $\lambda$ finite. The result then follows immediately from Theorem \ref{cor-dR-B} and its proof.
\end{proof}

Let $F/E$ be finite extension  let $M=h^0(\textrm{Spec}(F))(0)\in\mathrm{NMM}_E(E)$. Consider the canonical orthogonal structure $q$ on $M$. In this case one has $M_B=E^{\mathrm{Hom}_{E}(F,\mathbb{C})}$ (w.r.t. $\sigma:E\rightarrow\mathbb{C}$)
and 
$$\fonc{q_B}{E^{\mathrm{Hom}_{E}(F,\mathbb{C})}\otimes E^{\mathrm{Hom}_{E}(F,\mathbb{C})}}{E}{((v_{\tau})_{\tau},(w_{\tau})_{\tau})}{\sum_{\tau} v_{\tau}\cdot w_{\tau}}$$
is the standard form. Moreover, one has $M_{dR}=F$ and
$$\fonc{q_{dR}}{F\otimes F}{E}{(x,y)}{\mathrm{Trace}_{F/E}(x\cdot y)}$$
is the usual trace form. The representation $\rho_q:\mathcal{G}_{E}\rightarrow {\bf O}(q_B)$ factors as follows
$$\rho_q:\mathcal{G}_{E}\stackrel{}{\longrightarrow} G_{E}\stackrel{\rho_0}{\longrightarrow} {\bf O}(q_B)$$
where $G_{E}$ acts on $M_B=E^{\mathrm{Hom}_{E}(F,\mathbb{C})}=E^{\mathrm{Hom}_{E}(F,\overline{E})}$ by permuting the factors. Let $K$ be the Galois closure of (some embedding of) $F/E$ in $\overline{E}\subset\mathcal{P}$. We have $\mathfrak{P}_M=\mathrm{Spec}(K)$. We denote by $d_{F/E}$ the discriminant.
\begin{cor}
For $M=h^0(\mathrm{Spec}(F))$, one has  
$$\delta_{q}^1(\mathrm{Spec}(K))=w_1(q_{dR})=d_{F/E}=sw_1(\rho_0);$$
$$\delta_{q}^2(\mathrm{Spec}(K))=w_2(q_{dR})=sw_2(\rho_0)+ sp_2(\rho_0).$$
\end{cor}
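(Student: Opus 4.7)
The plan is to derive this corollary as a direct specialization of Corollary \ref{cor-Fro}, using only that the Betti form is the standard form and that the de Rham form is the trace form.

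First I would apply Corollary \ref{cor-Fro} to $(M,q)=(h^0(\mathrm{Spec}(F)),q)$, whose Galois closure data give $\mathfrak{P}_M=\mathrm{Spec}(K)$. This already yields the two identities
\begin{eqnarray*}
\delta_q^1(\mathrm{Spec}(K))&=&w_1(q_B)+w_1(q_{dR})\;=\;sw_1(\rho_0),\\
\delta_q^2(\mathrm{Spec}(K))&=&w_2(q_B)+w_1(q_B)\cdot w_1(q_B)+w_1(q_B)\cdot w_1(q_{dR})+w_2(q_{dR})\\
&=&sw_2(\rho_0)+sp_2(\rho_0).
\end{eqnarray*}

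Next I would show that all Hasse--Witt invariants of $q_B$ vanish. Indeed, the explicit description recalled just above the corollary identifies $(M_B,q_B)$ with the standard form $\langle 1,1,\dots,1\rangle$ of rank $[F:E]$ on $E^{\mathrm{Hom}_{E}(F,\mathbb{C})}$. By the diagonal formula (\ref{inv1}), one has $w_i(q_B)=0$ for all $i\geq1$. Substituting this into the two displays above collapses them to
\[
\delta_q^1(\mathrm{Spec}(K))=w_1(q_{dR})=sw_1(\rho_0), \qquad \delta_q^2(\mathrm{Spec}(K))=w_2(q_{dR})=sw_2(\rho_0)+sp_2(\rho_0),
\]
which is two thirds of the claim.

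The only remaining point is the identification $w_1(q_{dR})=d_{F/E}$ in $E^{\times}/E^{\times 2}$. This is classical: choosing any $E$-basis $(e_1,\dots,e_n)$ of $F$, the Gram matrix of $q_{dR}(x,y)=\mathrm{Tr}_{F/E}(xy)$ in this basis is $\bigl(\mathrm{Tr}_{F/E}(e_ie_j)\bigr)_{i,j}$, whose determinant is by definition the discriminant $d_{F/E}$, well defined modulo squares; under the identification $H^1(G_E,\mathbb{Z}/2\mathbb{Z})\simeq E^{\times}/E^{\times 2}$, the first Hasse--Witt invariant of a form is precisely its discriminant class (compare (\ref{inv1}) for $m=1$). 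Combining the three steps gives the corollary. The argument is entirely formal; no real obstacle arises once Corollary \ref{cor-Fro} is granted.
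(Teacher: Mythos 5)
Your proof is correct and follows essentially the same route as the paper: apply Corollary \ref{cor-Fro}, observe that $q_B$ is the standard form so $w_1(q_B)=w_2(q_B)=0$, and identify $w_1(q_{dR})$ with the discriminant $d_{F/E}$ of the trace form. The paper's proof is a one-liner making the first two of these points; you have simply spelled out the substitution and the classical identification of $w_1$ of a trace form with the field discriminant.
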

\begin{proof}
The from $q_B$ is the standard form, hence $w_1(q_B)=w_2(q_B)=0$. 
\end{proof}

\subsubsection{The lifting problem $\mathrm{Lift}(h^0(\mathrm{Spec}(F)),q)$}\label{sectartexplicit}
As above, we consider  an extension of number fields $F/E$ with discriminant $d_{F/E}$ and let $M=h^0(\mathrm{Spec}(F))$. Let  $K/E$ be the  Galois closure of $F$ in $\overline{E}$. Recall that $\mathcal{G}_{M/E}=G_{K/E}$. The terms $\delta_q^2(\mathrm{Spec}(K))$ and $sw_2(\rho_0)$ have a Galois theoretic interpretation. We consider the exact sequence of groups 
\begin{equation}
1\longrightarrow\mathbb{Z}/2\mathbb{Z}\longrightarrow \widetilde{\mathbf{O}}(q_B)(\overline{E})\longrightarrow \mathbf{O}(q_B)(\overline{E})\longrightarrow 1. 
\end{equation}
Then the group-scheme $\widetilde{\mathcal{G}}_{M/E}$ can be seen as the group of $\overline{E}$-points
$$\widetilde{\mathcal{G}}_{M/E}(\overline{E})=G_{K/E}\times_{\mathbf{O}(q_B)(\overline{E})}\widetilde{\mathbf{O}}(q_B)(\overline{E})$$
endowed with its natural  $G_{E}$-action. We also consider
$$\widetilde{G}_{K/E}:=G_{K/E}\times_{\mathbf{O}(q_B)(\overline{E})}\widetilde{\mathbf{O}}(q_B)(\overline{E})$$
endowed with  the trivial action of $G_{E}$, as a constant group scheme over $E$. Note that the group-schemes $\widetilde{\mathcal{G}}_{M/E}$ and $\widetilde{G}_{K/E}$ are  isomorphic if and only if $\widetilde{\mathcal{G}}_{M/E}$ is constant, i.e. iff the natural $G_{E}$-action is trivial.
We say that the lifting problem $\mathrm{Lift}^\delta(h^0(\mathrm{Spec}(F)),q)$ has a solution if there exists a $\widetilde{G}_{K/E}$-torsor lifting $\mathfrak{P}_M=\mathrm{Spec}(K)$, i.e. if it is possible to embed the Galois extension $K/E$ into a Galois extension $\widetilde K/E$ with Galois group $\widetilde{G}_{K/E}$. Then $\mathrm{Lift}^\delta(h^0(\mathrm{Spec}(F)),q)$ has a solution if and only if $sw_2(\rho_0)=0$ while $\mathrm{Lift}(h^0(\mathrm{Spec}(F)),q)$ has a solution if and only if $\delta_q^2(\mathrm{Spec}(K))=0$. It follows that if $sp_2(\rho_0)\neq0$ then $\widetilde{\mathcal{G}}_{M/E}$ is not constant. Moreover, it follows from \cite{Fr\"ohlich85} and from \cite{EKV93} in a more general set up, that $sp_2(\rho)=(2)(d_{F/E})$.

\subsubsection{Explicit examples}
We assume in this section that $F/\mathbb Q$ is of degree $4$ and that $G_{K/\mathbb{Q}}=\mathfrak{S}_4$. The group $\widetilde{\mathfrak{S}}_4$ is characterized by the fact that a transposition in $\mathfrak{S}_4$ lifts to an element of order $2$ of  $\widetilde{\mathfrak{S}}_4$ whereas a product of $2$ disjoint transpositions lifts to an element of order $4$.  
Since $2$ is a square of $\mathbb R=\mathbb Q_{\infty}$ we note that $(2, d_F)_{\infty}=1$. We write  $w_2$ for the Hasse Witt invariant $w_2(\mbox{Tr}_{F/\mathbb Q})$ and 
we denote by $(r,s)$ the signature of the field $F$.  Proposition 6.5 gives $w_{2, \infty}=-1$ if $(r, s)=(0, 2)$ and $w_{2, \infty}=1$ otherwise. The following result is taken from \cite{Jehanne}. 
\begin{prop}\label{jehthm} Let  $p\geq 3$ be  a prime number.  Then  $\omega_{2,p}=(2,d_F)_p=1$ if $p$ is  non ramified  in  $F$.
Moreover we have the following equalities:
\begin{enumerate}
\item $w_{2,p}=(2,d_F)_p=(-1)^{\frac{p^2-1}8}$  if  $p\mathcal O_F=\mathfrak p_1^2\mathfrak p'_1\mathfrak   p''_1$, 
\item $w_{2,p}=(2,d_F)_p=1$ if  $p\mathcal O_F=\mathfrak p_1^3\mathfrak p'_1$,
\item $w_{2,p}=-(2,d_F)_p=-(-1)^{\frac{p^2-1}8}$ if  $p\mathcal O_F=\mathfrak p_1^2\mathfrak p'_2$, 
\item $w_{2,p}=(-1)^{\frac{p-1}2}$ and  $(2,d_F)_p=(-1)^{\frac{p^2-1}8}$ if  $p\mathcal O_F=\mathfrak p_1^4$, 
\item $w_{2,p}=(-1)^{\frac{p+1}2}$ and  $(2,d_F)_p=1$ if  $p\mathcal O_F=\mathfrak p_2^2$, 
\item $w_{2,p}=(-1)^{\frac{p-1}2}(d_F, p)_p$ and  $(2,d_F)_p=1$ if  $p\mathcal O_F=\mathfrak p_1^2\mathfrak p_1^{'2}$.
\end{enumerate}
\end{prop}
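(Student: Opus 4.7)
The plan is to compute both $w_{2,p}$ and $(2,d_F)_p$ prime by prime, exploiting the decomposition
$$F\otimes_{\mathbb{Q}}\mathbb{Q}_p \simeq \prod_i F_{\mathfrak{p}_i},$$
which yields an orthogonal decomposition
$$\mathrm{Tr}_{F/\mathbb{Q}} \otimes_{\mathbb{Q}} \mathbb{Q}_p \simeq \bigoplus_i \mathrm{Tr}_{F_{\mathfrak{p}_i}/\mathbb{Q}_p}$$
of the localised trace form and a corresponding multiplicative decomposition of $d_F$ modulo squares in $\mathbb{Q}_p^{\times}$. When $p$ is unramified in $F$, each local discriminant is a unit, so $(2,d_F)_p = (2,\mathrm{unit})_p = 1$ for $p$ odd; likewise each local trace form diagonalises as a sum of terms $\langle 2,2u_j\rangle$ with $u_j$ a unit, and since all the Hilbert symbols $(2,2u_j)_p$ are trivial for $p\geq 3$, iterating the Whitney formula $w_2(q\oplus q') = w_2(q)+w_2(q')+w_1(q)\cdot w_1(q')$ gives $w_{2,p}=1$.

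For each of the six ramified patterns I would first identify the local structure of $F\otimes_{\mathbb{Q}}\mathbb{Q}_p$ up to the residual ambiguities (which turn out to be invisible to the final invariants), and then write down the trace form of each local factor in an explicit basis. For a ramified quadratic $L = \mathbb{Q}_p(\sqrt{d})$, the basis $\{1,\sqrt{d}\}$ gives $\mathrm{Tr}_{L/\mathbb{Q}_p}(x^2)=\langle 2,2d\rangle$; for a tame totally ramified $L=\mathbb{Q}_p(\pi)$ with $\pi^e = pu$, the vanishing of $\mathrm{Tr}(\pi^k)$ for $e\nmid k$ yields an explicit Gram matrix that one diagonalises by hand; for an unramified factor of residue degree $f$, the form is a sum of pieces $\langle 1\rangle$ and $\langle 2,2u\rangle$. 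The computation of $w_{2,p}$ and $(2,d_F)_p$ then reduces to manipulations of Hilbert symbols via the classical formula
$$(p^{\alpha}u,\,p^{\beta}v)_p = (-1)^{\alpha\beta\frac{p-1}{2}}\left(\frac{u}{p}\right)^{\beta}\left(\frac{v}{p}\right)^{\alpha},\qquad p\text{ odd},\ u,v\in\mathbb{Z}_p^{\times},$$
combined with repeated application of the Whitney formula.

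Case (1) already illustrates the method: one finds $\mathrm{Tr}_{F/\mathbb{Q}}\otimes\mathbb{Q}_p \simeq \langle 2,2d,1,1\rangle$ with $d\in\{p,up\}$, so the only non-trivial Hilbert symbol in $w_2$ is $(2,2d)_p = (2,d)_p = \left(\tfrac{2}{p}\right) = (-1)^{(p^2-1)/8}$, which also equals $(2,d_F)_p$ since $d_F\equiv d$ modulo squares. The more delicate cases are (4), (5), (6): in (6), the sum of two copies of $\langle 2,2d_i\rangle$ produces a Whitney cross term $w_1\cdot w_1'=(d_1,d_2)_p$, and a short calculation rewrites $(d_1,d_2)_p = (-1)^{(p-1)/2}\cdot(d_F,p)_p$, which is exactly the combination appearing in the statement; cases (4) and (5) require diagonalising the trace form of a totally ramified quartic $\mathbb{Q}_p(\pi)$ with $\pi^4=pu$, respectively of an extension with $e=f=2$ written as a ramified quadratic over the unramified quadratic $\mathbb{Q}_{p^2}$, and the quadratic residue character of $-1$ modulo $p$ (controlling whether $\zeta_4\in\mathbb{Q}_p$) is what produces the signs $(-1)^{(p-1)/2}$ and $(-1)^{(p+1)/2}$. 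The main obstacle is to verify that the final invariants really depend only on the ramification type and not on the specific choice of local completion when several ramified extensions of $\mathbb{Q}_p$ are compatible with a given pattern, and at $p=3$ to handle the possibly wildly ramified variants in cases (2) and (4) consistently with the $\mathfrak{S}_4$-constraint on $F$; this constitutes the bulk of the bookkeeping.
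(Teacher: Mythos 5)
The paper does not prove this proposition; it simply states ``The following result is taken from \cite{Jehanne}'' and cites Jehanne's article, so there is no in-text proof to compare against. Your plan — decompose $F\otimes_{\mathbb{Q}}\mathbb{Q}_p$ as a product of local fields, read off the trace form of each factor in a standard basis, and push the Hasse--Witt invariant through the Whitney sum formula together with the odd-prime Hilbert symbol formula — is the natural and correct way to prove it, and it is presumably what Jehanne does as well. I have spot-checked your outline in several of the cases: in case (1) the form is $\langle 2,2\delta\rangle\perp\langle 1\rangle\perp\langle 1\rangle$ with $\delta\in\{p,up\}$, giving $w_{2,p}=(2,\delta)_p=(2,d_F)_p=(-1)^{(p^2-1)/8}$ exactly as you say; in case (4), the Gram matrix of a tame totally ramified quartic $\mathbb{Q}_p(\pi)$, $\pi^4=pu$, reduces to $\langle 1,pu,1,-1\rangle$, whose $w_2$ is $(pu,-1)_p=(-1)^{(p-1)/2}$; in case (5) the transfer from the $e=f=2$ extension yields $\langle 2,2u,2p,2pu\rangle$ with $w_2=-(-1)^{(p-1)/2}=(-1)^{(p+1)/2}$; and your identity $(\delta_1,\delta_2)_p=(-1)^{(p-1)/2}(d_F,p)_p$ in case (6) checks out for all three choices of the pair $(\delta_1,\delta_2)$. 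So the method reproduces the stated values.

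Two small points. First, your aside about ``wildly ramified variants in cases (2) and (4)'' at $p=3$ is off for case (4): there $e=4$ and $\gcd(4,3)=1$, so the extension is tame and the Eisenstein computation $\pi^4=pu$ goes through unchanged; only case (2), with $e=3=p$, is genuinely wild at $p=3$. Second, that wild subcase really does need a separate argument — the uniformizer no longer satisfies $\pi^3=pu$, the different has valuation $>e-1$, and the Gram matrix takes a different shape — so simply asserting that the answer depends only on the decomposition type is not enough there; one must either compute the trace form for the (finitely many) wildly ramified cubic extensions of $\mathbb{Q}_3$ compatible with an $\mathfrak{S}_4$ inertia configuration, or invoke a reference that has already done so. Apart from this one genuinely open piece of bookkeeping, your proposal is a complete and correct proof strategy.
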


In the following examples $P$ is an irreducible polynomial of $\mathbb Q[X]$, $\alpha$ is a root of $P$ in $\mathbb C$, $F=\mathbb Q(\alpha)$ and  $G_{K/\mathbb Q} =\mathfrak{S}_4$.

\begin{itemize} 
\item $P=X^4+X-1$,   $d_F=-283$ and the signature of $F$ is $(2, 1)$. Since the ideal $283\mathcal O_F$ decomposes into a product $\mathfrak p_1^2\mathfrak p'_1\mathfrak   p''_1$ it follows from  Proposition \ref{jehthm}(1)   that $w_{283, p}=( 2, d_F)_{283}=-1$. Since  $283$ is the unique finite prime ramified in $F$, we have 
 $w_{2, p}=(2, d_F)_p$ for every $p\neq 2$. We  deduce from the product formula that $w_2$ and $(2, d_F)$ coincide at every prime $p$ of $\mathbb Q$ and so are equal. We conclude that
 $$w_2(\mathrm{Tr}_{F/\mathbb Q})=sp_2(\rho)\ne 0, \ sw_2(\rho)=0$$
hence $\mathrm{Lift}(h^0(\mathrm{Spec}(F)),q)$ has no solution and
$\mathrm{Lift}^\delta(h^0(\mathrm{Spec}(F)),q)$ has a solution.\\
 
  \item $P=X^4+X^3-2X-1$ and  $d_F=-5^211$. One checks that 
 $11\mathcal O_K=\mathfrak p_2\mathfrak p_1'^2$and  $5\mathcal O_K=\mathfrak p_2^2$. By elementary  computations on Hilbert symbols, it follows from Proposition \ref{jehthm}(3) and \ref{jehthm}(5) that 
 $$(2, d_F)_{11}=-w_{2, 11}=-1 , \  (2, d_F)_5=-w_{2, 5}=1.$$ We conclude that
 $$w_2(\mathrm{Tr}_{F/\mathbb Q})\ne 0,\ sp_2(\rho)\ne 0, \  sw_2(\rho)\ne 0$$
hence $\mathrm{Lift}(h^0(\mathrm{Spec}(F)),q)$ has no solution and
$\mathrm{Lift}^\delta(h^0(\mathrm{Spec}(F)),q)$ has no solution.\\

\item $P=X^4-2X^2-4X-1$,  $d_F=-2^811$ and the signature of $F$ is $(2, 1)$. One checks that $11\mathcal O_K=\mathfrak p_1^2\mathfrak p_2'$. Hence, by Proposition  \ref{jehthm}(3), we know that 
 $w_{2, 11}=-(2, d_F)_{11}=1$. We conclude that 
 $$w_2(\mathrm{Tr}_{F/\mathbb Q})=0,\ sp_2(\rho)\ne 0,\ sw_2(\rho)\ne 0$$
hence $\mathrm{Lift}(h^0(\mathrm{Spec}(F)),q)$ has a solution and
$\mathrm{Lift}^\delta(h^0(\mathrm{Spec}(F)),q)$ has no solution.\\

\item $P=X^4-X^3-X+1$, $d_F=2777$ and the signature of $F$ is $(4, 0)$. Since $2777\mathcal O_K=\mathfrak p_1^2\mathfrak p_1'\mathfrak p_1''$, by Proposition  \ref{jehthm} we obtain that $(2,2777)_{2777}=\omega_{2,2777}=1$. We conclude that
 $$w_2(\mathrm{Tr}_{F/\mathbb Q})=0,\ sp_2(\rho)= 0,\ sw_2(\rho)= 0$$
hence both lifting problems $\mathrm{Lift}(h^0(\mathrm{Spec}(F)),q)$ and
$\mathrm{Lift}^\delta(h^0(\mathrm{Spec}(F)),q)$ have a solution.

\end{itemize}

\subsection{Complete intersections.}

We consider a smooth and proper variety $V$ over the number field $E$  of even dimension $n$.  

\subsubsection{The Betti form $q_B$} For any integer $k\geq 0$ and for $R=\mathbb Z, \mathbb Q$ or $ \mathbb R$, we let $H^k(V;  R)$ be the singular cohomology group $H^k(V_\sigma (\mathbb C); R)$ with coefficients in $R$. We set $H^k(V)=H^k(V; \mathbb Z)$. We consider the quadratic form on $\mathbb Q$
$$q_B: H^n(V; \mathbb Q)\times H^n(V; \mathbb Q) \rightarrow  \mathbb Q$$ 
obtained by composing the cup product with  evaluation on the fundamental class. 
\begin{prop}\label{wbet} Let $(r, s)$ be the signature of the extended form $\mathbb R\otimes_{\mathbb Q}q_B$. Then $q_B$ is isometric to the quadratic form 
$$\sum_{1\leq i\leq r}x_i^2-\sum_{1\leq j\leq s}x_{r+j}^2.$$
In particular, we have $w_1(q_B)=s(-1)$ and $w_2(q_B)=\binom{s}{2}(-1,-1)$. 
\end{prop}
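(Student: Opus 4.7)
The plan is to show that $q_B$ is isometric over $\mathbb{Q}$ to the standard diagonal form $q_{\mathrm{st}}=\langle 1\rangle^{r}\oplus\langle -1\rangle^{s}$, after which the Hasse--Witt invariant formulas follow by direct computation. The key input is Poincar\'e duality on the compact oriented manifold $V_\sigma(\mathbb{C})$ of real dimension $2n$: since $n$ is even, the cup-product pairing
$$H^n(V;\mathbb{Z})/\mathrm{tors}\times H^n(V;\mathbb{Z})/\mathrm{tors}\longrightarrow H^{2n}(V;\mathbb{Z})\stackrel{\sim}{\longrightarrow}\mathbb{Z}$$
is a perfect symmetric pairing, so $q_B$ is the $\mathbb{Q}$-extension of a unimodular integral quadratic lattice.

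I would then invoke the Hasse--Minkowski classification: two non-degenerate quadratic forms over $\mathbb{Q}$ are isometric if and only if they share rank, discriminant class in $\mathbb{Q}^{\times}/\mathbb{Q}^{\times 2}$, signature at $\infty$, and Hasse invariants at every finite prime. Ranks and real signatures of $q_B$ and $q_{\mathrm{st}}$ agree by definition of $(r,s)$; the discriminants are both $(-1)^{s}$, since unimodularity over $\mathbb{Z}$ forces $\mathrm{disc}(q_B)=\pm 1$ and the signature pins down the sign. For the Hasse invariants at odd primes $p$, the $\mathbb{Z}_p$-unimodularity of $q_B$ allows an orthogonal diagonalization with diagonal entries in $\mathbb{Z}_p^{\times}$; since the Hilbert symbol $(u,v)_p$ of two $p$-adic units is trivial for $p$ odd, both $q_B\otimes\mathbb{Q}_p$ and $q_{\mathrm{st}}\otimes\mathbb{Q}_p$ have trivial Hasse invariant there. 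Equality at the prime $2$ will then be extracted from the Hilbert product formula $\sum_v w_2(q)_v = 0$: since $q_B$ and $q_{\mathrm{st}}$ have matching Hasse invariant at $\infty$ (determined by the common signature) and at every odd prime, they must also agree at $p=2$. The main delicacy is precisely the prime $2$, which I plan to sidestep via this reciprocity-style argument rather than attempting a direct $\mathbb{Z}_2$-level normal form.

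Once the isometry $q_B\simeq q_{\mathrm{st}}$ over $\mathbb{Q}$ is established, the formulas $w_1(q_B)=s(-1)$ and $w_2(q_B)=\binom{s}{2}(-1,-1)$ follow immediately from the definition recalled in Section 2.2.1 applied to the diagonal form $\langle 1,\ldots,1,-1,\ldots,-1\rangle$ with $r$ ones and $s$ minus ones: since $(1)=0$ in $H^1(G_{\mathbb{Q}},\mathbb{Z}/2\mathbb{Z})$, only pairs of $-1$ entries contribute to the defining sums, and there are exactly $s$ such entries in $w_1$ and $\binom{s}{2}$ pairs in $w_2$.
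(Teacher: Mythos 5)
Your proof is correct and follows essentially the same route as the paper's: Poincar\'e duality gives a unimodular integral lattice, which forces the discriminant to be $\pm 1$ (and hence $(-1)^s$ by the signature) and gives a $\mathbb{Z}_p$-diagonalization by units for odd $p$, so all local invariants away from $2$ and $\infty$ are trivial, and the product formula finishes the prime $2$. The paper phrases the conclusion via equality of Hasse--Witt invariants at every place rather than explicitly invoking the Hasse--Minkowski theorem by name, but the argument is the same.
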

\begin{proof} We set $q=q_B$ and $q'=\sum_{1\leq i\leq r}x_i^2-\sum_{1\leq j\leq s}x_{r+j}^2$. For a quadratic form $t$ on $\mathbb Q$ and  any place $v$ of $\mathbb Q$ we denote by $t_v$ the extended form $\mathbb Q_v\otimes_{\mathbb Q} t$ on $\mathbb Q_v$.  In order to prove the proposition it suffices to show that $q$ and $q'$ are locally isometric for every place of $\mathbb Q$. Since $q$ and $q'$ have the same signature then $q_\infty$ and $q'_\infty$ are isometric over $\mathbb R$. The form $q$  is induced by  extension from the integral form 
$$q_0: H^n(V)_{\mathrm{cotor}}\times H^n(V)_{\mathrm{cotor}}\rightarrow \mathbb Z$$ 
where $H^n(V)_{\mathrm{cotor}}$ is the maximal quotient of $H^n(V)$ which is free as a $\mathbb{Z}$-module.

It follows from Poincar\'e duality that $q_0$ is unimodular.
 Therefore the determinant of  $q$ is congruent to $\pm 1$ modulo $\mathbb Q^{* 2}$. The group homomorphism $\mathbb Q^*\rightarrow \mathbb R^*$ induces an injective map  $ \{ \pm 1\}\mathbb Q^{*2}/\mathbb Q^{* 2}\rightarrow \mathbb R^*/\mathbb R^{* 2}$ 
such that $w_1(q)\rightarrow w_1(q_{ \mathbb R})$. This implies that
 
$$ w_1(q)=s(-1) $$ 
and so $w_1(q)=w_1(q')$.  
We now examine $w_2(q)\in H^2(\mathbb Q, \mathbb Z/2\mathbb Z)$. For any prime $p\leq \infty$ we let $w_2(q)_p$ be the $p$-component of 
$w_2(q)$ in $H^2(\mathbb Q_p, \mathbb Z/2\mathbb Z)$. Indeed $w_2(q)_p=w_2(q_{ p})$. For $p=\infty$ we note that  $w_2(q)_{\infty}=w_2(q')_{\infty}=\binom{s}{2}(-1,-1)$. Let $p\neq 2$ be a prime number.  The form $q_{ p}$ is  induced from the $\mathbb Z_p$-unimodular form $q_{0, p}$. Since $\mathbb Z_p$ is a local ring, this form has an orthogonal basis and  we may write $q_{0,p}=<a_1,a_2,...,a_n>$ for $a_i \in \mathbb Z_p^*$. Therefore 
$$w_2(q_{ p})=\sum _{i<j}(a_i, a_j). $$ Since the $a_i$'s are units we conclude that  $w_2(q_p)=0$. It now follows from the definition of $q'$ that $w_2(q'_p)=0$. Hence we have proved that $w_2(q)_v=w_2(q')_v$ for any plave $v\neq 2$. By  the product formula we deduce that $w_2(q)_2=w_2(q')_2$. Therefore the forms  $q$ and $q'$  have the same rank, the same signature and for any place $v$ of $\mathbb Q$ then $w_i(q_v)=w_i(q'_v)$,  for $i \in \{1, 2\}$. It follows that $q$ and $q'$ are isometric. In particular they have the same Hasse-Witt invariants.
\end{proof}
As before we denote by $q_{dR}$ the quadratic form on $H_{dR}^n(V/E)$ induced by the cup product and we set 
$$d_V= \mathrm{disc}(q_{dR})=w_1(q_{dR}).$$ As a consequence of  Proposition \ref {wbet} we observe that the comparison formula of Theorem \ref{cor-dR-B},  applied to the quadratic motive $(M=h^n(V)(n/2), q)$, should be written 
\begin{equation}\label{comp0}
\delta^1_q(\mathfrak P_M)=sw_1(\rho_p)=d_V+s(-1) \end{equation}
and 
\begin{equation}\label{compo1}  
\delta^2_q(\mathfrak{P}_M)=w_2(q_{dR})+s(d_V, -1)+\binom{s+1}{2}(-1, -1), 
\end{equation}
where in these formulas $(-1)$ (resp. $(-1, -1)$) has to be understood as the image of $-1$ (resp. $(-1, -1)$)  in $E^*/E^{*2}$ (resp. by the restriction map $H^2(\mathbb Q, \mathbb Z/2\mathbb Z)\rightarrow H^2(E, \mathbb Z/2\mathbb Z)$). 

\subsubsection{The Hasse-Witt invariants of $q_B$}
We assume that $V$ is a smooth  complete intersection  in the projective space $\mathbf{P}_E^{n+c}$. More precisely $V$ is a variety of even dimension $n$ defined  by $c$ homogeneous polynomials $f_1,..., f_c$ with coefficients in $E$. For $1\leq i\leq c$ we let $d_i$ be the degree of $f_i$. We note that $V_{\sigma}$ is a complete intersection in $\mathbf{P}_{\mathbb{C}}^{n+c}$ defined by the polynomials $f_i$ seen via $\sigma$ as polynomials with coefficients in $\mathbb C$. In this situation the rank of $q_B$ and the Hasse Witt invariants  $w_1(q_B)$ and $w_2(q_B)$ can be explicitely expressed  in terms of  the integers $n, d_1,...,d_c$.   We will treat  in some details the case where $V$ is an hypersurface where the formulas are very simple.  However for any given complete intersection  explicit   formulas can be obtained  as well but they may be rather complicated. As an example we will consider  a case  where $c=2$. When  $V$ is an hypersurface  ($c=1$)  we write $f$ for
  $f_1$ and $d$ for $d_1$. For any integer $k$ we denote by $b_k(V)$ the dimension of the $\mathbb Q$-vector space  $H^k(V, \mathbb Q)$. For reason of simplicity  we  will  often  denoted by $V$ the complex variety $V_{\sigma}$ and by $\mathbf{P}^n$ the projective  space  $\mathbf{P}_{\mathbb C}^n$. 
 
 We need some preliminary results. 
\begin{lem}\label{bn}  For any integer $k$ , $0\leq k\leq 2n$ then $H^k(V)$ is torsion free. Moreover  one has:
\begin{enumerate}
\item $b_k(V)=0$ if $k$ is odd and $b_k(V)=1$ if $k$ is even and $k \neq n$. 
\item $b_n(V)= \chi(V)-n$ where $\chi(V)=\sum_{0\leq k\leq 2n} (-1)^kb_k(V)$. 
\item $b_n(V)=2+ \frac{1}{d}[(1-d)^{n+2}-1]$ when $V$ is an hypersurface.
\end{enumerate}

\end{lem}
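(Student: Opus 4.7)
The strategy is the standard computation of the cohomology of a smooth projective complete intersection via weak Lefschetz and Chern classes. First I would establish the Betti numbers away from the middle degree and the torsion-freeness claim. Iterating the weak Lefschetz theorem on a chain of smooth complete intersections linking $V$ to $\mathbf{P}^{n+c}_{\mathbb{C}}$, the natural map $H_k(V,\mathbb{Z})\to H_k(\mathbf{P}^{n+c},\mathbb{Z})$ is an isomorphism for $k<n$ and surjective for $k=n$. Combined with Poincaré duality $H^k(V,\mathbb{Z})\simeq H_{2n-k}(V,\mathbb{Z})$, this shows that for $k\in[0,2n]\setminus\{n\}$, the group $H^k(V,\mathbb{Z})$ is free of rank $0$ for $k$ odd and of rank $1$ for $k$ even, which is assertion (1). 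Torsion-freeness of $H^n(V,\mathbb{Z})$ does not follow from weak Lefschetz alone; I would cite the classical result (Milnor, Bott) that a smooth complete intersection of complex dimension $n$ has the homotopy type of a CW-complex obtained from $\mathbf{P}^n_{\mathbb{C}}$ by attaching only cells of dimension $n$, which gives torsion-freeness in the middle degree as well.

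Assertion (2) is then immediate: since $n$ is even, the even degrees $k=0,2,\dots,2n$ distinct from $n$ contribute $+n$ to $\chi(V)$, the odd degrees contribute $0$, and the middle degree contributes $(-1)^n b_n(V)=b_n(V)$; hence $\chi(V)=n+b_n(V)$.

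For assertion (3), I would perform the standard Chern class calculation on the hypersurface $V\subset\mathbf{P}^{n+1}$. The conormal exact sequence
\[
0\longrightarrow T_V\longrightarrow T_{\mathbf{P}^{n+1}}|_V\longrightarrow \mathcal{O}_V(d)\longrightarrow 0
\]
yields $c(T_V)=(1+h)^{n+2}/(1+dh)$ in $H^*(V,\mathbb{Z})$, where $h$ denotes the restriction of the hyperplane class. By Gauss--Bonnet,
\[
\chi(V)\;=\;\int_V c_n(T_V)\;=\;d\cdot \bigl[(1+h)^{n+2}/(1+dh)\bigr]_{h^n},
\]
the bracket denoting the coefficient of $h^n$. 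A direct expansion (for instance via the substitution $y=1+dh$, reducing the question to extracting a coefficient in $(y+d-1)^{n+2}/y$) evaluates this coefficient; multiplying by $d$ and using $(d-1)^{n+2}=(1-d)^{n+2}$ (valid since $n+2$ is even) gives $\chi(V)=n+2+\bigl((1-d)^{n+2}-1\bigr)/d$, and (2) then yields the stated formula for $b_n(V)$. The main technical obstacle is the torsion-freeness of $H^n(V,\mathbb{Z})$ in the middle degree, which requires input beyond weak Lefschetz; the rest is formal (Betti numbers away from the middle degree) or a routine Chern class manipulation.
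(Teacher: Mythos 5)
Your proof is correct and follows essentially the same route as the paper: weak Lefschetz plus Poincar\'e duality to pin down the cohomology away from degree $n$, and the standard Chern class computation of $\chi(V)$ (which the paper packages as the coefficient formula~(\ref{euler}), derived exactly as you do via the conormal sequence and Gauss--Bonnet). One small remark: your claim that torsion-freeness of $H^n(V,\mathbb{Z})$ ``does not follow from weak Lefschetz alone'' is overcautious. Weak Lefschetz gives that $H_{n-1}(V,\mathbb{Z})\cong H_{n-1}(\mathbf{P}^{n+c},\mathbb{Z})$ is free, and then the universal coefficient theorem identifies the torsion of $H^n(V,\mathbb{Z})$ with $\mathrm{Ext}^1(H_{n-1}(V,\mathbb{Z}),\mathbb{Z})=0$; combined with Poincar\'e duality this handles all degrees, which is what the paper's citation of Dimca is standing in for. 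Your appeal to the Andreotti--Frankel/Milnor CW-structure result is a perfectly good alternative, just a heavier tool than needed.
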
  
\begin{proof} It follows from Lefschetz Theorem (see \cite{Dimca}, Chapter 5, (2.6) and (2.11)) that 
 $$H^k(V)=H^k(\mathbf{P}^{n})\ \ \mbox{for}\   k\neq n. $$
 This implies that $H^k(V)$ is torsion free for any integer $k$ and the equalities of (1). In order to show (2) it suffices to observe that 
 $$\chi(V)= \chi(\mathbf{P}^{n})+b_n(V)-b_n(\mathbf{P}^n).$$
 Since  the Euler characteristic of an hypersurface is given by $\chi(V)=n+2+\frac{1}{d}[(1-d)^{n+2}-1]$, then (3) follows from (2).
 \end{proof}
     There exist  several effective procedures to compute $\chi(V)$ when $V$ is a complete intersection. For instance this can be done   by evaluating the $n$-th Chern class of $V$ on the fundamental class.  This leads to prove that  $\chi(V)$ is the coefficient of $h^{n+c}$ in 
\begin{equation}\label{euler} \frac{(1+h)^{n+c+1}d_1...d_c h^c}{(1+d_1h)(1+d_2h)...(1+d_ch)}.\end{equation}

 Our aim is now to  compute the signature of $q_{B, \mathbb R}$. We consider   the index  $\tau_n(V)=r-s$  of the quadratic lattice $H^n(V)$.     This index   has been extensively studied  (see \cite{libgo} and \cite{libgob} for instance).   If we put $d=d_1d_2...d_c$, we have 
 \begin{equation}\label{libgo1}
\tau_n(V)\equiv \begin{cases}
0 \ \mbox{mod}\ 8 \ \mbox{if  $\binom{\frac{n}{2}+t}{t}$ is even}, \\
d \ \mbox{mod}\ 8 \ \mbox{if  $\binom{\frac{n}{2}+t}{t}$ is odd}.
\end{cases}
\end{equation} 
where $d_1,...d_t$ are even  and $d_{t+1},...d_c$ are odd integers. 
We define the integer $m$ by 
 \begin{equation}\label{libgo1}
m= \begin{cases}
\chi(V)-n  \ \mbox{if  $\binom{\frac{n}{2}+t}{t}$ is even}, \\
\chi(V)-n-d \  \ \mbox{if  $\binom{\frac{n}{2}+t}{t}$ is odd}.
\end{cases}
\end{equation} 
The following proposition is an immediate consequence of (\ref{libgo1}), Proposition \ref{wbet}  and Lemma  \ref{bn}.

\begin{prop}\label{bet1} 
Let $V$ be a smooth complete intersection in $\mathbf{P}_E^{n+c}$ of even dimension $n$. Then $m$ is even and we have 
$$w_1(q_B)=m'(-1)\ \ \mathrm{and}\ \ w_2(q_B)=\binom{m'}{2}(-1, -1), \ \mathrm{with}\ m=2m'.$$ 
\end{prop}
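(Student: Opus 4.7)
The plan is to combine the rank formula $b_n(V)=\chi(V)-n$ from Lemma~\ref{bn}(2), the index congruence formula~(\ref{libgo1}), and the Hasse-Witt formula for diagonal sign forms from Proposition~\ref{wbet}. The rank of $q_B$ is $b_n(V)=r+s$ and the signature is $\tau_n(V)=r-s$, so one has the basic identity
\[
2s\;=\;b_n(V)-\tau_n(V)\;=\;\chi(V)-n-\tau_n(V).
\]
The whole argument will consist in feeding the two cases of (\ref{libgo1}) into this identity, then invoking Proposition~\ref{wbet}.

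First I would treat the case where $\binom{n/2+t}{t}$ is even. Then $\tau_n(V)\equiv 0 \pmod 8$, hence $m:=\chi(V)-n\equiv 2s\pmod 8$; in particular $m$ is even and $m'=m/2\equiv s\pmod 4$. In the other case, $\binom{n/2+t}{t}$ odd, one has $\tau_n(V)\equiv d\pmod 8$, and
\[
m\;=\;\chi(V)-n-d\;=\;(2s+\tau_n(V))-d\;\equiv\;2s\pmod 8,
\]
so again $m$ is even and $m'\equiv s\pmod 4$. This proves that $m$ is even in both cases and establishes the congruence $s\equiv m'\pmod 4$.

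It then remains to translate this congruence into equalities of Hasse-Witt invariants. By Proposition~\ref{wbet}, $w_1(q_B)=s\cdot(-1)$ in $\mathbb{Q}^\times/\mathbb{Q}^{\times 2}$, which depends only on $s\bmod 2$; since $s\equiv m'\pmod 2$ this gives $w_1(q_B)=m'(-1)$. Similarly, $w_2(q_B)=\binom{s}{2}(-1,-1)$ depends only on $\binom{s}{2}\bmod 2$, and an elementary check shows that the parity of $\binom{s}{2}$ is determined by the class of $s$ modulo $4$ (it is odd precisely when $s\equiv 2,3\pmod 4$); hence $\binom{s}{2}\equiv \binom{m'}{2}\pmod 2$ and therefore $w_2(q_B)=\binom{m'}{2}(-1,-1)$.

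There is no real obstacle here: the only nontrivial input is the congruence~(\ref{libgo1}) of Libgober-Wood, which we use as a black box. The routine part is the bookkeeping modulo $8$ and the verification that $\binom{s}{2}\bmod 2$ depends only on $s\bmod 4$; together with Proposition~\ref{wbet} and Lemma~\ref{bn}(2) this gives the proposition directly.
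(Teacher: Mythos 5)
Your proof is correct and uses exactly the three inputs the paper cites for this proposition (the Libgober--Wood congruence (\ref{libgo1}), Proposition~\ref{wbet}, and Lemma~\ref{bn}), which the paper itself declares to be an ``immediate consequence''; you are simply making the mod~$8$ bookkeeping explicit. In both cases you correctly reduce $2s\equiv m\pmod 8$, hence $s\equiv m'\pmod 4$, and the observation that $\binom{s}{2}\bmod 2$ depends only on $s\bmod 4$ (odd precisely when $s\equiv 2,3\pmod 4$) then finishes the argument via Proposition~\ref{wbet}.
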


If $V$ is an hypersurface defined by a polynomial of degree $d$ we deduce from (\ref{libgo1}) that 

 \begin{equation}\label{libgo2}
\tau_n(V)\equiv \begin{cases}
0 \ \mbox{mod}\ 8 \ \mbox{if  $d$ is even and $n\equiv 2\ \mbox{mod}\ 4$}, \\
d \ \mbox{mod}\ 8 \ \mbox{otherwise}.
\end{cases}
\end{equation}
 We denote by $\{u\}$ the integral part of $u\in \mathbb Q$.
\begin{cor}\label{Hasse}
 Let $V$ be a smooth hypersurface  of degree $d$ and  even dimension $n\geq 2$ in the projective space $\mathbf{P}_{E}^{n+1}$. Then 
\begin{enumerate}
\item $w_1(q_B)=\frac{n}{2}(d-1)(-1)$
\item $w_2(q_B)= \begin{cases}
\frac{d-1}{2}(-1, -1) \ \mbox{if } d \mbox{ is odd}, \\
\{\frac{n+2}{4}\}(1+\frac{d}{2})(-1,-1)\ \mbox{if}\  d \mbox{ is even}.
\end{cases}$
\end{enumerate}
\end{cor}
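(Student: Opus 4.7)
The plan is to reduce the statement to a modular--arithmetic calculation by combining Proposition \ref{bet1} with the explicit formula of Lemma \ref{bn}(3). Proposition \ref{bet1} asserts $w_1(q_B)=m'(-1)$ and $w_2(q_B)=\binom{m'}{2}(-1,-1)$, where $m=2m'$ equals $b_n(V)$ or $b_n(V)-d$ according to the parity of $\binom{n/2+t}{t}$. For a hypersurface $t=0$ if $d$ is odd and $t=1$ if $d$ is even; a direct check then shows $m=b_n(V)-d$ when $d$ is odd or when $d$ is even with $n\equiv 0 \bmod 4$, and $m=b_n(V)$ when $d$ is even with $n\equiv 2\bmod 4$. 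The entire statement thus boils down to computing $m'\bmod 4$ (which controls $m'\bmod 2$ for $w_1$ and $\binom{m'}{2}\bmod 2$ for $w_2$).

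To get at $m'\bmod 4$, I would expand Lemma \ref{bn}(3) by the binomial theorem as
$$b_n(V)=2+\frac{(1-d)^{n+2}-1}{d}=-n+\binom{n+2}{2}d-\binom{n+2}{3}d^2+\binom{n+2}{4}d^3-\cdots$$
and reduce modulo a sufficiently high power of $2$. When $d$ is odd, the identity $d(b_n(V)-2)=(1-d)^{n+2}-1$ together with the divisibility $2^{n+2}\mid(1-d)^{n+2}$ (which requires $n\geq 2$) and the elementary fact $d^2\equiv 1\bmod 8$ yields $b_n(V)\equiv 2-d\bmod 8$, so $m'\equiv 1-d\bmod 4$. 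The claimed identities follow: $w_1(q_B)=0$ because $d-1$ is even, while $\binom{m'}{2}$ has the same parity as $\tfrac{d-1}{2}$, giving $w_2(q_B)=\tfrac{d-1}{2}(-1,-1)$.

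For $d=2e$ even, the truncated expansion modulo $16$ is
$$b_n(V)\equiv -n+2e\binom{n+2}{2}-4e^2\binom{n+2}{3}+8e^3\binom{n+2}{4}\pmod{16}.$$
The key simplification, which I would verify by Lucas's theorem, is that $\binom{4l+2}{3}$ and $\binom{4l+4}{3}$ are both even, which makes the $e^2$--contribution vanish modulo $8$. Substituting $n=4l$ or $n=4l+2$ and reducing, one obtains the compact forms $m\equiv 4l(e-1)\bmod 8$ in the first subcase and $m\equiv -2+4(l+1)(e-1)\bmod 8$ in the second. Dividing by $2$ and reading off parities yields $\binom{m'}{2}\equiv l(1+e)\bmod 2$ and $\binom{m'}{2}\equiv(l+1)(1+e)\bmod 2$ respectively, and these match the stated formulas since $\tfrac{n}{4}=l$ in the first case, $\tfrac{n+2}{4}=l+1$ in the second, and $e=\tfrac{d}{2}$. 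The computation of $w_1(q_B)$ is immediate from $m'\bmod 2$ in each case.

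The main obstacle is the bookkeeping in the mod--$8$ arithmetic: one must simultaneously track the parity of $d$, the residue $n\bmod 4$, and the $2$-adic behaviour of several binomial coefficients. Once the vanishings $\binom{4l+2}{3}\equiv\binom{4l+4}{3}\equiv 0\bmod 2$ are established, the reductions become a routine matter of truncating the binomial series to the right order.
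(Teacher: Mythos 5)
Your approach is the same as the paper's in substance: both reduce the problem to a mod-$4$ congruence coming from the signature formula of Proposition \ref{wbet} (which you invoke through its restatement as Proposition \ref{bet1}), and then compute that congruence from the explicit expression of $b_n(V)$ in Lemma \ref{bn}(3) together with the index congruence from \cite{libgo}. The reduction to the three subcases (via the parity of $\binom{n/2+t}{t}$ for $t\in\{0,1\}$), the $d$ odd case, and the subcase $d$ even with $n\equiv 0\bmod 4$ are all correct.

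There is, however, an arithmetic error in the subcase $d$ even, $n=4l+2$. Writing $d=2e$, the correct reduction is $m=b_n(V)\equiv -4l-2+4e(l+1)\equiv 2+4(l+1)(e-1)\pmod 8$, not $-2+4(l+1)(e-1)$ as you wrote; the two differ by $4\pmod 8$. Concretely for $n=2$, $d=4$ one has $b_2(V)=2+\tfrac{(1-4)^4-1}{4}=22$, so $m\equiv 6\pmod 8$, whereas your formula yields $-2+4\cdot 1\cdot 1=2\pmod 8$. This offset does not affect $w_1$, since only the parity of $m'$ enters there, but it flips the parity of $\binom{m'}{2}$: your congruence would give $\binom{m'}{2}\equiv 1+(l+1)(1+e)\pmod 2$ rather than $(l+1)(1+e)\pmod 2$. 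So the final identity you state is the correct one, but it does not follow from the intermediate congruence as written; the sign of the constant term in that congruence must be corrected from $-2$ to $+2$ for the argument to close.
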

\begin{proof}

We let $r_n$ be the integer defined by  the equality $[(1-d)^{n+2}-1]=dr_n$.  If  $d$ is odd, it follows from Lemma \ref{bn} and (\ref{libgo2}) that 
\begin{equation}\label{bn-}
s \equiv 1+\frac{r_n}{2}-\frac{d}{2}\ \mbox{mod}\ 4.\end{equation}
From the equality $(d-1)^{n+2}=1+dr_n$ we deduce that $d$ and $r_n$ are both odd and that one and only one of them is congruent to $1$ mod $4$. Suppose that $d=1+4v$. 
Since $1+dr_n\equiv 0\ \mbox{mod}\ 8$, then $r_n\equiv -1+4v\ \mbox{mod}\ 8$ and therefore $s\equiv 0\ \mbox{mod}\ 4$. If now $d\equiv 3\ \mbox{mod}\ 4$ and so 
$r_n=1+4t$, we obtain  that $d\equiv -1+4t\ \mbox{mod}\ 8$ and so that $s\equiv 2\ \mbox{mod}\ 4.$

We assume now that $d$ is even. First we check that 
\begin {equation}\label{rn}
1+\frac{r_n}{2}\equiv -\frac{n}{2}+(n+1)\frac{(n+2)}{2}\frac{d}{2}\ \mbox{mod}\ 4.\end{equation}
Suppose that $n\equiv  2\ \mbox{mod}\ 4$ and write $n=2+4u$.  It follows once again from  Lemma \ref{bn} and (\ref{libgo2}) that 
$s\equiv 1+2(1+u)(1+\frac{d}{2})\ \mbox{mod}\ 4$. Therefore we conclude that $s\equiv 3\ \mbox{mod}\  4$ if $n\equiv 2\ \mbox{mod}\ 8$ and 
$d\equiv 0\ \mbox{mod}\ 4$ and $s\equiv 1\ \mbox{mod}\ 4$ otherwise. Finally we suppose $n\equiv 0\ \mbox{mod}\ 4$ and we put $n=4u$. In this case  we know that 
$s\equiv 1+\frac{r_n}{2}-\frac{d}{2}\ \mbox{mod}\ 4$ which implies that $s\equiv 2u(1+\frac{d}{2})\ \mbox{mod}\ 4$. Hence 
$s\equiv 2\ \mbox{mod}\ 4$ if $d\equiv 0\ \mbox{mod}\ 4$ and $n$ not divisible by $8$ and $s\equiv 0\ \mbox{mod}\ {4}$ otherwise. The proposition is proved by a case by case checking. 

\end{proof}
In various situations  more precise computations of the index  leads to the determination  of the form $q_B$ itself.  This is the case when  $V$ is a  quadric  of $\mathbf{P}^{n+1}$.   By Lemma \ref{bn} the unimodular lattice $H^n(V)$ is of rank $2$. By Proposition \ref {Hasse} we have $w_1(q_B)=\frac{n}{2}(-1)$ and  $w_2(q_B)=0$. If $n\equiv2\ \mbox{mod}\ 4$, then 
$\tau_n(V)=0$ and  $q_B$ is isometric to the form $x^2-y^2$. If $n\equiv0\ \mbox{mod}\ 4$, $\tau_n(V)=2$ and $q_B$ is isometric to $x^2+y^2$.

 We now consider the case of a  cubic  surface $V$ ($d=3$ and $n=2$ ).  The rank of the lattice $H^2(V)$ given by  Lemma \ref{bn} is  equal to $7$. Moreover by (\ref{libgo2})  we know that $\tau_n(V)\equiv3\ \mbox{mod}\ 8$ and so   $\tau_n(V)$ is either equal to $3$ or $-5$.  We now observe that,  under our hypotheses,   (\ref{libgo2}) can be improved. More precisely it follows from  (2) in \cite{libgo} that $\tau_n(V)\equiv d+8 \ \mbox{mod}\ 16$. Hence 
$\tau_n(V)=-5$. We conclude that  $q_B$ is  isometric to 
$$x_1^2-x_2^2-x_3^2-x_4^2-x_5^2-x_6^2.$$

We treat now as an example the case of a smooth surface  $V$ defined  in $\mathbf{P}^{4}$ by the polynomials $f_1$ and $f_2$ of degree $d_1$ and $d_2$ ($n=2$ and $c=2$).  It follows from (\ref{euler}) that: 
\begin{equation}\label{euler1}\chi(V)=d_1d_2[d_1^2+d_2^2+d_1d_2-5(d_1+d_2)+10]. 
\end{equation}

\begin{cor} Let $V$ be a smooth surface of $\mathbf{P}_E^4$ defined by the homogeneous polynomials $f_1$ and $f_2$ of degree $d_1$ and $d_2$. Then 
\begin{enumerate}
\item $w_1(q_B)=(d_1d_2-1)(-1)$
\item $w_2(q_B)= \begin{cases}
(\{\frac{d_1}{2}\}+\{\frac{d_2}{2}\}+\{\frac{d_1}{2}\}\{\frac{d_2}{2}\})(-1, -1) \ \mbox{if } d_1 \mbox{and}\   d_2 \mbox{ are   both odd}, \\
(1+\{\frac{d_1}{2}\}\{\frac{d_2}{2}\})(-1,-1)\ \mbox{if}\  d_1 \mbox{or}\ d_2  \mbox{ otherwise}.
\end{cases}$
\end{enumerate}

\end{cor}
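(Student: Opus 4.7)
The plan is to specialize Proposition~\ref{bet1} (with $n=2$, $c=2$) and carry out the modular arithmetic explicitly, using the Euler characteristic formula (\ref{euler1}). By Proposition~\ref{bet1}, once we determine $m' = m/2 \bmod 4$, we directly read off
$$w_1(q_B) = m'(-1),\qquad w_2(q_B) = \binom{m'}{2}(-1,-1).$$
So the entire proof reduces to a careful evaluation of $m' \bmod 2$ (for part (1)) and $m' \bmod 4$ (for part (2)).

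First, I would determine which of the two formulas in the definition of $m$ applies, based on $t$, the number of even $d_i$. With $n/2 = 1$ and $c = 2$, we have $\binom{1+t}{t} = t+1$. Thus $m = \chi(V) - 2 - d_1 d_2$ when $t \in \{0,2\}$, and $m = \chi(V) - 2$ when $t = 1$. Combined with (\ref{euler1}), in each of the three cases $(d_1,d_2)$ = (odd, odd), (odd, even), (even, even), $m$ becomes an explicit polynomial in $d_1, d_2$, and the plan is to substitute $d_1 = 2a+\epsilon_1$, $d_2 = 2b+\epsilon_2$ with $\epsilon_i \in \{0,1\}$ and reduce modulo a sufficient power of $2$.

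For part (1), a computation modulo $4$ of $\chi(V)$ in each parity case suffices. In every case one checks that $m \equiv 2(d_1 d_2 - 1) \pmod 4$, hence $m' \equiv d_1d_2 - 1 \pmod 2$, giving the asserted formula $w_1(q_B) = (d_1 d_2 - 1)(-1)$. This also matches the check $w_1(q_B)_\infty$ via Lemma~\ref{bn} and the index formula (\ref{libgo1}) in each parity regime.

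For part (2), one needs $m \bmod 8$, equivalently $\chi(V) \bmod 16$ after absorbing the $d_1 d_2$ correction. Using $(2a+\epsilon_1)^2 \equiv 2a \cdot \epsilon_1 + \epsilon_1 \pmod{8}$ and analogous reductions inside the bracket $[d_1^2+d_2^2+d_1d_2 - 5(d_1+d_2)+10]$ of (\ref{euler1}), one expands and simplifies the product modulo $16$ in each of the three parity regimes. The resulting residue of $m'$ modulo $4$ is then translated into $\binom{m'}{2} \bmod 2$, which equals the second binary digit of $m'$. Expressing that digit as a polynomial in $\lfloor d_1/2\rfloor$ and $\lfloor d_2/2\rfloor$ yields the two stated cases.

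The main obstacle is the third step of part (2): the computation modulo $16$ is the most delicate part, since one must track cross terms like $4ab$ and $4(a^2+b^2)$ that vanish in lower moduli but contribute to $m' \bmod 4$. Keeping the parity bookkeeping straight across the three cases (and being careful at the boundary where one of the $d_i$ equals $2$, where the bracket in (\ref{euler1}) is minimal) is the only place where genuine calculation is required; everything else is a mechanical unpacking of Proposition~\ref{bet1}.
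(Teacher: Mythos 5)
Your plan --- feed (\ref{euler1}) into Proposition~\ref{bet1} with $n=c=2$ and track $m'\bmod 4$ across the three parity regimes --- is exactly the ``easy computation'' the paper has in mind, and your observation for part~(1) that $m\equiv 2(d_1d_2-1)\pmod 4$ holds in each regime is correct. One local slip: the congruence $(2a+\epsilon)^2\equiv 2a\epsilon+\epsilon\pmod 8$ is false. For $\epsilon=1$ one has $(2a+1)^2=4a(a+1)+1\equiv 1\pmod 8$ always, and for $\epsilon=0$ one has $(2a)^2=4a^2\equiv 4a\pmod 8$; this does not hurt part~(1) (which only needs things modulo~$4$) but would derail part~(2).

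The more important point is that, in the mixed-parity case, carrying your computation through does not reproduce the printed formula. Set $d_1=2a+1$, $d_2=2b$. The bracket in (\ref{euler1}) equals $4a^2+4b^2+4ab-6a-8b+6\equiv 2(a+1)\pmod 4$, hence
$$m'=b(2a+1)\bigl(4a^2+4b^2+4ab-6a-8b+6\bigr)-1\equiv 2b(a+1)-1\pmod 4,$$
so $\binom{m'}{2}$ is odd precisely when $b(a+1)=\{d_2/2\}\bigl(\{d_1/2\}+1\bigr)$ is even, not when $\{d_1/2\}\{d_2/2\}$ is even. As a sanity check, the $(2,3)$ complete intersection in $\mathbf{P}^4_E$ is a K3 surface with signature $(3,19)$, so $s=19$ and $w_2(q_B)=\binom{19}{2}(-1,-1)=(-1,-1)$ since $\binom{19}{2}=171$ is odd, whereas the stated formula gives $(1+1\cdot 1)(-1,-1)=0$; conversely $(d_1,d_2)=(1,2)$ gives $\mathbf{P}^1\times\mathbf{P}^1$ with $s=1$ and $w_2(q_B)=0$, while the formula returns $(-1,-1)$. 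The ``both odd'' case (one finds $m'\equiv 2(a+b+ab)\pmod 4$) and the ``both even'' case ($m'\equiv -2ab-1\pmod 4$) do match the statement, so your computation, correctly executed, will confirm those and will --- and should --- produce a different congruence than the one printed for the odd--even regime; be prepared to flag this rather than force your arithmetic to fit it.
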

\begin{proof} The equalities are obtained from (\ref{euler1})  and  Lemma \ref{bet1} by an easy computation. 
\end{proof}
We may use the results of Corollary \ref{Hasse}  to improve the comparison formulas of Theorem \ref{cor-dR-B} when $(M, q)$ is the quadratic motive attached to an hypersurface.

\begin{cor}\label{dV} Let  $V$ be a smooth hypersurface of even dimension $n\geq 2$ in $\mathbf{P}_E^{n+1}$,  defined by an homogeneous polynomial $f$ of degree $d$ and let $(M=h^n(V)(n/2), q)$ be the associated motive. Then  
$$d_V=w_1(q_{dR})=\varepsilon'(n, d)\mathrm{disc}_{\mathrm{d}}(f) \ \mathrm{in}\ \  E^*/ E^{*2}, $$
where $\mathrm{disc}_{\mathrm{d}}(f)$ is the divided discriminant of $f$ and where $\varepsilon'(n, d)=(-1)^{\frac{(d-1)}{2}}$ if d is odd and is $(-1)^{(1+\frac{n}{2})(1+\frac{d}{2}) +1}$ if $d$ is even. 
\end{cor}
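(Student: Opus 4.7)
The strategy is to combine the two comparison formulas of Theorem \ref{cor-dR-B} applied to $(M,q) = (h^n(V)(n/2),q)$ with the explicit computation of $w_1(q_B)$ obtained in Corollary \ref{Hasse}(1), and then to invoke Saito's formula (\cite{Saito11}, cf.\ also \cite{Saito94}) expressing the first Stiefel--Whitney class of the middle $\ell$-adic cohomology of a hypersurface in terms of the divided discriminant of its defining equation.

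First, applying Theorem \ref{cor-dR-B}(1) and (2) to $(M,q)$, one obtains in additive notation in $H^1(G_E,\mathbb{Z}/2\mathbb{Z}) \simeq E^{\times}/E^{\times 2}$ the identity
\[
w_1(q_{dR}) \;=\; \delta_q^1(\mathfrak{P}_M) + w_1(q_B) \;=\; sw_1(\rho_p) + w_1(q_B)
\]
for any finite prime $p$ of $E$. Second, Corollary \ref{Hasse}(1) gives $w_1(q_B) = \tfrac{n}{2}(d-1)\cdot(-1)$; since $n$ is even, the parity of $\tfrac{n}{2}(d-1)$ depends only on $d\bmod 2$, so $w_1(q_B)$ is trivial when $d$ is odd, and equals $(-1)^{n/2}$ in $E^{\times}/E^{\times 2}$ when $d$ is even.

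Third, and this is the crucial step, one invokes the theorem of Saito computing the determinant $\det H^n(V_{\bar E},\mathbb{Q}_p(n/2))$ of the orthogonal Galois representation $\rho_p$. In the form we need, this yields
\[
sw_1(\rho_p) \;=\; \varepsilon(n,d)\cdot \mathrm{disc}_{\mathrm{d}}(f) \quad \text{in } E^{\times}/E^{\times 2},
\]
where $\varepsilon(n,d) = (-1)^{(d-1)/2}$ for $d$ odd and $\varepsilon(n,d) = (-1)^{d(n+2)/4}$ for $d$ even. Note that the $p$-independence required to make this statement meaningful is itself a theorem of Saito (\cite{Saito11} Corollary 3.3), already used in the proof of Theorem \ref{cor-dR-B}(2).

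It remains to combine the three steps. When $d$ is odd, $w_1(q_B) = 0$ and we directly get $w_1(q_{dR}) = (-1)^{(d-1)/2}\mathrm{disc}_{\mathrm{d}}(f)$, giving $\varepsilon'(n,d) = (-1)^{(d-1)/2}$ as required. When $d$ is even,
\[
w_1(q_{dR}) \;=\; (-1)^{d(n+2)/4 + n/2}\cdot \mathrm{disc}_{\mathrm{d}}(f),
\]
and a short parity check shows that $d(n+2)/4 + n/2 \equiv (1+n/2)(1+d/2) + 1 \pmod 2$ (both sides reduce to $nd/4 + d/2 + n/2$ modulo $2$), which gives exactly $\varepsilon'(n,d) = (-1)^{(1+n/2)(1+d/2)+1}$. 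The main conceptual obstacle is Step 3, which is not proved here but imported from Saito's work; the rest is a routine combination of results already established in the paper together with an elementary parity computation.
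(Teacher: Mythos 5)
Your proof is correct, and it fills out in detail what the paper compresses into a single line (``immediate consequence of Corollary \ref{Hasse} and \cite{Saito11}''). You route through Theorem \ref{cor-dR-B}(1)--(2) to write $w_1(q_{dR}) = sw_1(\rho_p) + w_1(q_B)$, import Saito's value of $sw_1(\rho_p)$, and plug in Corollary \ref{Hasse}(1) for $w_1(q_B)$; the paper, by not mentioning Theorem \ref{cor-dR-B} here, is implicitly leaning on Saito's formula for the de Rham discriminant directly (Saito's paper computes both the de Rham discriminant and the $\ell$-adic determinant, and the two are reconciled by his earlier comparison theorem, which is also what underlies Theorem \ref{cor-dR-B}(2)). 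The two routes are equivalent, and your version is arguably the more transparent one given the machinery set up in Section 6. Your parity verification is correct: writing $a=n/2$, $b=d/2$, both $d(n+2)/4 + n/2$ and $(1+n/2)(1+d/2)+1$ reduce to $ab+a+b \pmod 2$, and your claimed form of Saito's determinant formula matches exactly the $\delta^1_q$ formula appearing in the next corollary of the paper, so there is no hidden sign discrepancy.
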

\begin{proof} This is an immediate  consequence of Corollary \ref{Hasse} and \cite{Saito11}.
\end{proof}

\begin{rem} Suppose for simplicity that $E=\mathbb Q$. When $n=0$, we deduce from (\ref {bn-}) that $s \equiv 0\ \mbox{mod}\ 4$ and so that $w_1(q_B)=w_2(q_B)=0$.  This result and Corollary \ref{dV} can be easily understood in this case. Suppose that $V$ is defined by the polynomial $f=X_0^d+a_{d-1}X_0^{d-1}X_1+...+a_0X_1^d$. We set $g(X)=f(X=\frac{X_0}{X_1}, 1)$. The motive $h^0(V)$ is isomorphic to the Artin motive $\mathrm{Spec}(F)$ where $F\simeq \mathbb Q[X]/(g(X))$. As we have seen in Section 6.1, the form $q_B$ is the form $x_1^2+...+x_d^2$ and thus its Hasse-Witt invariants are trivial. Moreover the form $q_{dR}$ identifies with the trace form $\mathrm{Tr}_{F/\mathbb Q}$. Therefore 
\begin{equation}\label{Tr}
d_V=\mathrm{disc}(F/\mathbb Q)=\mathrm{disc}(\mathrm{Tr}_{F/\mathbb Q})=\prod_{i<j}(x_i-x_j)^2, 
\end{equation}
where $x_1, ..., x_d$ are the roots of $g$ in $\mathbb C$. This equality can be expressed in terms of resultants. More precisely, by \cite{GKZ}, Chapter 12, (1.29) and (1.31), 
$$\prod_{i<j}(x_i-x_j)^2=(-1)^{\frac{d(d-1)}{2}}R_{d, d-1}(g, g')=\frac{\varepsilon'(0, d)}{d^{d-2}}R_{d-1,d-1}(D_0(f), D_1(f))=\varepsilon'(0, d)\mathrm{disc}_{\mathrm{d}}(f).$$
We conclude that Corollary \ref{dV} generalizes in higher dimension the classical equality (\ref{Tr}). 
\end{rem}

\begin{cor}  Let  $V$ be a smooth hypersurface of even dimension $n\geq 2$ in $\mathbf{P}_E^{n+1}$,  defined by an homogeneous polynomial  of degree $d$ and let 
$(M=h^n(V)(n/2), q)$ be the associated motive. Then we have
$$\delta^1_q(\mathfrak{P}_M)=\begin{cases}
(-1)^{\frac{d-1}{2}}\cdot\mathrm{disc}_{\mathrm{d}}(f) \textrm{ if  d  is  odd} \\
(-1)^{\frac{d}{2}\cdot\frac {n+2}{2}}\cdot\mathrm{disc}_{\mathrm{d}}(f) \textrm{ if  d  is  even}
\end{cases}$$
and

$$\delta^2_q(\mathfrak{P}_M)=w_2(q_{dR})+\begin{cases}
\frac{d-1}{2}(-1, -1) \textrm{ if  d  is  odd} \\
\frac{n}{4}(1+\frac{d}{2})(-1, -1)\textrm{ if  d  is  even  and $n\equiv 0$   $\mathrm{ mod }$ $4$}\\
\left(-1,\mathrm{disc}_{\mathrm{d}}(f)\right)+(\frac{n+2}{4})(1+\frac{d}{2})(-1, -1)\ \textrm{ if  d  is  even  and $n\equiv 2$   $\mathrm{ mod }$ $4$.}
\end{cases}$$
\end{cor}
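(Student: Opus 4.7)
The plan is to combine the three ingredients already prepared in this section: the general comparison formula of Theorem \ref{cor-dR-B}(1) applied to the orthogonal motive $(M,q) = (h^n(V)(n/2),q)$, the explicit computation of the Hasse-Witt invariants of the Betti form $q_B$ given in Corollary \ref{Hasse}, and the identification of the discriminant of the de Rham form with the divided discriminant given in Corollary \ref{dV}. By Theorem \ref{cor-dR-B}(1), one has
\begin{equation*}
\delta^1_q(\mathfrak{P}_M) = w_1(q_B) + w_1(q_{dR}), \qquad \delta^2_q(\mathfrak{P}_M) = w_2(q_B) + w_1(q_B)^2 + w_1(q_B)\cdot w_1(q_{dR}) + w_2(q_{dR}),
\end{equation*}
so the task reduces to a purely bookkeeping simplification once $w_1(q_B), w_2(q_B)$ and $w_1(q_{dR})$ are known.

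First, I would dispose of the case $d$ odd. Here Corollary \ref{Hasse}(1) gives $w_1(q_B) = \tfrac{n}{2}(d-1)(-1) = 0$ in $E^\times/E^{\times 2}$ because $d-1$ is even, and Corollary \ref{Hasse}(2) gives $w_2(q_B) = \tfrac{d-1}{2}(-1,-1)$. Combining with $w_1(q_{dR}) = (-1)^{(d-1)/2}\mathrm{disc}_{\mathrm{d}}(f)$ from Corollary \ref{dV}, the cross terms $w_1(q_B)^2$ and $w_1(q_B)\cdot w_1(q_{dR})$ vanish and the formulas fall out immediately.

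The $d$ even case I would split further according to $n$ mod $4$, as dictated by the case distinction in Corollary \ref{Hasse}(2). In both sub-cases $w_1(q_B) = \tfrac{n}{2}(-1)$ since $d-1$ is odd. When $n \equiv 0 \pmod 4$, $\tfrac{n}{2}$ is even so $w_1(q_B)=0$, hence again the cross terms vanish, $\lfloor (n+2)/4 \rfloor = n/4$, and direct substitution yields the stated formula. The only genuinely delicate case is $d$ even with $n \equiv 2 \pmod 4$: here $\tfrac{n}{2}$ is odd, so $w_1(q_B) = (-1)$ is non-trivial, $w_1(q_B)^2 = (-1,-1)$, and the cross term becomes
\begin{equation*}
w_1(q_B) \cdot w_1(q_{dR}) = (-1) \cdot (-1)^{(1+n/2)(1+d/2)+1} \mathrm{disc}_{\mathrm{d}}(f) = \bigl[(1+\tfrac{n}{2})(1+\tfrac{d}{2})+1\bigr](-1,-1) + (-1,\mathrm{disc}_{\mathrm{d}}(f)).
\end{equation*}
Also in this sub-case $\lfloor(n+2)/4\rfloor = (n+2)/4$, so $w_2(q_B) = \tfrac{n+2}{4}(1+\tfrac{d}{2})(-1,-1)$.

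The main obstacle is the final reduction mod $2$ of the coefficient of $(-1,-1)$ in this last sub-case: one must verify
\begin{equation*}
\tfrac{n+2}{4}(1+\tfrac{d}{2}) + 1 + (1+\tfrac{n}{2})(1+\tfrac{d}{2}) + 1 \equiv \tfrac{n+2}{4}(1+\tfrac{d}{2}) \pmod 2,
\end{equation*}
equivalently $(1+\tfrac{d}{2})\cdot \tfrac{n+2}{2} \equiv 0 \pmod 2$. This is immediate since $n \equiv 2 \pmod 4$ forces $(n+2)/2$ to be even. Once this congruence is recorded, collecting the three contributions $w_2(q_B)$, $w_1(q_B)^2$ and $w_1(q_B)\cdot w_1(q_{dR})$ gives precisely the claimed expression, completing all three cases.
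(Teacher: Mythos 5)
Your proposal is correct and follows essentially the same strategy as the paper's (very terse) proof: apply the comparison formula of Theorem \ref{cor-dR-B}(1), plug in $w_1(q_B), w_2(q_B)$ from Corollary \ref{Hasse} and $w_1(q_{dR})$ from Corollary \ref{dV}, and reduce the exponents modulo $2$. The one cosmetic difference is in the first formula: the paper derives $\delta^1_q(\mathfrak{P}_M)$ directly from Theorem \ref{cor-dR-B}(2) (namely $\delta^1_q(\mathfrak{P}_M)=sw_1(\rho_\lambda)$) combined with Saito's computation of $sw_1(\rho_\lambda)$ in \cite{Saito11}, whereas you expand $w_1(q_B)+w_1(q_{dR})$ and simplify; since Corollary \ref{dV} for $w_1(q_{dR})$ is itself obtained from Saito's formula together with the same comparison theorem, the two routes are equivalent, and your bookkeeping (in particular the observation that $(1+\tfrac{d}{2})\cdot\tfrac{n+2}{2}$ is even when $n\equiv 2 \bmod 4$) checks out.
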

\begin{proof} The first assertion follows from \cite{Saito11} and  Theorem \ref{cor-dR-B}(2).  The second assertion follows from Theorem \ref{cor-dR-B}(1) and Lemma \ref{Hasse}.
\end{proof}

\begin{rem} Let us assume for simplicity that $n \equiv\ 0\ \mathrm{mod}\ 4$. Following a conjecture of Saito \cite{Saito12}, proved for  $l>n+1$,   there should exist integers $\alpha(n, d)$ and $\beta (n, d)$, which can be explicitely expressed in terms of  $n$ and $d$,  such  that for any prime number $l$, one has 
$$ \delta^2_q(\mathfrak{P}_M)=sw_2(\rho_l)+\alpha(n, d)c_l+\beta(n,d)(-1, -1)+(2, d_V), $$
where $c_l$ is the unique element of $H^2(\mathbb Q, \mathbb Z/2\mathbb Z)$ which ramifies exactly in $l$ and $\infty$. 

\end{rem}

\subsection{The lifting problem $\mathrm{Lift}(M,q)$ over $\mathbb{R}$}
Let $(M, q)$ be an orthogonal $\mathbb{Q}$-motive, let $(M_B, q_B)$ be its Betti realization and let $\infty$ be the real place of $\mathbb{Q}$. Recall that 
$(M_{\infty}, q_{\infty}):= (M_B,q_B)\otimes_{\mathbb{Q}}\mathbb{R}$ and that we have an orthogonal representation 
$\rho_{\infty\mid G_{\mathbb{R}}}: G_{\mathbb{R}}\rightarrow {\bf O}(M_{\infty})$. Under this action $M_{\infty}$ decomposes into a direct sum 
$M_{\infty}=M_{\infty}^+\oplus M_{\infty}^{-}$ where the complex conjugation (i.e. the  non-trivial element in $G_{\mathbb{R}}$) acts as the identity $\mathrm{Id}$ (resp. as $- \mathrm{Id}$) on $M_{\infty}^+$ (resp. 
on $M_{\infty}^{-}$). Our aim is to compute the local invariants $sw_2(\rho_{\infty\mid G_{\mathbb{R}}})$ and $sp_2(\rho_{\infty\mid G_{\mathbb{R}}})$, which belong to the group $H^2(G_{\mathbb{R}}, {\mathbb{Z}}/2{\mathbb{Z}})\simeq {\mathbb{Z}}/2{\mathbb{Z}}$.

\begin{prop}\label{propR} Let $b^{-}$ be the dimension of $M_{\infty}^{-}$. Then we have 
$$ sw_2(\rho_{\infty\mid G_{\mathbb{R}}})={\frac{b^{-}(b^{-}-1)}{2}}\ \ \mbox{and}\ \ sp_2(\rho_{\infty\mid G_{\mathbb{R}}})=0. $$
\end{prop}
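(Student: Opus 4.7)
The strategy is to directly compute both invariants from the action of complex conjugation $c\in G_{\mathbb{R}}$. Set $g:=\rho_{\infty\mid G_{\mathbb{R}}}(c)\in \mathbf{O}(q_\infty)(\mathbb{R})$. Since $g$ preserves $q_\infty$, the decomposition $M_\infty=M_\infty^+\oplus M_\infty^-$ is $q_\infty$-orthogonal, and $g$ acts as $\mathrm{Id}$ on $M_\infty^+$ and as $-\mathrm{Id}$ on $M_\infty^-$.

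For the Stiefel--Whitney invariant, I would work over $\mathbb{C}_\infty=\mathbb{C}$, where the defining central extension lives in the paper's conventions. Every non-degenerate quadratic form over $\mathbb{C}$ is isomorphic to the standard sum of squares, so I can pick a $\mathbb{C}$-basis $e_1,\ldots,e_{b^-}$ of $M_\infty^-\otimes_{\mathbb{R}}\mathbb{C}$ which is orthonormal for $q_\infty\otimes\mathbb{C}$. The restriction of $g$ to $M_\infty^-\otimes\mathbb{C}$ is then the composition of the reflections along $e_1,\ldots,e_{b^-}$, and each such reflection lifts in the complex Clifford algebra to the corresponding basis vector (with $e_i^2=1$). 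A direct manipulation of the Clifford anticommutation relations (moving the second occurrence of $e_i$ past $b^--i$ generators and iterating) yields
$$\tilde g^2=(e_1\cdots e_{b^-})^2=(-1)^{\binom{b^-}{2}},$$
and this value is precisely the class in $H^2(G_{\mathbb{R}},\mathbb{Z}/2\mathbb{Z})\simeq\mathbb{Z}/2\mathbb{Z}$ of the pulled-back Pin extension, proving $sw_2=\binom{b^-}{2}$.

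For the spinor class, I would diagonalize $q_\infty|_{M_\infty^-}$ over $\mathbb{R}$ in an orthogonal basis $f_1,\ldots,f_{b^-}$ with $q_\infty(f_i)=\epsilon_i\in\{\pm 1\}$, so that $g$ becomes the composition of the corresponding reflections extended by the identity on $M_\infty^+$. The classical formula for the spinor norm of a composition of reflections gives
$$sp(g)=\prod_i q_\infty(f_i)=\mathrm{disc}(q_\infty|_{M_\infty^-})\in \mathbb{R}^{\times}/\mathbb{R}^{\times 2}.$$
Under the identifications $\mathrm{Hom}(G_{\mathbb{R}},H^1(G_{\mathbb{R}},\mathbb{Z}/2\mathbb{Z}))\simeq H^1\otimes H^1$ and $H^1(G_{\mathbb{R}},\mathbb{Z}/2\mathbb{Z})\simeq \mathbb{R}^{\times}/\mathbb{R}^{\times 2}$ (sending the generator $\chi_c$ to $(-1)$), the class $sp_2(\rho_{\infty\mid G_{\mathbb{R}}})$ becomes the Hilbert symbol $(-1,\mathrm{disc}(q_\infty|_{M_\infty^-}))_\infty$.

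The main obstacle is then to prove that $\mathrm{disc}(q_\infty|_{M_\infty^-})>0$, equivalently that $q_\infty|_{M_\infty^-}$ has an even number of negative eigenvalues, so that the above Hilbert symbol vanishes. This parity statement should follow from a Hodge-theoretic input to the Betti realization: the orthogonal structure on the Nori motive $(M,q)$ makes $q_B$ compatible with a polarization of the underlying Hodge structure, and the Hodge--Riemann bilinear relations, together with the fact that complex conjugation exchanges $H^{p,q}$ and $H^{q,p}$ in the Hodge decomposition, should force an even negative index for $q_\infty$ restricted to the $c$-antiinvariant subspace.
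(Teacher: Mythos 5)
Your $sw_2$ computation is correct and takes a more explicit route than the paper. The paper decomposes the orthogonal character as $\chi_\rho=b^+\chi_0+b^-\chi_1$, quotes Fr\"ohlich's addition formula $sw_2(\chi+\chi')=sw_2(\chi)+sw_2(\chi')+\det(\chi)\cdot\det(\chi')$ together with $sw_2(\chi)=0$ for degree-one characters, and gets $sw_2(\rho)=\binom{b^-}{2}(\chi_1)(\chi_1)$ by iterating. Your direct cocycle computation via the lift $\tilde g=e_1\cdots e_{b^-}$ and the relation $\tilde g^2=(-1)^{\binom{b^-}{2}}$ in the complex Clifford algebra is a legitimate alternative: it makes the cocycle representing $sw_2$ visible rather than appealing to the additivity relation. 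Both give the same answer.

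For $sp_2$ you have a genuine gap, which you partly acknowledge. You correctly compute $sp(\rho_{\infty\mid G_\mathbb{R}}(c))=\mathrm{disc}(q_\infty|_{M_\infty^-})$ and identify $sp_2(\rho_{\infty\mid G_\mathbb{R}})$ with the Hilbert symbol $(-1,\mathrm{disc}(q_\infty|_{M_\infty^-}))_\infty$, so the assertion of the proposition is exactly the positivity of $\mathrm{disc}(q_\infty|_{M_\infty^-})$. Your proposed Hodge-theoretic closure of this step cannot work in the stated generality: an arbitrary orthogonal object of $\mathrm{NMM}_\mathbb{Q}(\mathbb{Q})$ is a \emph{mixed} motive, the pairing $q$ is not assumed to be a polarization, and there is no Hodge--Riemann constraint available. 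The case of Artin motives already makes this concrete: for $(V,q,\rho)$ with $V=\mathbb{Q}^2$, $q=x^2-y^2$ and $\rho\colon G_\mathbb{Q}\to\mathbf{O}(q)(\mathbb{Q})$ factoring through $\mathrm{Gal}(\mathbb{Q}(i)/\mathbb{Q})$ with $\rho(c)=\mathrm{diag}(1,-1)$, one finds $\mathrm{disc}(q_\infty|_{M_\infty^-})=-1<0$, so your plan of deriving positivity from the motivic structure has no chance of succeeding. The paper's own proof proceeds quite differently: it invokes Fr\"ohlich's relation $sp[t+t']=sp[t]+sp[t']$ together with the claim $sp[\phi]=0$ for degree-one orthogonal characters $\phi$, and concludes immediately from the decomposition $\chi_\rho=b^+\chi_0+b^-\chi_1$. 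You should be aware, however, that the relation $sp[\phi]=0$ also presupposes that the degree-one piece carries the unit form $\langle 1\rangle$ (the spinor norm of $-\mathrm{Id}$ on $\langle a\rangle$ is the class of $a$), so exactly the same positivity question is silently in play there; your analysis has in fact surfaced a real subtlety, but it does not give a proof, and merely invoking Hodge theory will not supply the missing hypothesis.
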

\begin{proof} In order to ease the notation, we write $\rho$ for $\rho_{\infty\mid G_{\mathbb{R}}}$. By choosing  orthogonal bases of $M_{\infty}^+$  and  $M_{\infty}^{-}$,  we decompose    $(M_{\infty}, q_{\infty})$ into an orthogonal  sum of dimension $1$ quadratic spaces,   stable under the action of $G_{\mathbb{R}}$. Therefore the orthogonal character $\chi_{\rho}$ of $\rho$ can be written as a sum of degree one orthogonal characters, 
\begin{equation}\label{orth}
\chi_{\rho}=b^+\chi_0+b^{-}\chi_1, 
\end{equation} 
where $\chi_0$ (resp. $\chi_1$) is the trivial (resp. non trivial) character of $G_{\mathbb{R}}$ and $b^+ $   is the dimension of $M_{\infty}^+$. The Stiefel-Whitney class of an orthogonal representation only depends on  the character of the representation. Moreover we obtain from  \cite{Fr\"ohlich85} the following addition formula.
\begin{lem} 
Let $\rho$ and $\rho'$ be orthogonal representations of a finite group $\Gamma$, of characters $\chi_{\rho}$ and $\chi_{\rho'}$. Then we have
$$sw_2(\chi_{\rho}+ \chi_{\rho'})=sw_2(\chi_{\rho})+sw_2(\chi_{\rho'}) + \mathrm{det}(\chi_{\rho})\cdot \mathrm{det}\  (\chi_{\rho'}).$$
Moreover $sw_2(\chi)=0$ for   $\chi \in \mathrm{Hom}(\Gamma, {\mathbb{Z}}/2{\mathbb{Z}})$.
\end{lem}
It follows from the lemma that $sw_2(\rho)=sw_2(\chi_{\rho})=sw_2(b^{-}\chi_1)$ and that 
$$sw_2(a\chi_1)= sw_2((a-1)\chi_1)+ (\chi_1)( \chi_1^{a-1})=sw_2((a-1)\chi_1)+(a-1)(\chi_1)(\chi_1),\ \forall a. $$
Therefore $sw_2(\rho)=\frac{b^{-}(b^{-}-1)}{2}(\chi_1)(\chi_1)$.  Since $(\chi_1)(\chi_1)$ identifies with the class of the Hamilton quaternion algebra over $\mathbb R$, under the group monomorphism 
$H^2(G_{\mathbb{R}}, {\mathbb{Z}}/2{\mathbb{Z}})\rightarrow Br(\mathbb R)$, we conclude that   $(\chi_1)( \chi_1)$  is the non trivial element  of $H^2(G_{\mathbb{R}},{\mathbb{Z}}/2{\mathbb{Z}})$. 

We now recall the definition of $sp_2(t)$ for an orthogonal representation of $t:G_{\mathbb{R}}\rightarrow {\bf O}(q_t)$ as given in \cite{Fr\"ohlich85}.  We let $sp: {\bf O}(q_t)\rightarrow 
\mbox{Hom} (G_{\mathbb{R}}, {\mathbb{Z}}/2{\mathbb{Z}})$ 
be the group homomorphism obtained by composing the spinor norm ${\bf O}(q_t) \rightarrow \Bbb R^{\times} /\Bbb R^{\times2}$  with the canonical isomorphim $\Bbb R^{\times} /\Bbb R^{\times2}\simeq \mbox{Hom} (G_{\mathbb{R}}, {\mathbb{Z}}/2{\mathbb{Z}})$. Let  $sp[t]: G_{\mathbb{R}}\rightarrow \mbox{Hom}(G_{\mathbb{R}}, {\mathbb{Z}}/2{\mathbb{Z}})$ be the group homomorphism $g\rightarrow sp(t(g))$. This map satisfies the relations 
\begin{equation}\label{sp1} 
sp[t+t']=sp[t]+sp[t'] \ \ (\mbox{addition  in}\ \  \mbox {Hom}(G_{\mathbb{R}}, \mbox{Hom} (G_{\mathbb{R}}, {\mathbb{Z}}/2{\mathbb{Z}}))
\end{equation}
and 
\begin{equation}\label{sp2} 
sp[\phi]=0\ \ \forall\  \phi\ \in  \mathrm{Hom}(G_{\mathbb{R}}, \pm 1)
\end{equation}
The map $sp[t]$ gives rise to a bilinear form 
$$c_t: G_{\mathbb{R}}\times G_{\mathbb{R}}\rightarrow  {\mathbb{Z}}/2{\mathbb{Z}}. $$
It   is a $2$-cocycle  which represents  $sp_2(t)$  in $H^2(G_{\mathbb{R}},{\mathbb{Z}}/2{\mathbb{Z}})$. 
The triviality of $sp_2(\rho)$ now follows from  (\ref{orth}), (\ref{sp1}) and (\ref{sp2}). 
\end{proof}

\begin{rem}
The lifting problem $\mathrm{Lift}(M,q)$ has a solution over $\mathbb{R}$ if and only if
$\frac{b^{-}(b^{-}-1)}{2}$ is even.
\end{rem}

\section{Appendix}

\subsection{The K\"unneth formula for effective Nori motives}\label{sect-appendix}

The aim of this section is to prove a K\"unneth decomposition  for effective Nori motives. For generalities on Nori motives, we refer to \cite{Nori}, \cite{Levine},  \cite{Huber-Muller-Stach-12} and \cite{Huber-Muller-Stach-15}.

Following Nori's construction  we attach to any diagram $D$ and any representation  $T: D\rightarrow \mathrm{Vec}_\mathbb Q$  a $\mathbb Q$-linear abelian category $C(D, T)$, a representation $\tilde T:D\rightarrow C(D, T)$ and a forgetful functor $f_T$ such that $f_T\circ \tilde T=T$ and $\tilde T$ is universal for this property. For any such pair $(D, T)$, we may consider the pair $(D^{op}, T^\vee)$ where $D^{op}$ is the opposite diagram and $T^\vee: D^{op} \rightarrow \mathrm{Vec}_\mathbb Q$  assigns to any vertex $p$ of $D$ the dual $\mathbb{Q}$-vector space of $T(p)$ and to any edge $m$ from $p$ to $q$ the transpose of $T(m)$. Let $k$ be a subfield of the field of complex numbers $\mathbb{C}$. The category of effective homological Nori motives (respectively of effective cohomological Nori motives) is defined as 
$EHM:= C(D, H_*)$ (resp. $ECM:= C(D^{op}, H_*^{\vee})$), where $D$ is the diagram of effective good pairs consisting of triples $(X, Y, i)$ where $X$ is a $k$-variety, $Y\hookrightarrow X$ a closed subvariety and $i$ an integer such that the singular homology $H_j(X(\mathbb C), Y(\mathbb C), \mathbb Q)=0$ if $j\neq i$, and  $H_*$ is the representation which maps $(X, Y, i)$ to the relative singular homology $H_i(X(\mathbb C), Y(\mathbb C), \mathbb Q)$.  For a good pair  $(X, Y, i)$ we write $h_i(X, Y)$ (resp. $h^i(X, Y))$ for the corresponding object in $EHM$ (resp. $ECM$).

We denote by $\mathrm{Ind}(EHM)$ the category of ind-objects of $EHM$,  i.e. the strictly full subcategory of the category of presheaves of sets on $EHM$ which are isomorphic to filtered colimits of representables. Then $\mathrm{Ind}(EHM)$ is an abelian category with enough injectives, and the Yoneda functor $y:EHM\rightarrow \mathrm{Ind}(EHM)$ is an  exact fully faithful functor. An object of $\mathrm{Ind}(EHM)$ lying in the essential image of $y$ is called essentially constant. Then we consider the abelian category $\mathbf{Ch}(\mathrm{Ind}(EHM))$ (resp. $\mathbf{Ch}_+(\mathrm{Ind}(EHM))$) of (resp. bounded below) homological chain complexes in $\mathrm{Ind}(EHM)$.

Let $X$ be an affine variety over $k$ (i.e. an affine scheme of finite type over $k$). A good filtration of $X$ is a sequence of closed subvarieties
$$\emptyset=X_{-1}\subset X_0 \subset ...\subset X_{n-1}\subset X_n=X$$
such that $(X_j, X_{j-1}, j)$ is a good pair for $0\leq j\leq n$. Nori's basic Lemma ensures that the set of good filtrations is non-empty and filtered. It is moreover functorial in the sense that given a morphism $f:X\rightarrow Y$ of affine varieties, there exist good filtrations $X_{*}$ on $X$ and $Y_*$ on $Y$ such that $f(X_i)\subset Y_i$ for any $i$. We set
$$C_*(X_*):=[h_n(X_n,X_{n-1})\rightarrow h_{n-1}(X_{n-1},X_{n-2})\rightarrow h_0(X_0,\emptyset)]\in \mathbf{Ch}_+(EHM)$$
where $h_0(X_0,\emptyset)$ sits in degree $0$. We obtain a functor
$$\fonc{C_*}{\mathbf{Aff}_k} {\mathbf{Ch}_+(\mathrm{Ind}(EHM))}{X}{\underrightarrow{lim}\,\, C_*(X_*)}$$
where the colimit is taken over the system of good filtrations $X_*$ of $X$, and  $\mathbf{Aff}_k$ denotes the category of affine $k$-schemes of finite type.

Let $X$ be any variety (i.e. a separated scheme of finite type over $k$). A \emph{rigidified affine cover}  $(\mathcal{U},r)$ is given by a finite open affine cover $\mathcal{U}=\{U_i\subset X,i\in I\}$ of $X$, and a \emph{surjective} map of sets
$r:X(\mathbb{C})\twoheadrightarrow I$ such that $x\in U_{r(x)}(\mathbb{C})$ for any $x\in X(\mathbb{C})$,
where $X(\mathbb{C})$ denotes the (discrete) set of complex points of $X$. A morphism $(X,\mathcal{U},r)\rightarrow (X',\mathcal{U}',r')$ of varieties with rigidified affine covers is a pair $(f,\phi)$ given by a morphism of schemes $f:X\rightarrow X'$ inducing a map of sets $\phi:I\rightarrow I'$ (i.e. such that the obvious square commutes) such that $f(U_i)\subseteq U'_{\phi(i)}$ for every $i\in I$. Note that, if $(\mathcal{U},r)$ and $(\mathcal{U'},r')$ are two rigidified affine covers of $X$, 
there exist at most one morphism $(X,\mathcal{U'},r')\rightarrow (X,\mathcal{U},r)$ whose underlying map $X\rightarrow X$ is the identity. If such a morphism exists, we say that $(\mathcal{U}',r')$ refines $(\mathcal{U},r)$.

Given a variety $X$ endowed with a rigidified affine cover $(\mathcal{U},r)$, we consider the simplicial Cech complex $\check{C}_{\bullet}(\mathcal{U})$, a simplicial affine scheme.  Applying the functor $C_*$ above, we get a simplicial object $C_*\check{C}_{\bullet}(\mathcal{U})$ of $\mathbf{Ch}_+(\mathrm{Ind}(EHM))$, and taking the associated complex we get a double complex in $\mathrm{Ind}(EHM)$. We denote by $\mathrm{Toc}(C_*\check{C}_{\bullet}(\mathcal{U}))\in \mathbf{Ch}_+(\mathrm{Ind}(EHM))$ the total complex. In other words, we consider the following functor
\begin{equation}\label{functorToc}
\mathrm{Toc}:\mathrm{Fun}(\Delta^{op},\mathbf{Ch}_+(\mathrm{Ind}(EHM)))\longrightarrow \mathbf{Ch}_+(\mathbf{Ch}_+(\mathrm{Ind}(EHM)))\stackrel{\mathrm{Tot}}{\longrightarrow}\mathbf{Ch}_+(\mathrm{Ind}(EHM))
\end{equation}
where the first functor sends a simplicial object to the corresponding unnormalized homological complex and the second functor sends a double complex to its total complex. Then $\mathrm{Toc}(C_*\check{C}_{\bullet}(\mathcal{U}))$ is functorial in $(\mathcal{U},r)$. Moreover, if $(\mathcal{V},s)$ refines $(\mathcal{U},r)$
then 
$$\mathrm{Toc}(C_*\check{C}_{\bullet}(\mathcal{V}))\rightarrow \mathrm{Toc}(C_*\check{C}_{\bullet}(\mathcal{U}))$$
is a quasi-isomorphism. Hence $\mathrm{Toc}(C_*\check{C}_{\bullet}(\mathcal{U}))\in \mathbf{D}_b(\mathrm{Ind}(EHM))$ does not depend on $(\mathcal{U},r)$ up to a canonical isomorphism. Here $\mathbf{D}_b(\mathrm{Ind}(EHM))$ of course denotes the derived category of bounded homological complexes in $\mathrm{Ind}(EHM)$.

This defines a functor
$$\fonc{\tilde{M}}{\mathbf{Var}_k} {\mathbf{D}_b(\mathrm{Ind}(EHM))}{X}{\mathrm{Tot}(C_*\check{C}_{\bullet}(\mathcal{U}))}$$
The functor
$$\mathbf{D}_b(EHM)\rightarrow\mathbf{D}_b(\mathrm{Ind}(EHM))$$ is fully faithful and its essential image $\tilde{\mathbf{D}}_b(EHM)$ is precisely the full subcategory consisting in chain complexes with essentially constant homology. Note that $\tilde{\mathbf{D}}_b(EHM)$ is strictly larger than $\mathbf{D}_b(EHM)$ but these two categories are canonically equivalent; we make no distinction between them. Let us denote by $\mathbf{Var}_k$  the category of varieties over $E$. We obtain a functor
$$M:\mathbf{Var}_k\rightarrow \mathbf{D}_b(EHM)$$
such that the image of $M(X)$ in $\mathbf{D}_b(\mathrm{Vec}_\mathbb Q)$ computes the singular homology of $X$. 

\begin{notation}\label{notdansehm}
For any variety $X/k$ we define the object $h_i(X)$ of EHM by 
$$h_i(X)=H_i(M(X)).$$ 
\end{notation}
 
 The tensor product on  $EHM$ extends to a tensor product on $\mathrm{Ind}(EHM)$. Any object of $EHM$ is flat (since $EHM$ has a fiber functor to the category of finite dimensional $\mathbb{Q}$-vector spaces) hence so is any object of $\mathrm{Ind}(EHM)$. Therefore, derived tensor products on $\mathbf{D}_b(EHM)$ and $\mathbf{D}_b(\mathrm{Ind}(EHM))$ are well defined and $\mathbf{D}_b(EHM)\rightarrow\mathbf{D}_b(\mathrm{Ind}(EHM))$ is monoidal.

\begin{thm}
We have a functorial isomorphism in $\mathbf{D}_b(EHM)$ 
$$M(X)\otimes^L M(Y)\stackrel{\sim}{\longrightarrow }M(X\times_kY)$$
inducing the usual K\"unneth isomorphism on singular homology.
\end{thm}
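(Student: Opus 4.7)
My plan is to reduce the statement to the affine case via the Cech construction, prove the affine case by exhibiting an explicit product filtration, and verify the quasi-isomorphism at the level of singular homology using the faithfulness of the fiber functor on $\mathrm{Ind}(EHM)$.

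\emph{Step 1: Reduction to the affine case.} Let $(\mathcal{U},r)$ and $(\mathcal{V},s)$ be rigidified affine covers of $X$ and $Y$ respectively. Then the product $\mathcal{U}\times\mathcal{V}=\{U_i\times V_j\}$ together with the map $(r,s):(X\times Y)(\mathbb{C})\twoheadrightarrow I\times J$ is a rigidified affine cover of $X\times Y$. Its Cech nerve $\check{C}_{\bullet}(\mathcal{U}\times\mathcal{V})$ is the diagonal of the bisimplicial affine scheme $\check{C}_{\bullet}(\mathcal{U})\times\check{C}_{\bullet}(\mathcal{V})$. Granting the affine K\"unneth (Step 2), we obtain a bisimplicial quasi-isomorphism
$$C_*(\check{C}_p(\mathcal{U}))\otimes^L C_*(\check{C}_q(\mathcal{V}))\stackrel{\sim}{\longrightarrow} C_*(\check{C}_p(\mathcal{U})\times \check{C}_q(\mathcal{V}))$$
in $\mathbf{D}_b(\mathrm{Ind}(EHM))$, natural in $(p,q)$. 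Applying $\mathrm{Toc}$ and invoking the Eilenberg--Zilber theorem (which is valid in any abelian category with enough flats, and in particular in $\mathrm{Ind}(EHM)$ where every object is flat), one identifies the total complex of the bisimplicial object with $\mathrm{Toc}(C_*\check{C}_{\bullet}(\mathcal{U}\times\mathcal{V}))$. The compatibility of the tensor product with $\mathrm{Toc}$ (shuffle map) yields
$$\tilde{M}(X)\otimes^L\tilde{M}(Y)\stackrel{\sim}{\longrightarrow}\tilde{M}(X\times Y)$$
in $\mathbf{D}_b(\mathrm{Ind}(EHM))$.

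\emph{Step 2: The affine case.} Let $X_*$ and $Y_*$ be good filtrations of affine $X$ and $Y$. Define the product filtration $(X\times Y)_n:=\bigcup_{i+j=n}X_i\times Y_j$. The quotient pair at level $n$ is the union of the smash-like pieces $X_i\times Y_j/((X_{i-1}\times Y_j)\cup(X_i\times Y_{j-1}))$, and the relative K\"unneth theorem for singular homology gives
$$H_m\bigl((X\times Y)_n,(X\times Y)_{n-1}\bigr)\simeq\bigoplus_{i+j=n}H_i(X_i,X_{i-1})\otimes H_j(Y_j,Y_{j-1}),$$
which vanishes for $m\neq n$. Hence $(X\times Y)_*$ is a good filtration. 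By Nori's basic lemma and functoriality of good filtrations, the colimits over $X_*$ and $Y_*$ are cofinal with the colimit over all good filtrations of $X\times Y$ up to refinement. The obvious shuffle map of complexes of ind-objects
$$C_*(X_*)\otimes C_*(Y_*)\longrightarrow C_*((X\times Y)_*)$$
then defines a morphism $C_*(X)\otimes^L C_*(Y)\to C_*(X\times Y)$ in $\mathbf{D}_b(\mathrm{Ind}(EHM))$, where derivedness is automatic because every object of $\mathrm{Ind}(EHM)$ is flat.

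\emph{Step 3: Checking it is an isomorphism and descending to $\mathbf{D}_b(EHM)$.} The forgetful functor $f:\mathrm{Ind}(EHM)\to\mathrm{Ind}(\mathrm{Vec}_{\mathbb{Q}})$ is exact, faithful and monoidal, and it transforms the morphism of Step~2 into the classical shuffle quasi-isomorphism for the filtered singular chain complexes of $X$ and $Y$ (this is the standard proof of the topological K\"unneth theorem via compatible cellular filtrations). Faithfulness on bounded derived categories then implies that the map is an isomorphism in $\mathbf{D}_b(\mathrm{Ind}(EHM))$. Both sides have essentially constant homology (since $M(X)$, $M(Y)$, and $M(X\times Y)$ do, and the derived tensor product of essentially constant complexes is essentially constant in view of flatness and the fact that the fiber functor detects finite dimensionality), so the isomorphism descends to $\mathbf{D}_b(EHM)$, and by construction induces the usual K\"unneth map on singular homology.

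\emph{Expected main obstacle.} The delicate point is Step~1, specifically verifying that the Eilenberg--Zilber equivalence and the shuffle map behave well with the functor $\mathrm{Toc}$ when applied to \emph{bisimplicial complexes of ind-objects}, and that the whole construction is independent of the choices of rigidified affine covers (with cofinality of product covers among all refinements of a given cover of $X\times Y$). The technical heart, however, is the compatibility of Nori's basic lemma with products, i.e.\ ensuring that product filtrations are cofinal among good filtrations of $X\times Y$; this is what makes the shuffle map well-defined at the level of the colimits defining $C_*(X\times Y)$ in $\mathrm{Ind}(EHM)$.
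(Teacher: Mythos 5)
Your proposal takes essentially the same route as the paper's proof: an affine K\"unneth lemma via good filtrations, followed by \v{C}ech/bisimplicial descent using the Eilenberg--Zilber theorem and the identification of $\check{C}_\bullet(\mathcal{U}\times\mathcal{V})$ with the diagonal of $\check{C}_\bullet(\mathcal{U})\times\check{C}_\bullet(\mathcal{V})$. Two small variants are worth flagging. In the affine step you claim the product filtration $(X\times Y)_n=\bigcup_{i+j=n}X_i\times Y_j$ is itself a good filtration; this is true, but it requires the excision/relative K\"unneth argument you sketch. The paper sidesteps this point-set verification: by Nori's basic lemma one can choose \emph{any} good filtration $Z_*$ of $X\times Y$ with $\bigcup_{i+j=k}X_i\times Y_j\subseteq Z_k$, and the shuffle complex maps canonically through $C_*(Z_*)$, independently of the choice. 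Second, the cofinality of product filtrations among good filtrations of $X\times Y$, which you flag as the technical heart, is not in fact needed: it suffices that each shuffle map lands compatibly in the colimit $C_*(X\times Y)$, and the quasi-isomorphism is then checked via the exact, faithful (hence conservative) forgetful functor to $\mathrm{Ind}(\mathrm{Vec}_{\mathbb{Q}})$ — which is precisely what your Step~3 does. Modulo these small points, the argument and all of its essential ingredients coincide with the paper's.
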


\begin{proof} Since the fully faithful functor $\mathbf{D}_b(EHM)\rightarrow\mathbf{D}_b(\mathrm{Ind}(EHM))$ commutes with $\otimes^L$, it is enough to prove the result in $\mathbf{D}_b(\mathrm{Ind}(EHM))$. Given arbitrary varieties $X$ and $Y$ endowed with rigidified open affine covers $(\mathcal{U},r)$ and $(\mathcal{V},s)$, the complexes $\mathrm{Toc}(C_*\check{C}_{\bullet}(\mathcal{U}))$ and $\mathrm{Toc}(C_*\check{C}_{\bullet}(\mathcal{V}))$ in $\mathbf{Ch}_+(\mathrm{Ind}(EHM))$ represent $M(X)$ and $M(Y)$. Hence $M(X)\otimes^L M(Y)$ is represented by the total complex 
$$\mathrm{Tot}(\mathrm{Toc}(C_*\check{C}_{\bullet}(\mathcal{U}))\otimes \mathrm{Toc}(C_*\check{C}_{\bullet}(\mathcal{V})))\in \mathbf{Ch}_+(\mathrm{Ind}(EHM))$$ associated with the double complex $\mathrm{Toc}(C_*\check{C}_{\bullet}(\mathcal{U}))\otimes \mathrm{Toc}(C_*\check{C}_{\bullet}(\mathcal{V})))$. It is therefore enough to exhibit a quasi-isomorphism
\begin{equation}\label{finalqis}
\mathrm{Tot}(\mathrm{Toc}(C_*\check{C}_{\bullet}(\mathcal{U}))\otimes \mathrm{Toc}(C_*\check{C}_{\bullet}(\mathcal{V})))\stackrel{\sim}{\longrightarrow} \mathrm{Toc}(C_*\check{C}_{\bullet}(\mathcal{U}\times \mathcal{V}))
\end{equation}
which is functorial in $(\mathcal{U},r)$, $(\mathcal{V},s)$, $X$ and $Y$. Here $(\mathcal{U}\times \mathcal{V},r\times s)$ is the obvious rigidified open affine cover of $X\times Y$.

We first consider the affine case. Consider the following diagram of functors:
\[ \xymatrix{
\mathbf{Aff}_k\times \mathbf{Aff}_k\ar[d]^{-\times-}\ar[r]^{C_*\times C_*\hspace{2.3cm}}
&\mathbf{Ch}_+(\mathrm{Ind}(EHM))\times\mathbf{Ch}_+(\mathrm{Ind}(EHM))\ar[d]^{\mathrm{Tot}(-\otimes-)}\\
\mathbf{Aff}_k\ar[r]^{C_*}
&\mathbf{Ch}_+(\mathrm{Ind}(EHM))}
\]
\begin{lem}\label{lemaffine} (Nori, \cite{Nori} Section 4.5)
The square above is quasi-commutative in the sense that there is a natural transformation 
$$\mathrm{Tot}(C_*(-)\otimes C_*(-))\longrightarrow C_*(-\times -)$$
 inducing a quasi-isomorphism
$$\mathrm{Tot}(C_*(X)\otimes C_*(Y))\rightarrow C_*(X\times Y) $$
in $\mathbf{Ch}(\mathrm{Ind}(EHM))$ for every $X, Y\in \mathbf{Aff}_k$.
\end{lem}
\begin{proof}
We have $\mathrm{Tot}(C_*(X)\otimes C_*(Y))\simeq \underrightarrow{lim} \,\mathrm{Tot}(C_*(X_*)\otimes C_*(Y_*))$ where the colimit is taken over the system of good filtrations $X_*$ and $Y_*$ on $X$ and $Y$ respectively. Given such good filtrations $X_*$ and $Y_*$, there exists a good filtration $Z_*$ on $X\times Y$ such that $\cup_{i+j=k}X_i\times Y_j\subseteq Z_k$. 
The obvious map from
$$\mathrm{Tot}_k(C_*(X_*)\otimes C_*(Y_*))=\bigoplus_{i+j=k}h_i(X_i,X_{i-1})\otimes h_j(Y_j,Y_{j-1})\simeq
\bigoplus_{i+j=k}h_i(X_i\times Y_j,X_{i}\times Y_{j-1}\cup X_{i-1}\times Y_j).$$
to $C_k(Z_*)=H_k(Z_k,Z_{k-1})$
gives morphisms of complexes
$$\mathrm{Tot}(C_*(X_*)\otimes C_*(Y_*))\rightarrow C_*(Z_*)\rightarrow C_*(X\times Y)$$
which does not depend on the choice of the filtration $Z_*$. This induces a quasi-isomorphism 
$$\mathrm{Tot}(C_*(X)\otimes C_*(Y))\rightarrow C_*(X\times Y)$$
which is moreover functorial in $X$ and $Y$. This proves the lemma and thus the theorem when $X$ and $Y$ are both affine. 

\end{proof}
Take now arbitrary varieties $X$ and $Y$, and choose rigidified open affine covers $(\mathcal{U},r)$ and $(\mathcal{V},s)$, with $\mathcal{U}=\{U_i\subset X,\,i\in I\}$ and $\mathcal{V}=\{V_j\subset Y,\,j\in J\}$. We denote by $\check{C}_{\bullet}(\mathcal{U}):\Delta^{op}\rightarrow \mathbf{Aff}_k$ and $\check{C}_{\bullet}(\mathcal{V}):\Delta^{op}\rightarrow \mathbf{Aff}_k$ the corresponding simplicial affine schemes. Consider the diagram
\[ \xymatrix{
\Delta^{op}\times\Delta^{op} \ar[d]^{\mathrm{Id}}\ar[r]^{\check{C}_{\bullet}(\mathcal{U})\times \check{C}_{\bullet}(\mathcal{V})}&  \mathbf{Aff}_k\times \mathbf{Aff}_k\ar[d]^{-\times-}\ar[r]^{C_*\times C_*\hspace{2cm}}
&\mathbf{Ch}_+(\mathrm{Ind}(EHM))\times\mathbf{Ch}_+(\mathrm{Ind}(EHM))\ar[d]^{\mathrm{Tot}(-\otimes-)}\\
\Delta^{op}\times\Delta^{op}  \ar[r]& \mathbf{Aff}_k\ar[r]^{C_*}
&\mathbf{Ch}_+(\mathrm{Ind}(EHM))}
\]
where the left square commutes. The bottom line gives a bisimplicial object of $\mathbf{Ch}_+(\mathrm{Ind}(EHM))$, which we denote by
$$C_*(\check{C}_{\bullet}(\mathcal{U})\times \check{C}_{\bullet}(\mathcal{V})):\Delta^{op}\times\Delta^{op}\longrightarrow \mathbf{Ch}_+(\mathrm{Ind}(EHM)).$$
Composing the top line with $\mathrm{Tot}(-\otimes-)$ gives another bisimplicial object of $\mathbf{Ch}_+(\mathrm{Ind}(EHM))$, which we denote by
$$\mathrm{Tot}(C_*\check{C}_{\bullet}(\mathcal{U})\otimes C_*\check{C}_{\bullet}(\mathcal{V}))):\Delta^{op}\times\Delta^{op}\longrightarrow \mathbf{Ch}_+(\mathrm{Ind}(EHM)).$$
We denote by 
\begin{equation}\label{doubles}
\mathrm{T}(C_*(\check{C}_{\bullet}(\mathcal{U})\times \check{C}_{\bullet}(\mathcal{V})))\mbox{ and }\mathrm{T}(\mathrm{Tot}(C_*\check{C}_{\bullet}(\mathcal{U})\otimes C_*\check{C}_{\bullet}(\mathcal{V}))))\in \mathbf{Ch}_+(\mathbf{Ch}_+(\mathrm{Ind}(EHM)))
\end{equation} the complexes of objects of $\mathbf{Ch}_+(\mathrm{Ind}(EHM))$ associated with
these bisimplicial objects of $\mathbf{Ch}_+(\mathrm{Ind}(EHM))$. More precisely, if $A_{\bullet\bullet}$ is a bisimplicial object in an abelian category $\mathcal{A}$, we denote by $\mathrm{C}(A_{\bullet\bullet})$ the double complex associated with $A_{\bullet\bullet}$ and by $\mathrm{T}(A_{\bullet\bullet})$ its total complex, so that $\mathrm{T}(A_{\bullet\bullet})_n=\bigoplus_{p+q=n}A_{p,q}$. In our situation $\mathcal{A}=\mathbf{Ch}_+(\mathrm{Ind}(EHM))$.

We can see (\ref{doubles}) as double complexes and
consider their total complexes
$$\mathrm{Tot}(\mathrm{T}(C_*(\check{C}_{\bullet}(\mathcal{U})\times \check{C}_{\bullet}(\mathcal{V}))))\mbox{ and }\mathrm{Tot}(\mathrm{T}(\mathrm{Tot}(C_*\check{C}_{\bullet}(\mathcal{U})\otimes C_*\check{C}_{\bullet}(\mathcal{V})))))\in \mathbf{Ch}_+(\mathrm{Ind}(EHM)).$$

\begin{lem}
There is a canonical quasi-isomorphism
$$\mathrm{Tot}(\mathrm{T}(\mathrm{Tot}(C_*\check{C}_{\bullet}(\mathcal{U})\otimes C_*\check{C}_{\bullet}(\mathcal{V}))))\stackrel{\mathcal{N}}{\longrightarrow}
\mathrm{Tot}(\mathrm{T}(C_*(\check{C}_{\bullet}(\mathcal{U})\times \check{C}_{\bullet}(\mathcal{V})))).$$
\end{lem}
\begin{proof}
Consider $$\mathrm{T}:\mathrm{Fun}(\Delta^{op}\times\Delta^{op},\mathbf{Ch}_+(\mathrm{Ind}(EHM)))\longrightarrow \mathbf{Ch}_+(\mathbf{Ch}_+(\mathrm{Ind}(EHM)))$$ 
as a functor from bisimplicial objects of $\mathbf{Ch}_+(\mathrm{Ind}(EHM))$ to $\mathbf{Ch}_+(\mathbf{Ch}_+(\mathrm{Ind}(EHM)))$. The natural transformation of Lemma \ref{lemaffine} gives  a morphism of bisimplicial objects: 
$$\mathrm{Tot}(C_*\check{C}_{\bullet}(\mathcal{U})\otimes C_*\check{C}_{\bullet}(\mathcal{V}))\longrightarrow 
C_*(\check{C}_{\bullet}(\mathcal{U})\times \check{C}_{\bullet}(\mathcal{V})).$$
Applying the functor $\mathrm{T}$, we get a morphism
$$D_*=\mathrm{T}(\mathrm{Tot}(C_*\check{C}_{\bullet}(\mathcal{U})\otimes C_*\check{C}_{\bullet}(\mathcal{V})))\longrightarrow 
\mathrm{T}(C_*(\check{C}_{\bullet}(\mathcal{U})\times \check{C}_{\bullet}(\mathcal{V})))=D'_*$$
in $\mathbf{Ch}_+(\mathbf{Ch}_+(\mathrm{Ind}(EHM)))$. By Lemma \ref{lemaffine}, 
the morphism in $\mathbf{Ch}_+(\mathrm{Ind}(EHM))$ 
$$D_n=\bigoplus_{p+q=n}\mathrm{Tot}(C_*\check{C}_{p}(\mathcal{U})\otimes C_*\check{C}_{q}(\mathcal{V}))
\longrightarrow \bigoplus_{p+q=n} C_*(\check{C}_{p}(\mathcal{U})\times \check{C}_{q}(\mathcal{V}))=D'_n$$ 
is a quasi-isomorphism for any $n\geq0$. The result then follows from a standard spectral sequence argument.
\end{proof}
\begin{lem}
There is a canonical isomorphism
$$\mathrm{Tot}(\mathrm{Toc}(C_*\check{C}_{\bullet}(\mathcal{U}))\otimes \mathrm{Toc}(C_*\check{C}_{\bullet}(\mathcal{V})))\simeq\mathrm{Tot}(\mathrm{T}(\mathrm{Tot}(C_*\check{C}_{\bullet}(\mathcal{U})\otimes C_*\check{C}_{\bullet}(\mathcal{V})))).$$
\end{lem}
\begin{proof}
These two complexes can be both identified with the total complex associated with the quadruple complex whose generic term is
$$(C_p\check{C}_{q}(\mathcal{U})\otimes C_u\check{C}_{v}(\mathcal{V}))_{p,q,u,v\in\mathbb{N}^4}.$$
Indeed, we have
\begin{eqnarray*}
\mathrm{Tot}(\mathrm{T}(\mathrm{Tot}(C_*\check{C}_{\bullet}(\mathcal{U})\otimes C_*\check{C}_{\bullet}(\mathcal{V}))))_t
&=&  \bigoplus_{n+m=t}(\mathrm{T}(\mathrm{Tot}(C_*\check{C}_{\bullet}(\mathcal{U})\otimes C_*\check{C}_{\bullet}(\mathcal{V})))_{n})_m\\
&=&\bigoplus_{n+m=t}\bigoplus_{p+q=n} \mathrm{Tot}(C_*\check{C}_{p}(\mathcal{U})\otimes C_*\check{C}_{q}(\mathcal{V}))_m\\
&=&\bigoplus_{n+m=t}\bigoplus_{p+q=n}\bigoplus_{u+v=m} C_u\check{C}_{p}(\mathcal{U})\otimes C_v\check{C}_{q}(\mathcal{V}))\\
&=&\bigoplus_{u+v+p+q=t}C_u\check{C}_{p}(\mathcal{U})\otimes C_v\check{C}_{q}(\mathcal{V}),
\end{eqnarray*}
and
\begin{eqnarray*}
\mathrm{Tot}(\mathrm{Toc}(C_*\check{C}_{\bullet}(\mathcal{U}))\otimes \mathrm{Toc}(C_*\check{C}_{\bullet}(\mathcal{V})))_t&=&
\bigoplus_{n+m=t}\mathrm{Toc}(C_*\check{C}_{\bullet}(\mathcal{U}))\otimes \mathrm{Toc}(C_*\check{C}_{\bullet}(\mathcal{V}))_{n,m}\\
&=& \bigoplus_{n+m=t}\mathrm{Toc}(C_*\check{C}_{\bullet}(\mathcal{U}))_n\otimes \mathrm{Toc}(C_*\check{C}_{\bullet}(\mathcal{V}))_m\\
&=& \bigoplus_{n+m=t}(\bigoplus_{p+q=n} C_p\check{C}_{q}(\mathcal{U}))\otimes (\bigoplus_{u+v=m}C_u\check{C}_{v}(\mathcal{V}))\\
&=& \bigoplus_{n+m=t}\bigoplus_{p+q=n} \bigoplus_{u+v=m} C_p\check{C}_{q}(\mathcal{U})\otimes C_u\check{C}_{v}(\mathcal{V}) \\
&=&\bigoplus_{u+v+p+q=t} C_p\check{C}_{q}(\mathcal{U})\otimes C_u\check{C}_{v}(\mathcal{V}).
\end{eqnarray*}
\end{proof}

Consider now the diagonal simplicial object associated with the  bisimplicial object $C_*(\check{C}_{\bullet}(\mathcal{U})\times \check{C}_{\bullet}(\mathcal{V}))$, which we denote by 
$$\mathrm{diag}(C_*(\check{C}_{\bullet}(\mathcal{U})\times \check{C}_{\bullet}(\mathcal{V})))
:\Delta^{op}\longrightarrow \Delta^{op}\times\Delta^{op}\longrightarrow \mathbf{Ch}_+(\mathrm{Ind}(EHM)).$$
We denote by $\mathrm{D}(C_*(\check{C}_{\bullet}(\mathcal{U})\times \check{C}_{\bullet}(\mathcal{V})))\in \mathbf{Ch}_+(\mathbf{Ch}_+(\mathrm{Ind}(EHM)))$ the associated (unnormalized) homological complex, so that
$$\mathrm{D}(C_*(\check{C}_{\bullet}(\mathcal{U})\times \check{C}_{\bullet}(\mathcal{V})))_n=C_*(\check{C}_{n}(\mathcal{U})\times \check{C}_{n}(\mathcal{V}))).$$

\begin{lem} (Eilenberg-Zilber Theorem) There is a canonical quasi-isomorphism
$$\mathrm{Tot}(\mathrm{T}(C_*(\check{C}_{\bullet}(\mathcal{U})\times \check{C}_{\bullet}(\mathcal{V}))) )\stackrel{\nabla}\longrightarrow
\mathrm{Tot}(\mathrm{D}(C_*(\check{C}_{\bullet}(\mathcal{U})\times \check{C}_{\bullet}(\mathcal{V})))).$$
\end{lem}
\begin{proof}
We consider the bisimplicial object $C_*(\check{C}_{\bullet}(\mathcal{U})\times \check{C}_{\bullet}(\mathcal{V}))$ in the abelian category $\mathcal{A}:=\mathbf{Ch}_+(\mathrm{Ind}(EHM))$. By the Eilenberg-Zilber Theorem (\cite{Weibel94} 8.5.1), the shuffle map (see \cite{Weibel94} 8.5.4) 
$$\mathrm{T}(C_*(\check{C}_{\bullet}(\mathcal{U})\times \check{C}_{\bullet}(\mathcal{V})) )\longrightarrow
\mathrm{D}(C_*(\check{C}_{\bullet}(\mathcal{U})\times \check{C}_{\bullet}(\mathcal{V}))),$$
a map in $\mathbf{Ch}_+(\mathcal{A})=\mathbf{Ch}_+(\mathbf{Ch}_+(\mathrm{Ind}(EHM)))$,
is a quasi-isomorphism. We conclude with a spectral sequence argument.

\end{proof}

We consider the rigidified open affine cover $(\mathcal{U}\times \mathcal{V},r\times s)$  of $X\times Y$, where $\mathcal{U}\times \mathcal{V}:=\{U_i\times V_j\subset X\times Y, (i,j)\in I\times J\}$.
\begin{lem}
There is a canonical isomorphism 
$$\mathrm{Tot}(\mathrm{D}(C_*(\check{C}_{\bullet}(\mathcal{U})\times \check{C}_{\bullet}(\mathcal{V}))))\simeq \mathrm{Toc}(C_*(\check{C}_{\bullet}(\mathcal{U}\times \mathcal{V})))$$
\end{lem}
\begin{proof}
The result follows from the fact that there is a canonical isomorphism of simplicial object of $\mathbf{Ch}_+(\mathrm{Ind}(EHM))$:
\begin{equation}\label{simpliso}
\mathrm{diag}(C_*(\check{C}_{\bullet}(\mathcal{U})\times \check{C}_{\bullet}(\mathcal{V})))= C_*(\check{C}_{\bullet}(\mathcal{U}\times \mathcal{V})).
\end{equation}
Indeed, we have
\begin{eqnarray*}
\mathrm{diag}(C_*(\check{C}_{\bullet}(\mathcal{U})\times \check{C}_{\bullet}(\mathcal{V})))_n
&=&  
C_*(\check{C}_{n}(\mathcal{U})\times \check{C}_{n}(\mathcal{V}))\\
&=&C_*(\bigsqcup_{(i_0,...,i_n)}U_{i_0}\cap...\cap U_{i_n}\times \bigsqcup_{(j_0,...,j_n)}V_{j_0}\cap...\cap V_{j_n})\\
&\simeq&
C_*(\bigsqcup_{((i_0,j_0),...,(i_n,j_n))}(U_{i_0}\times V_{j_0})\cap...\cap (U_{i_n}\times V_{j_n})\\
&=&C_*(\check{C}_{n}(\mathcal{U}\times \mathcal{V})).
\end{eqnarray*}
The desired isomorphism is obtained by applying the functor (\ref{functorToc}) to the isomorphism of simplicial objects (\ref{simpliso}).

\end{proof}

We obtain (\ref{finalqis}) by composing the quasi-isomorphisms of the previous lemmas.  More precisely, we have the quasi-isomorphism
\begin{eqnarray*}
\mathrm{Tot}(\mathrm{Toc}(C_*\check{C}_{\bullet}(\mathcal{U}))\otimes \mathrm{Toc}(C_*\check{C}_{\bullet}(\mathcal{V})))&\simeq&\mathrm{Tot}(\mathrm{T}(\mathrm{Tot}(C_*\check{C}_{\bullet}(\mathcal{U})\otimes C_*\check{C}_{\bullet}(\mathcal{V}))))\\
&\stackrel{\mathcal{N}}{\longrightarrow}&\mathrm{Tot}(\mathrm{T}(C_*(\check{C}_{\bullet}(\mathcal{U})\times \check{C}_{\bullet}(\mathcal{V}))))\\
&\stackrel{\nabla}{\longrightarrow}&\mathrm{Tot}(\mathrm{D}(C_*(\check{C}_{\bullet}(\mathcal{U})\times \check{C}_{\bullet}(\mathcal{V}))))\\
&\simeq& \mathrm{Toc}(C_*(\check{C}_{\bullet}(\mathcal{U}\times \mathcal{V})))
\end{eqnarray*}
Note that $\mathcal{N}$ is induced by Nori's Lemma \ref{lemaffine} (i.e. the K\"unneth formula for affine varieties) and that 
$\nabla$ is induced by the Eilenberg-Zilber shuffle map. This quasi-isomorphism is functorial in $(X,\mathcal{U},r)$ and $(Y,\mathcal{V},s)$ by construction. It induces a canonical isomorphism $$M(X)\otimes^L M(Y)\stackrel{\sim}{\longrightarrow }M(X\times_kY)$$
in $\mathbf{D}_b(EHM)$, inducing the usual K\"unneth formula on singular homology.
\end{proof}

\begin{cor}\label{mm}
For any varieties $X$ and $Y$, there is a canonical isomorphism in $EHM$
$$\bigoplus_{p+q=n}h_{p}(X)\otimes h_q(Y)\stackrel{\sim}{\longrightarrow} h_n(X\times Y).$$
\end{cor}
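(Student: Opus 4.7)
The plan is to deduce the corollary from the derived K\"unneth isomorphism
$$M(X)\otimes^{L} M(Y)\stackrel{\sim}{\longrightarrow} M(X\times_{k}Y)$$
in $\mathbf{D}_{b}(EHM)$ established in the preceding theorem, by taking homology on both sides and using the flatness of every object of $EHM$ (hence of $\mathrm{Ind}(EHM)$) already recorded in the excerpt.

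First I would observe that, by Notation \ref{notdansehm}, the right-hand side directly yields $H_{n}\bigl(M(X\times_{k}Y)\bigr)=h_{n}(X\times Y)$. The work is therefore to identify the homology of the left-hand side with the expected direct sum. Since every object of $EHM$ is flat, the same holds in $\mathrm{Ind}(EHM)$, so the derived tensor product $\otimes^{L}$ may be computed by choosing arbitrary representing complexes of flat objects and taking the ordinary tensor product of complexes. Concretely, choose bounded below complexes $A_{\bullet}$ and $B_{\bullet}$ of objects of $EHM$ representing $M(X)$ and $M(Y)$; then $M(X)\otimes^{L}M(Y)$ is represented by the total complex $\mathrm{Tot}(A_{\bullet}\otimes B_{\bullet})$.

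Next I would apply the classical K\"unneth formula in this flat setting: since each $A_{p}$ and $B_{q}$ is flat in $\mathrm{Ind}(EHM)$, the standard spectral sequence argument (or an explicit acyclic model / induction on the truncation of $A_{\bullet}$) collapses and yields a canonical isomorphism
$$H_{n}\bigl(\mathrm{Tot}(A_{\bullet}\otimes B_{\bullet})\bigr)\;\simeq\;\bigoplus_{p+q=n}H_{p}(A_{\bullet})\otimes H_{q}(B_{\bullet})\;=\;\bigoplus_{p+q=n}h_{p}(X)\otimes h_{q}(Y),$$
functorial in $X$ and $Y$. Combining with the derived K\"unneth quasi-isomorphism gives the required canonical isomorphism in $EHM$.

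The only subtle point is to make sure that the K\"unneth decomposition lives in $EHM$ and not merely in $\mathrm{Ind}(EHM)$. This is guaranteed by the fact that the fully faithful embedding $\mathbf{D}_{b}(EHM)\hookrightarrow \mathbf{D}_{b}(\mathrm{Ind}(EHM))$ reflects essentially constant homology: since $M(X)$, $M(Y)$ and $M(X\times Y)$ all lie in the essential image, so do all the $h_{\ast}(X)$, $h_{\ast}(Y)$, $h_{\ast}(X\times Y)$, and the flat tensor product of essentially constant ind-objects is essentially constant. I do not expect any real obstacle here; the main (minor) care is in verifying that the chosen representatives are complexes of flat ind-objects so that the classical K\"unneth formula applies in degenerate form, and that the resulting isomorphism coincides, via the fiber functor $H_{\ast}$, with the usual topological K\"unneth isomorphism on singular homology (which follows from the corresponding compatibility already built into the main theorem).
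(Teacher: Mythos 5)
Your proposal is correct and takes essentially the same approach as the paper, which deduces the corollary in a single line from the derived K\"unneth isomorphism $M(X)\otimes^{L}M(Y)\simeq M(X\times Y)$ together with flatness of every object of $EHM$. You simply spell out the collapsing K\"unneth spectral sequence and the descent from $\mathrm{Ind}(EHM)$ to $EHM$ that the paper leaves implicit.
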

\begin{proof}
This follows immediately from the previous theorem since any object of $EHM$ is flat.
\end{proof}

Applying Nori's universal construction to the diagram of arbitrary pairs of varieties, or proceeding as we did for homological motives (see Notation \ref{notdansehm}), we may define $h^i(X)\in ECM$ for any variety $X$.

\begin{cor}\label{K}For any varieties $X$ and $Y$, there is a canonical isomorphism in $ECM$.   
$$\bigoplus_{p+q=n}h^{p}(X)\otimes h^q(Y)\stackrel{\sim}{\longleftarrow} h^n(X\times Y).$$
\end{cor}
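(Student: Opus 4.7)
\medskip

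The plan is to derive Corollary \ref{K} from the homological K\"unneth formula of Corollary \ref{mm} by means of the natural duality between effective homological and effective cohomological Nori motives. By construction $EHM=C(D,H_*)$ and $ECM=C(D^{\mathrm{op}},H_*^{\vee})$ arise from Nori's universal diagrammatic construction applied, respectively, to the representation $H_*$ on the diagram $D$ of effective good pairs and to its linear dual $H_*^{\vee}$ on the opposite diagram $D^{\mathrm{op}}$. This symmetry produces a canonical contravariant functor
$$\mathbb{D}:EHM\longrightarrow ECM,\qquad h_i(X,Y)\longmapsto h^i(X,Y),$$
obtained as follows: the representation $\tilde H_*:D\to EHM$ induces a representation $\tilde H_*^{\mathrm{op}}:D^{\mathrm{op}}\to EHM^{\mathrm{op}}$, and one checks that $EHM^{\mathrm{op}}$ together with $f_{H_*}^{\mathrm{op}}$ composed with vector-space duality receives $H_*^{\vee}$; the universal property of $C(D^{\mathrm{op}},H_*^{\vee})=ECM$ then produces the comparison functor, whose opposite is $\mathbb{D}$. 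The functor $\mathbb{D}$ is monoidal: the tensor product on $EHM$ and $ECM$ is defined from the product of good pairs, which is insensitive to the choice of variance, and the duality $(V\otimes W)^{\vee}\simeq V^{\vee}\otimes W^{\vee}$ on the underlying finite-dimensional vector spaces transports to an isomorphism $\mathbb{D}(M\otimes N)\simeq \mathbb{D}(M)\otimes \mathbb{D}(N)$ by another application of the universal property.

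Applying the contravariant monoidal functor $\mathbb{D}$ to the homological K\"unneth isomorphism
$$\bigoplus_{p+q=n}h_{p}(X)\otimes h_q(Y)\stackrel{\sim}{\longrightarrow} h_n(X\times Y)$$
of Corollary \ref{mm} yields the required isomorphism
$$h^n(X\times Y)\stackrel{\sim}{\longrightarrow}\bigoplus_{p+q=n}h^{p}(X)\otimes h^q(Y)$$
in $ECM$, which is precisely the content of Corollary \ref{K}. The canonicity statement follows from the canonicity of both the homological isomorphism and of $\mathbb{D}$.

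The main obstacle is the careful verification that the duality functor $\mathbb{D}$ exists as a \emph{monoidal} functor at the level of the effective diagrammatic categories $EHM$ and $ECM$, where duals of individual objects need not exist. This requires unwinding Nori's universal construction for diagrams with a multiplicative structure (as in \cite{Huber-Muller-Stach-15}), together with the observation that the finite-dimensional fiber functors $f_{H_*}$ and $f_{H^*}$ identify $\mathbb{D}$ with linear duality; granting this, the rest of the deduction is formal. Alternatively, one could bypass the duality argument by running the entire proof of the preceding theorem verbatim in $ECM$ using cohomology: the Eilenberg--Zilber theorem and the total-complex manipulations are purely abelian-categorical, and Nori's affine K\"unneth lemma has a cohomological version which is proved by the same good-filtration argument or, equivalently, follows from the homological one by applying $f_{H_*}^{\vee}$ via faithfulness of the fiber functor.
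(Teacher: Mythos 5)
Your argument takes the same route as the paper: both construct a contravariant equivalence between $EHM$ and $ECM$ via the universal property of Nori's diagram category, check that it sends $h_i(X)$ to $h^i(X)$ and respects tensor products, and then transport Corollary \ref{mm} across it. The paper makes the monoidality check concrete through a module-theoretic description of the comparison functor (as $M \mapsto M^{\vee}$ on finitely generated $\mathrm{End}(T|F)$-modules) together with the graded-diagram definition of the tensor product, which is precisely the "unwinding" you flag as the main obstacle; supplying that interpretation would close the gap you left open.
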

\begin{proof} Let $C(D, T)$ be the diagram category associated to a diagram $D$ and a representation $T: D\rightarrow {\mathrm{Vec}}_\mathbb Q$. First we exhibit a canonical equivalence of categories
\begin{equation}\label{explicit}  
C(D, T)\simeq C(D^{op}, T^\vee)^{op}.
\end{equation}
To this aim, we consider the  following square 
\[ \xymatrix{
D\ar[d]^{G}\ar[r]^{\tilde T}
&{C(D, T)}\ar[d]^{f_T}\\
C(D^{op}, T^\vee)^{op}\ar[r]^{f_{T^\vee}^\vee}
&{{\mathrm{Vec}}_\mathbb Q}}
\]
where $G$ is the representation of $D$ obtained by composing the functors 
$$D\rightarrow D^{op}\rightarrow C(D^{op}, T^\vee)^{op}. $$ Since the square  is commutative we know from (\cite{Nori}, Theorem 1.6),  that there exists a unique $\mathbb Q$-linear functor $L(G): C(D, T)\rightarrow 
C(D^{op}, T^\vee)^{op}$ such that $$G=L(G)\circ \tilde T\ \mathrm{and}\ f_T=f_{T^\vee}^\vee\circ L(G).$$ The functor $L(G)$ is an equivalence of categories. A quasi inverse for $L(G)$ is constructed by considering the square attached to the pair $(D^{op}, T^\vee)$.

We need to give a module-theoretic interpretation of the  functor $L(G)$. Recall that the category $C(D, T)$ is defined in \cite{Nori} by 
$$C(D, T)=\mathrm{colim}_FC(F, T|F)$$ where $F$ runs through all the finite subdiagrams $F$ of $D$ and the $2$-colimit is taken in the $2$-category of small categories. If $F $ is finite, then $C(F, T|F)$ is the category of finitely generated left $\mathrm{End}(T|F)$-modules where  
$$\mathrm{End}(T|F)=\{u=(u_p)_{p\in F}\in \mathbf{\prod}_{p\in F}\mathrm{End}(T(p))  | u_q\circ T(m)=T(m)\circ u_p, \forall m: p\rightarrow q, p, q \in F\}.$$
For $M$ an 
$\mathrm{End}(T|F)$-module,  we have ring homomorphisms 
\[ \xymatrix{
\mathrm{End}(T|F)\ar[r]^f&\mathrm{End}_{\mathbb Q}(M)\ar[r]^\vee&\mathrm{End}_{\mathbb Q}(M^\vee)^{op}}
\]
where $f$ is given by the action of $\mathrm{End}(T|F)$ on $M$ and where $\vee$ associates to any  $u\in \mathrm{End}(M)$ its transposed $u^\vee$. Moreover the map $u=(u_p)_{p\in F}\rightarrow u^\vee=(u_p^\vee)_{p\in F}$  induces a ring isomorphism $\mathrm{End}(T|F)\simeq\mathrm{End}(T^\vee|F)^{op}$. Therefore we obtain by composition a ring homomorphism 
$$\mathrm{End}(T^\vee|F)\rightarrow \mathrm{End}(T|F)^{op}\rightarrow \mathrm{End}_{\mathbb Q}(M^\vee)$$ and thus a structure of $\mathrm{End}(T^\vee|F)$-module on $M^\vee$. We let $\alpha_F$ be the functor 
$$\appl{C(F,T|F)}{C(F^{op},T^\vee|F)^{op}}{M}{M^\vee}$$
and we denote by $\alpha: C(D, T)\rightarrow C(D^{op},T^\vee)^{op}$ the functor defined by the $(\alpha_F)_{F\subset D}$. It is easy  to check that $G=\alpha\circ \tilde T$ and $f_T=f_{T^\vee}^\vee\circ \alpha$. It follows from the universal property of $C(D, T)$ that $L(G)$ and $\alpha$ are isomorphic.

Applying these observations to the diagram of good pairs,  we obtain an equivalence
$$\alpha: EHM\stackrel{\sim}{\longrightarrow} ECM^{op}.$$
We have $\alpha(h_i(X))=h^i(X)$ for any integer $i$ and any variety $X/k$. Moreover the functor $\alpha$ sends direct sums to direct sums. Thus in order to deduce  Corollary \ref{K} from Corollary \ref{mm}  it suffices to prove that $\alpha$ sends tensor products to tensor products. The categories $EHM$ and $ECM$ are both diagram categories  attached to a graded diagram endowed with a commutative product structure with unit and a graded unital multiplicative representation. Using the definition of the tensor product on such categories as defined in (\cite{Huber-Muller-Stach-12}, Proposition B.16) and in  (\cite{Huber-Muller-Stach-15}, Chapter VII, Section 7.1) and using the description  of $\alpha$ in terms of modules given above, we check that $\alpha$ respects tensor products as required.
 \end{proof}

\subsection{Poincar\'e duality}

We denote by $\mathrm{NMM}_k$ the category of cohomological Nori motives.

\begin{cor}\label{poincaremotiv}
Let $V/k$ be a smooth projective variety of dimension $n$. There is a map in $\mathrm{NMM}_k$
\begin{equation}\label{motivic-poincare}
h^i(V)(j)\otimes h^{2n-i}(V)(n-j)\longrightarrow h^{2n}(V)(n)\stackrel{\mathrm{Tr}}{\longrightarrow}{\bf 1}
\end{equation}
inducing the usual pairings in Betti and de Rham cohomology for any $i,j\in\mathbb{Z}$. In particular, (\ref{motivic-poincare}) yields an canonical isomorphism
$$h^i(V)(j)\stackrel{\sim}{\longrightarrow}h^{2n-i}(V)(n-j)^{\vee}.$$
\end{cor}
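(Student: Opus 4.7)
The plan is to construct the pairing as the composition of three pieces: a K\"unneth decomposition, a pullback along the diagonal, and a motivic trace map, then upgrade it to an isomorphism using that the Betti fiber functor is faithful and exact.

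First, since $V$ is smooth and projective, the diagonal $\Delta:V\hookrightarrow V\times V$ is a closed immersion of smooth varieties, so it yields functorially a morphism
$$\Delta^{*}:h^{2n}(V\times V)\longrightarrow h^{2n}(V)$$
in $\mathrm{NMM}_k$. By the K\"unneth formula (Corollary \ref{K}) applied to $X=Y=V$, we have a canonical isomorphism
$$h^{2n}(V\times V)\simeq \bigoplus_{p+q=2n} h^{p}(V)\otimes h^{q}(V),$$
from which we extract, for each $i$, the direct summand $h^{i}(V)\otimes h^{2n-i}(V)$. Next, for $V$ smooth projective of pure dimension $n$, the top cohomology carries a motivic trace $\mathrm{Tr}:h^{2n}(V)(n)\to \mathbf 1$: because $h^{2n}(V)$ is a one-dimensional realization sitting in the Tannakian category, and the Betti class of the fundamental cycle is Galois-equivariantly identified with $\mathbb Q(-n)$, this morphism already exists at the level of the Nori diagram (via the good pair structure for the top cell of $V$) and is characterized uniquely by inducing $1$ on realizations. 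Composing and twisting by $\mathbf 1(j)\otimes \mathbf 1(n-j)\simeq \mathbf 1(n)$, we obtain the desired pairing
$$h^{i}(V)(j)\otimes h^{2n-i}(V)(n-j)\longrightarrow h^{2n}(V)(n)\stackrel{\mathrm{Tr}}{\longrightarrow}\mathbf 1.$$

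To check that this map induces the usual cup-product/trace pairing on Betti and de Rham realizations, one applies the fiber functors $H^{*}_{B}$ and $H^{*}_{dR}$: the K\"unneth isomorphism of Corollary \ref{K} is by construction compatible with the classical K\"unneth isomorphism in singular and de Rham cohomology, $\Delta^{*}$ realizes to ordinary pullback along the diagonal (which is cup-product), and the motivic trace realizes to the classical trace. Hence the realizations of the constructed pairing are precisely the Poincar\'e pairings.

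Finally, the induced morphism
$$\phi:h^{i}(V)(j)\longrightarrow h^{2n-i}(V)(n-j)^{\vee}$$
must be shown to be an isomorphism. Since the Betti realization $H^{*}_{B}:\mathrm{NMM}_k\to\mathrm{Vec}_{\mathbb Q}$ is exact and faithful (hence conservative) on the Tannakian category $\mathrm{NMM}_k$, it suffices to prove that $H^{*}_{B}(\phi)$ is an isomorphism; but this is exactly the classical Poincar\'e duality for the smooth projective variety $V_{\sigma}(\mathbb C)$. The main subtlety in the whole argument is the existence of the motivic trace $\mathrm{Tr}:h^{2n}(V)(n)\to\mathbf 1$ as a morphism in $\mathrm{NMM}_k$ (rather than merely at the level of realizations); this is where one has to invoke either Nori's basic lemma to exhibit a good cellular filtration whose top cell realizes the fundamental class, or a direct appeal to the universal property of the diagram category together with the $1$-dimensionality of $H^{2n}_B(V)(n)$.
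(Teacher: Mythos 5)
You correctly identify the three ingredients (K\"unneth, diagonal pullback, trace) and you even flag the trace as the crux, but you do not actually close the gap, and both routes you sketch for constructing the trace fail as stated.

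The essential problem is the claim that ``$h^{2n}(V)(n)$ is one-dimensional'' (equivalently that $H^{2n}_B(V)(n)\simeq\mathbb Q$). This is false whenever $V$ is connected but not geometrically connected: $H^{2n}(V(\mathbb C),\mathbb Q(n))$ has dimension equal to the number of geometric components. So a ``direct appeal to the universal property together with the $1$-dimensionality'' cannot work in general. Nor is it clear what ``Nori's basic lemma exhibits a good cellular filtration whose top cell realizes the fundamental class'' buys you: the basic lemma produces good pairs for building the motive, not an arrow $h^{2n}(V)(n)\to\mathbf 1$; a morphism in the diagram category has to be constructed, not read off from a filtration. The paper's proof addresses exactly this: after reducing to $V$ connected, it sets $K=\mathcal O_V(V)$ (a finite extension of $k$ over which $V$ is geometrically connected), chooses a closed embedding $V\hookrightarrow\mathbf P^N_K$, observes that $\varphi:h^{2n}(\mathbf P^N_K)\to h^{2n}(V)$ is an isomorphism because $H^*_B(\varphi)$ is and $H^*_B$ is conservative, and then identifies $h^{2n}(\mathbf P^N_K)(n)\simeq h^0(\mathrm{Spec}(K))$. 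The trace is then the sum map in the category of Artin motives $\simeq\mathrm{Rep}_{\mathbb Q}(G_k)$ (Proposition \ref{prop-Artin}). This is a concrete construction, not available to either of the strategies you propose.

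A second, smaller gap: you assert without argument that the constructed pairing realizes to the usual de Rham pairing. For Betti this is by construction, but the trace map was built using Betti data only, so its de Rham realization is not tautologically the de Rham trace. The paper supplies the missing step: it uses the period isomorphism $H^*_B\otimes_{\mathbb Q}\mathbb C\simeq H^*_{dR}\otimes_k\mathbb C$ to transport the identity $H^*_B(m)=m_B$ over $\mathbb C$, deduce $H^*_{dR}(m)\otimes_k\mathbb C=m_{dR}\otimes_k\mathbb C$, and then descend. You should include such an argument. The final step (conservativity of $H^*_B$ gives the isomorphism $h^i(V)(j)\xrightarrow{\sim}h^{2n-i}(V)(n-j)^{\vee}$) is fine and matches the paper.
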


\begin{proof}
Corollary \ref{K} gives the canonical map in $\mathrm{NMM}_k$
$$h^i(V)(j)\otimes h^{2n-i}(V)(n-j)\longrightarrow h^{2n}(V\times V)(n)\longrightarrow h^{2n}(V)(n)$$
inducing the same map on Betti cohomology. Here $h^{2n}(V\times V)\rightarrow h^{2n}(V)$ is induced by the diagonal map.

Let us show the existence of the trace map
$$h^{2n}(V)(n)\stackrel{\mathrm{Tr}}{\rightarrow}{\bf 1}.$$
We follow \cite{Huber-Muller-Stach-12} with a slight modification.
Since $h^{i}(V_1\bigsqcup V_2)=h^{i}(V_1)\oplus h^i(V_2)$, we may
assume that $V$ is connected. Let $V\rightarrow \mathbf{P}^N_{k}$ be a closed embedding. We consider the finite extension  $K:=\mathcal{O}_V(V)$ of $k$, so that $V/K$ is geometrically connected. We obtain a closed embedding $V\rightarrow\mathbf{P}^N_{k}\times_{k}K=\mathbf{P}^N_{K}$, inducing a map 
$\varphi:h^{2n}(\mathbf{P}^N_{K})\rightarrow h^{2n}(V)$. But $H^*_B(\varphi)$ is an isomorphism, hence so is $\varphi$. 
By (\cite{Huber-Muller-Stach-12} Lemma 1.12(1)) we have a canonical isomorphism $h^{2n}(\mathbf{P}^N_{k})\simeq {\bf 1}(-n)$.
We obtain $$h^{2n}(\mathbf{P}^N_{K})=h^{2n}(\mathbf{P}^N_{k}\times \mathrm{Spec}(K))\simeq h^{2n}(\mathbf{P}^N_{k})\otimes h^0(\mathrm{Spec}(K))\simeq h^0(\mathrm{Spec}(K))(-n).$$
One is therefore reduced to define the trace map $h^0(\mathrm{Spec}(K))\rightarrow{\bf 1}$ in the category of Artin motives, i.e. in the category of $\mathbb{Q}$-linear representations of $G_{k}$ (see Proposition \ref{prop-Artin}). But $h^0(\mathrm{Spec}(K))$ corresponds to the linear $G_{k}$-action on $\mathbb{Q}^{\mathrm{Hom}_{k}(K,\overline{k})}$ by permutation of the factors, hence the sum map
$$\appl{\mathbb{Q}^{\mathrm{Hom}_{k}(K,\overline{k})}}{\mathbb{Q}}{(v_{\sigma})_{\sigma}}{\sum_{\sigma} v_{\sigma}}$$
is $G_{k}$-equivariant. This defines a map $h^0(\mathrm{Spec}(K))\stackrel{\sum}{\rightarrow}{\bf 1}$. The trace map of the lemma is then given by composition
$$h^{2n}(V)(n)\stackrel{\sim}{\longleftarrow}h^{2n}(\mathbf{P}^N_{K})(n)\simeq h^0(\mathrm{Spec}(K))\stackrel{\sum}{\longrightarrow}{\bf 1}.$$
This map induces the usual trace map in Betti cohomology.

So we have the map $$m:h^i(V)(j)\otimes h^{2n-i}(V)(n-j)\longrightarrow h^{2n}(V)(n)\stackrel{\mathrm{Tr}}{\longrightarrow}{\bf 1}$$ and this map induces the usual pairing on Betti cohomology. 
Applying the tensor functors $H^*_B$ and $H^*_{dR}$, we obtain product maps
$$H^*_B(m):H^i(V(\mathbb{C}),(2i\pi)^j\mathbb{Q})\otimes H^{2n-i}(V(\mathbb{C}),(2i\pi)^{n-j}\mathbb{Q})
\longrightarrow H^{2n}(V(\mathbb{C}),(2i\pi)^n\mathbb{Q})\stackrel{\mathrm{Tr}}{\longrightarrow} \mathbb{Q}$$
and
$$H^*_{dR}(m):H^i(V/k)\otimes H^{2n-i}(V/k)
\longrightarrow H^{2n}(V/k)\stackrel{\mathrm{Tr}}{\longrightarrow} k.$$
On the other hand, we denote by 
$$m_B:H^i(V(\mathbb{C}),(2i\pi)^j\mathbb{Q})\otimes H^{2n-i}(V(\mathbb{C}),(2i\pi)^{n-j}\mathbb{Q})
\longrightarrow H^{2n}(V(\mathbb{C}),(2i\pi)^n\mathbb{Q})\stackrel{\mathrm{Tr}}{\longrightarrow} \mathbb{Q}$$
and
$$m_{dR}:H^i_{dR}(V/k)\otimes H^{2n-i}_{dR}(V/k)
\longrightarrow H^{2n}(V/k)\stackrel{\mathrm{Tr}}{\longrightarrow} k$$
the usual product map in Betti cohomology (given by cup-product) and in de Rham cohomology (given by wedge product) respectively. The usual period isomorphism yields an isomorphism of tensor functors $H_B^*\otimes_{\mathbb{Q}}\mathbb{C}\simeq H_{dR}^*\otimes_k\mathbb{C}$, hence the square
\[ \xymatrix{
H^i(V(\mathbb{C}),(2i\pi)^j\mathbb{Q})_{\mathbb{C}}\otimes H^{2n-i}(V(\mathbb{C}),(2i\pi)^{n-j}\mathbb{Q})_{\mathbb{C}}\ar[r]^{\hspace{3.5cm}H^*_B(m)\otimes \mathbb{C}}\ar[d]^{\simeq}&\mathbb{C}\ar[d]^{\mathrm{Id}}\\
H^i_{dR}(V/k)_{\mathbb{C}}\otimes H^{2n-i}_{dR}(V/k)_{\mathbb{C}}\ar[r]^{\hspace{2.5cm}H^*_{dR}(m)\otimes \mathbb{C}}
&\mathbb{C}
}
\]
commutes, where the vertical isomorphisms are given by the period isomorphism. But the square
\[ \xymatrix{
H^i(V(\mathbb{C}),(2i\pi)^j\mathbb{Q})_{\mathbb{C}}\otimes H^{2n-i}(V(\mathbb{C}),(2i\pi)^{n-j}\mathbb{Q})_{\mathbb{C}}\ar[r]^{\hspace{3.5cm}m_B\otimes \mathbb{C}}\ar[d]^{\simeq}&\mathbb{C}\ar[d]^{\mathrm{Id}}\\
H^i_{dR}(V/k)_{\mathbb{C}}\otimes H^{2n-i}_{dR}(V/k)_{\mathbb{C}}\ar[r]^{\hspace{2.5cm}m_{dR}\otimes \mathbb{C}}
&\mathbb{C}
}
\]
commutes as well. By construction of $m$, we have 
$H^*_B(m)=m_B$. It follows that $H^*_{dR}(m)\otimes_k\mathbb{C}=m_{dR}\otimes_k\mathbb{C}$, hence that 
$H^*_{dR}(m)=m_{dR}$. The first claim of the corollary is proved.

By adjunction, the map $m$ gives a map $d:h^i(V)(j)\rightarrow h^{2n-i}(V)(n-j)^{\vee}$ inducing the usual map
$H^*_{B}(d):H^i(V(\mathbb{C}),(2i\pi)^j\mathbb{Q})\rightarrow H^{2n-i}(V(\mathbb{C}),(2i\pi)^{n-j}\mathbb{Q})^{\vee}$ in Betti cohomology. But $H^*_{B}(d)$ is an isomorphism, hence so is $d$, since $H^*_B$ is conservative.
\end{proof}

We record the following special case of Corollary \ref{poincaremotiv}.

\begin{cor}\label{corqSaito} Let $V/k$ be a smooth projective algebraic variety of even dimension $n$ over the number field $k$. Then  $h^n(V)(n/2)\in\mathrm{NMM}_k$ has a canonical orthogonal structure $q$ such that the forms $q_B$ and $q_{dR}$ 
are the perfect pairings
\begin{equation}\label{qB}
q_B:H^n_{B}(V(\mathbb{C}),(2i\pi)^{n/2}\mathbb{Q})\otimes H^n_{B}(V(\mathbb{C}),(2i\pi)^{n/2}\mathbb{Q})\stackrel{\cup}{\longrightarrow} H^{2n}_{B}(V(\mathbb{C}),(2i\pi)^{n}\mathbb{Q})\stackrel{\mathrm{Tr}}{\longrightarrow} E
\end{equation}
and
\begin{equation}\label{qdR}
q_{dR}:H^n_{dR}(V/k)\otimes H^n_{dR}(V/k)\stackrel{\wedge}{\longrightarrow}H^{2n}_{dR}(V/k)\stackrel{\mathrm{Tr}}{\longrightarrow} k
\end{equation}
given by cup-product and trace maps.
\end{cor}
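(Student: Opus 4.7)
The plan is to derive this as a direct specialization of Corollary \ref{poincaremotiv}, which has already produced the motivic Poincaré duality pairing; the only remaining work is symmetry, non-degeneracy and uniqueness.

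First, I would apply Corollary \ref{poincaremotiv} with $i=n$ and $j=n/2$ (so that $2n-i=n$ and $n-j=n/2$) to obtain a map in $\mathrm{NMM}_k$
$$q\colon h^n(V)(n/2)\otimes h^n(V)(n/2)\longrightarrow h^{2n}(V)(n)\stackrel{\mathrm{Tr}}{\longrightarrow}\mathbf{1}$$
whose Betti and de Rham realizations are precisely the pairings $q_B$ and $q_{dR}$ described in (\ref{qB}) and (\ref{qdR}). That same corollary asserts that the induced map $h^n(V)(n/2)\to h^n(V)(n/2)^{\vee}$ is an isomorphism, which is the non-degeneracy requirement in the definition of an orthogonal object. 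So what remains is to check that $q$ is symmetric and that the orthogonal structure inducing $q_B$ is unique.

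For symmetry, I would use the fact that the Betti fiber functor $H^*_B\colon\mathrm{NMM}_k(k)\to \mathrm{Vec}_k$ is exact and faithful, hence the natural map
$$\mathrm{Hom}_{\mathrm{NMM}_k(k)}\bigl(h^n(V)(n/2)\otimes h^n(V)(n/2),\mathbf{1}\bigr)\longrightarrow \mathrm{Hom}_k\bigl(H^n_B(V,\mathbb{Q}(n/2))^{\otimes 2},k\bigr)$$
is injective. Since $n$ is even, cup-product on $H^n_B$ is graded-commutative with sign $(-1)^{n^2}=+1$, so $q_B\circ\tau=q_B$ for the swap $\tau$. Injectivity above then forces $q\circ\tau=q$ in $\mathrm{NMM}_k(k)$, which is precisely the symmetry condition. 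The same injectivity argument gives uniqueness: if $q'$ is any other orthogonal structure on $h^n(V)(n/2)$ whose Betti realization agrees with $q_B$, then $q=q'$ as morphisms in the motivic category.

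The arguments above are largely formal consequences of Corollary \ref{poincaremotiv} together with faithfulness of $H^*_B$, so the only mildly delicate point is checking that the trace map and the product $h^{2n}(V\times V)\to h^{2n}(V)$ used to build $q$ really do realize to the classical trace on Betti and de Rham cohomology; this is exactly the compatibility that was already verified at the end of the proof of Corollary \ref{poincaremotiv} via the period isomorphism. Thus no new estimate or construction is required, and the main (already-dispatched) obstacle was the motivic Künneth and Poincaré duality package of Section 8.
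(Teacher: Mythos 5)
Your proof is correct and takes exactly the approach the paper has in mind: the paper simply records the corollary as a "special case" of Corollary \ref{poincaremotiv} with no further argument, so the symmetry and uniqueness steps you supply (via faithfulness of the Betti fiber functor, together with graded-commutativity of cup product in even degree) are precisely the implicit gap-filling the paper leaves to the reader. One cosmetic point: the objects here live in $\mathrm{NMM}_k$ (Nori motives with $\mathbb{Q}$-coefficients), not $\mathrm{NMM}_k(k)$, so the Hom-sets in your injectivity argument should be taken in $\mathrm{NMM}_k$; and it is worth stating explicitly that $H^*_B$, being a symmetric monoidal functor, carries the swap $\tau$ on $h^n(V)(n/2)^{\otimes 2}$ to the swap on $H^n_B(V,\mathbb{Q}(n/2))^{\otimes 2}$, which is the fact that lets faithfulness descend the identity $q_B\circ\tau = q_B$ to $q\circ\tau = q$.
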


\subsection{Artin motives} As pointed out to us by A. Huber, it is not clear that the Tannakian subcategory $\mathrm{NMM}^0_k$ of $\mathrm{NMM}_k$ generated by the objects of the form $h^0(\mathrm{Spec}(K))$, where $K/k$ is a finite extension, is the category $\mathrm{Rep}_{\mathbb{Q}}(G_k)$ of classical Artin motives. This is essentially due to the fact that one does not know that the tensor functor $ECM_k\rightarrow\mathrm{NMM}_k$ is fully faithful. The aim of this section is to clarify this point.

Let $\mathcal{T}'$ be a Tannakian category over the field $k$ (i.e. a rigid abelian tensor category such that $\mathrm{End}(\mathbf{1})=k$). A Tannakian subcategory $\mathcal{T}\subset \mathcal{T}'$ is a strictly full subcategory which is stable by finite tensor products (in particular $\mathbf{1}\in \mathcal{T}$), finite sums, duals and subquotients. Let $a:\mathcal{T}\rightarrow \mathcal{T}'$ be a $k$-linear tensor functor between Tannakian categories over $k$. We say that $a$ \emph{identifies $\mathcal{T}$ with a Tannakian subcategory of $\mathcal{T}'$} if $a$ is exact, fully faithful and if its essential image is a Tannakian subcategory of $\mathcal{T}'$. Let $a:\mathcal{T}\rightarrow \mathcal{T}'$ be an exact tensor functor, let $\omega':\mathcal{T}'\rightarrow \mathrm{Vec}_k$ be a neutralizing fiber functor, let $\omega:=\omega'\circ a$ and let $\mathcal{G}_{\omega}:=\mathbf{Aut}^{\otimes}(\omega)$ and $\mathcal{G}_{\omega'}:=\mathbf{Aut}^{\otimes}(\omega')$ be the Tann
 aka duals of $\mathcal{T}$ and $\mathcal{T}'$ respectively. Then $a$ identifies $\mathcal{T}$ with a Tannakian subcategory of $\mathcal{T}'$ if and only if the morphism $\mathcal{G}_{\omega'}\rightarrow \mathcal{G}_{\omega}$ is faithfully flat.

\begin{lem} Let $\mathcal{T}$, $\mathcal{T}'$ and $\mathcal{T}''$ be $k$-linear Tannakian categories and let 
\[ \xymatrix{
\mathcal{T}\ar[r]^{a}\ar[dr]^{c}
&\mathcal{T}'\ar[d]^{b}\ar[rd]^{\omega'}&\\
&\mathcal{T}''\ar[r]^{\omega''}&\mathrm{Vec}_k
}
\]
be a commutative diagram of tensor categories, where $a$, $b$, $c$, $\omega$ and $\omega'$ are $k$-linear tensor functor (in particular, we have specified isomorphisms of tensor functors $c\simeq b\circ a$ and $\omega'\simeq \omega''\circ b$). We assume that 

\begin{itemize}
\item $\omega'$ and $\omega''$  are fiber functors (i.e. exact and faithful);
\item $c$ identifies $\mathcal{T}$ with a Tannakian subcategory of $\mathcal{T}''$;
\end{itemize}
Then $a$ identifies $\mathcal{T}$ with a Tannakian subcategory of $\mathcal{T}'$.
\end{lem}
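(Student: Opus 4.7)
The plan is first to verify that $a$ is exact, faithful, and full, so that the assertion of the lemma is meaningful, and then translate it, via Tannaka duality, into a statement about faithful flatness of a map of Tannaka dual pro-algebraic groups. Setting $\omega:=\omega'\circ a$, the commutativity of the diagram gives $\omega=\omega''\circ c$, and since $c$ is exact and faithful and $\omega''$ is a fiber functor, $\omega$ is itself a fiber functor on $\mathcal{T}$.

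For faithfulness of $a$: if $a(f)=a(g)$ then $c(f)=b(a(f))=b(a(g))=c(g)$, whence $f=g$ by faithfulness of $c$. To prove that $a$ is full, I would first observe that $b$ is faithful: indeed $\omega''\circ b\simeq\omega'$ is faithful and $\omega''$ itself is faithful, so $b(f)=0$ forces $\omega'(f)=\omega''(b(f))=0$, hence $f=0$. Then, given $\varphi:a(X)\to a(Y)$ in $\mathcal{T}'$, fullness of $c$ applied to $b(\varphi)$ produces some $f:X\to Y$ with $c(f)=b(\varphi)$; faithfulness of $b$ then yields $a(f)=\varphi$. Exactness of $a$ follows from the standard fact that a faithful exact functor between abelian categories reflects exact sequences, applied to $\omega'$ and to $\omega'\circ a=\omega$, which is exact.

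Next, I would invoke the Tannakian translation (\cite{Deligne-Milne} Proposition 2.21): a $k$-linear exact tensor functor $a:\mathcal{T}\to\mathcal{T}'$ between neutral Tannakian categories identifies $\mathcal{T}$ with a Tannakian subcategory of $\mathcal{T}'$ if and only if the induced morphism $a^{*}:\mathcal{G}_{\omega'}\to\mathcal{G}_{\omega}$ of Tannaka duals is faithfully flat. The hypothesis on $c$ is therefore equivalent to the composition $c^{*}=a^{*}\circ b^{*}:\mathcal{G}_{\omega''}\to\mathcal{G}_{\omega'}\to\mathcal{G}_{\omega}$ being faithfully flat, and what we must establish is faithful flatness of $a^{*}$ alone.

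The final step is the purely group-scheme statement: if $G\to H\to K$ is a composable pair of morphisms of affine (pro-)algebraic $k$-group schemes and the composite is faithfully flat, then $H\to K$ is faithfully flat. At the level of affine algebraic groups over a field this is immediate, because faithful flatness of $H\to K$ is equivalent to injectivity of the Hopf algebra map $\mathcal{O}(K)\to\mathcal{O}(H)$, and injectivity of the composite $\mathcal{O}(K)\to\mathcal{O}(H)\to\mathcal{O}(G)$ forces injectivity of its first factor. To descend this to our pro-algebraic setting, I would exhaust $\mathcal{T}$, $\mathcal{T}'$, $\mathcal{T}''$ by finitely generated Tannakian subcategories and apply the statement to each compatible algebraic quotient of the three Tannaka duals, then pass to the limit. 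The only place requiring care is this passage to the pro-algebraic limit, but it is routine and I do not expect any substantive obstacle.
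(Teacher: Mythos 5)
Your proof is correct, and it takes a genuinely different route from the paper's. You translate the whole statement through Tannaka duality into a question about faithful flatness of affine group schemes and then use the elementary observation that if $\mathcal{O}(K)\to\mathcal{O}(H)\to\mathcal{O}(G)$ has injective composite then $\mathcal{O}(K)\to\mathcal{O}(H)$ is injective. The paper, by contrast, stays entirely at the categorical level: it observes that $\omega'$ and $\omega''$, being neutralizing fiber functors, reflect exactness and are injective on subobjects; deduces the same for $b$; and then propagates fullness and subobject-bijectivity from $c$ down to $a$, finishing with the observation that the essential image of an exact tensor functor which is bijective on subobjects is automatically closed under subquotients, tensors, sums, and duals. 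Your approach buys conceptual clarity and matches the faithful-flatness criterion the paper records just before the lemma; the paper's approach buys self-containment and avoids invoking the dictionary. One simplification you can make: your worry about "passage to the pro-algebraic limit" is moot, since Takeuchi's theorem (faithfully flat $\Leftrightarrow$ injective on Hopf algebras over a field) holds for all affine group schemes over $k$, not just algebraic ones, so the Hopf-algebra argument applies directly to $\mathcal{G}_{\omega'}\to\mathcal{G}_\omega$ and $\mathcal{G}_{\omega''}\to\mathcal{G}_{\omega'}$ without any exhaustion by finitely generated subcategories.
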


\begin{proof}
By Tannaka duality, the fiber functor $\omega'$ can be identified with the forgetful functor $\mathrm{Rep}_k(\mathcal{G}_{\omega'})\rightarrow \mathrm{Vec}_k$. Hence $\omega'$ reflects short exact sequences, i.e. a (short) sequence is exact in $\mathcal{T}'$ if and only if it is exact in $\mathrm{Vec}_k$ after applying $\omega'$, and similarly for $\omega''$. Moreover, $\omega'$ is injective on subobjects, in the sense that for any $V'\in\mathcal{T}'$ the map $\mathrm{Sub}_{\mathcal{T}'}(V')\rightarrow \mathrm{Sub}_{\mathrm{Vec}_k}(\omega'(V'))$ is injective, where 
$\mathrm{Sub}_{\mathcal{T}'}(V')$ is the set of subobjects of $V'$ in $\mathcal{T}'$. It follows that $b$ is exact and (hence) faithful, and that $a$ is exact. It also follows that $b$ is injective on subobjects.

The fact that $b$ is faithful and $c$ fully faithful immediately implies that $a$ is fully faithful. Similarly the fact that $b$ is injective on subobjects and $c$ bijective on subobjects implies that $a$ is bijective on subobjects (i.e. the map $\mathrm{Sub}_{\mathcal{T}}(V)\rightarrow \mathrm{Sub}_{\mathcal{T}'}(a(V))$ is bijective for any $V\in\mathcal{T}$). It follows that the essential image of $a$ is stable by subobjects. The set of quotients $\mathrm{Quot}_{\mathcal{T}}(V)$ of $V\in\mathcal{T}$ (resp. $\mathrm{Quot}_{\mathcal{T'}}(a(V))$) is in 1-1 correspondence with $\mathrm{Sub}_{\mathcal{T}}(V)$ (resp. $\mathrm{Sub}_{\mathcal{T'}}(a(V))$). Hence the essential image of $a$ is stable by subquotients. It is also stable by finite tensor products, duals and finite direct sums since $a$ is an exact tensor functor.

\end{proof}

Consider the diagram $\mathrm{Pairs}^0$ of pairs $(X,Y,i)$ where $\mathrm{dim}(X)=0$ endowed with the representation $H^*$ given by Betti cohomology with $\mathbb{Q}$-coefficients, i.e. $H^*(X,Y,i)=H^i(X(\mathbb{C}),Y(\mathbb{C}),\mathbb{Q})$. Let $ECM_k^0:=C(\mathrm{Pairs}^0,H^*)$ be its diagram category. Sending the vertex $(\mathrm{Spec}(A),\emptyset,0)$ to the representation $\mathbb{Q}^{\mathrm{Hom}_k(A,\mathbb{C})}$, where $A/k$ is a finite \'etale algebra, leads to a representation $\mathrm{Pairs}^0\rightarrow \mathrm{Rep}_{\mathbb{Q}}(G_k)$ inducing a canonical tensor equivalence $\alpha:ECM_k^0\stackrel{\sim}{\rightarrow} \mathrm{Rep}_{\mathbb{Q}}(G_k)$, where $G_k$ is the absolute Galois group of $k$ (see \cite{Huber-Muller-Stach-15} Section 12.3). Let $\alpha^{-1}:\mathrm{Rep}_{\mathbb{Q}}(G_k)\stackrel{\sim}{\rightarrow}ECM_k^0$ be a tensor-inverse and consider the tensor functor
$$a:\mathrm{Rep}_{\mathbb{Q}}(G_k)\stackrel{\sim}{\rightarrow} ECM_k^0\rightarrow ECM_k \rightarrow \mathrm{NMM}_k.$$

\begin{prop}\label{prop-Artin}
The tensor functor $a:\mathrm{Rep}_{\mathbb{Q}}(G_k) \rightarrow \mathrm{NMM}_k$
identifies $\mathrm{Rep}_{\mathbb{Q}}(G_k)$ with the Tannakian subcategory $\mathrm{NMM}^0_k$ of $\mathrm{NMM}_k$ generated by the objects of the form $h^0(\mathrm{Spec}(K))$ where $K/k$ is a finite extension.
\end{prop}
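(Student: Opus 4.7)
The strategy is to apply the preceding Lemma to a tautological configuration, so that the entire content of the proof is the construction of a natural retraction of $a$ onto its source. First I would verify that $a$ factors through $\mathrm{NMM}^0_k \hookrightarrow \mathrm{NMM}_k$: since $a$ is an exact tensor functor and each $a(\mathbb{Q}^{\mathrm{Hom}_k(K, \mathbb{C})}) = h^0(\mathrm{Spec}(K))$ belongs to $\mathrm{NMM}^0_k$, the stability of the Tannakian subcategory $\mathrm{NMM}^0_k$ under sums, tensor products, duals, and subquotients forces $a(V) \in \mathrm{NMM}^0_k$ for every $V \in \mathrm{Rep}_\mathbb{Q}(G_k)$. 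Denote the factored functor by $a' \colon \mathrm{Rep}_\mathbb{Q}(G_k) \to \mathrm{NMM}^0_k$.

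The key construction is a $\mathbb{Q}$-linear tensor functor $b \colon \mathrm{NMM}^0_k \to \mathrm{Rep}_\mathbb{Q}(G_k)$ lifting Betti in the sense that its composition with the forgetful functor $\mathrm{Rep}_\mathbb{Q}(G_k) \to \mathrm{Vec}_\mathbb{Q}$ equals $H^*_B|_{\mathrm{NMM}^0_k}$. Fix a prime $\ell$; the $\ell$-adic realization $H^*_\ell \colon \mathrm{NMM}_k \to \mathrm{Rep}_{\mathbb{Q}_\ell}(G_k)$ gives a continuous $G_k$-action on $H^*_\ell(M) \simeq H^*_B(M) \otimes_\mathbb{Q} \mathbb{Q}_\ell$. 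Restricted to $\mathrm{NMM}^0_k$, this action factors through a finite quotient of $G_k$, since it does so on the Tannakian generators $h^0(\mathrm{Spec}(K))$, where it acts $\mathbb{Q}$-linearly by permutation of $\mathrm{Hom}_k(K, \mathbb{C})$. By functoriality, the same conclusion propagates through sums, tensor products, duals, and subquotients, so the action preserves the $\mathbb{Q}$-subspace $H^*_B(M) \subset H^*_B(M) \otimes_\mathbb{Q} \mathbb{Q}_\ell$ for every $M \in \mathrm{NMM}^0_k$. Setting $b(M) := H^*_B(M)$ with this induced $G_k$-action defines the desired tensor functor, and inspection on the generators shows that $b \circ a' \simeq \mathrm{Id}_{\mathrm{Rep}_\mathbb{Q}(G_k)}$.

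With $b$ in hand, I apply the preceding Lemma with $\mathcal{T} = \mathcal{T}'' = \mathrm{Rep}_\mathbb{Q}(G_k)$, $\mathcal{T}' = \mathrm{NMM}^0_k$, $a = a'$, $c = \mathrm{Id}$, $\omega''$ the forgetful functor, and $\omega' = H^*_B|_{\mathrm{NMM}^0_k}$. All compatibility hypotheses are automatic, and the hypothesis that $c$ identifies $\mathcal{T}$ with a Tannakian subcategory of $\mathcal{T}''$ is tautological since $c = \mathrm{Id}$. The Lemma therefore yields that $a'$ identifies $\mathrm{Rep}_\mathbb{Q}(G_k)$ with a Tannakian subcategory of $\mathrm{NMM}^0_k$. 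Because this subcategory contains every generator $h^0(\mathrm{Spec}(K))$ of $\mathrm{NMM}^0_k$, it must coincide with $\mathrm{NMM}^0_k$, which gives the proposition.

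The main obstacle is the construction of $b$: the Galois action coming from $\ell$-adic realization is a priori only $\mathbb{Q}_\ell$-linear, and one must verify that it preserves the $\mathbb{Q}$-subspace $H^*_B(M)$. This is immediate on the generators $h^0(\mathrm{Spec}(K))$, where the action factors through a finite quotient permuting the natural $\mathbb{Q}$-basis $\mathrm{Hom}_k(K, \mathbb{C})$; the delicate step is to propagate this property to all objects of $\mathrm{NMM}^0_k$ in a way compatible with the tensor structure, which relies precisely on the Tannakian-subcategory presentation of $\mathrm{NMM}^0_k$ from these generators together with the functoriality and tensor compatibility of $H^*_\ell$ and Artin's comparison isomorphism.
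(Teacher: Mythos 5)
Your proof is correct in substance but takes a genuinely different route from the paper. The paper applies its Lemma with the auxiliary category $\mathcal{T}''=\mathcal{R}_{l}$ whose objects are pairs $(V,\rho_l)$ of a $\mathbb{Q}$-vector space together with a continuous $G_k$-action on $V\otimes\mathbb{Q}_l$ (\emph{not} required to preserve $V$); the functor $b:\mathrm{NMM}_k\to\mathcal{R}_l$ is then essentially free of charge, being just Betti plus $l$-adic realization glued by Artin comparison, and the only content is the formal verification that the tautological $c:\mathrm{Rep}_{\mathbb{Q}}(G_k)\to\mathcal{R}_l$ has Tannakian-subcategory essential image. You instead build a genuine retraction $b:\mathrm{NMM}^0_k\to \mathrm{Rep}_{\mathbb{Q}}(G_k)$ and apply the Lemma tautologically with $c=\mathrm{Id}$. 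This forces you to prove, up front, the $\mathbb{Q}$-rationality of the $\ell$-adic Galois action on $H^*_B(M)\subset H^*_\ell(M)$ for every $M\in\mathrm{NMM}^0_k$ --- a statement that in the paper's organization drops out as a \emph{consequence} of the proposition rather than being an input. Your propagation argument for this (true on generators; stable under $\oplus$, $\otimes$, dual, subquotient via the identity $H^*_B(N)=H^*_\ell(N)\cap H^*_B(M)$ for $N\subseteq M$, with both intersected subspaces Galois-stable) is sound, so the approach works, but it is heavier than the paper's. Two smaller points you should make explicit: (i) the exactness of $a$, which you assert but do not justify, can be deduced before any of this by noting that $H^*_B\circ a$ is the forgetful functor (a fiber functor, hence exact) and that the fiber functor $H^*_B$ on $\mathrm{NMM}_k$ reflects exactness, exactly as in the proof of the Lemma; (ii) the isomorphism $b\circ a'\simeq\mathrm{Id}$ is not automatic from agreement on generating objects and deserves a line of justification --- e.g.\ both are exact tensor endofunctors of $\mathrm{Rep}_{\mathbb{Q}}(G_k)$ lying over the forgetful functor, hence correspond by Tannaka duality to endomorphisms of the Tannaka dual, and agreement on the objects $\mathbb{Q}^{\mathrm{Hom}_k(K,\mathbb{C})}$ pins this endomorphism down on each finite quotient of $G_k$.
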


\begin{proof}
Let $\mathcal{T}=\mathrm{Rep}_{\mathbb{Q}}(G_k)$, let  $\mathcal{T}'=\mathrm{NMM}_k$ and let $\mathcal{T}''=\mathcal{R}_{l}$ be the $\mathbb{Q}$-linear Tannakian category whose objects are pairs $(V,\rho_l)$ where $V$ is a finite dimensional $\mathbb{Q}$-vector space and $\rho_l$ is a continuous representation of $G_k$ on $V\otimes \mathbb{Q}_l$. The functor $\omega'$ is the Betti realization and $\omega''$ is the functor $(V,\rho_l)\mapsto V$. The functor $b$ is induced by the Betti and the $l$-adic realization together with Artin's comparison isomorphism, and $c:\mathrm{Rep}_{\mathbb{Q}}(G_k)\rightarrow \mathcal{R}_{l}$ is the obvious functor. The lemma above applies and shows that the tensor functor $a$
identifies $\mathrm{Rep}_{\mathbb{Q}}(G_k)$ with a Tannakian subcategory of $\mathrm{NMM}_k$.

Let $\mathrm{Im}(a)$ be the essential image of $a$, and let $\mathrm{NMM}^0_k$ be the Tannakian subcategory  of $\mathrm{NMM}_k$ generated by the objects of the form $h^0(\mathrm{Spec}(K))$ where $K/k$ is a finite extension. We need to check that $\mathrm{Im}(a)=\mathrm{NMM}^0_k$. The functor $a$ maps the representation $\mathbb{Q}^{\mathrm{Hom}_k(K,\mathbb{C})}$ to $h^0(\mathrm{Spec}(K))\in \mathrm{Im}(a)$, for any finite extension $K/k$. Since $\mathrm{Im}(a)$ is a Tannakian subcategory of $\mathrm{NMM}_k$, we have that $\mathrm{NMM}^0_k \subset\mathrm{Im}(a)$ is a Tannakian subcategory. Let $\alpha: \mathrm{Im}(a)\rightarrow \mathrm{Rep}_{\mathbb{Q}}(G_k)$ be a tensor-inverse for the functor $\mathrm{Rep}_{\mathbb{Q}}(G_k)\rightarrow \mathrm{Im}(a)$ induced by $a$. Let $\alpha(\mathrm{NMM}^0_k)$ be the essential image of the functor $\mathrm{NMM}^0_k \hookrightarrow \mathrm{Im}(a)\stackrel{\alpha}{\rightarrow}\mathrm{Rep}_{\mathbb{Q}}(G_k)$. Then $\alpha(\mathrm{NMM}^0_k)\
 subset  \mathrm{Rep}_{\mathbb{Q}}(G_k)$ is a Tannakian subcategory containing the representations of the form $\mathbb{Q}^{\mathrm{Hom}_k(K,\mathbb{C})}$, where $K/k$ is a finite Galois extension (in particular the regular representation of any finite quotient $G$ of $G_k$ belongs to $\alpha(\mathrm{NMM}^0_k)$). It follows that $\alpha(\mathrm{NMM}^0_k)=\mathrm{Rep}_{\mathbb{Q}}(G_k)$ hence that $\mathrm{NMM}^0_k  =\mathrm{Im}(a)$.
\end{proof}


\begin{thebibliography}{6}
\bibitem{Auel}{Asher, A.: \emph{Saito's perspective on Serre's formula.} Unpublished notes.}
\bibitem{Brinon1}{Brinon, O.: \emph{Repr\'esentations cristallines dans le cas d'un corps r\'esiduel imparfait.} Ann. Inst. Fourier, 56, 4 (2006), 919-999.}
\bibitem{Brinon-Conrad}{Brinon, O., Conrad, B.:\emph{CMI summer school notes on p-adic Hodge theory}.}
\bibitem{Conrad12}{Conrad, B.:\emph{Weil and Grothendieck approaches to Adelic points.} Enseign. Math. (2) 58 (2012), 61-97.}
\bibitem{Deligne94}{Deligne, P.: \emph{Structures de Hodge mixtes r\'eelles.} Motives (Seattle, WA, 1991), 509--514, Proc. Sympos. Pure Math., 55, Part 1, Amer. Math. Soc., Providence, RI, 1994.}
\bibitem{Deligne-Milne}{Deligne, P., Milne, J.:\emph{Tannakian categories.}}
\bibitem{Dimca}{Dimca, A.:\emph{Singularities and topology of hypersurfaces.}}
\bibitem{CCMT14}{Cassou-Nogu\`es, Ph., Chinburg, T., Morin, B., Taylor,  M.: \emph{The classifying topos of a group scheme and invariants of symmetric bundles.} Proc. Lond. Math. Soc. (3) 109 (2014), no.5, 1093-1136.}
\bibitem{CCMT15}{Cassou-Nogu\`es, Ph., Chinburg, T., Morin, B., Taylor,  M.: \emph{Hopf algebras and quadratic forms.} Illinois  J. of Maths, to appear.} 
\bibitem{EKV93}{Esnault, H., Khan, B., Viehweg, C.:\emph{Coverings with odd ramification and Stiefel-Whitney classes.} J. Reine Angew. Math. 441 (1993), 145-188.}
\bibitem{Fontaine94}{Fontaine, J.M.: \emph{P\'eriodes $p$-adiques.} Ast\'erisque, Soc. Math. France, vol. 223 (1994), 113-184.   }
\bibitem{Fontaine00}{Fontaine, J.M., Yi Ouyang: \emph{Theory of $p$-adic Galois representations}}
\bibitem{Fr\"ohlich85}{Fr\"ohlich, A.: \emph{Orthogonal representations of Galois groups, Stiefel-Whitney classes and Hasse-Witt invariants.} J. Reine Angew. Math. 360 (1985), 84--123.}
\bibitem{Gabber14}{Gabber, O., Gille, P., Moret-Bailly, L.: \emph{Fibr\'es principaux sur les corps valu\'es hens\'eliens.} Algebraic Geometry 1(5) (2014), 573-612.}
\bibitem{GKZ}{Gelfand, I.M., Kapranov, M.M., Zelevinsky, A.: \emph{ Discriminants, resultants and multidimensional determinants.} Birkh\"auser, Boston, 1994.}
\bibitem{Giraud}{Giraud, J.: \emph{Cohomologie non ab\'elienne.} Die Grundlehren der mathematischen Wissenschaften, Band 179. Springer-Verlag, Berlin-New York, 1971.}

\bibitem{Huber93}{Huber, A.: \emph{Calculation of derived functors via Ind-categories.} J. Pure Appl. Algebra 90 (1993), no. 1, 39--48.}
\bibitem{Huber-Muller-Stach-12}{Huber, A., Muller-Stach, S.: \emph{On the relation between Nori motives and Kontsevich periods.} Preprint 2012.}
\bibitem{Huber-Muller-Stach-15}{Huber, A., Muller-Stach, S.: \emph{Periods and Nori motives.} Preprint 2015}
\bibitem{Jardine92}{J. F. Jardine: \emph{Cohomological invariants associated to symmetric bilinear forms.} Exposition. Math. 10 (1992), no. 2, 97--134.}
\bibitem{Jehanne}{Jehanne, A.: \emph{Sur les extensions de $\mathbb Q$ \`a groupe de Galois $S_4$ et $\tilde{S_4}$.} Acta Arithmetica 3(1995), 259-276.}
\bibitem{Kontsevich99}{Kontsevich, M.: \emph{Operads and motives in deformation quantization.} Lett. Math. Phys. 48 (1999), no. 1, 35--72.}
\bibitem{Kontsevich-Zagier01}{Kontsevich, M., Zagier, D.: \emph{Periods.} Mathematics unlimited--2001 and beyond, 771--808, Springer, Berlin, 2001.}
\bibitem{Levine}{Levine, M.: \emph{Mixed motives.}}
\bibitem{libgo}{Libgober, A, Wood, J., Zagier, D.: \emph{Congruences  modulo powers of 2 for the signature of complete intersections.} Quart. J. Math. Oxford Ser. (2), 31(1980), 209--218.}
\bibitem{libgob}{Libgober, A.: \emph{Some properties of the signature of complete intersections.} Proc. Amer. Soc. 79(1980), 373-375.}


\bibitem{Nori76}{Nori, M.V.: \emph{On the representations of the fundamental group.} Compositio Math. 33 (1976), no. 1, 29--41.}
\bibitem{Nori}{Nori, M.V.: \emph{Unpublished notes on motives.}}
\bibitem{pp}{Prestel, A., Ziegler, M.: \emph{Model theoretic methods in the theory of topological fields.} J. Reine Angew. Math. 299(300) (1978), 318-341.}
\bibitem{Saito94}{Saito, T.: \emph{Jacobi sum Hecke characters, de Rham discriminant, and the determinant of $l$-adic cohomologies.} J. of Alg. Geom. 3(1994), 411-434.}
\bibitem{Saito95}{Saito, T.: \emph{The sign of the functional equation of the L-function of an orthogonal motive.} Invent. Math. 120(1995), 119-142.}
\bibitem{Saito11}{Saito, T.: \emph{The discriminant  and  the determinant  of an Hypersurface of even dimension.} Math. Res. Lett. 19(2012), no 4, 855-871.}
\bibitem{Saito12}{Saito, T.: \emph{The second Stiefel-Whitney classes of l-adic cohomology.}
Journal fur die reine und angewandte Mathematik. 681(2013), 101-147. }
\bibitem{Serre84}{Serre, J.P.: \emph{L'invariant de Witt de la forme $\mathrm {Tr}(x^2)$.} Comment. Math. Helv. 59(1984), 651-676.}
\bibitem{Serre70}{Serre, J.P.: \emph{Cours d'Arithm\'etique.} Presses universitaires de France, Paris, 1970}
\bibitem{Stalder08}{Stalder, N.: \emph{Scalar Extension of Abelian and Tannakian Categories.} Preprint (2008). arXiv:0806.0308.}
\bibitem{Szamuely09}{Szamuely, T.: \emph{Galois groups and fundamental groups.} Cambridge Studies in Advanced Mathematics, 117. Cambridge University Press, Cambridge, 2009.}
\bibitem{Weibel94}{Weibel, C.A.: \emph{An introduction to homological algebra.} Cambridge Studies in Advanced Mathematics, 38. Cambridge University Press, Cambridge, 1994.}

\end{thebibliography}
\end{document}